\newif\ifprintable
\newcommand{\hyperlink}[2]{{#2}}
\newcommand{\hypertarget}[2]{{#2}}
\newcommand{\phantomsection}{{}}
\newcommand{\url}[1]{{#1}}
\newif\ifmarking
\title{Combinatorial Floer Homology}
\author{
Vin de Silva \\
Pomona College
\and
Joel~W.~Robbin\\
University of Wisconsin\thanks{Part of this paper was written
while JWR visited the FIM at ETH Z\"urich}
\and
Dietmar~A.~Salamon\thanks{Partially supported by the 
Swiss National Science Foundation Grant 200021-127136}\\
ETH-Z\"urich
}
\date{6 February 2013}
\newcommand{\MAT}[1]{\left[\begin{array}{#1}}
\newcommand{\RIX}{\end{array}\right]}
\newcommand{\p}{\partial}
\newcommand{\alg}[2]{{\mathrm{alg}\left(#1,#2\right)}}
\newcommand{\geo}[2]{{\mathrm{geo}\left(#1,#2\right)}}
\newcommand{\num}[2]{{\mathrm{num}\left(#1,#2\right)}}
\newcommand{\one}{{{\mathchoice {\rm 1\mskip-4mu l} {\rm 1\mskip-4mu l}
{\rm 1\mskip-4.5mu l} {\rm 1\mskip-5mu l}}}}
\newcommand{\C}{{\mathbb C}}
\newcommand{\D}{{\mathbb D}}
\newcommand{\HH}{{\mathbb H}}
\newcommand{\K}{{\mathbb K}}
\newcommand{\N}{{\mathbb N}}
\newcommand{\R}{{\mathbb R}}
\renewcommand{\S}{{\mathbb S}}
\newcommand{\T}{{\mathbb T}}
\newcommand{\Z}{{\mathbb Z}}
\newcommand{\cA}{{\mathcal A}}   
\newcommand{\cD}{{\mathcal D}}
\newcommand{\cH}{{\mathcal H}}
\newcommand{\cM}{{\mathcal M}}   
\newcommand{\cP}{{\mathcal P}}
\newcommand{\cW}{{\mathcal W}}
\newcommand{\Om}{{\Omega}}
\newcommand{\om}{{\omega}}
\newcommand{\eps}{{\varepsilon}}
\renewcommand{\phi}{{\varphi}}
\newcommand{\tSi}{{\widetilde{\Sigma}}}
\newcommand{\talpha}{{\widetilde{\alpha}}}
\newcommand{\tbeta}{{\widetilde{\beta}}}
\newcommand{\tga}{{\widetilde{\gamma}}}
\newcommand{\tLa}{{\widetilde{\Lambda}}}
\newcommand{\tA}{{\widetilde{A}}}
\newcommand{\tB}{{\widetilde{B}}}
\newcommand{\tU}{{\widetilde{U}}}
\newcommand{\tf}{{\widetilde{f}}}
\renewcommand{\th}{{\widetilde{h}}}
\newcommand{\tu}{{\widetilde{u}}}
\newcommand{\tv}{{\widetilde{v}}}
\newcommand{\tx}{{\widetilde{x}}}
\newcommand{\ty}{{\widetilde{y}}}
\newcommand{\tz}{{\widetilde{z}}}
\newcommand{\im}{{\rm im }}        
\newcommand{\id}{{\rm id}}         
\newcommand{\INT}{{\rm int}}       
\newcommand{\IM}{{\rm Im }}        
\newcommand{\RE}{{\rm Re}}         
\renewcommand{\Re}{{\rm Re}}       
\newcommand{\Diff}{{\rm Diff}}        
\newcommand{\End}{{\rm End}}          
\newcommand{\w}{{\rm w}}
\renewcommand{\i}{{\mathbf{i}}}
\newcommand{\comb}{{\mathrm{comb}}}
\newcommand{\Floer}{{\mathrm{Floer}}}
\newcommand{\SO}{{\rm SO}}
\newcommand{\PSL}{{\rm PSL}}
\newcommand{\Cinf}{C^{\infty}}
\newcommand{\CF}{{\mathrm{CF}}}
\newcommand{\HF}{{\mathrm{HF}}}
\newcommand{\BC}{{\mathrm{BC}}}
\newcommand{\RP}{{\mathbb{R}\mathrm{P}}}
\def\NABLA#1{{\mathop{\nabla\kern-.5ex\lower1ex\hbox{$#1$}}}}
\def\Nabla#1{\nabla\kern-.5ex{}_{#1}}
\def\abs#1{\mathopen|#1\mathclose|}
\def\Abs#1{\left|#1\right|}
\def\Norm#1{\left\|#1\right\|}
\renewcommand{\p}{{\partial}}
\newtheorem{theorem}{Theorem}[section]
\newtheorem{corollary}[theorem]{Corollary}
\newtheorem{lemma}[theorem]{Lemma}
\newtheorem{proposition}[theorem]{Proposition}
\newtheorem{definition}[theorem]{Definition}
\newtheorem{remark}[theorem]{Remark}
\newtheorem{example}[theorem]{Example}
\begin{document}
\maketitle
\newpage

\tableofcontents


\newpage

\section{Introduction}

The Floer homology of a transverse pair of Lagrangian submanifolds
in a symplectic manifold   is, under favorable hypotheses, 
the homology of a chain complex generated by the intersection points.  
The boundary operator counts index one holomorphic strips with 
boundary on the Lagrangian submanifolds.
This theory was introduced by Floer 
in~\cite{FLOER1,FLOER2};\phantomsection\label{FLOERa}
see also the three papers~\cite{OH} 
of Oh.\phantomsection\label{OHa}
In this memoir we consider the following special case:

\begin{quote}
\hypertarget{property_H}{{\bf (H)}}
{\it $\Sigma$ is a connected oriented 2-manifold without boundary
and $\alpha,\beta\subset\Sigma$ are connected smooth 
one dimensional oriented submanifolds without boundary 
which are closed as subsets of $\Sigma$
and intersect transversally.}
We do not assume that $\Sigma$ is compact, 
but when it is, $\alpha$ and $\beta$ are embedded circles. 
\end{quote}
In this special case there is a purely combinatorial approach
to Lagrangian Floer homology which was first developed by 
de Silva~\cite{DESILVA}.\phantomsection\label{DESILVA1}
We give a full and detailed definition 
of this combinatorial Floer homology (see Theorem~\ref{thm:floer1}) 
under the hypothesis that $\alpha$ and $\beta$ are noncontractible 
embedded circles and are not isotopic to each other.  
Under this hypothesis,  {\em combinatorial Floer homology 
is invariant under isotopy, not just Hamiltonian isotopy, 
as in  Floer's original work} (see Theorem~\ref{thm:floer2}).
Combinatorial Floer homology {\em is} isomorphic 
to analytic Floer homology as defined by Floer  
(see Theorem~\ref{thm:floer3}).

Floer homology is the homology of a chain complex 
$\CF(\alpha,\beta)$ with basis consisting of the points 
of the intersection $\alpha\cap\beta$
(and coefficients in $\Z/2\Z$). The boundary operator 
$\p:\CF(\alpha,\beta)\to\CF(\alpha,\beta)$ has the form
$$
\p x =\sum_y n(x,y) y.
$$
In the case of analytic Floer homology as defined by 
Floer $n(x,y)$ denotes the number (mod two) of 
equivalence classes of holomorphic strips $v:\S\to\Sigma$ 
satisfying the boundary conditions 
$$
v(\R)\subset\alpha,\qquad 
v(\R+\i)\subset\beta,\qquad
v(-\infty)=x,\qquad
v(+\infty)=y
$$
and having Maslov index one.
The boundary operator in combinatorial Floer homology 
has the same form but now $n(x,y)$ denotes 
the number (mod two) of equivalence classes of 
smooth immersions $u:\D\to\Sigma$ satisfying 
$$
u(\D\cap\R)\subset\alpha,\qquad
u(\D\cap S^1)\subset\beta,\qquad
u(-1)=x,\qquad  u(+1)=y. 
$$
We call such an immersion a {\em smooth lune}. Here 
$$
\S:=\R+\i[0,1], \qquad \D:=\{z\in\C\,|\,\IM\,z\ge 0,\,|z|\le 1\}
$$ 
denote the standard strip and the standard half disc respectively.
We develop the combinatorial theory without appeal to the 
difficult analysis required for the analytic theory.
The invariance under isotopy rather than just Hamiltonian 
isotopy (Theorem~\ref{thm:floer3}) is a benefit of this approach.
A corollary is the formula  
$$
\dim\HF(\alpha,\beta) = \geo{\alpha}{\beta}
$$ 
for the dimension of the 
\hyperlink{floer_homology}{Floer Homology} $\HF(\alpha,\beta)$
(see Corollary~\ref{cor:HF}). Here $\geo{\alpha}{\beta}$ denotes the 
\hyperlink{geometric_intersection_number}{\em geometric intersection number} 
of the curves $\alpha$ and $\beta$.
In Remark~\ref{rmk:Zcoeff} we indicate how to 
define combinatorial Floer homology with integer coefficients,
but we do not discuss integer coefficients in analytic 
Floer homology.

Let $\cD$ denote the space of all smooth maps $u:\D\to\Sigma$ 
satisfying the boundary conditions $u(\D\cap\R)\subset\alpha$ 
and $u(\D\cap S^1)\subset\beta$.
For $x,y\in\alpha\cap\beta$ let $\cD(x,y)$
denote the subset of all $u\in\cD$ satisfying 
the endpoint conditions $u(-1)=x$ and $u(1)=y$.
Each $u\in\cD$ determines a locally constant 
function 
$$
{\w:\Sigma\setminus(\alpha\cup\beta)\to\Z}
$$ 
defined as the degree
$$
\w(z) := \deg(u,z), \qquad z\in\Sigma\setminus(\alpha\cup\beta).
$$
When $z$ is a regular value of $u$ this is the algebraic 
number of points in the preimage $u^{-1}(z)$. 
The function $\w$ depends only on the homotopy class of~$u$.
In Theorem~\ref{thm:trace} we prove that the homotopy class 
of $u\in\cD$ is uniquely determined by its endpoints $x,y$ 
and its degree function $\w$.  
Theorem~\ref{thm:maslov} says that the Viterbo--Maslov index 
of every smooth map~$u\in\cD(x,y)$ is determined by the values 
of $\w$ near the endpoints $x$ and $y$ of $u$, namely,
it is given by the following {\em trace formula}
\index{trace!formula}\index{Viterbo--Maslov index!trace formula}
$$
\mu(u) = \frac{m_x(\Lambda_u)+m_y(\Lambda_u)}{2},\qquad
\Lambda_u:=(x,y,\w).
$$
Here $m_x$ denotes the sum of the four values 
of $\w$ encountered when walking along a 
small circle surrounding $x$, and similarly for $y$.
\hyperlink{Part_I}{Part~I} of this memoir is devoted to 
proving this formula.

\hyperlink{Part_II}{Part~II} 
gives a combinatorial characterization 
of smooth lunes.  Specifically, the equivalent conditions~(ii) and~(iii) 
of Theorem~\ref{thm:lune1} are necessary for the 
existence of a smooth lune. This implies the fact 
(not obvious to us) that a lune cannot contain 
either of its endpoints in the interior of its image.
In the simply connected case we prove in 
the same theorem that the necessary conditions
are also sufficient. We conjecture that they characterize 
smooth lunes in general. Theorem~\ref{thm:lune2}
shows that any two smooth lunes with the 
same counting function $\w$ are isotopic and thus
the equivalence class of a smooth lune is uniquely 
determined by its combinatorial data.  The proofs 
of these theorems are carried out in 
Sections~\ref{sec:ARC} and~\ref{sec:CL}.
Together they provide a solution to the 
Picard--Loewner problem in a special case;
see for example~\cite{FRANCIS2} and the references
cited therein, e.g.~\cite{TITUS,BLANK,POENARU}. 
\phantomsection\label{FRANCIS2,TITUS,BLANK,POENARU}
Our result is a special case
because no critical points are allowed (lunes are immersions),
the source is a disc and not a Riemann surface with positive genus,
and the prescribed boundary circle decomposes into two embedded arcs.

\hyperlink{Part_III}{Part~III} introduces combinatorial Floer homology.
Here  we restrict our discussion to the case 
where $\alpha$ and $\beta$ are noncontractible 
embedded circles which are not isotopic to each other 
(with either orientation). The basic definitions are given 
in Section~\ref{sec:FLOER}.  That the square of the boundary 
operator is indeed zero in the combinatorial setting 
will be proved in Section~\ref{sec:HEART} by 
analyzing \hyperlink{broken_hearts}{\it broken hearts}. 
Propositions~\ref{prop:heart} and~\ref{prop:heartbreaker} 
say that {\em there are two ways to break a heart}
and this is why the square of the boundary operator is zero.
In  Section~\ref{sec:ISOTOPY} we prove the  isotopy invariance 
of combinatorial Floer homology by examining generic deformations 
of loops  that change the number of intersection points.
This is very much in the spirit of Floer's original 
proof of deformation invariance 
(under Hamiltonian isotopy of the Lagrangian manifolds) 
of analytic Floer homology.   
The main theorem in Section~\ref{sec:LS} asserts, in the 
general setting, that smooth lunes (up to isotopy) are in 
one-to-one correspondence with index one holomorphic strips 
(up to translation). 
The proof is self-contained and does not use 
any of the other results in this memoir. 
It is based on an equation 
(the {\em index formula}~\eqref{eq:MASLOV} 
in Theorem~\ref{thm:MASLOV}) which expresses 
the Viterbo--Maslov index of a holomorphic 
strip in terms of its critical points and its angles at infinity.
A linear version of this equation 
(the {\em linear index formula}~\eqref{eq:index} 
in Lemma~\ref{le:index}) also shows that every holomorphic strip 
is regular in the sense that the linearized operator is surjective. 
It follows from these observations 
that the combinatorial and analytic definitions 
of Floer homology agree as asserted in Theorem~\ref{thm:floer3}.
In fact, our results show that the two chain complexes agree.

There are many directions in which the theory developed in the 
present memoir can be extended.  Some of these are discussed 
in Section~\ref{sec:FD}.  For example, it has been understood for 
some time that the Donaldson triangle product and the Fukaya 
category have combinatorial analogues in dimension two, 
and that these analogues are isomorphic  to the original analytic theories.
The combinatorial approach to the Donaldson triangle product
has been outlined in the PhD thesis of the first 
author~\cite{DESILVA}\phantomsection\label{DESILVA0},
and the combinatorial approach to the derived Fukaya category
has been used by Abouzaid~\cite{A}\phantomsection\label{A0} 
to compute it. Our formula for the Viterbo--Maslov index
in Theorem~\ref{thm:maslov} and our combinatorial characterization
of smooth lunes in Theorem~\ref{thm:lune1} are not needed for
their applications.   In our memoir these two results are limited
to the elements of $\cD$.  
(To our knowledge, they have not been extended to triangles 
or more general polygons in the existing literature.)

When $\Sigma=\T^2$, the Heegaard--Floer theory 
of Ozsvath--Szabo~\cite{OS1,OS2} can be interpreted as a refinement 
of the combinatorial Floer theory, in that the winding number of 
a lune at a prescribed point in $\T^2\setminus(\alpha\cup\beta)$
is taken into account in the definition of their boundary operator.
However, for higher genus surfaces Heegaard--Floer theory
does not include the combinatorial Floer theory discussed 
in the present memoir as a special case.

Appendix~\ref{app:path} contains a proof that, under suitable hypotheses,  
the space of paths connecting $\alpha$ to $\beta$ is simply connected.
Appendix~\ref{app:diffeos_of_D}  contains a proof that the group 
of orientation preserving diffeomorphisms of the half disc 
fixing the corners is connected.  Appendix~\ref{app:homological} 
contains an account of Floer's algebraic deformation argument.
Appendix~\ref{app:asymptotic} summarizes the relevant results
in~\cite{ASYMPTOTIC} about the asymptotic behavior of holomorphic strips.

{\bf Acknowledgement.}
We would like to thank the referee for his/her careful work.

\newpage
\part*{I. The Viterbo--Maslov Index}
\addcontentsline{toc}{part}{Part I. The Viterbo--Maslov Index}


Throughout this memoir we assume~\hyperlink{property_H}{(H)}.  
We often write ``assume~\hyperlink{property_H}{(H)}'' to remind the reader 
of our standing hypothesis.

\section{Chains and Traces}\label{sec:CT}

Define a cell complex structure on $\Sigma$ by taking  
the set of zero-cells to be the set $\alpha\cap\beta$,\hypertarget{Part_I}{}
the set of one-cells to be  the set of connected components 
of $(\alpha\setminus\beta)\cup(\beta\setminus\alpha)$
with compact closure, and the set of two-cells to be the 
set of connected components of $\Sigma\setminus(\alpha\cup\beta)$
with compact closure.  (There is an abuse of language here 
as the ``two-cells'' need not be  homeomorphs of the open unit
disc if the genus of $\Sigma$ is positive and the ``one-cells" 
need not be arcs if $\alpha\cap\beta=\emptyset$.)
Define a boundary operator $\p$ as follows. 
For each two-cell $F$ let 
$$
\p F=\sum \pm E, 
$$
where the sum is over the one-cells $E$ which abut $F$ and 
the plus sign is chosen iff the orientation of $E$ (determined 
from the given orientations of $\alpha$ and $\beta$)
agrees with the boundary orientation of $F$
as a connected open subset of the oriented manifold  $\Sigma$.
For each one-cell $E$ let 
$$
\p E=y-x
$$
where $x$ and $y$ are the endpoints of the arc $E$ and the 
orientation of $E$ goes from $x$ to $y$. (The one-cell $E$ 
is either a subarc of $\alpha$ or a subarc of $\beta$ and both 
$\alpha$ and $\beta$ are oriented one-manifolds.) 
For $k=0,1,2$ a {\bf $k$-chain} \index{chain}
is defined to be a formal linear
combination (with integer coefficients) of $k$-cells, i.e.\ 
a two-chain is a locally constant map 
$\Sigma\setminus(\alpha\cup\beta)\to\Z$ 
(whose support has compact closure in $\Sigma$)
and a one-chain is a locally constant map 
$(\alpha\setminus\beta)\cup(\beta\setminus\alpha)\to\Z$
(whose support has compact closure in $\alpha\cup\beta$).
It follows directly from the definitions that
$\p^2F=0$  for each two-cell $F$.

Each $u\in\cD$ determines a two-chain $\w$ via
\begin{equation}\label{eq:w-V}
\w(z) := \deg(u,z), \qquad 
z\in\Sigma\setminus(\alpha\cup\beta).
\end{equation}
and a one-chain $\nu$ via
\begin{equation}\label{eq:nu-V}
\nu(z) := 
\left\{\begin{array}{rl}
\deg(u\big|_{\p\D\cap\R\;\,}:\p\D\cap\R\;\to\alpha,z),&
\mbox{for } z\in\alpha\setminus\beta, \\
-\deg(u\big|_{\p\D\cap S^1}:\p\D\cap S^1\to\beta,z),&
\mbox{for } z\in\beta\setminus\alpha.
\end{array}\right.
\end{equation}
Here we orient the one-manifolds $\D\cap\R$ and 
$\D\cap S^1$ from $-1$ to $+1$.  For any one-chain 
$\nu:(\alpha\setminus\beta)\cup(\beta\setminus\alpha)\to\Z$
denote 
$$
\nu_\alpha:=\nu|_{\alpha\setminus\beta}:
\alpha\setminus\beta\to\Z,\qquad
\nu_\beta:=\nu|_{\beta\setminus\alpha}:
\beta\setminus\alpha\to\Z.
$$
Conversely, given locally constant functions 
$\nu_\alpha:\alpha\setminus\beta\to\Z$ 
(whose support has compact closure in $\alpha$)
and $\nu_\beta:\beta\setminus\alpha\to\Z$
(whose support has compact closure in $\beta$),
denote by $\nu=\nu_\alpha-\nu_\beta$ the one-chain that 
agrees with $\nu_\alpha$ on $\alpha\setminus\beta$ and 
agrees with $-\nu_\beta$ on $\beta\setminus\alpha$.

\begin{definition}[{\bf Traces}]\label{def:trace-V}\rm 
Fix two (not necessarily distinct)
intersection points $x,y\in\alpha\cap\beta$.

\smallskip\noindent{\bf (i)}
Let $\w:\Sigma\setminus(\alpha\cup\beta)\to\Z$
be a two-chain.  The triple 
$
\Lambda=(x,y,\w)
$
is called an {\bf $(\alpha,\beta)$-trace} if 
\index{trace!a@$(\alpha,\beta)$}
there exists an element $u\in\cD(x,y)$ such that
$\w$ is given by~\eqref{eq:w-V}. 
In this case $\Lambda=:\Lambda_u$ 
is also called the {\bf $(\alpha,\beta)$-trace of $u$}
and we sometimes write $\w_u:=\w$.
\index{trace!of $u$, denoted $\Lambda_u$}

\smallskip\noindent{\bf (ii)}
Let $\Lambda=(x,y,\w)$ be an $(\alpha,\beta)$-trace.
The triple $\p\Lambda:=(x,y,\p\w)$
is called the {\bf boundary of $\Lambda$.} \index{boundary}

\smallskip\noindent{\bf (iii)}
A one-chain 
$\nu:(\alpha\setminus\beta)\cup(\beta\setminus\alpha)\to\Z$
is called an {\bf $(x,y)$-trace} if there 
exist \index{trace!x@$(x,y)$}
smooth curves $\gamma_\alpha:[0,1]\to\alpha$
and $\gamma_\beta:[0,1]\to\beta$ such that
$\gamma_\alpha(0)=\gamma_\beta(0)=x$,
$\gamma_\alpha(1)=\gamma_\beta(1)=y$,
$\gamma_\alpha$ and $\gamma_\beta$ 
are homotopic in $\Sigma$ with fixed endpoints, and 
\begin{equation}\label{eq:admissible}
\nu(z)=\left\{\begin{array}{rl}
\deg(\gamma_\alpha,z),&\mbox{for }z\in\alpha\setminus\beta,\\
-\deg(\gamma_\beta,z),&\mbox{for }z\in\beta\setminus\alpha.
\end{array}\right.
\end{equation}
\end{definition}

\begin{remark}\label{rmk:nu}\rm
Assume $\Sigma$ is simply connected.
Then the condition on $\gamma_\alpha$ and $\gamma_\beta$ 
to be homotopic with fixed endpoints is redundant.
Moreover, if $x=y$ then a one-chain $\nu$ 
is an $(x,y)$-trace if and only if the restrictions
$$
\nu_\alpha:=\nu|_{\alpha\setminus\beta},\qquad 
\nu_\beta:=-\nu|_{\beta\setminus\alpha}
$$ 
are constant.
If $x\ne y$ and $\alpha,\beta$ are embedded circles 
and $A,B$ denote the positively oriented arcs from $x$ to $y$ 
in $\alpha,\beta$, then a one-chain $\nu$ 
is an $(x,y)$-trace if and only if 
$$
\nu_\alpha|_{\alpha\setminus(A\cup\beta)}=\nu_\alpha|_{A\setminus\beta}-1
$$
and
$$
\nu_\beta|_{\beta\setminus(B\cup\alpha)}=\nu_\beta|_{B\setminus\alpha}-1.
$$
In particular, when walking along $\alpha$ or $\beta$, 
the function $\nu$ only changes its value at $x$ and $y$.
\end{remark}

\begin{lemma}\label{le:boundary}
Let $x,y\in\alpha\cap\beta$ and $u\in\cD(x,y)$.
Then the boundary of the $(\alpha,\beta)$-trace
$\Lambda_u$ of $u$ is the triple 
$
\p\Lambda_u=(x,y,\nu),
$
where $\nu$ is given by~\eqref{eq:nu-V}.
In other words, if $\w$ is given by~\eqref{eq:w-V}
and $\nu$ is given by~\eqref{eq:nu-V}
then $\nu=\p\w$.
\end{lemma}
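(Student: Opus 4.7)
The plan is to verify the equality $\nu = \p\w$ one-cell by one-cell, using the standard degree-theoretic fact that the jump of the degree function across an oriented boundary curve equals the local degree of the boundary map. Since both $\nu$ and $\p\w$ are compactly-supported locally constant integer-valued functions on $(\alpha\setminus\beta)\cup(\beta\setminus\alpha)$, it suffices to show that their values agree on any chosen regular point of each one-cell.

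First I would treat a one-cell $E\subset\alpha\setminus\beta$. Let $F_L$ and $F_R$ denote the two-cells abutting $E$ on the left and right sides with respect to $\alpha$'s orientation. A short orientation computation (outward-normal-first convention) shows that in $\p F_L$ the cell $E$ appears with coefficient $+1$ and in $\p F_R$ with coefficient $-1$; hence $(\p\w)(E)=\w(F_L)-\w(F_R)$. Pick a point $z\in E$ that is simultaneously a regular value of $u$ and of $u|_{\p\D\cap\R}$ (such points are dense by Sard), and a small transverse arc $c:(-\eps,\eps)\to\Sigma$ with $c(0)=z$, $c(t)\in F_L$ for $t>0$, $c(t)\in F_R$ for $t<0$. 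Each interior preimage of $z$ perturbs to preimages on both sides and contributes the same amount to $\deg(u,c(t))$ for $t$ slightly positive and slightly negative, so it cancels from the difference. Thus $\w(F_L)-\w(F_R)$ is exactly the sum of local contributions from preimages $s\in(\p\D\cap\R)\cap u^{-1}(z)$.

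For each such boundary preimage $s$ I would expand $u$ in local coordinates and split into the four sign cases for $\det du(s)$ and for the derivative of $u|_{\p\D\cap\R}$ along $\alpha$. A direct half-disc solvability argument shows that exactly one side of $c(0)$ receives a preimage, and the signed contribution to $\w(F_L)-\w(F_R)$ equals $+1$ or $-1$ precisely according to whether $u|_{\p\D\cap\R}$ preserves or reverses the orientation at $s$. Summing over $s$ gives $\w(F_L)-\w(F_R)=\deg(u|_{\p\D\cap\R}:\p\D\cap\R\to\alpha,z)=\nu(z)$, as required.

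For a one-cell $E\subset\beta\setminus\alpha$ the same argument applies verbatim, but with one crucial bookkeeping point: the orientation of $\p\D\cap S^1$ chosen in the paper (from $-1$ to $+1$) is the opposite of the orientation induced on $\p\D\cap S^1$ as part of $\p\D$ with its boundary orientation from $\D$. The local-degree computation naturally produces $\deg$ with respect to the induced boundary orientation, which is the negative of the degree with respect to the paper's orientation; this explains the minus sign in~\eqref{eq:nu-V} on the $\beta$-side and shows $(\p\w)(E)=-\deg(u|_{\p\D\cap S^1}:\p\D\cap S^1\to\beta,z)=\nu(z)$. Keeping these orientation conventions straight, particularly the sign flip on the $\beta$-arc, will be the main nuisance of the argument, but the underlying content is just the standard fact that the boundary of the pushforward chain $u_*[\D]$ equals the pushforward of $\p[\D]$.
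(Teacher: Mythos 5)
Your argument is correct and is essentially the paper's proof: both pick a short arc transverse to $\alpha$ (resp.\ $\beta$) through a generic point of a one-cell and use a degree-theoretic preimage count to show the jump in $\w$ across that cell equals the boundary degree, with the sign flip on the $\beta$-side coming from the chosen orientation of $\p\D\cap S^1$ being opposite to the induced boundary orientation of $\D$. The paper packages the local perturbation analysis more compactly by orienting the compact $1$-manifold $u^{-1}(\Gamma)$ and reading the balance equation off its boundary, which also bakes in the transversality hypotheses (namely, $u$ transverse to $\gamma$ and $\gamma(0)$ a regular value of $u|_{\D\cap\R}$) that your sketch leaves slightly implicit in the phrase ``regular value of $u$''.
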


\begin{proof}
Choose an embedding $\gamma:[-1,1]\to\Sigma$ 
such that $u$ is transverse to $\gamma$,  
$\gamma(t)\in\Sigma\setminus(\alpha\cup\beta)$ for $t\ne 0$, 
$\gamma(-1)$, $\gamma(1)$ are regular values of $u$, 
${\gamma(0)\in\alpha\setminus\beta}$ 
is a regular value of $u|_{\D\cap\R}$,
and $\gamma$ intersects $\alpha$ transversally 
at $t=0$ such that orientations match in
$$
T_{\gamma(0)}\Sigma = T_{\gamma(0)}\alpha \oplus\R\dot\gamma(0).
$$
Denote $\Gamma:=\gamma([-1,1])$. Then $u^{-1}(\Gamma)\subset\D$ 
is a $1$-dimensional submanifold with boundary
$$
\p u^{-1}(\Gamma) = u^{-1}(\gamma(-1))\cup u^{-1}(\gamma(1))
\cup \bigl(u^{-1}(\gamma(0))\cap\R)\bigr).
$$
If $z\in u^{-1}(\Gamma)$ then 
$$
\im\,du(z)+T_{u(z)}\Gamma=T_{u(z)}\Sigma,\qquad
T_zu^{-1}(\Gamma)=du(z)^{-1}T_{u(z)}\Gamma.
$$ 
We orient $u^{-1}(\Gamma)$ such that the orientations 
match in
$$
T_{u(z)}\Sigma = T_{u(z)}\Gamma\oplus du(z)\i T_zu^{-1}(\Gamma).
$$
In other words, if $z\in u^{-1}(\Gamma)$ and $u(z)=\gamma(t)$, 
then a nonzero tangent vector $\zeta\in T_zu^{-1}(\Gamma)$
is positive if and only if the pair $(\dot\gamma(t),du(z)\i\zeta)$ 
is a positive basis of $T_{\gamma(t)}\Sigma$.
Then the boundary orientation of $u^{-1}(\Gamma)$ 
at the elements of $u^{-1}(\gamma(1))$ agrees with the 
algebraic count  in the definition of $\w(\gamma(1))$,  
at the elements of $u^{-1}(\gamma(-1))$ is opposite 
to the algebraic count in the definition of $\w(\gamma(-1))$,
and at the elements of $u^{-1}(\gamma(0))\cap\R$ is opposite 
to the algebraic count in the definition of $\nu(\gamma(0))$.
Hence
$$
\w(\gamma(1)) = \w(\gamma(-1)) + \nu(\gamma(0)).
$$
In other words the value of $\nu$ at a point in
$\alpha\setminus\beta$ is equal to the value 
of~$\w$ slightly to the left of $\alpha$ 
minus the value of~$\w$ slightly to the right of~$\alpha$.
Likewise, the value of $\nu$ at a point in
$\beta\setminus\alpha$ is equal to the value 
of~$\w$ slightly to the right of $\beta$ 
minus the value of~$\w$ slightly to the left of~$\beta$.
This proves Lemma~\ref{le:boundary}.
\end{proof}

\begin{theorem}\label{thm:trace}
\smallskip\noindent{\bf (i)}
Two elements of $\cD$ belong to the same connected
component of~$\cD$ if and only if they have 
the same $(\alpha,\beta)$-trace.

\smallskip\noindent{\bf (ii)}
Assume $\Sigma$ is diffeomorphic to the two-sphere.
Let $x,y\in\alpha\cap\beta$ and let 
$\w:\Sigma\setminus(\alpha\cup\beta)\to\Z$
be a locally constant function.
Then $\Lambda=(x,y,\w)$ is an $(\alpha,\beta)$-trace 
if and only if $\p\w$ is an $(x,y)$-trace.

\smallskip\noindent{\bf (iii)}
Assume $\Sigma$ is not diffeomorphic to the two-sphere
and let $x,y\in\alpha\cap\beta$. If $\nu$ is an $(x,y)$-trace,
then there is a unique two-chain $\w$ such that 
$\Lambda:=(x,y,\w)$ is an $(\alpha,\beta)$-trace
and $\p\w=\nu$.
\end{theorem}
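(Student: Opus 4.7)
The three parts share a common strategy based on the classification of maps $S^2\to\Sigma$: $\pi_2(S^2)=\Z$ is detected by degree, while $\pi_2(\Sigma)=0$ when $\Sigma$ is not diffeomorphic to $S^2$. For part~(i), the ``only if'' direction is immediate, since $\alpha\cap\beta$ is discrete (so endpoints are locally constant along any continuous path in $\cD$) and the degree function $\w_u$ is invariant under homotopy in $\cD$ (because such a homotopy keeps $\p\D$ mapped into $\alpha\cup\beta$, disjoint from the locus where $\w$ is evaluated). For the converse, given $u_0,u_1\in\cD(x,y)$ with the same trace, I would first align their boundaries: by Lemma~\ref{le:boundary} the curves $u_0|_{\D\cap\R}$ and $u_1|_{\D\cap\R}$ in the oriented $1$-manifold $\alpha$ from $x$ to $y$ induce the same degree function on $\alpha\setminus\beta$, hence are homotopic rel endpoints within $\alpha$. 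Extending this boundary homotopy inward using a collar of $\D\cap\R$ produces a homotopy of $u_0$ in $\cD(x,y)$, and since all intermediate maps still send $\p\D$ into $\alpha\cup\beta$, the degree function $\w$ is preserved throughout. An analogous step matches the $\beta$-boundary. Once $u_0|_{\p\D}=u_1|_{\p\D}$, gluing $u_0$ to a reversed copy of $u_1$ along $\p\D$ yields $f:S^2\to\Sigma$ with degree function $\w-\w\equiv 0$; whether $\Sigma=S^2$ or not, this forces $f$ to be null-homotopic, and the null-homotopy provides the desired path from $u_0$ to $u_1$ in $\cD(x,y)$.

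For part~(ii), ``only if'' follows from Lemma~\ref{le:boundary}, the homotopy condition on $\gamma_\alpha,\gamma_\beta$ being automatic since $S^2$ is simply connected. For ``if'', choose curves $\gamma_\alpha,\gamma_\beta$ realizing $\p\w$ as an $(x,y)$-trace; their concatenation bounds a disc in $S^2$, yielding $u_0\in\cD(x,y)$ with $\p\w_{u_0}=\p\w$. Then $\w-\w_{u_0}$ is a cellular $2$-cycle. Since every $2$-cell inherits the orientation of $S^2$, the vanishing of $\p(\w-\w_{u_0})$ on each $1$-cell forces $\w-\w_{u_0}$ to take equal values on adjacent $2$-cells; by connectedness of the adjacency graph, $\w-\w_{u_0}\equiv k$ for some $k\in\Z$. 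Finally, modify $u_0$ on a small interior disc by grafting, via a collar, a smooth map $D^2\to S^2$ of degree $k$ collapsing its boundary to $u_0(p)$; this shifts the degree function uniformly by $k$, yielding $u\in\cD(x,y)$ with $\w_u=\w$.

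For part~(iii), existence is direct: given an $(x,y)$-trace $\nu$ with $\gamma_\alpha$ and $\gamma_\beta$ homotopic rel endpoints in $\Sigma$, any null-homotopy of their concatenation defines $u\in\cD(x,y)$, and $\p\w_u=\nu$ by Lemma~\ref{le:boundary}. For uniqueness, realize two candidates $\w_1,\w_2$ by $u_1,u_2\in\cD(x,y)$, align boundaries as in part~(i), and glue to form $f:S^2\to\Sigma$ with degree function $\w_1-\w_2$; since $\Sigma\neq S^2$ gives $\pi_2(\Sigma)=0$, the map $f$ is null-homotopic, whence $\w_1\equiv\w_2$. The main obstacle I anticipate is the boundary-alignment step used in parts~(i) and~(iii): one must extend a homotopy of boundary curves to a homotopy of discs staying inside $\cD$ without disturbing $\w$. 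The key observation making this routine is that $\w$ is defined on $\Sigma\setminus(\alpha\cup\beta)$, while any homotopy in $\cD$ keeps $\p\D$ inside $\alpha\cup\beta$; therefore any such continuous extension is automatically $\w$-preserving.
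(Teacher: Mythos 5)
Your proposal is correct. Parts~(ii) and~(iii) essentially reproduce the paper's argument: realize the prescribed boundary data by a map $u'$ built from a null-homotopy of $\gamma_\alpha\cdot\gamma_\beta^{-1}$, observe that $\w-\w_{u'}$ is a $\p$-closed two-chain hence a global constant (your adjacency-graph remark is exactly why the paper's terse ``hence $\w-\w'=:d$ is constant'' holds), and either conclude the constant vanishes (part~(iii)) or absorb it by connected-summing a sphere of that degree (part~(ii)). Part~(i) is where you genuinely diverge. The paper works in the path space $\cP(x,y;\Sigma)$: the correspondence $u\mapsto F_u$ identifies $\pi_0(\cD(x,y))$ with homotopy classes of paths from $\cP(x,y;\alpha)$ to $\cP(x,y;\beta)$, and it then uses that each component of $\cP(x,y;\Sigma)$ is contractible when $\Sigma$ is not the $2$-sphere (universal cover $\cong\C$), while $\pi_1(\cP(x,y;\Sigma))\cong\Z$ acts on $\pi_0(\cD)$ by a global shift of $\w$ when $\Sigma=S^2$. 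You instead align boundaries of $u_0,u_1$ (legitimate: by Lemma~\ref{le:boundary} the degree functions on $\alpha\setminus\beta$ and $\beta\setminus\alpha$ classify the boundary arcs rel endpoints in the $1$-manifolds $\alpha$, $\beta$), glue $u_0$ to $\bar u_1$ along $\p\D$ to get a sphere map $f$, and conclude $f$ is null-homotopic since its degree is $\w_{u_0}-\w_{u_1}\equiv 0$. Both arguments rest on the same topological input --- $\pi_2(\Sigma)=0$ unless $\Sigma\cong S^2$, where $\pi_2$ is detected by degree --- but yours is the more concrete packaging, while the paper's path-space formulation is what later underlies the action filtration of Remark~\ref{rmk:action}. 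One small caveat: the collar extension of your boundary homotopy must be seen to stay in $\cD$; the ``key observation'' you flag addresses $\w$-preservation but not preservation of the boundary conditions near the corners $\pm1$, where the collar of $\D\cap\R$ meets $\D\cap S^1$. This is routine (take the boundary homotopy stationary near $\pm1$, so the collar extension can be supported away from $\D\cap S^1$), but it is a point your sketch silently relies on rather than discharges.
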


\begin{proof} 
We prove~(i). ``Only if'' follows from the standard 
arguments in degree theory as in Milnor~\cite{M}.  
\phantomsection\label{M}
To prove ``if'', fix two intersection points 
$$
x,y\in\alpha\cap\beta
$$ 
and, for $X=\Sigma,\alpha,\beta$, 
denote by $\cP(x,y;X)$ the space of all smooth curves 
${\gamma:[0,1]\to X}$ satisfying $\gamma(0)=x$ and $\gamma(1)=y$.
Every $u\in\cD(x,y)$ determines smooth paths 
$
\gamma_{u,\alpha}\in\cP(x,y;\alpha)
$
and
$
\gamma_{u,\beta}\in\cP(x,y;\beta)
$ 
via
\begin{equation}\label{eq:gaualbe}
\gamma_{u,\alpha}(s):=u(-\cos(\pi s),0),\qquad
\gamma_{u,\beta}(s)=u(-\cos(\pi s),\sin(\pi s)).
\end{equation}
These paths are homotopic in $\Sigma$ with fixed
endpoints. An explicit homotopy is the map 
$$
F_u:=u\circ\phi:[0,1]^2\to\Sigma
$$
where $\phi:[0,1]^2\to\D$ is the map
$$
\phi(s,t):=(-\cos(\pi s),t\sin(\pi s)).
$$
By Lemma~\ref{le:boundary}, the homotopy class of 
$\gamma_{u,\alpha}$ in $\cP(x,y;\alpha)$ is uniquely determined by 
$$
\nu_\alpha:=\p\w_u|_{\alpha\setminus\beta}:\alpha\setminus\beta\to\Z
$$
and that of $\gamma_{u,\beta}$ in $\cP(x,y;\beta)$ is uniquely determined 
by 
$$
\nu_\beta:=-\p\w_u|_{\beta\setminus\alpha}:\beta\setminus\alpha\to\Z.
$$  
Hence they are both uniquely determined by the $(\alpha,\beta)$-trace
of $u$. If $\Sigma$ is not diffeomorphic to the $2$-sphere 
the assertion follows from the fact that
each component of $\cP(x,y;\Sigma)$ is contractible
(because the universal cover of $\Sigma$ is diffeomorphic
to the complex plane).  Now assume $\Sigma$ is diffeomorphic 
to the $2$-sphere. Then $\pi_1(\cP(x,y;\Sigma))=\Z$
acts on $\pi_0(\cD)$ because the correspondence $u\mapsto F_u$
identifies $\pi_0(\cD)$ with a space of homotopy classes of 
paths in $\cP(x,y;\Sigma)$ connecting 
$\cP(x,y;\alpha)$ to $\cP(x,y;\beta)$.   
The induced action on the space of two-chains 
$\w:\Sigma\setminus(\alpha\cup\beta)$ is given by adding 
a global constant.  Hence the map $u\mapsto\w$ induces an injective 
map 
$$
\pi_0(\cD(x,y))\to\{\mbox{$2$-chains}\}.
$$
This proves~(i).

We prove~(ii) and~(iii). 
Let $\w$ be a two-chain, suppose that 
$
\nu:=\p\w
$
is an $(x,y)$-trace, and denote
$
\Lambda := (x,y,\w).
$
Let ${\gamma_\alpha:[0,1]\to\alpha}$ and 
${\gamma_\beta:[0,1]\to\beta}$ 
be as in Definition~\ref{def:trace-V}.
Then there is a $u'\in\cD(x,y)$ such that the map
$s\mapsto u'(-\cos(\pi s),0)$ is homotopic to $\gamma_\alpha$
and $s\mapsto u'(-\cos(\pi s),\sin(\pi s))$ is homotopic to  $\gamma_\beta$.  
By definition the $(\alpha,\beta)$-trace of $u'$
is $\Lambda'=(x,y,\w')$ for some two-chain $\w'$.  
By Lemma~\ref{le:boundary}, we have
$$
\p\w'=\nu=\p\w
$$
and hence $\w-\w'=:d$ is constant.
If $\Sigma$ is not diffeomorphic to the two-sphere
and $\Lambda$ is the $(\alpha,\beta)$-trace of some element 
$u\in\cD$, then $u$ is homotopic to $u'$ (as $\cP(x,y;\Sigma)$ 
is simply connected) and hence $d=0$ and $\Lambda=\Lambda'$.
If $\Sigma$ is diffeomorphic to the $2$-sphere choose
a smooth map $v:S^2\to \Sigma$ of degree $d$ and replace $u'$ 
by the connected sum $u:=u'\# v$.  
Then $\Lambda$ is the $(\alpha,\beta)$-trace of $u$.
This proves Theorem~\ref{thm:trace}.
\end{proof}

\begin{remark}\label{rmk:trace}\rm
Let $\Lambda=(x,y,\w)$ be an $(\alpha,\beta)$-trace
and define 
$$
\nu_\alpha:=\p\w|_{\alpha\setminus\beta},\qquad
\nu_\beta:=-\p\w|_{\beta\setminus\alpha}.
$$

\smallskip\noindent{\bf (i)}
The two-chain $\w$ is uniquely 
determined by the condition $\p\w=\nu_\alpha-\nu_\beta$ 
and its value at one point.  To see this, think of the 
embedded circles $\alpha$ and $\beta$ as traintracks. \index{traintracks}
Crossing $\alpha$ at a point $z\in\alpha\setminus\beta$
increases $\w$ by $\nu_\alpha(z)$
if the train comes from the left, and decreases it 
by~$\nu_\alpha(z)$ if the train comes from the right.
Crossing $\beta$ at a point $z\in\beta\setminus\alpha$
decreases $\w$ by $\nu_\beta(z)$
if the train comes from the left 
and increases it by $\nu_\beta(z)$
if the train comes from the right.
Moreover, $\nu_\alpha$ extends continuously to $\alpha\setminus\{x,y\}$
and $\nu_\beta$ extends continuously to $\beta\setminus\{x,y\}$.
At each intersection point $z\in(\alpha\cap\beta)\setminus\{x,y\}$ 
with intersection index $+1$ (respectively $-1$)
the function $\w$ takes the values
$$
k,\quad
k+\nu_\alpha(z),\quad
k+\nu_\alpha(z)-\nu_\beta(z),\quad
k-\nu_\beta(z)
$$ 
as we march counterclockwise (respectively clockwise)
along a small circle surrounding the intersection point.

\bigbreak

\smallskip\noindent{\bf (ii)}
If $\Sigma$ is not diffeomorphic to the $2$-sphere
then, by Theorem~\ref{thm:trace}~(iii), 
the $(\alpha,\beta)$-trace $\Lambda$ is uniquely determined 
by its boundary $\p\Lambda=(x,y,\nu_\alpha-\nu_\beta)$.

\smallskip\noindent{\bf (iii)}
Assume $\Sigma$ is not diffeomorphic to the $2$-sphere
and choose a universal covering $\pi:\C\to\Sigma$.
Choose a point $\tx\in\pi^{-1}(x)$ and lifts 
$\talpha$ and $\tbeta$ of~$\alpha$ and~$\beta$ 
such that
$
\tx\in\talpha\cap\tbeta.
$
Then $\Lambda$ lifts to an $(\talpha,\tbeta)$-trace  
$$
\tLa = (\tx,\ty,\widetilde\w).
$$
More precisely, the one chain $\nu:=\nu_\alpha-\nu_\beta=\p\w$ 
is an $(x,y)$-trace, by Lemma~\ref{le:boundary}.  
The paths  $\gamma_\alpha:[0,1]\to\alpha$ and $\gamma_\beta:[0,1]\to\beta$ 
in Definition~\ref{def:trace-V} 
lift to unique paths $\gamma_\talpha:[0,1]\to\talpha$ and 
$\gamma_\tbeta:[0,1]\to\tbeta$ connecting $\tx$ to $\ty$.
For $\tz\in\C\setminus(\tA\cup\tB)$ the number
$\widetilde\w(\tz)$ is the winding number of the loop 
$
\gamma_\talpha-\gamma_\tbeta
$ 
about $\tz$ (by Rouch\'e's theorem). 
The two-chain $\w$ is then given by 
$$
\w(z)
= \sum_{\tz\in\pi^{-1}(z)} \widetilde\w(\tz),\qquad
z\in\Sigma\setminus(\alpha\cup\beta).
$$
To see this, lift an element $u\in\cD(x,y)$ with 
$(\alpha,\beta)$-trace $\Lambda$ to the 
universal cover to obtain an element 
$\tu\in\cD(\tx,\ty)$ with $\Lambda_\tu=\tLa$
and consider the degree.
\end{remark}

\begin{definition}[{\bf Catenation}]\label{def:catenation}\rm
Let $x,y,z\in\alpha\cap\beta$. 
The {\bf catenation of two $(\alpha,\beta)$-traces 
\index{catenation}
$\Lambda=(x,y,\w)$ and $\Lambda'=(y,z,\w')$} is defined by 
$$
\Lambda\#\Lambda' := (x,z,\w+\w').
$$
Let $u\in\cD(x,y)$ and $u'\in\cD(y,z)$
and suppose that $u$ and $u'$ are constant 
near the ends $\pm1\in\D$. 
For $0<\lambda<1$ sufficiently close to one 
the {\bf $\lambda$-catenation of $u$ and $u'$} 
is the map $u\#_\lambda u'\in\cD(x,z)$ defined by
$$
(u\#_\lambda u')(\zeta) := \left\{\begin{array}{ll}
u\left(\frac{\zeta+\lambda}{1+\lambda\zeta}\right),&
\mbox{for }\Re\,\zeta\le 0,\\
u'\left(\frac{\zeta-\lambda}{1-\lambda\zeta}\right),&
\mbox{for }\Re\,\zeta\ge 0.
\end{array}\right.
$$
\end{definition}

\begin{lemma}\label{le:catenation}
If $u\in\cD(x,y)$ and $u'\in\cD(y,z)$ are as in 
Definition~\ref{def:catenation} then
$$
\Lambda_{u\#_\lambda u'} = \Lambda_u \#\Lambda_{u'}.
$$
Thus the catenation of two $(\alpha,\beta)$-traces 
is again an $(\alpha,\beta)$-trace.
\end{lemma}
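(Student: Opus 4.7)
The plan is to verify directly that the $(\alpha,\beta)$-trace of the catenation $u\#_\lambda u'$ has the right endpoints and right two-chain, by a degree computation that splits $\D$ into its two halves.

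First I would check that $u\#_\lambda u'$ genuinely lies in $\cD(x,z)$. The Möbius transformations $\phi_-(\zeta):=(\zeta+\lambda)/(1+\lambda\zeta)$ and $\phi_+(\zeta):=(\zeta-\lambda)/(1-\lambda\zeta)$ are orientation-preserving diffeomorphisms of the half disc $\D$ onto subsets of $\D$ (they are real Blaschke factors, so they preserve $S^1$, $\R$, and the upper half plane). Thus $u\circ\phi_-$ and $u'\circ\phi_+$ satisfy the boundary conditions on the closed left and right halves $\D_-,\D_+$ of $\D$. They are also continuous on the imaginary axis $\D\cap i\R$: for $\lambda$ close to $1$, $\phi_-$ sends $\D_-$ into a neighborhood of $+1$ in $\D$ where $u\equiv y$, and $\phi_+$ sends $\D_+$ into a neighborhood of $-1$ in $\D$ where $u'\equiv y$, so both expressions equal the constant $y$ in a strip around the imaginary axis. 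This makes $u\#_\lambda u'$ smooth, and the endpoint values $\phi_-(-1)=-1$, $\phi_+(+1)=+1$ give $(u\#_\lambda u')(-1)=u(-1)=x$ and $(u\#_\lambda u')(+1)=u'(+1)=z$.

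Next I would compute the degree function. Fix $w\in\Sigma\setminus(\alpha\cup\beta)$ that is simultaneously a regular value of $u$, $u'$, and $u\#_\lambda u'$. Since $y\in\alpha\cap\beta$ and $w\notin\alpha\cup\beta$, we have $w\neq y$, so for $\lambda$ sufficiently close to $1$ the preimages $u^{-1}(w)$ and $u'^{-1}(w)$ avoid the regions where $u,u'$ are constantly equal to $y$, hence lie in $\phi_-(\D_-)$ and $\phi_+(\D_+)$ respectively. Because $\phi_\pm$ are orientation-preserving diffeomorphisms, the signed count of preimages of $w$ under $u\#_\lambda u'$ in $\D_-$ is exactly $\deg(u,w)=\w_u(w)$, and in $\D_+$ it is $\deg(u',w)=\w_{u'}(w)$. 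Summing gives
$$
\w_{u\#_\lambda u'}(w) \;=\; \w_u(w)+\w_{u'}(w).
$$
Therefore $\Lambda_{u\#_\lambda u'}=(x,z,\w_u+\w_{u'})=\Lambda_u\#\Lambda_{u'}$, as required.

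The final assertion that the catenation of two $(\alpha,\beta)$-traces is an $(\alpha,\beta)$-trace then follows immediately from Definition~\ref{def:trace-V}~(i): given traces $\Lambda=\Lambda_u$ and $\Lambda'=\Lambda_{u'}$, we may after a reparametrization assume $u,u'$ are constant near $\pm1$ (this does not change the trace, by Theorem~\ref{thm:trace}~(i)), form $u\#_\lambda u'\in\cD(x,z)$, and read off that $\Lambda\#\Lambda'$ is the trace of this element of $\cD$.

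I do not expect a real obstacle here; the only delicate point is verifying the smoothness of $u\#_\lambda u'$ across the imaginary axis, which is handled cleanly by the constancy hypothesis near the endpoints together with the observation that $\phi_\pm$ carry the imaginary axis into small neighborhoods of $\pm 1$ when $\lambda$ is close to $1$.
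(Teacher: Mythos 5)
Your argument is correct and is essentially the detailed unpacking of what the paper dispatches with the single sentence ``This follows directly from the definitions'': you split $\D$ into its two halves, observe that the Blaschke factors $\phi_\pm$ are orientation-preserving, use the constancy of $u,u'$ near $\pm1$ to patch across the imaginary axis, and compute the degree by summing contributions from each half. There is one imprecise sentence that you should tighten: you write that ``$\phi_-$ sends $\D_-$ into a neighborhood of $+1$ in $\D$,'' but this is false as stated --- $\phi_-$ fixes $-1$, so $\phi_-(\D_-)$ contains $-1$ and in fact fills out most of $\D$ when $\lambda$ is close to $1$. What you mean, and what your conclusion actually uses, is that $\phi_-$ sends a strip around the imaginary axis $\D\cap i\R$ into a small neighborhood of $+1$ (because $\phi_-(0)=\lambda$ and $\phi_-(i)\to 1$ as $\lambda\to 1$), so that $u\circ\phi_-$ is constant equal to $y$ on that strip; similarly for $\phi_+$. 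This also matters for the degree computation: it is precisely because $\phi_-(\D_-)$ is the complement in $\D$ of a shrinking neighborhood of $+1$ that $u^{-1}(w)\subset\phi_-(\D_-)$ for $\lambda$ close to $1$, which is what lets you identify the preimage count over $\D_-$ with $\deg(u,w)$. With that wording fixed, the proof is complete and matches the paper's intent.
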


\begin{proof}
This follows directly from the definitions.
\end{proof}


\section{The Maslov Index}\label{sec:M}

\begin{definition}\label{def:maslov}\rm
Let $x,y\in\alpha\cap\beta$ and $u\in\cD(x,y)$.
Choose an orientation preserving trivialization 
$$
\D\times\R^2\to u^*T\Sigma:(z,\zeta)\mapsto\Phi(z)\zeta
$$
and consider the Lagrangian paths 
$$
\lambda_0,\lambda_1:[0,1]\to\RP^1
$$
given by 
\begin{equation*}
\begin{split}
\lambda_0(s)&:=\Phi(-\cos(\pi s),0)^{-1}
T_{u(-\cos(\pi s),0)}\alpha,\\
\lambda_1(s)&:=\Phi(-\cos(\pi s),\sin(\pi s))^{-1}
T_{u(-\cos(\pi s),\sin(\pi s))}\beta.
\end{split}
\end{equation*}
The {\bf Viterbo--Maslov index of $u$} is defined as the 
\index{Viterbo--Maslov index}\index{Maslov index}
\index{Viterbo--Maslov index!of a pair of Lagrangian paths}
\index{Viterbo--Maslov index!of an $(\alpha,\beta)$-trace}
relative Maslov index of the pair of Lagrangian paths
$(\lambda_0,\lambda_1)$ and will be denoted by
$$
\mu(u) := \mu(\Lambda_u) := \mu(\lambda_0,\lambda_1).
$$
By the naturality and homotopy axioms for the relative 
Maslov index (see for example~\cite{RS2}), 
\phantomsection\label{RS2a}
the number $\mu(u)$ is independent of the choice of the 
trivialization and depends only on the homotopy class of $u$;
hence it depends only on the $(\alpha,\beta)$-trace of $u$,
by Theorem~\ref{thm:trace}.   The relative Maslov index 
$\mu(\lambda_0,\lambda_1)$ is the degree of the loop in 
$\RP^1$ obtained by traversing $\lambda_0$,  followed by 
a counterclockwise turn from $\lambda_0(1)$ to $\lambda_1(1)$,
followed by traversing $\lambda_1$ in reverse time, followed
by a clockwise turn from $\lambda_1(0)$ to $\lambda_0(0)$.
This index was first defined by Viterbo~\cite{VITERBO} 
\phantomsection\label{V1} (in all dimensions).  
Another exposition is contained 
in~\cite{RS2}.\phantomsection\label{RS2b}
\end{definition}

\begin{remark}\label{rmk:mascat}\rm
The Viterbo--Maslov index is additive under catenation,
i.e.\ if 
$$
\Lambda=(x,y,\w),\qquad
\Lambda'=(y,z,\w')
$$ 
are $(\alpha,\beta)$-traces then
$$
\mu(\Lambda\#\Lambda') = \mu(\Lambda)+\mu(\Lambda').
$$
For a proof of this formula see~\cite{VITERBO,RS2}.
\phantomsection\label{V2}\label{RS2c}
\end{remark}

\begin{definition}\label{def:arc-V}\rm
Let $\Lambda=(x,y,\w)$ be an $(\alpha,\beta)$-trace
and denote $\nu_\alpha:=\p\w|_{\alpha\setminus\beta}$ and
$\nu_\beta:=-\p\w|_{\beta\setminus\alpha}$.
$\Lambda$ is said to satisfy the 
{\bf arc condition} if \index{arc condition}
\begin{equation}\label{eq:arc-V}
x\ne y,\qquad \min\Abs{\nu_\alpha} = \min\Abs{\nu_\beta}=0.
\end{equation}
When $\Lambda$ satisfies the arc condition
there are arcs $A\subset\alpha$ and $B\subset\beta$ 
from $x$ to $y$ such that
\begin{equation}\label{eq:nuAB-V}
\nu_\alpha(z) = \left\{\begin{array}{rl}
\pm1,&\mbox{if }z\in A,\\
0,&\mbox{if }z\in\alpha\setminus\overline A,
\end{array}\right.\;\;
\nu_\beta(z) = \left\{\begin{array}{rl}
\pm1,&\mbox{if }z\in B,\\
0,&\mbox{if }z\in\beta\setminus\overline B.
\end{array}\right.
\end{equation}
Here the plus sign is chosen iff the orientation of $A$ from 
$x$ to $y$ agrees with that of $\alpha$, respectively the 
orientation of $B$ from $x$ to $y$ agrees with that of~$\beta$.
In this situation the quadruple $(x,y,A,B)$ and the triple
$(x,y,\p\w)$ determine one another and we also write
$$
\p\Lambda = (x,y,A,B)
$$ 
for the boundary of $\Lambda$.
When $u\in\cD$ and $\Lambda_u=(x,y,\w)$ 
satisfies the arc condition and $\p\Lambda_u=(x,y,A,B)$
then 
$$
s\mapsto u(-\cos(\pi s),0)
$$
is homotopic in $\alpha$ 
to a path traversing $A$ and the path 
$$
s\mapsto u(-\cos(\pi s),\sin(\pi s))
$$
is homotopic in~$\beta$ to a path traversing $B$.
\end{definition}

\begin{theorem}\label{thm:maslov}
Let $\Lambda=(x,y,\w)$ be an $(\alpha,\beta)$-trace.  
For $z\in\alpha\cap\beta$ denote by $m_z(\Lambda)$ 
the sum of the four values of $\w$ encountered when 
walking along a small circle surrounding~$z$.
Then the Viterbo--Maslov index of~$\Lambda$ 
is given by
\begin{equation}\label{eq:maslov}
\mu(\Lambda) = \frac{m_x(\Lambda)+m_y(\Lambda)}{2}.
\end{equation}
We call this the {\bf trace formula}.  
\index{Viterbo--Maslov index!trace formula}
\index{trace!formula}
\end{theorem}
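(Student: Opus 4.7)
The plan is to prove the trace formula~\eqref{eq:maslov} by induction via the catenation operation of Definition~\ref{def:catenation}, exploiting additivity of both sides under catenation together with a verification on base cases. By Remark~\ref{rmk:mascat}, the Viterbo--Maslov index $\mu$ is additive under catenation of $(\alpha,\beta)$-traces. I would first establish the analogous additivity of the right-hand side $\Lambda\mapsto\tfrac{1}{2}(m_x(\Lambda)+m_y(\Lambda))$, then verify~\eqref{eq:maslov} for an elementary lune (the base case), and finally reduce a general $(\alpha,\beta)$-trace to a catenation of elementary lunes.

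For additivity of the right-hand side, consider $\Lambda=(x,y,\w)$ and $\Lambda'=(y,z,\w')$ with catenation $(x,z,\w+\w')$. Since $m_p$ is linear in $\w$, the additivity reduces to the combinatorial identity
\[
m_x(\Lambda')+m_z(\Lambda)=m_y(\Lambda)+m_y(\Lambda').
\]
I would prove this by computing $m_p$ for each relevant point via the four-quadrant description of Remark~\ref{rmk:trace}(i). At an interior intersection $p$ of a trace, the four values of $\w$ around $p$ take the form $k,\,k+\nu_\alpha,\,k+\nu_\alpha-\nu_\beta,\,k-\nu_\beta$ (with signs fixed by the intersection index), summing to $4k+2\nu_\alpha-2\nu_\beta$. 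At an endpoint there is an additional $\pm 1$ jump in one quadrant attributable to the boundary behavior of $u$, with sign determined by the orientation of the transverse intersection. Matching the $\nu$-values of $\Lambda$ and $\Lambda'$ at the common point $y$ then produces the identity.

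The base case is the elementary lune: an $(\alpha,\beta)$-trace $\Lambda=(x,y,\w)$ satisfying the arc condition~\eqref{eq:arc-V} with $\w\in\{0,\pm 1\}$ supported on a single face $F\subset\Sigma\setminus(\alpha\cup\beta)$. In this case exactly one of the four quadrants at each endpoint lies in $F$, so $m_x(\Lambda)=m_y(\Lambda)=\pm 1$. Taking a representative $u:\D\to\overline{F}$ that is a diffeomorphism and computing the Lagrangian paths $\lambda_0,\lambda_1$ of Definition~\ref{def:maslov} in a trivialization of $u^*T\Sigma$, a direct calculation shows the closed-up loop in $\RP^1$ has degree $\pm 1$, giving $\mu(\Lambda)=\tfrac{1}{2}(m_x(\Lambda)+m_y(\Lambda))$. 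A general $\Lambda$ is reached from these base cases by catenation, inducting on $\sum_F\abs{\w(F)}$ and peeling off an elementary lune adjacent to an endpoint at each step; in the non--simply connected case I would first lift to the universal cover via Remark~\ref{rmk:trace}(iii), and the $\Sigma=S^2$ case needs a separate treatment for disc bubbles.

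The main obstacle is the additivity identity in the second paragraph, which requires careful sign-tracking to handle $y$'s transition from endpoint to interior intersection while coherently accounting for the transverse intersection indices at $x,y,z$. A secondary technical point is that the inductive peeling procedure in the third paragraph must always terminate with a peelable elementary lune at an endpoint; this is a small Picard--Loewner-type statement whose resolution relies on the combinatorial characterization of traces provided by Theorem~\ref{thm:trace}.
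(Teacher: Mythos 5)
Your catenation--and--base--case framework resembles the paper's in spirit (the paper also reduces the planar case by an induction that peels off pieces of the trace, then lifts to the universal cover), but there are two substantial gaps.

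First, the additivity identity
$m_x(\Lambda')+m_z(\Lambda) = m_y(\Lambda)+m_y(\Lambda')$
is not provable by the local four-quadrant bookkeeping you describe. The quantities $m_z(\Lambda)$ and $m_x(\Lambda')$ depend on the values of $\w$ and $\w'$ near points that are \emph{not} endpoints of their respective traces, and those values are a global feature of the trace (for example, $m_z(\Lambda)$ changes by $\pm 4$ according to whether $z$ lies inside or outside the region where $\w=1$, which cannot be read off from any finite amount of data near $y$). You cannot deduce $k(z,\Lambda)-k(y,\Lambda)=k(y,\Lambda')-k(x,\Lambda')$ from ``matching the $\nu$-values at $y$''; those quadrant values are connected only by integrating $\p\w$ along global paths. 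In the paper the additivity of the right-hand side is not proved as a separate lemma at all --- Step~9 of Proposition~\ref{prop:maslovC} derives it \emph{after} the trace formula has already been established for both pieces by Steps~7 and~8 --- so the additivity identity is downstream of the theorem, not a tool that leads to it. If you insist on proving additivity first you will find it is roughly as hard as the theorem itself.

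Second, and more seriously, ``first lift to the universal cover'' does not by itself reduce the non-simply-connected case to the planar one. The winding number of the downstairs trace is a sum over all lifts, $\w(z)=\sum_{\tz\in\pi^{-1}(z)}\widetilde\w(\tz)$, so $m_x(\Lambda)=\sum_{g\in\Gamma}m_{g\tx}(\tLa)$ and $m_y(\Lambda)=\sum_{g\in\Gamma}m_{g\ty}(\tLa)$, not just the $g=\id$ term. To conclude $m_x(\Lambda)+m_y(\Lambda)=m_\tx(\tLa)+m_\ty(\tLa)$ you must show that all the other terms cancel in pairs, i.e.\ the cancellation formula $m_{g\tx}(\tLa)+m_{g^{-1}\ty}(\tLa)=0$ for every deck transformation $g\ne\id$. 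This is exactly Proposition~\ref{prop:gm}, which occupies the whole of Section~\ref{sec:LIFT} and requires the annulus reduction, the winding-number comparison, and an isotopy argument. Your sketch omits this ingredient entirely; it is the heart of the non-simply-connected case and cannot be bypassed. Finally, the termination of your decomposition into ``elementary lunes supported on a single face'' is also not obvious (the paper's planar induction reduces the number of crossings $\#(B\cap\R)$ after normalizing $\alpha=\R$, $A=[x,y]$, which is a different and more controlled quantity), but this is the smallest of the three gaps.
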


We first prove the trace formula for the $2$-plane $\C$ 
and the $2$-sphere $S^2$ (Section~\ref{sec:PLANE} on 
page~\pageref{proof:maslov1}).  
When $\Sigma$ is not simply connected we reduce 
the result to the case of the $2$-plane 
(Section~\ref{sec:LIFT} page~\pageref{proof:maslov2}).  
The key is the identity
\begin{equation}\label{eq:mtilde}
m_{g\tx}(\tLa) + m_{g^{-1}\ty}(\tLa)=0
\end{equation}
for every lift $\tLa$ to the universal cover 
and every deck transformation $g\ne\id$. 
We call this the {\bf cancellation formula}.
\index{Viterbo--Maslov index!cancellation formula}
\index{cancellation formula}


\section{The Simply Connected Case}\label{sec:PLANE}

A connected oriented $2$-manifold $\Sigma$
is called {\bf planar} if it admits an 
(orientation preserving) \index{planar $2$-manifold}
embedding into the complex plane. 

\begin{proposition}\label{prop:maslovC}
The trace formula~\eqref{eq:maslov} holds when $\Sigma$ is planar.
\end{proposition}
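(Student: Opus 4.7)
The plan is to reduce to $\Sigma=\C$ and compute both sides of \eqref{eq:maslov} explicitly in the canonical trivialization of $T\C$. Since $\Sigma$ is planar, fix an orientation-preserving embedding $\iota\colon\Sigma\hookrightarrow\C$. Because $\mu(\Lambda)$ is invariant under the choice of trivialization and the two-chain $\w$ (together with the local quantities $m_x(\Lambda),m_y(\Lambda)$) depends only on $u$ as a map into $\C$, one may replace $\Sigma$ by $\C$ and take $\Phi\equiv\id$ throughout. The Lagrangian paths $\lambda_0,\lambda_1\colon[0,1]\to\RP^1$ then become literally the tangent lines to $\alpha,\beta$ at the corresponding boundary points of $u$, and the orientations of $\alpha,\beta$ let one lift them to continuous angle functions $\theta_\alpha,\theta_\beta\colon[0,1]\to\R$ that track the oriented tangent directions of $\alpha$ along $u(\D\cap\R)$ and of $\beta$ along $u(\D\cap S^1)$.

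With this setup, $\mu(\Lambda)$ is computable as the $\RP^1$-degree of the standard Maslov loop, which equals a signed combination of the net changes of $\theta_\alpha$ and $\theta_\beta$ together with corner-angle terms measuring the angle between $\dot\alpha$ and $\dot\beta$ at $x$ and $y$. Both sides of \eqref{eq:maslov} are additive under catenation (Remark~\ref{rmk:mascat} for the left; linearity of $\w\mapsto m_z$ for the right), so by writing a general trace as a catenation of arc-condition traces (and further into simple lunes) it suffices to verify the formula on base cases plus the correct cancellation at intermediate catenation points. For a simple lune $\Lambda=(x,y,\chi_R)$ with $R$ a single region abutting both $x$ and $y$, one checks directly that $m_x(\Lambda)=m_y(\Lambda)=1$ (only one of the four quadrants near each corner lies in $R$) and that the Maslov loop accumulates total $\RP^1$-angle $\pi$, giving $\mu(\Lambda)=1$ in agreement with $(1+1)/2$.

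The main obstacle is the bookkeeping at intermediate catenation points: if $\Lambda=\Lambda_1\#\Lambda_2$ has intermediate point $p$, then $m_p$ contributes to $m_p(\Lambda_1)$ and $m_p(\Lambda_2)$ separately but does not appear on the right-hand side of \eqref{eq:maslov} for the composite trace. The required cancellation is a local identity at $p$ relating the corner-angle defect of $\Lambda_1$ at $p$ to that of $\Lambda_2$ at $p$, which I would establish by a direct computation at the transverse intersection. A possibly cleaner route is a Gauss--Bonnet-type argument: express $\mu(\Lambda)$ as a weighted sum $\sum_R \w(R)\,\delta(R)$ over the compact components $R$ of $\C\setminus(\alpha\cup\beta)$, with $\delta(R)$ a corner-defect of $R$, and match this weighted sum to $(m_x(\Lambda)+m_y(\Lambda))/2$ by inspection of how each region abuts the distinguished corners $x$ and $y$.
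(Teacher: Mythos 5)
Your high-level structure (reduce to $\C$, exploit additivity of $\mu$ under catenation, verify base cases) echoes the paper's, but the two steps that carry the weight of your argument are not justified and, as stated, do not work.

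First, the decomposition ``write a general trace as a catenation of arc-condition traces (and further into simple lunes)'' is not available. Under catenation the degree functions add, so a catenation of simple lunes has $\w\ge 0$; but a general $(\alpha,\beta)$-trace can have winding numbers of either sign (and mixed sign), so it is not a sum of characteristic functions. Even restricting to $\w\ge 0$ you would still need the intermediate endpoints to chain compatibly, which is not automatic. The paper's decomposition (Step~9 of its proof) is different and much more limited: it writes $\Lambda_u=\Lambda_{u_0}\#\Lambda_v$ with $\Lambda_{u_0}$ satisfying the arc condition and $\Lambda_v\in\cD(y,y)$ based at the endpoint $y$, and it treats the arc-condition case by an induction (Steps~1--7) on $\#(B\cap\R)$ that peels off arcs of $B$ one transition at a time; the reduction from $\Lambda$ to $\Lambda'$ in Steps~4--6 is \emph{not} ``remove a simple lune'' — note for example Step~6, where $\mu(\Lambda)=1-\mu(\Lambda')$, which is not catenation-additive behaviour.

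Second, even granting a decomposition, the cancellation you flag is a genuine identity, not a local triviality at $p$. If $\Lambda=\Lambda_1\#\Lambda_2$ with $\Lambda_1$ from $x$ to $p$ and $\Lambda_2$ from $p$ to $y$, and if the trace formula is known for $\Lambda_1$ and $\Lambda_2$, what you must show to conclude it for $\Lambda$ is $m_p(\Lambda_1)+m_p(\Lambda_2)=m_x(\Lambda_2)+m_y(\Lambda_1)$. The right side involves the winding numbers of $\Lambda_1$ near $y$ and of $\Lambda_2$ near $x$, which are not local at $p$, and the identity is in fact \emph{equivalent} to the compatibility of the trace formula with catenation — so ``I would establish by a direct computation at the transverse intersection'' does not give you anything to work with. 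The paper sidesteps this by catenating only at the endpoint $y$, where Step~8 guarantees $m_x(\Lambda_v)=m_y(\Lambda_v)$ for the loop $\Lambda_v\in\cD(y,y)$, so the required identity collapses to a triviality. Your Gauss--Bonnet alternative, expressing $\mu(\Lambda)$ as $\sum_R\w(R)\,\delta(R)$ over regions, is an interesting idea and would indeed avoid catenation bookkeeping altogether, but as written it is a one-sentence conjecture: you would have to define $\delta(R)$, prove the identity against the definition of the relative Maslov index of a pair of Lagrangian paths, and then show that the weighted sum reduces to $\bigl(m_x+m_y\bigr)/2$. None of that is outlined. In short, the skeleton is reasonable, but the proof has a real gap in both the decomposition step and the cancellation step, and the paper's actual proof takes a different route precisely to avoid those two difficulties.
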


\begin{proof}
Assume first that $\Sigma=\C$ and 
$
\Lambda=(x,y,\w)
$ 
satisfies the arc condition.  Thus
the boundary of $\Lambda$ has the form
$$
\p\Lambda = (x,y,A,B),
$$
where $A\subset\alpha$ and $B\subset\beta$ are arcs from 
$x$ to $y$ and $\w(z)$ is the winding number of the loop $A-B$ 
about the point $z\in\Sigma\setminus(A\cup B)$
(see Remark~\ref{rmk:trace}). Hence the 
trace formula~\eqref{eq:maslov} can be written in the form
\begin{equation}\label{eq:masLov}
\mu(\Lambda) = 2k_x + 2k_y + \frac{\eps_x-\eps_y}{2}.
\end{equation}
Here 
$
\eps_z=\eps_z(\Lambda)\in\{+1,-1\}
$ 
denotes the intersection index of $A$ and $B$ 
at a point $z\in A\cap B$, $k_x=k_x(\Lambda)$ 
denotes the value of the winding number $\w$ 
at a point in $\alpha\setminus A$ close to $x$,
and $k_y=k_y(\Lambda)$ denotes the value of $\w$ 
at a point in $\alpha\setminus A$ close to $y$.
We now prove~\eqref{eq:masLov} under the hypothesis
that $\Lambda$ satisfies the arc condition.
The proof is by induction on the number of intersection 
points of $B$ and~$\alpha$ and has seven steps.

\medskip\noindent{\bf Step~1.}
{\it We may assume without loss of generality that
\begin{equation}\label{eq:AB}
\Sigma=\C,\qquad \alpha=\R,\qquad A=[x,y],\qquad x<y,
\end{equation}
and $B\subset\C$ is an embedded arc from $x$ to $y$ 
that is transverse to $\R$.}

\medskip\noindent
Choose a diffeomorphism from $\Sigma$ 
to $\C$ that maps $A$ to a bounded closed interval 
and maps $x$ to the left endpoint of $A$.
If $\alpha$ is not compact the diffeomorphism can be chosen
such that it also maps $\alpha$ to $\R$.
If $\alpha$ is an embedded circle the diffeomorphism 
can be chosen such that its restriction to $B$ is 
transverse to $\R$; now replace the image of~$\alpha$ by~$\R$. 
This proves Step~1.

\medskip\noindent{\bf Step~2.}
{\it Assume~\eqref{eq:AB} and let 
$\bar\Lambda:=(x,y,z\mapsto -\w(\bar z))$ 
be the $(\alpha,\bar\beta)$-trace 
obtained from $\Lambda$ by complex conjugation.
Then $\Lambda$ satisfies~\eqref{eq:masLov} if and only if 
$\bar\Lambda$ satisfies~\eqref{eq:masLov}.}

\medskip\noindent
Step~2 follows from the fact that the numbers
$\mu,k_x,k_y,\eps_x,\eps_y$ change sign 
under complex conjugation.

\medskip\noindent{\bf Step~3.}
{\it  Assume~\eqref{eq:AB}.  If $B\cap\R=\{x,y\}$ then $\Lambda$ 
satisfies~\eqref{eq:masLov}.}

\medskip\noindent
In this case $B$ is contained in the upper 
or lower closed half plane and the loop $A\cup B$ 
bounds a disc contained in the same half plane.
By Step~1 we may assume that $B$ is contained in the 
upper half space. Then $\eps_x=1$, $\eps_y=-1$,
and $\mu(\Lambda)=1$.  Moreover, the winding number 
$\w$ is one in the disc encircled by $A$ 
and $B$ and is zero in the complement of its closure.  
Since the intervals $(-\infty,0)$ and $(0,\infty)$ 
are contained in this complement, we have $k_x=k_y=0$.
This proves Step~3.

\medskip\noindent{\bf Step~4.}
{\it Assume~\eqref{eq:AB} and $\#(B\cap\R)>2$, 
follow the arc of $B$, starting at $x$, 
and let~$x'$ be the next intersection point with $\R$. 
Assume $x'<x$, denote by $B'$ the arc in $B$ from $x'$ to $y$, 
and let $A':=[x',y]$ (see Figure~\ref{fig:maslov4}).
If the $(\alpha,\beta)$-trace 
$\Lambda'$ with boundary $\p\Lambda'=(x',y,A',B')$ 
satisfies~\eqref{eq:masLov} so does $\Lambda$.}

\begin{figure}[htp] 
\centering 
\includegraphics[scale=0.4]{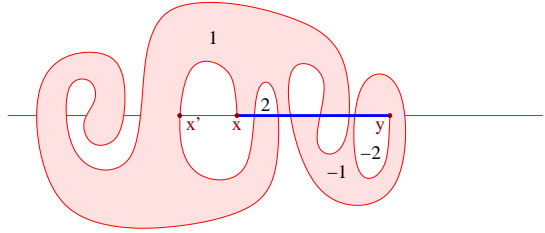}      
\caption{{Maslov index and catenation: $x'<x<y$.}}\label{fig:maslov4}      
\end{figure}

\medskip\noindent
By Step~2 we may assume $\eps_x(\Lambda)=1$.
Orient $B$ from $x$ to $y$.
The Viterbo--Maslov index of $\Lambda$ is 
minus the Maslov index of the path 
$
B\to\RP^1:z\mapsto T_zB, 
$
relative to the Lagrangian subspace $\R\subset\C$.
Since the Maslov index of the arc in $B$ from 
$x$ to $x'$ is $+1$ we have
\begin{equation}\label{eq:case11}
\mu(\Lambda) = \mu(\Lambda')-1.
\end{equation}
Since the orientations of $A'$ and $B'$ 
agree with those of $A$ and $B$ we have
\begin{equation}\label{eq:case12}
\eps_{x'}(\Lambda') = \eps_{x'}(\Lambda)=-1,\qquad
\eps_y(\Lambda')=\eps_y(\Lambda).
\end{equation}
Now let $x_1<x_2<\cdots<x_m<x$ be the intersection
points of $\R$ and $B$ in the interval $(-\infty,x)$
and let $\eps_i\in\{-1,+1\}$ be the intersection
index of $\R$ and $B$ at $x_i$. 
Then there is an integer $\ell\in\{1,\dots,m\}$ 
such that $x_\ell=x'$ and $\eps_\ell=-1$. 
Moreover, the winding number $\w$
slightly to the left of $x$ is 
$$
k_x(\Lambda) = \sum_{i=1}^m\eps_i.
$$
It agrees with the value of $\w$ 
slightly to the right of $x'=x_\ell$.  Hence 
\begin{equation}\label{eq:case13}
k_x(\Lambda) 
= \sum_{i=1}^\ell \eps_i 
= \sum_{i=1}^{\ell-1}\eps_i - 1 
= k_{x'}(\Lambda') - 1,\qquad
k_y(\Lambda')=k_y(\Lambda).
\end{equation}
It follows from equation~\eqref{eq:masLov} for $\Lambda'$
and equations~\eqref{eq:case11}, \eqref{eq:case12}, 
and~\eqref{eq:case13} that
\begin{eqnarray*}
\mu(\Lambda) 
&=& 
\mu(\Lambda')-1 \\
&=& 
2k_{x'}(\Lambda')+2k_y(\Lambda')
+ \frac{\eps_{x'}(\Lambda')-\eps_y(\Lambda')}{2}
- 1 \\
&=&
2k_{x'}(\Lambda')+2k_y(\Lambda')
+ \frac{-1-\eps_y(\Lambda)}{2}
- 1 \\
&=&
2k_{x'}(\Lambda')+2k_y(\Lambda')
+ \frac{1-\eps_y(\Lambda)}{2}
- 2  \\
&=&
2k_x(\Lambda)+2k_y(\Lambda)
+ \frac{\eps_x(\Lambda)-\eps_y(\Lambda)}{2}.
\end{eqnarray*}
This proves Step~4.

\medskip\noindent{\bf Step~5.}
{\it Assume~\eqref{eq:AB} and $\#(B\cap\R)>2$, 
follow the arc of $B$, starting at $x$, 
and let~$x'$ be the next intersection 
point with $\R$. Assume $x<x'<y$, 
denote by $B'$ the arc in $B$ from $x'$ to $y$, 
and let $A':=[x',y]$ (see Figure~\ref{fig:maslov5}).  
If the $(\alpha,\beta)$-trace
$\Lambda'$ with boundary $\p\Lambda'=(x',y,A',B')$ 
satisfies~\eqref{eq:masLov} so does $\Lambda$.}
\begin{figure}[htp] 
\centering 
\includegraphics[scale=0.4]{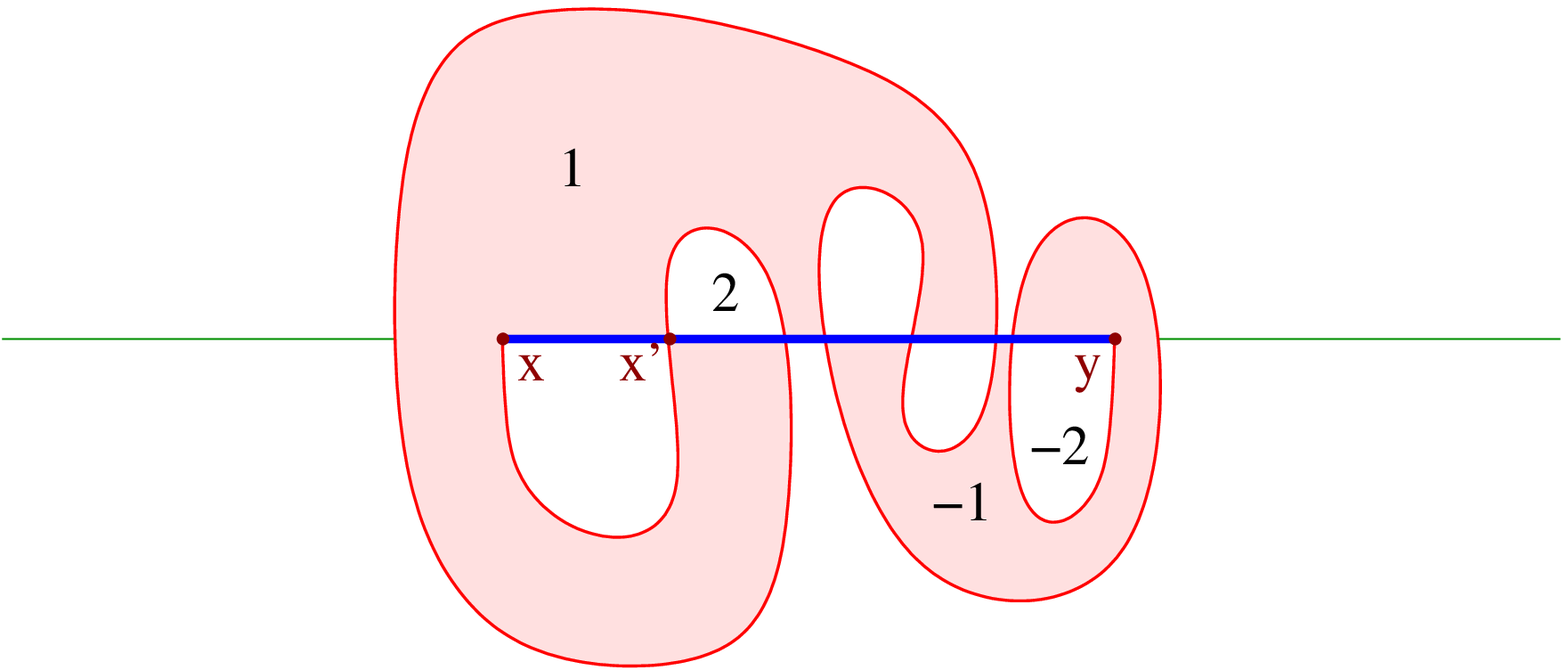} 
\caption{{Maslov index and catenation: $x<x'<y$.}}
\label{fig:maslov5}      
\end{figure}

\medskip\noindent
By Step~2 we may assume $\eps_x(\Lambda)=1$.
Since the Maslov index of the arc in $B$ 
from $x$ to $x'$ is $-1$, we have 
\begin{equation}\label{eq:case21}
\mu(\Lambda) = \mu(\Lambda')+1.
\end{equation}
Since the orientations of $A'$ and $B'$ 
agree with those of $A$ and $B$ we have
\begin{equation}\label{eq:case22}
\eps_{x'}(\Lambda') = \eps_{x'}(\Lambda)=-1,\qquad
\eps_y(\Lambda')=\eps_y(\Lambda).
\end{equation}
Now let $x<x_1<x_2<\cdots<x_m<x'$ be the intersection
points of $\R$ and $B$ in the interval $(x,x')$
and let $\eps_i\in\{-1,+1\}$ be the intersection
index of $\R$ and $B$ at $x_i$.  
Since the interval $[x,x']$ in $A$ and the arc in $B$
from $x$ to~$x'$ bound an open half disc, every subarc of $B$ 
in this half disc must enter and exit through the open 
interval $(x,x')$.  Hence the intersections 
indices of $\R$ and $B$ at the points $x_1,\dots,x_m$ 
cancel in pairs and thus
$$
\sum_{i=1}^m\eps_i=0. 
$$
Since $k_{x'}(\Lambda')$ is the sum of the intersection
indices of $\R$ and $B'$ at all points to the left of $x'$
we obtain
\begin{equation}\label{eq:case23}
k_{x'}(\Lambda') 
= k_x(\Lambda) + \sum_{i=1}^m\eps_i
= k_x(\Lambda),\qquad
k_y(\Lambda')=k_y(\Lambda).
\end{equation}
It follows from equation~\eqref{eq:masLov} for $\Lambda'$
and equations~\eqref{eq:case21}, \eqref{eq:case22}, 
and~\eqref{eq:case23} that
\begin{eqnarray*}
\mu(\Lambda) 
&=& 
\mu(\Lambda')+1 \\
&=& 
2k_{x'}(\Lambda')+2k_y(\Lambda')
+ \frac{\eps_{x'}(\Lambda')-\eps_y(\Lambda')}{2}
+ 1 \\
&=&
2k_x(\Lambda)+2k_y(\Lambda)
+ \frac{-1-\eps_y(\Lambda)}{2}
+ 1 \\
&=&
2k_x(\Lambda)+2k_y(\Lambda)
+ \frac{\eps_x(\Lambda)-\eps_y(\Lambda)}{2}.
\end{eqnarray*}
This proves Step~5.

\medskip\noindent{\bf Step~6.}
{\it Assume~\eqref{eq:AB} and $\#(B\cap\R)>2$, 
follow the arc of $B$, starting at $x$, 
and let~$y'$ be the next intersection point with $\R$. 
Assume $y'>y$. Denote by $B'$ the arc in $B$ from $y$ to $y'$, 
and let $A':=[y,y']$ (see Figure~\ref{fig:maslov6}).
If the $(\alpha,\beta)$-trace 
$\Lambda'$ with boundary $\p\Lambda'=(y,y',A',B')$
satisfies~\eqref{eq:masLov} so does $\Lambda$.}
\begin{figure}[htp] 
\centering 
\includegraphics[scale=0.4]{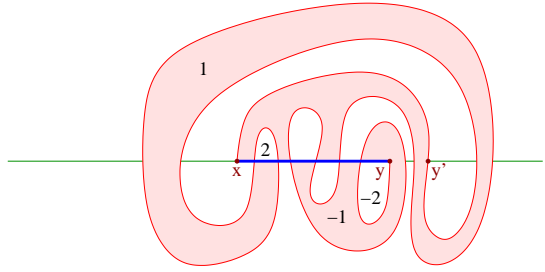} 
\caption{{Maslov index and catenation: $x<y<y'$.}}
\label{fig:maslov6}      
\end{figure}

\medskip\noindent
By Step~2 we may assume $\eps_x(\Lambda)=1$.
Since the orientation of $B'$ from $y$ to $y'$ is opposite
to the orientation of $B$ and the Maslov index 
of the arc in $B$ from $x$ to $y'$ is $-1$, we have 
\begin{equation}\label{eq:case31}
\mu(\Lambda) = 1-\mu(\Lambda').
\end{equation}
Using again the fact that the orientation of $B'$ is opposite
to the orientation of $B$ we have
\begin{equation}\label{eq:case32}
\eps_y(\Lambda') = -\eps_y(\Lambda),\qquad
\eps_{y'}(\Lambda')=-\eps_{y'}(\Lambda) = 1.
\end{equation}
Now let $x_1<x_2<\cdots<x_m$ be all 
intersection points of $\R$ and $B$ 
and let $\eps_i\in\{-1,+1\}$ be 
the intersection index of $\R$ and $B$ at $x_i$.  
Choose 
$$
j<k<\ell
$$ 
such that
$$
x_j=x,\qquad x_k=y,\qquad x_\ell=y'.
$$
Then 
$$
\eps_j=\eps_x(\Lambda)=1,\qquad
\eps_k=\eps_y(\Lambda),\qquad
\eps_\ell=\eps_{y'}(\Lambda)=-1, 
$$
and 
$$
k_x(\Lambda) = \sum_{i<j}\eps_i,\qquad
k_y(\Lambda) = - \sum_{i>k}\eps_i.
$$
For $i\ne j$ the intersection index
of $\R$ and $B'$ at $x_i$ is $-\eps_i$.
Moreover, $k_y(\Lambda')$ is the sum of the 
intersection indices of $\R$ and $B'$ 
at all points to the left of $y$ 
and $k_{y'}(\Lambda')$ is minus the sum of the 
intersection indices of $\R$ and $B'$ 
at all points to the right of $y'$. Hence
$$
k_y(\Lambda') 
= - \sum_{i<j}\eps_i
  - \sum_{j<i<k}\eps_i,\qquad
k_{y'}(\Lambda') 
= \sum_{i>\ell}\eps_i.
$$
We claim that 
\begin{equation}\label{eq:case33}
k_{y'}(\Lambda') + k_x(\Lambda) = 0,\qquad
k_y(\Lambda') + k_y(\Lambda) = \frac{1+\eps_y(\Lambda)}{2}.
\end{equation}
To see this, note that the value of the winding number 
$\w$ slightly to the left of $x$ agrees with the value
of $\w$ slightly to the right of $y'$, and hence
$$
0 = \sum_{i<j}\eps_i + \sum_{i>\ell}\eps_i 
= k_x(\Lambda) + k_{y'}(\Lambda').
$$
This proves the first equation in~\eqref{eq:case33}. 
To prove the second equation in~\eqref{eq:case33} 
we observe that
$$
\sum_{i=1}^m\eps_i = \frac{\eps_x(\Lambda)+\eps_y(\Lambda)}{2}
$$
and hence
\begin{eqnarray*}
k_y(\Lambda') + k_y(\Lambda)
&=&
- \sum_{i<j}\eps_i 
- \sum_{j<i<k}\eps_i
- \sum_{i>k}\eps_i  \\
&=&
\eps_j + \eps_k - \sum_{i=1}^m\eps_i \\
&=&
\eps_x(\Lambda) + \eps_y(\Lambda) - \sum_{i=1}^m\eps_i \\
&=& 
\frac{\eps_x(\Lambda)+\eps_y(\Lambda)}{2} \\
&=& 
\frac{1+\eps_y(\Lambda)}{2}.
\end{eqnarray*}
This proves the second equation in~\eqref{eq:case33}.

It follows from equation~\eqref{eq:masLov} for $\Lambda'$
and equations~\eqref{eq:case31}, \eqref{eq:case32}, 
and~\eqref{eq:case33} that
\begin{eqnarray*}
\mu(\Lambda) 
&=& 
1-\mu(\Lambda') \\
&=& 
1 - 2k_y(\Lambda')-2k_{y'}(\Lambda')
- \frac{\eps_y(\Lambda')-\eps_{y'}(\Lambda')}{2} \\
&=& 
1 - 2k_y(\Lambda')-2k_{y'}(\Lambda')
- \frac{-\eps_y(\Lambda)-1}{2} \\
&=& 
2k_y(\Lambda) - \eps_y(\Lambda) + 2k_x(\Lambda)
+ \frac{1+\eps_y(\Lambda)}{2} \\
&=&
2k_x(\Lambda)+2k_y(\Lambda) 
+ \frac{1-\eps_y(\Lambda)}{2}.
\end{eqnarray*}
Here the first equality follows from~\eqref{eq:case31}, 
the second equality follows from~\eqref{eq:masLov} for $\Lambda'$,
the third equality follows from~\eqref{eq:case32}, 
and the fourth equality follows from~\eqref{eq:case33}.
This proves Step~6.

\medskip\noindent{\bf Step~7.}
{\it The trace formula~\eqref{eq:maslov} holds when $\Sigma=\C$
and $\Lambda$ satisfies the arc condition.}

\medskip\noindent 
It follows from Steps~3-6 by induction that equation~\eqref{eq:masLov}
holds for every $(\alpha,\beta)$-trace $\Lambda=(x,y,\w)$
whose boundary $\p\Lambda=(x,y,A,B)$
satisfies~\eqref{eq:AB}.  Hence Step~7 follows from Step~1.

\medskip
Next we drop the hypothesis that $\Lambda$ satisfies the 
arc condition and extend the result to planar surfaces.  
This requires a further three steps.

\medskip\noindent{\bf Step~8.}
{\it The trace formula~\eqref{eq:maslov} holds when $\Sigma=\C$ and $x=y$.}

\medskip\noindent
Under these hypotheses $\nu_\alpha:=\p\w|_{\alpha\setminus\beta}$ 
and $\nu_\beta:=-\p\w|_{\beta\setminus\alpha}$ are constant.
There are four cases.

\smallskip\noindent{\bf Case~1.} 
{\it $\alpha$ is an embedded circle
and $\beta$ is not an embedded circle.} 
In this case we have $\nu_\beta\equiv0$ and $B=\{x\}$.
Moroeover, $\alpha$ is the boundary of a unique disc $\Delta_\alpha$
and we assume that $\alpha$ is oriented as the boundary
of $\Delta_\alpha$. Then the path $\gamma_\alpha:[0,1]\to\Sigma$
in Definition~\ref{def:trace-V} satisfies 
$\gamma_\alpha(0)=\gamma_\alpha(1)=x$ and 
is homotopic to $\nu_\alpha\alpha$. Hence
$$
m_x(\Lambda) = m_y(\Lambda) = 2\nu_\alpha
= \mu(\Lambda).
$$
Here the last equation follows from the fact 
that $\Lambda$ can be obtained as the catenation 
of $\nu_\alpha$ copies of the disc $\Delta_\alpha$.

\smallskip\noindent{\bf Case~2.} 
{\it $\alpha$ is not an embedded circle
and $\beta$ is an embedded circle.} 
This follows from Case~1 by interchanging $\alpha$ and $\beta$.

\smallskip\noindent{\bf Case~3.} 
{\it $\alpha$ and $\beta$ are embedded circles.} 
In this case there is a unique pair of embedded 
discs $\Delta_\alpha$ and $\Delta_\beta$ with boundaries 
$\alpha$ and $\beta$, respectively. Orient $\alpha$
and $\beta$ as the boundaries of these discs. 
Then, for every $z\in\Sigma\setminus\alpha\cup\beta$, 
we have
$$
\w(z)=\left\{\begin{array}{ll}
\nu_\alpha-\nu_\beta,&\mbox{for } 
z\in\Delta_\alpha\cap\Delta_\beta,\\
\nu_\alpha,&\mbox{for } 
z\in\Delta_\alpha\setminus\overline{\Delta}_\beta,\\
-\nu_\beta,&
\mbox{for } z\in\Delta_\beta\setminus\overline{\Delta}_\alpha,\\
0,& \mbox{for } 
z\in\Sigma\setminus\overline{\Delta}_\alpha\cup\overline{\Delta}_\beta.
\end{array}\right.
$$
Hence
$$
m_x(\Lambda) = m_y(\Lambda) 
= 2\nu_\alpha-2\nu_\beta = \mu(\Lambda).
$$
Here the last equation follows from the fact 
$\Lambda$ can be obtained as the catenation 
of $\nu_\alpha$ copies of the disc $\Delta_\alpha$ 
(with the orientation inherited from $\Sigma$)
and $\nu_\beta$ copies of $-\Delta_\beta$
(with the opposite orientation).

\smallskip\noindent{\bf Case~4.} 
{\it Neither $\alpha$ nor $\beta$ is an embedded circle.} 
Under this hypothesis we have $\nu_\alpha=\nu_\beta=0$.
Hence it follows from Theorem~\ref{thm:trace}
that $\w=0$ and $\Lambda=\Lambda_u$ for the constant 
map $u\equiv x\in\cD(x,x)$.  Thus
$$
m_x(\Lambda)=m_y(\Lambda)=\mu(\Lambda)=0.
$$
This proves Step~8.

\medskip\noindent{\bf Step~9.}
{\it The trace formula~\eqref{eq:maslov} holds when $\Sigma=\C$.}

\medskip\noindent
By Step~8, it suffices to assume $x\ne y$.
It follows from Theorem~\ref{thm:trace} that every 
$u\in\cD(x,y)$ is homotopic to a catentation $u = u_0\#v$,
where $u_0\in\cD(x,y)$ satisfies the arc condition 
and $v\in\cD(y,y)$.  Hence it follows from Steps~7 and~8 
that
\begin{eqnarray*}
\mu(\Lambda_u)
&=&
\mu(\Lambda_{u_0}) + \mu(\Lambda_v) \\
&=&
\frac{m_x(\Lambda_{u_0})+m_y(\Lambda_{u_0})}{2} + m_y(\Lambda_v) \\
&=&
\frac{m_x(\Lambda_u)+m_y(\Lambda_u)}{2}.
\end{eqnarray*}
Here the last equation follows from the fact that 
$
\w_u=\w_{u_0}+\w_v
$ 
and hence 
$
m_z(\Lambda_u)=m_z(\Lambda_{u_0})+m_z(\Lambda_v)
$
for every $z\in\alpha\cap\beta$. This proves Step~9.  

\medskip\noindent{\bf Step~10.}
{\it The trace formula~\eqref{eq:maslov} holds when $\Sigma$ is planar.}

\medskip\noindent
Choose an element $u\in\cD(x,y)$ such that $\Lambda_u=\Lambda$.
Modifying $\alpha$ and $\beta$ on the complement of $u(\D)$,
if necessary, we may assume without loss of generality that
$\alpha$ and $\beta$ are embedded circles.
Let $\iota:\Sigma\to\C$ be an orientation preserving embedding.
Then $\iota_*\Lambda := \Lambda_{\iota\circ u}$ is an 
$(\iota(\alpha),\iota(\beta))$-trace in $\C$ and hence 
satisfies the trace formula~\eqref{eq:maslov} by Step~9.  
Since $m_{\iota(x)}(\iota_*\Lambda)=m_x(\Lambda)$, 
$m_{\iota(y)}(\iota_*\Lambda)=m_y(\Lambda)$,
and $\mu(\iota_*\Lambda)=\mu(\Lambda)$ it follows 
that $\Lambda$ also satisfies the trace formula. 
This proves Step~10 and Proposition~\ref{prop:maslovC}
\end{proof}

\begin{remark}\label{rmk:maslov}\rm
Let $\Lambda=(x,y,A,B)$ be an $(\alpha,\beta)$-trace in $\C$ 
as in Step~1 in the proof of Theorem~\ref{thm:maslov}.
Thus $x<y$ are real numbers, $A$ is the interval $[x,y]$, 
and $B$ is an embedded arc with endpoints $x,y$
which is oriented from $x$ to $y$ and is 
transverse to $\R$. Thus
$
Z:=B\cap\R
$
is a finite set.  Define a map
$$
f:Z\setminus\{y\}\to Z\setminus\{x\}
$$
as follows. Given $z\in Z\setminus\{y\}$ walk 
along $B$ towards $y$ and let $f(z)$ be the next 
intersection point with $\R$. This map is bijective.  
Now let $I$ be any of the three open intervals 
$(-\infty,x)$, $(x,y)$, $(y,\infty)$.  
Any arc in $B$ from $z$ to $f(z)$ with
both endpoints in the same interval $I$ 
can be removed by an isotopy of $B$ which
does not pass through $x,y$. 
Call $\Lambda$ a {\bf reduced $(\alpha,\beta)$-trace} 
\index{reduced $(\alpha,\beta)$-trace}\index{trace!reduced}
if $z\in I$ implies $f(z)\notin I$ for each of the three intervals. 
Then every $(\alpha,\beta)$-trace is isotopic to a reduced 
$(\alpha,\beta')$-trace and the isotopy does not affect the 
numbers $\mu,k_x,k_y,\eps_x,\eps_y$.
\begin{figure}[htp]
\centering 
\includegraphics[scale=0.6]{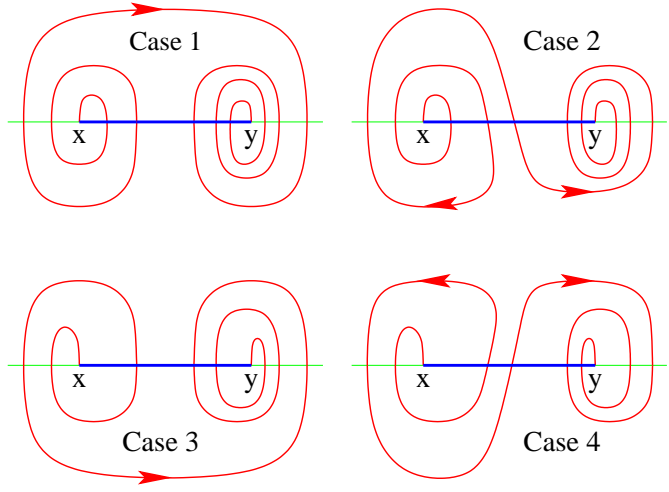} 
\caption{{Reduced $(\alpha,\beta)$-traces in $\C$.}}
\label{fig:maslov}
\end{figure}

Let $Z^+$ (respectively $Z^-$) denote the set 
of all points $z\in Z=B\cap\R$ where the positive 
tangent vectors in $T_zB$ point up (respectively down).
One can prove that every reduced $(\alpha,\beta)$-trace
satisfies one of the following conditions. 

\smallskip
\centerline{{\bf Case~1:} 
If $z\in Z^+\setminus\{y\}$ then $f(z)>z$.\qquad
{\bf Case~2:} $Z^-\subset[x,y]$.}

\centerline{{\bf Case~3:} 
If $z\in Z^-\setminus\{y\}$ then $f(z)>z$.\qquad
{\bf Case~4:} $Z^+\subset[x,y]$.}

\smallskip\noindent
(Examples with $\eps_x=1$ and $\eps_y=-1$ 
are depicted in Figure~\ref{fig:maslov}.)
One can then show directly that the reduced $(\alpha,\beta)$-traces
satisfy equation~\eqref{eq:masLov}.  This gives rise 
to an alternative proof of Proposition~\ref{prop:maslovC} 
via case distinction. 
\end{remark}

\begin{proof}[Proof of Theorem~\ref{thm:maslov} 
in the Simply Connected Case]
\phantomsection\label{proof:maslov1} 
If $\Sigma$ is diffeomorphic to the $2$-plane
the result has been established 
in Proposition~\ref{prop:maslovC}.
Hence assume 
$$
\Sigma=S^2. 
$$
Let $u\in\cD(x,y)$.  
If $u$ is not surjective the assertion follows from the case 
of the complex plane (Proposition~\ref{prop:maslovC})
via stereographic projection.  Hence assume $u$ is 
surjective and choose a regular value 
$z\in S^2\setminus(\alpha\cup\beta)$ of $u$.  
Denote 
$$
u^{-1}(z) = \{z_1,\dots,z_k\}.
$$
For $i=1,\dots,k$ let $\eps_i=\pm1$ according to whether or not
the differential $du(z_i):\C\to T_z\Sigma$ is orientation preserving.
Choose an open disc $\Delta\subset S^2$ centered at $z$
such that 
$$
\bar\Delta\cap(\alpha\cup\beta)=\emptyset
$$
and $u^{-1}(\Delta)$ is a union of open neighborhoods
$U_i\subset\D$ of $z_i$ with disjoint closures such that 
$$
u|_{U_i}:U_i\to\Delta
$$ 
is a diffeomorphism for each $i$
which extends to a neighborhood of $\bar U_i$. 
Now choose a continuous map $u':\D\to S^2$ which agrees
with $u$ on $\D\setminus\bigcup_iU_i$ and restricts to a 
diffeomorphism from $\bar U_i$ to $S^2\setminus\Delta$ 
for each $i$. Then $z$ does not belong to the image of $u'$
and hence the trace formula~\eqref{eq:maslov} holds for $u'$
(after smoothing along the boundaries $\p U_i$).  
Moreover, the diffeomorphism 
$$
u'|_{\bar U_i}:\bar U_i\to S^2\setminus\Delta
$$
is orientation preserving if and only if $\eps_i=-1$.
Hence
\begin{equation*}
\begin{split}
\mu(\Lambda_u) &= \mu(\Lambda_{u'}) + 4\sum_{i=1}^k\eps_i,\\
m_x(\Lambda_u) &= m_x(\Lambda_{u'}) + 4\sum_{i=1}^k\eps_i,\\
m_y(\Lambda_u) &= m_y(\Lambda_{u'}) + 4\sum_{i=1}^k\eps_i.
\end{split}
\end{equation*}
By Proposition~\ref{prop:maslovC} the trace formula~\eqref{eq:maslov} 
holds for $\Lambda_{u'}$ and hence it also holds for $\Lambda_u$.  
This proves Theorem~\ref{thm:maslov} when $\Sigma$ is simply
connected.
\end{proof}


\section{The Non Simply Connected Case}\label{sec:LIFT}

The key step for extending Proposition~\ref{prop:maslovC} 
to non-simply connected two-manifolds is the next
result about lifts to the universal cover. 

\begin{proposition}\label{prop:gm}
Suppose $\Sigma$ is not diffeomorphic to the $2$-sphere.
Let ${\Lambda=(x,y,\w)}$ be an 
$(\alpha,\beta)$-trace and ${\pi:\C\to\Sigma}$ be a 
universal covering. Denote by $\Gamma\subset\Diff(\C)$
the group of deck transformations. 
Choose an element ${\tx\in\pi^{-1}(x)}$ and let $\talpha$ 
and $\tbeta$ be the lifts of $\alpha$ and $\beta$ 
through~$\tx$. Let $\tLa=(\tx,\ty,\widetilde\w)$ be the lift
of $\Lambda$ with left endpoint~$\tx$. Then
$\tLa$ satisfies the {\bf cancellation formula}
\index{cancellation formula}
\index{Viterbo--Maslov index!cancellation formula} 
\begin{equation}\label{eq:gm}
m_{g\tx}(\tLa) + m_{g^{-1}\ty}(\tLa) 
= 0
\end{equation}
for every $g\in\Gamma\setminus\{\id\}$.
\rm(Proof on page~\pageref{proof:gm}.)
\end{proposition}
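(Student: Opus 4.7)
My plan exploits the winding-number characterization of $\widetilde\w$ from Remark~\ref{rmk:trace}~(iii): $\widetilde\w(\tz)$ is the winding number about $\tz$ of the closed loop
\[
\ell := \gamma_\talpha\cdot\gamma_\tbeta^{-1}
\]
in $\C$, where $\gamma_\talpha,\gamma_\tbeta$ are the lifts of the paths in Definition~\ref{def:trace-V}. The crucial consequence is that $\widetilde\w$ is locally constant on $\C\setminus(\talpha\cup\tbeta)$: because $\ell$ is supported in $\talpha\cup\tbeta$, the winding number jumps only across these two specific lifts, and not across any other component of $\tA\cup\tB$.

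The first step reduces the two terms in~\eqref{eq:gm} to single values of $\widetilde\w$. Fix $g\ne\id$ and assume first the generic situation $g\talpha\ne\talpha$ and $g\tbeta\ne\tbeta$. The branches through $g\tx$ are then $g\talpha$ and $g\tbeta$, across which $\widetilde\w$ does not jump; hence the four values summed in $m_{g\tx}(\tLa)$ are all equal, and
\[
m_{g\tx}(\tLa) = 4\,\widetilde\w(g\tx),\qquad m_{g^{-1}\ty}(\tLa) = 4\,\widetilde\w(g^{-1}\ty),
\]
the right-hand sides being the values of $\widetilde\w$ at any nearby point in $\C\setminus(\talpha\cup\tbeta)$. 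The non-generic case, when $g$ lies in the (cyclic) stabilizer of $\talpha$ or $\tbeta$, I would handle by a direct case analysis using the description of the four neighbouring values of $\widetilde\w$ from Remark~\ref{rmk:trace}~(i); the pairing of $\tx$ with $g^{-1}\ty$ (rather than $g\ty$) is designed so that any extra jump contributions on the two sides cancel.

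The second and main step is the identity $\widetilde\w(g\tx)+\widetilde\w(g^{-1}\ty)=0$ for $g\ne\id$. My plan is a $\Gamma$-equivariance argument: translating by $g^{-1}$ rewrites $\widetilde\w(g\tx)$ as the winding number of the translated loop $g^{-1}\ell$ about $\tx$, so the target sum becomes the winding of $g^{-1}\ell$ at $\tx$ plus the winding of $\ell$ at $g^{-1}\ty$. Writing both loops as differences of arcs $(g^{-1}\gamma_\talpha-g^{-1}\gamma_\tbeta)+(\gamma_\talpha-\gamma_\tbeta)$, I would construct a compactly supported $2$-chain in $\C$ whose boundary is this $1$-cycle and which evaluates to zero at both $\tx$ and $g^{-1}\ty$. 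As a consistency check, summing the desired formula over all $g\in\Gamma$ gives
\[
\sum_{g\in\Gamma}\bigl(m_{g\tx}(\tLa)+m_{g^{-1}\ty}(\tLa)\bigr)
= m_x(\Lambda)+m_y(\Lambda) = 2\mu(\Lambda) = m_\tx(\tLa)+m_\ty(\tLa),
\]
using $\w(z)=\sum_g\widetilde\w(gz)$ and Proposition~\ref{prop:maslovC} applied to the pair $(\talpha,\tbeta)$ in $\C$; this already forces $\sum_{g\ne\id}(\cdots)=0$, so the pointwise cancellation formula is the sharpening that there is no cross-cancellation between distinct $g$.

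The principal obstacle is realizing the antisymmetric cancellation in step two in a transparent way. The involution $\tx\leftrightarrow\ty$, $g\leftrightarrow g^{-1}$ naturally swaps the two terms, but extracting the minus sign requires a delicate accounting of the opposite roles of $\gamma_\talpha$ (traversed forward in $\ell$) and $\gamma_\tbeta$ (traversed backward), together with the action of the deck group on the two basepoints; the stabilizer subcases then reduce to a finite check.
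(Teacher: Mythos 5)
Your Step 1 (that $m_{g\tx}(\tLa)=4\widetilde\w(g\tx)$ when $g\tx\notin\tA\cup\tB$) is correct and matches what the paper does in Lemma~\ref{le:wg}. However, your Step 2, which you correctly identify as the crux, has a genuine gap and the proposed mechanism does not give the stated result.

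First, the $2$-chain you describe does not compute the quantity you need. Since $H_2(\C)=0$, a compactly supported $2$-chain $D$ with $\p D = g^{-1}\ell+\ell$ is unique and is given by $D(p)=\widetilde\w(gp)+\widetilde\w(p)$. Evaluating at $\tx$ gives $\widetilde\w(g\tx)+\widetilde\w(\tx)$, and at $g^{-1}\ty$ gives $\widetilde\w(\ty)+\widetilde\w(g^{-1}\ty)$. Neither of these is $\widetilde\w(g\tx)+\widetilde\w(g^{-1}\ty)$: you are combining a winding number of one loop at one point with a winding number of the other loop at a \emph{different} point, and the two points differ by $g^2$, not $g$, from each other. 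No single bounding $2$-chain tracks this.

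Second, what a direct winding-number comparison can produce is only the symmetric identity $\widetilde\w(g\ty)-\widetilde\w(g\tx)=\widetilde\w(g^{-1}\ty)-\widetilde\w(g^{-1}\tx)$, obtained by intersecting $\gamma_\talpha-\gamma_\tbeta$ with $g\gamma_\talpha$ or $g\gamma_\tbeta$ and using the disjointness $\tA\cap g\tA=\emptyset$, $\tB\cap g^{-1}\tB=\emptyset$. This is the paper's balance equation~\eqref{eq:GM}, and it is strictly weaker than~\eqref{eq:gm}. The upgrade from balance to cancellation is a separate mechanism (Lemma~\ref{le:gm}): pick $g$ maximizing the putative defect, project $\tLa$ to the \emph{annulus} $\C/\langle g\rangle$ (not to $\Sigma$ itself), and compare Viterbo--Maslov indices of $\tLa$ and the projection using the trace formula for planar surfaces. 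Your consistency check projects all the way down to $\Sigma$ and therefore only recovers $\sum_{g\ne\id}(\cdots)=0$; the annulus intermediate quotient is exactly what converts this global vanishing into the pointwise one. You correctly observe that the pointwise statement is a genuine sharpening, but the $2$-chain plan as stated does not supply it.

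Finally, the cases you set aside as ``non-generic'' (where $g$ stabilizes $\talpha$ or $\tbeta$, or where $g\tx$ or $g\ty$ lands on $\tA\cup\tB$) are not a small finite check: they occupy the bulk of the paper's argument (Lemmas~\ref{le:AB} and~\ref{le:zero}, and Steps~1--4 of the proof of Proposition~\ref{prop:gm}) and require careful catenation and induction, including reduction to the case $\ty=g_0\tx$ via Epstein's isotopy theorem. You should not expect to dispose of them by inspection of the four local values of $\widetilde\w$.
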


\begin{lemma}[{\bf Annulus Reduction}]
\label{le:gm} \index{annulus reduction}
Suppose $\Sigma$ is not diffeomorphic to the $2$-sphere. 
Let $\Lambda$, $\pi$, $\Gamma$, $\tLa$
be as in Proposition~\ref{prop:gm}.  If 
\begin{equation}\label{eq:GM}
m_{g\tx}(\tLa) + m_{g^{-1}\ty}(\tLa) 
= m_{g^{-1}\tx}(\tLa) + m_{g\ty}(\tLa)
\end{equation}
for all $g\in\Gamma\setminus\{\id\}$ 
then the cancellation formula~\eqref{eq:gm} holds 
for all $g\in\Gamma\setminus\{\id\}$.
\end{lemma}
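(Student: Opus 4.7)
The plan is to combine the symmetry hypothesis~\eqref{eq:GM} with the planar trace formula of Proposition~\ref{prop:maslovC} applied on the intermediate cylindrical covers $A_n := \C/\langle g^n\rangle$ for every $n\ge 1$. Fix $g\in\Gamma\setminus\{\id\}$ and abbreviate
$$
a_j := m_{g^j\tx}(\tLa), \qquad b_j := m_{g^j\ty}(\tLa), \qquad c_j := a_j + b_{-j}.
$$
The cancellation formula~\eqref{eq:gm} for $g$ is the statement $c_1 = 0$; in fact I will show $c_k=0$ for every $k\ne 0$. Since $\widetilde\w$ is the winding number of a compactly supported loop (Remark~\ref{rmk:trace}(iii)), it has compact support in $\C$, so all three sequences have finite support. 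Substituting $g^j$ for $g$ in~\eqref{eq:GM} yields $a_j+b_{-j}=a_{-j}+b_j$, i.e.\ $c_j=c_{-j}$ for every $j\ne 0$.

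Next, fix $n\ge 1$ and let $q_n:\C\to A_n$ be the projection. Since $g^n$ acts freely and preserves orientation on $\C$, the quotient $A_n$ is an oriented cylinder, and therefore planar. The lift $\tu\in\cD(\tx,\ty)$ descends to $\bar u_n:=q_n\circ\tu$ whose associated trace is $\bar\Lambda_n=(\bar x_n,\bar y_n,\bar\w_n)$, with $\bar x_n:=q_n(\tx)$ and $\bar y_n:=q_n(\ty)$. The fiber $q_n^{-1}(\bar x_n)$ is exactly the orbit $\{g^{nk}\tx\}_{k\in\Z}$, and the four sectors of $\bar x_n$ in $A_n$ correspond under $q_n$ to the four sectors at each $g^{nk}\tx$ in $\C$. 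Using $\bar\w_n(p)=\sum_{\tp\in q_n^{-1}(p)}\widetilde\w(\tp)$, one obtains
$$
m_{\bar x_n}(\bar\Lambda_n)=\sum_{k\in\Z}a_{nk},\qquad m_{\bar y_n}(\bar\Lambda_n)=\sum_{k\in\Z}b_{nk}.
$$
The Viterbo--Maslov index is a local invariant of the Lagrangian paths, so $\mu(\bar\Lambda_n)=\mu(\tLa)=\mu(\Lambda)$. After modifying the lifted curves $\bar\alpha_n,\bar\beta_n$ outside the compact set $\bar u_n(\D)$ into a pair of connected curves satisfying~\hyperlink{property_H}{(H)} on $A_n$ (in the spirit of Step~10 of the proof of Proposition~\ref{prop:maslovC}), Proposition~\ref{prop:maslovC} applies both to $\C$ (with the trace $\tLa$ itself) and to $A_n$. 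Subtracting the two resulting trace formulas, the $k=0$ contributions $a_0+b_0$ cancel and what remains is
$$
\sum_{k\ne 0}(a_{nk}+b_{nk})=0.
$$
Reindexing $b_{nk}\mapsto b_{-nk}$ in the second summand and using $c_j=c_{-j}$ reduces this further to $\sum_{k>0}c_{nk}=0$ for every $n\ge 1$.

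A descending induction finishes the argument. Suppose, for contradiction, that $c_N\ne 0$ for some $N>0$, and choose $N$ maximal with this property (possible by finite support). Specialising $n=N$ in the identity just derived gives $\sum_{k>0}c_{Nk}=c_N+c_{2N}+\cdots=c_N$, since every $c_{jN}$ with $j\ge 2$ vanishes by the maximality of $N$; but the left side is $0$, so $c_N=0$, a contradiction. Hence $c_k=0$ for every $k\ne 0$, and in particular $c_1=m_{g\tx}(\tLa)+m_{g^{-1}\ty}(\tLa)=0$, which is~\eqref{eq:gm}. Since $g$ was arbitrary, the cancellation formula holds on all of $\Gamma\setminus\{\id\}$.

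The step I expect to require the most care is the application of Proposition~\ref{prop:maslovC} on the cylinder $A_n$: the standing hypothesis~\hyperlink{property_H}{(H)} demands connected curves, whereas the projected curves $\bar\alpha_n,\bar\beta_n$ are generically disconnected (the cosets $\Gamma/\langle g^n\rangle$ other than the trivial one contribute extra components of $\bar\alpha_n$ and $\bar\beta_n$ on $A_n$). The remedy is exactly the device of Step~10 in the proof of Proposition~\ref{prop:maslovC}: modify $\bar\alpha_n$ and $\bar\beta_n$ on the complement of the compact set $\bar u_n(\D)$ into a single connected curve each without altering $\mu(\bar\Lambda_n)$, $m_{\bar x_n}(\bar\Lambda_n)$, or $m_{\bar y_n}(\bar\Lambda_n)$, and then embed $A_n$ in $\C$ to invoke the planar case.
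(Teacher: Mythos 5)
Your proof is correct and uses the same core mechanism as the paper's: project the lifted trace to an intermediate annulus cover $\C/\langle g^n\rangle$, apply Proposition~\ref{prop:maslovC} there and in $\C$, subtract, invoke the symmetry from~\eqref{eq:GM}, and use the finite support of $\widetilde\w$. The only difference is organizational: the paper replaces $g$ by $h^k$ with $k$ maximal among those for which $m_{h^k\tx}(\tLa)+m_{h^{-k}\ty}(\tLa)\ne 0$, so that the single annulus $\C/\langle g\rangle$ already produces the contradiction, whereas you derive one identity for each $n$ and then close the argument by descending induction; the two routes are logically equivalent. One clarification regarding your last paragraph: the disconnectedness worry does not arise, because $\bar\alpha_n:=q_n(\talpha)$ and $\bar\beta_n:=q_n(\tbeta)$ are projections of the \emph{single} lifts $\talpha,\tbeta$ through $\tx$ (exactly as the paper defines $\alpha_0,\beta_0$), not of all their $\Gamma$-translates; they are therefore connected, embedded, closed submanifolds of $A_n$ meeting transversally, so hypothesis~\hyperlink{property_H}{(H)} holds on the annulus and Proposition~\ref{prop:maslovC} applies with no modification of the curves.
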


\begin{proof}
If~\eqref{eq:gm} does not hold then there is 
a deck transformation $h\in\Gamma\setminus\{\id\}$ 
such that
$
m_{h\tx}(\tLa) + m_{h^{-1}\ty}(\tLa) \ne 0.
$
Since there can only be finitely many such 
$h\in\Gamma\setminus\{\id\}$, 
there is an integer $k\ge 1$ such that 
$m_{h^k\tx}(\tLa)+m_{h^{-k}\ty}(\tLa)\ne0$
and $m_{h^\ell\tx}(\tLa)+m_{h^{-\ell}\ty}(\tLa)=0$ 
for every integer $\ell>k$. Define $g:=h^k$.  Then
\begin{equation}\label{eq:GM2}
m_{g\tx}(\tLa) + m_{g^{-1}\ty}(\tLa) \ne 0
\end{equation}
and $m_{g^k\tx}(\tLa) + m_{g^{-k}\ty}(\tLa)=0$
for every integer $k\in\Z\setminus\{-1,0,1\}$.
Define
$$
\Sigma_0:=\C/\Gamma_0,\qquad 
\Gamma_0 := \left\{g^k\,|\,k\in\Z\right\}.
$$
Then $\Sigma_0$ is diffeomorphic to the annulus.
Let $\pi_0:\C\to\Sigma_0$ be the obvious 
projection, define $\alpha_0:=\pi_0(\talpha)$, $\beta_0:=\pi_0(\tbeta)$,
and let $\Lambda_0:=(x_0,y_0,\w_0)$
be the $(\alpha_0,\beta_0)$-trace in $\Sigma_0$ with 
$x_0:=\pi_0(\tx)$, $y_0:=\pi_0(\ty)$, and
$$
\w_0(z_0) := \sum_{\tz\in\pi_0^{-1}(z_0)}\widetilde\w(\tz),\qquad 
z_0\in\Sigma_0\setminus(\alpha_0\cup\beta_0).
$$
Then 
\begin{equation*}
\begin{split}
m_{x_0}(\Lambda_0) 
&= m_\tx(\tLa) + \sum_{k\in\Z\setminus\{0\}}m_{g^k\tx}(\tLa),\\
m_{y_0}(\Lambda_0) 
&= m_\ty(\tLa) + \sum_{k\in\Z\setminus\{0\}}m_{g^{-k}\ty}(\tLa).
\end{split}
\end{equation*}
By Proposition~\ref{prop:maslovC} both $\tLa$ 
and $\Lambda_0$ satisfy the trace formula~\eqref{eq:maslov} 
and they have the same Viterbo--Maslov index. Hence
\begin{eqnarray*}
0 
&=&
\mu(\Lambda_0)-\mu(\tLa) \\
&=&
\frac{m_{x_0}(\Lambda_0) + m_{y_0}(\Lambda_0)}{2}
- \frac{m_\tx(\tLa)+m_\ty(\tLa)}{2} \\
&=&
\frac12
\sum_{k\ne 0}\left(m_{g^k\tx}(\tLa)+m_{g^{-k}\ty}(\tLa)\right) \\
&=&
m_{g\tx}(\tLa)+m_{g^{-1}\ty}(\tLa).
\end{eqnarray*}
Here the last equation follows from~\eqref{eq:GM}.
This contradicts~\eqref{eq:GM2} and proves Lemma~\ref{le:gm}.
\end{proof}

\bigbreak

\begin{lemma}\label{le:AB}
Suppose $\Sigma$ is not diffeomorphic to the $2$-sphere.
Let $\Lambda$, $\pi$, $\Gamma$, $\tLa$ be as in 
Proposition~\ref{prop:gm} and denote 
$\nu_\talpha:=\p\widetilde\w|_{\talpha\setminus\tbeta}$
and $\nu_\tbeta:=-\p\widetilde\w|_{\tbeta\setminus\talpha}$.
Choose smooth paths
$$
\gamma_\talpha:[0,1]\to\talpha,\qquad \gamma_\tbeta:[0,1]\to\tbeta
$$
from $\gamma_\talpha(0)=\gamma_\tbeta(0)=\tx$ to
$\gamma_\talpha(1)=\gamma_\tbeta(1)=\ty$ such that
$\gamma_\talpha$ is an immersion when $\nu_\talpha\not\equiv0$
and constant when $\nu_\talpha\equiv0$, the same holds
for $\gamma_\tbeta$, and
\begin{equation*}
\begin{split}
\nu_\talpha(\tz) =\deg(\gamma_\talpha,\tz)&\quad
\mbox{for}\quad \tz\in\talpha\setminus\{\tx,\ty\},\\
\nu_\tbeta(\tz) =\deg(\gamma_\tbeta,\tz)&\quad
\mbox{for}\quad\tz\in\tbeta\setminus\{\tx,\ty\}.
\end{split}
\end{equation*}
Define 
$$
\tA := \gamma_\talpha([0,1]),\qquad \tB := \gamma_\tbeta([0,1]).
$$
Then, for every $g\in\Gamma$, we have
\begin{equation}\label{eq:gxyA1}
g\tx\in\tA
\qquad\iff\qquad 
g^{-1}\ty\in\tA,
\end{equation}
\begin{equation}\label{eq:gxyA2}
g\tx\notin\tA\;\;\mbox{ and }\;\;g\ty\notin\tA 
\qquad\iff\qquad 
\tA\cap g\tA=\emptyset,
\end{equation}
\begin{equation}\label{eq:gxyA3}
g\tx\in\tA\;\;\mbox{ and }\;\;g\ty\in\tA 
\qquad\iff\qquad g=\id.
\end{equation}
The same holds with $\tA$ replaced by $\tB$.
\end{lemma}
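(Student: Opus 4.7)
The plan is to split on whether the deck transformation $g$ preserves the lift $\talpha$; by symmetry between the two factors, I will only carry out the argument for $\tA$, and the translation to $\tB$ is identical. Two structural facts drive everything: distinct lifts of $\alpha$ in $\C$ are disjoint, and $\tA\subset\talpha$ is connected, being the image of a path.

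First I would dispose of the case $g\notin\mathrm{Stab}(\talpha)$. Then $g\talpha\cap\talpha=\emptyset$, and symmetrically $g^{-1}\talpha\cap\talpha=\emptyset$, so the points $g\tx,g\ty,g^{-1}\ty$ lie outside $\talpha\supset\tA$, and $g\tA\cap\tA\subset g\talpha\cap\talpha=\emptyset$. Both sides of \eqref{eq:gxyA1} and \eqref{eq:gxyA3} are false (note $g\ne\id$ since $\id\in\mathrm{Stab}(\talpha)$) and both sides of \eqref{eq:gxyA2} are true, so all three equivalences hold vacuously.

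Next I would handle $g\in\mathrm{Stab}(\talpha)$ via a dichotomy on the homeomorphism type of $\talpha$. If $\talpha\cong S^1$ is compact, then proper discontinuity of the $\Gamma$-action on $\C$ applied to $\talpha$ forces $\mathrm{Stab}(\talpha)$ to be finite, and torsion-freeness of $\pi_1(\Sigma)$ (orientable, not $S^2$) then forces it to be trivial, so only $g=\id$ arises and the assertions are immediate. Otherwise $\talpha\cong\R$, and freeness of the $\Gamma$-action on $\C$ makes $g|_\talpha$ a fixed-point-free self-homeomorphism of $\R$; an orientation-reversing such map would have a fixed point by the intermediate value theorem, so $g|_\talpha$ is orientation-preserving and fixed-point-free, hence conjugate to a translation. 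I would record this by choosing a parameterization $\talpha\cong\R$ in which $g$ acts as $t\mapsto t+c$, with $c=0\iff g=\id$.

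What remains is a one-dimensional calculation. After swapping the labels if necessary I arrange $\tx=a\le b=\ty$; since $\gamma_\talpha$ is either constant or a smooth immersion into $\R$, it is monotone, so $\tA=[a,b]$ of length $L:=b-a\ge 0$. With $g\tx=a+c$, $g\ty=b+c$, $g^{-1}\ty=b-c$, the three assertions reduce to interval arithmetic: $a+c\in[a,b]\iff b-c\in[a,b]\iff c\in[0,L]$, giving \eqref{eq:gxyA1}; the disjointness $[a,b]\cap[a+c,b+c]=\emptyset$ is equivalent to $|c|>L$, which is precisely the conjunction $a+c\notin[a,b]$ and $b+c\notin[a,b]$, giving \eqref{eq:gxyA2}; and $a+c,b+c\in[a,b]$ simultaneously forces $c\in[0,L]\cap[-L,0]=\{0\}$, i.e.\ $g=\id$, giving \eqref{eq:gxyA3}. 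The main obstacle is the structural dichotomy $\talpha\cong S^1$ versus $\talpha\cong\R$, requiring proper discontinuity plus torsion-freeness in the first case and the translation normal form in the second; once those inputs are in place, the rest is bookkeeping.
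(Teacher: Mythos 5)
Your proof is correct and takes essentially the same route as the paper's: reduce to the stabilizer of $\talpha$, show any stabilizing $g$ acts on $\talpha\cong\R$ as a translation, and verify the three equivalences by interval arithmetic, with the non-stabilizing case vacuous by disjointness of lifts. The only cosmetic difference is that for compact $\talpha$ you invoke proper discontinuity plus torsion-freeness of $\Gamma$, while the paper reaches the same triviality of the stabilizer by the more direct observation that $\pi|_{\talpha}$ is a homeomorphism onto a contractible $\alpha$.
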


\begin{proof}
If $\alpha$ is a contractible embedded circle or 
not an embedded circle at all we have 
$\tA\cap g\tA=\emptyset$ whenever $g\ne\id$
and this implies~\eqref{eq:gxyA1}, \eqref{eq:gxyA2}
and~\eqref{eq:gxyA3}. Hence assume 
$\alpha$ is a noncontractible embedded circle.
Then we may also assume, 
without loss of generality, 
that $\pi(\R)=\alpha$, 
the map $\tz\mapsto\tz+1$ is a deck transformation, 
$\pi$ maps the interval $[0,1)$ bijectively onto~$\alpha$,
and $\tx,\ty\in\R=\talpha$ with $\tx<\ty$.  
Thus $\tA=[\tx,\ty]$ and, for every $k\in\Z$, 
$$
\tx+k\in[\tx,\ty]
\quad\iff\quad
0\le k\le \ty-\tx
\quad\iff\quad
\ty-k\in[\tx,\ty].
$$
Similarly, we have 
$$
\tx+k,\ty+k\notin[\tx,\ty]
\quad\iff\quad
[\tx+k,\ty+k]\cap[\tx,\ty]=\emptyset
$$
and
$$
\tx+k,\ty+k\in[\tx,\ty]
\quad\iff\quad
[\tx+k,\ty+k]\subset[\tx,\ty]
\quad\iff\quad k=0.
$$
This proves~\eqref{eq:gxyA1}, \eqref{eq:gxyA2}, 
and~\eqref{eq:gxyA3} for the deck transformation 
$\tz\mapsto\tz+k$.  If $g$ is any other deck transformation, 
then we have 
$
\talpha\cap g\talpha=\emptyset
$ 
and so~\eqref{eq:gxyA1}, \eqref{eq:gxyA2}, and~\eqref{eq:gxyA3}
are trivially satisfied. This proves Lemma~\ref{le:AB}.
\end{proof}

\begin{lemma}[{\bf Winding Number Comparison}]\label{le:wg} 
\index{winding number comparison}
Suppose $\Sigma$ is not diffeomorphic to the $2$-sphere.
Let $\Lambda$, $\pi$, $\Gamma$, $\tLa$
be as in Proposition~\ref{prop:gm}, and let
$\tA,\tB\subset\C$ be as in Lemma~\ref{le:AB}.
Then the following holds.

\smallskip\noindent{\bf (i)}
Equation~\eqref{eq:GM} holds for every $g\in\Gamma$
that satisfies $g\tx,g\ty\notin\tA\cup\tB$.

\smallskip\noindent{\bf (ii)}
If $\Lambda$ satisfies the arc condition then it also satisfies
the cancellation formula~\eqref{eq:gm}
for every $g\in\Gamma\setminus\{\id\}$.
\end{lemma}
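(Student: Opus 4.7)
The plan is to use the winding-number characterization of $\widetilde\w$ from Remark~\ref{rmk:trace}(iii): $\widetilde\w(\tz) = \deg(\gamma_\talpha - \gamma_\tbeta, \tz)$ for $\tz \in \C \setminus (\tA \cup \tB)$. The central preliminary observation is that jumps of $\widetilde\w$ across $\talpha$ (resp.\ $\tbeta$) occur only at points of $\tA$ (resp.\ $\tB$). Hence, for any $\tp \in \pi^{-1}(\{x,y\})$ with $\tp \notin \tA \cup \tB$, the function $\widetilde\w$ extends continuously to $\tp$ and
\[
m_\tp(\tLa) = 4\widetilde\w(\tp),
\]
irrespective of whether $\tp$ happens to lie on $\talpha \cup \tbeta$: even if $g$ lies in the stabilizer of $\talpha$ or $\tbeta$, the hypothesis $\tp \notin \tA \cup \tB$ prevents any jump of $\widetilde\w$ along the small circle around $\tp$.

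For part~(i), Lemma~\ref{le:AB}\eqref{eq:gxyA1}, applied to both $\tA$ and $\tB$, promotes the hypothesis $g\tx,g\ty \notin \tA \cup \tB$ to the stronger statement $g^{\pm 1}\tx, g^{\pm 1}\ty \notin \tA \cup \tB$. Equation~\eqref{eq:GM} then reduces to the identity $\widetilde\w(g\tx) + \widetilde\w(g^{-1}\ty) = \widetilde\w(g^{-1}\tx) + \widetilde\w(g\ty)$. I will establish this by expressing each difference $\widetilde\w(g\tx) - \widetilde\w(g\ty)$ (and its $g^{-1}$-counterpart) as a signed intersection number of a connecting path with the loop $L = \tA - \tB$. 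Taking $g\tA$ (from $g\ty$ to $g\tx$) and $g^{-1}\tA$ (from $g^{-1}\ty$ to $g^{-1}\tx$) as these paths, Lemma~\ref{le:AB}\eqref{eq:gxyA2} makes the $\tA$-contribution vanish. Transporting by the deck transformations $g^{\mp 1}$, which are orientation-preserving diffeomorphisms and hence preserve signed intersection, the required identity becomes the equality $\#(\tA, g^{-1}\tB) = \#(\tA, g\tB)$, which I will deduce from the $g \leftrightarrow g^{-1}$ symmetry of the boundary-linking data of the pair $(\tA, \tB)$ in the simply connected plane.

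For part~(ii), by Lemma~\ref{le:gm} it suffices to verify~\eqref{eq:GM} for every $g \ne \id$; part~(i) disposes of all $g$ with $g\tx, g\ty \notin \tA \cup \tB$. The remaining $g$ satisfy $g\tx \in \tA \cup \tB$ or $g\ty \in \tA \cup \tB$, and by the proof of Lemma~\ref{le:AB} such $g$ must lie in the stabilizer of $\talpha$ or $\tbeta$ in $\Gamma$; \eqref{eq:gxyA3} excludes the case that both endpoints of $g\tA$ lie in $\tA$ (or both in $\tB$) unless $g = \id$. In the stabilizer case, $g$ acts by integer translation on the corresponding line, the arc condition identifies $\tA$ with $[\tx, \ty] \subset \R$, and $\widetilde\w$ jumps by $\pm 1$ across $\tA$ and $\tB$. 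A direct quadrant-by-quadrant computation at $g\tx$ and at the paired point $g^{-1}\ty$ (which by~\eqref{eq:gxyA1} lies in $\tA$ exactly when $g\tx$ does) yields a central-symmetric pattern that forces the cancellation $m_{g\tx}(\tLa) + m_{g^{-1}\ty}(\tLa) = 0$ directly—stronger than~\eqref{eq:GM}. The main obstacles will be the $g \leftrightarrow g^{-1}$ intersection-number symmetry at the end of part~(i) and the careful sign-and-orientation bookkeeping for the quadrant-value cancellation in the stabilizer case of part~(ii).
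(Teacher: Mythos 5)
Your part (i) follows the paper's strategy (express $\widetilde\w$-differences as intersection numbers with $\tA-\tB$ and exploit $\tA\cap g\tA=\emptyset$), and your reduction to $\#(\tA,g^{-1}\tB)=\#(\tA,g\tB)$ is correct. But the final step --- deducing that identity from ``the $g\leftrightarrow g^{-1}$ symmetry of the boundary-linking data'' --- is not a proof; as written it is an assertion of something equivalent to the target equation, not an argument. There is in fact no freestanding symmetry to invoke. The paper closes the gap by a different choice of connecting path on the $g^{-1}$-side: use $g\gamma_\talpha$ for the $g$-side (so $\gamma_\talpha\cdot g\gamma_\talpha=0$ kills one term, leaving $\gamma_\talpha\cdot g^{-1}\gamma_\tbeta$ after transporting by $g^{-1}$), but use $g^{-1}\gamma_\tbeta$ for the $g^{-1}$-side (so $\gamma_\tbeta\cdot g^{-1}\gamma_\tbeta=0$ kills the other term, leaving $\gamma_\talpha\cdot g^{-1}\gamma_\tbeta$ directly). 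Both sides then equal the \emph{same} intersection number and no symmetry claim is needed. Equivalently, you can finish your own argument by computing $\widetilde\w(g\ty)-\widetilde\w(g\tx)$ a second time with the path $g\gamma_\tbeta$ instead of $g\gamma_\talpha$: the two computations give $\gamma_\talpha\cdot g^{-1}\gamma_\tbeta$ and $\gamma_\talpha\cdot g\gamma_\tbeta$ respectively, proving your symmetry claim. Either way, the missing ingredient is the freedom to choose the $\beta$-path, which your sketch never exploits.

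Part (ii) contains a genuine oversight. Under the arc condition, $\gamma_\talpha$ is an embedding onto a proper subarc $\tA$ of $\talpha$ that projects injectively to $A\subset\alpha$, so $g\tA\cap\tA=\emptyset$ for \emph{every} $g\in\Gamma\setminus\{\id\}$, and likewise $g\tB\cap\tB=\emptyset$. In particular $g\tx,g\ty\notin\tA\cup\tB$ for all $g\ne\id$, so the case ``$g\tx\in\tA\cup\tB$ or $g\ty\in\tA\cup\tB$'' that you set out to handle by a quadrant-by-quadrant stabilizer computation is vacuous. The paper's argument is a one-liner: part (i) applies to every $g\ne\id$, and Lemma~\ref{le:gm} then gives the cancellation formula. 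Your proposed stabilizer analysis is not needed here (it resembles, in spirit, Steps 3--4 of the later Proposition~\ref{prop:gm}, where the arc condition is \emph{not} assumed and such cases genuinely arise), and pursuing it would be both unnecessary and a source of error.
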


\begin{proof}
We prove~(i).  Let $g\in\Gamma$ such that $g\tx,g\ty\notin\tA\cup\tB$ 
and let $\gamma_\talpha,\gamma_\tbeta$ be as in Lemma~\ref{le:AB}.  
Then $\widetilde\w(\tz)$ is the winding number of the loop 
$\gamma_\talpha-\gamma_\tbeta$ about the point 
$\tz\in\C\setminus(\tA\cup\tB)$. Moreover, the paths 
$g\gamma_\talpha,g\gamma_\tbeta:[0,1]\to\C$ 
connect the points $g\tx,g\ty\in\C\setminus(\tA\cup\tB)$. Hence
$$
\widetilde\w(g\ty)-\widetilde\w(g\tx)
= (\gamma_\talpha-\gamma_\tbeta)\cdot g\gamma_\talpha
= (\gamma_\talpha-\gamma_\tbeta)\cdot g\gamma_\tbeta.
$$
Similarly with $g$ replaced by $g^{-1}$. 
Moreover, it follows from Lemma~\ref{le:AB}, that
$$
\tA\cap g\tA =\emptyset,\qquad \tB\cap g^{-1}\tB=\emptyset.
$$
Hence
\begin{eqnarray*}
\widetilde\w(g\ty) - \widetilde\w(g\tx)
&=&  
\left(\gamma_\talpha-\gamma_\tbeta\right)\cdot g\gamma_\talpha \\
&=&  
g\gamma_\talpha\cdot\gamma_\tbeta \\
&=&
\gamma_\talpha\cdot g^{-1}\gamma_\tbeta \\
&=&
\left(\gamma_\talpha-\gamma_\tbeta\right)\cdot g^{-1}\gamma_\tbeta \\
&=&
\widetilde\w(g^{-1}\ty) - \widetilde\w(g^{-1}\tx)
\end{eqnarray*}
Here we have used the fact that every $g\in\Gamma$
is an orientation preserving diffeomorphism
of $\C$. Thus we have proved that
$$
\widetilde\w(g\tx) + \widetilde\w(g^{-1}\ty)
= \widetilde\w(g\ty) + \widetilde\w(g^{-1}\tx).
$$
Since $g\tx,g\ty\notin\tA\cup\tB$, 
we have 
$$
m_{g\tx}(\tLa)=4\widetilde\w(g\tx),\qquad
m_{g^{-1}\ty}(\tLa)=4\widetilde\w(g^{-1}\ty),
$$
and the same identities hold with $g$ replaced by $g^{-1}$.
This proves~(i).

We prove~(ii).
If $\Lambda$ satisfies the arc condition then $g\tA\cap\tA=\emptyset$
and $g\tB\cap\tB=\emptyset$ for every $g\in\Gamma\setminus\{\id\}$.
In particular, for every $g\in\Gamma\setminus\{\id\}$,
we have $g\tx,g\ty\notin\tA\cup\tB$ and hence~\eqref{eq:GM}
holds by~(i).  Hence it follows from Lemma~\ref{le:gm} 
that the cancellation formula~\eqref{eq:gm} holds 
for every $g\in\Gamma\setminus\{\id\}$.
This proves Lemma~\ref{le:wg}. 
\end{proof}

The next lemma deals with $(\alpha,\beta)$-traces
connecting a point $x\in\alpha\cap\beta$ to itself. 
An example on the annulus is depicted in 
Figure~\ref{fig:annulus3}.

\begin{lemma}[{\bf Isotopy Argument}]\label{le:zero} \index{isotopy argument}
Suppose $\Sigma$ is not diffeomorphic to the $2$-sphere.
Let $\Lambda$, $\pi$, $\Gamma$, $\tLa$
be as in Proposition~\ref{prop:gm}.  Suppose that there 
is a deck transformation $g_0\in\Gamma\setminus\{\id\}$
such that $\ty = g_0\tx$.  Then $\Lambda$ has Viterbo--Maslov 
index zero and $m_{g\tx}(\tLa)=0$ for every 
$g\in\Gamma\setminus\{\id,g_0\}$.
\end{lemma}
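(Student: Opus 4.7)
The plan is to descend to the cyclic cover of $\Sigma$ determined by $\langle g_0 \rangle$, where the geometry becomes planar and Proposition \ref{prop:maslovC} applies directly, and then to combine the resulting trace formulas in $\C$ and in this annulus.

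First, I would extract the geometric setup from the hypothesis. Since $\ty = g_0\tx$ with $g_0\ne\id$ and both $\tx,g_0\tx$ lie on $\talpha\cap\tbeta$, the lifts $g_0\talpha$ and $\talpha$ share the point $g_0\tx$; since distinct lifts of a simple curve are disjoint, $g_0\talpha=\talpha$ and similarly $g_0\tbeta=\tbeta$. This forces $\alpha,\beta$ to be non-contractible embedded circles whose lifts $\talpha,\tbeta$ are embedded lines in $\C$, and the paths $\gamma_\talpha,\gamma_\tbeta$ of Lemma \ref{le:AB} may be taken to be embedded arcs $\tA\subset\talpha$, $\tB\subset\tbeta$ from $\tx$ to $g_0\tx$. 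In particular $\tLa$ satisfies the arc condition in $\C$.

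Second, I would form the intermediate cover $\pi_0\colon\C\to\Sigma_0:=\C/\langle g_0\rangle$. The space $\Sigma_0$ is an open annulus, hence planar (it embeds in $\C$). The lines $\talpha,\tbeta$ descend to embedded essential circles $\alpha_0,\beta_0\subset\Sigma_0$, and $\tLa$ pushes forward to an $(\alpha_0,\beta_0)$-trace $\Lambda_0=(x_0,x_0,\w_0)$ with $x_0:=\pi_0(\tx)=\pi_0(\ty)$ and $\w_0(z_0)=\sum_{\tz\in\pi_0^{-1}(z_0)}\widetilde\w(\tz)$. The Viterbo–Maslov index is preserved under covering: $\mu(\Lambda)=\mu(\tLa)=\mu(\Lambda_0)$.

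Third, I would apply Proposition \ref{prop:maslovC} in both planar spaces. In $\C$ (via the arc-condition case) it yields
$$\mu(\tLa)=\tfrac{1}{2}\bigl(m_\tx(\tLa)+m_{g_0\tx}(\tLa)\bigr),$$
while in the planar annulus $\Sigma_0$, together with $x_0=y_0$, it yields $\mu(\Lambda_0)=m_{x_0}(\Lambda_0)$. Because a small circle around $x_0$ lifts to small circles around each point of $\pi_0^{-1}(x_0)=\{g_0^k\tx:k\in\Z\}$, the four values of $\w_0$ around $x_0$ split as sums of the corresponding four values of $\widetilde\w$, giving
$$m_{x_0}(\Lambda_0)=\sum_{k\in\Z} m_{g_0^k\tx}(\tLa).$$

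Fourth, and most delicately, I would establish $m_{g\tx}(\tLa)=0$ for every $g\in\Gamma\setminus\{\id,g_0\}$. By Remark \ref{rmk:trace}(iii), outside $\tA\cup\tB$ the function $\widetilde\w$ is the winding number of the bounded closed loop $\gamma_\talpha-\gamma_\tbeta$. For $g=g_0^k$ with $k\notin\{0,1\}$, the point $g_0^k\tx$ lies on both lines $\talpha,\tbeta$ but strictly past the endpoints of $\tA$ and $\tB$, hence $\nu_\talpha,\nu_\tbeta$ vanish near it and it sits in the unbounded component of $\C\setminus(\tA\cup\tB)$, where the loop has winding number zero. For $g\in\Gamma\setminus\langle g_0\rangle$, a parallel analysis using Lemma \ref{le:AB} (applied to $\tA$ and $\tB$) shows $g\tx\notin\tA\cup\tB$ and that $\widetilde\w$ vanishes in a neighborhood of $g\tx$, with no boundary jumps. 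In all cases $m_{g\tx}(\tLa)=0$, proving the second assertion.

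Finally, substituting the vanishing into the $\Sigma_0$-identity leaves $m_{x_0}(\Lambda_0)=m_\tx(\tLa)+m_{g_0\tx}(\tLa)$, whereas the $\C$-identity gives $\mu(\Lambda)=\tfrac{1}{2}(m_\tx(\tLa)+m_{g_0\tx}(\tLa))$. Equating $\mu(\Lambda)=\mu(\Lambda_0)$ forces $m_\tx(\tLa)+m_{g_0\tx}(\tLa)=0$, hence $\mu(\Lambda)=0$.

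The hardest part will be Step 4: a rigorous verification that both $\widetilde\w$ and its boundary chain vanish near every deck translate $g\tx$ with $g\ne\id,g_0$. The awkward scenario is when $\mathrm{Stab}(\talpha)\cap\mathrm{Stab}(\tbeta)\supsetneq\langle g_0\rangle$, so that additional lifts of $x$ could a priori land inside $\tA$ or $\tB$; these must be excluded by arranging $\gamma_\talpha,\gamma_\tbeta$ as the minimal embedded arcs along $\talpha,\tbeta$ from $\tx$ to $g_0\tx$, after which the winding-number bookkeeping of the bounded loop $\gamma_\talpha-\gamma_\tbeta$ closes the argument.
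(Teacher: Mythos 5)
The overall strategy you take for $\mu(\Lambda)=0$ (descend to the cyclic cover $\C/\langle g_0\rangle$ and compare trace formulas) is close to the paper's, and the arithmetic combining the $\C$- and annulus-identities to extract $\mu=0$ is a nice way to organize that part of the argument, provided the vanishing $m_{g_0^k\tx}(\tLa)=0$ for $k\notin\{0,1\}$ is secured. But that vanishing, and the stronger vanishing for all $g\in\Gamma\setminus\{\id,g_0\}$, is exactly where your Step~4 breaks down; it is the heart of the lemma and you do not have a valid argument for it.

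Two concrete problems. First, your claim ``for $g\in\Gamma\setminus\langle g_0\rangle$, Lemma~\ref{le:AB} shows $g\tx\notin\tA\cup\tB$'' is false whenever $g_0$ is not primitive along $\alpha$. In the paper's coordinates one has $\talpha=\R$, the translation $\tz\mapsto\tz+1$ is a deck transformation, $\tx=0$, $\ty=\ell\in\N$, $g_0(\tz)=\tz+\ell$, $\tA=[0,\ell]$ and $\tbeta=\tbeta+1$, so the arc $\tB$ from $0$ to $\ell$ in $\tbeta$ passes through all the integers $1,\dots,\ell-1$. If $\ell\ge2$, the translations $g$ by $k$ with $1\le k\le\ell-1$ are in $\Gamma\setminus\{\id,g_0\}$, are not in $\langle g_0\rangle$, and satisfy $g\tx=k\in\tA\cap\tB$. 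You cannot ``arrange $\gamma_\talpha,\gamma_\tbeta$ as the minimal embedded arcs'' to avoid this: the arcs are determined by $\p\widetilde\w$, hence by $\Lambda$. The paper disposes of these points by a separate catenation argument (decomposing $\tLa$ as a sum of $\ell$ translates of the length-one trace $\tLa_1$ and summing the identities $m_j(\tLa_1)=0$); your proposal has no analogue of this step.

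Second, even for $g$ with $g\tx\notin\tA\cup\tB$, your assertion that $g\tx$ ``sits in the unbounded component'' of $\C\setminus(\tA\cup\tB)$ where the winding number is zero is not justified. A priori $g\tx$ could lie in a bounded component where $\widetilde\w\ne0$. The paper's argument here is what gives the lemma its name: since $\alpha$ and $\beta$ are homotopic embedded circles with $x$ fixed, Epstein's theorem upgrades the homotopy to an \emph{isotopy} $f(s,t)$ through embedded loops, whose lift $\tf$ realizes $\widetilde\w$ as $\deg(\tf,\cdot)$; and then the injectivity of $s\mapsto f(s,t)$ for each fixed $t$ is exactly what forces $g\tx\notin\tf([0,\ell]\times[0,1])$ unless $g$ is a translation by some $k\in\{0,\dots,\ell\}$, hence $\widetilde\w\equiv0$ near $g\tx$. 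Without invoking Epstein (or an equivalent input) there is no control over which components of $\C\setminus(\tA\cup\tB)$ carry nonzero winding number, and the ``winding-number bookkeeping'' you allude to cannot close.
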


\begin{figure}[htp]
\centering 
\includegraphics[scale=0.275]{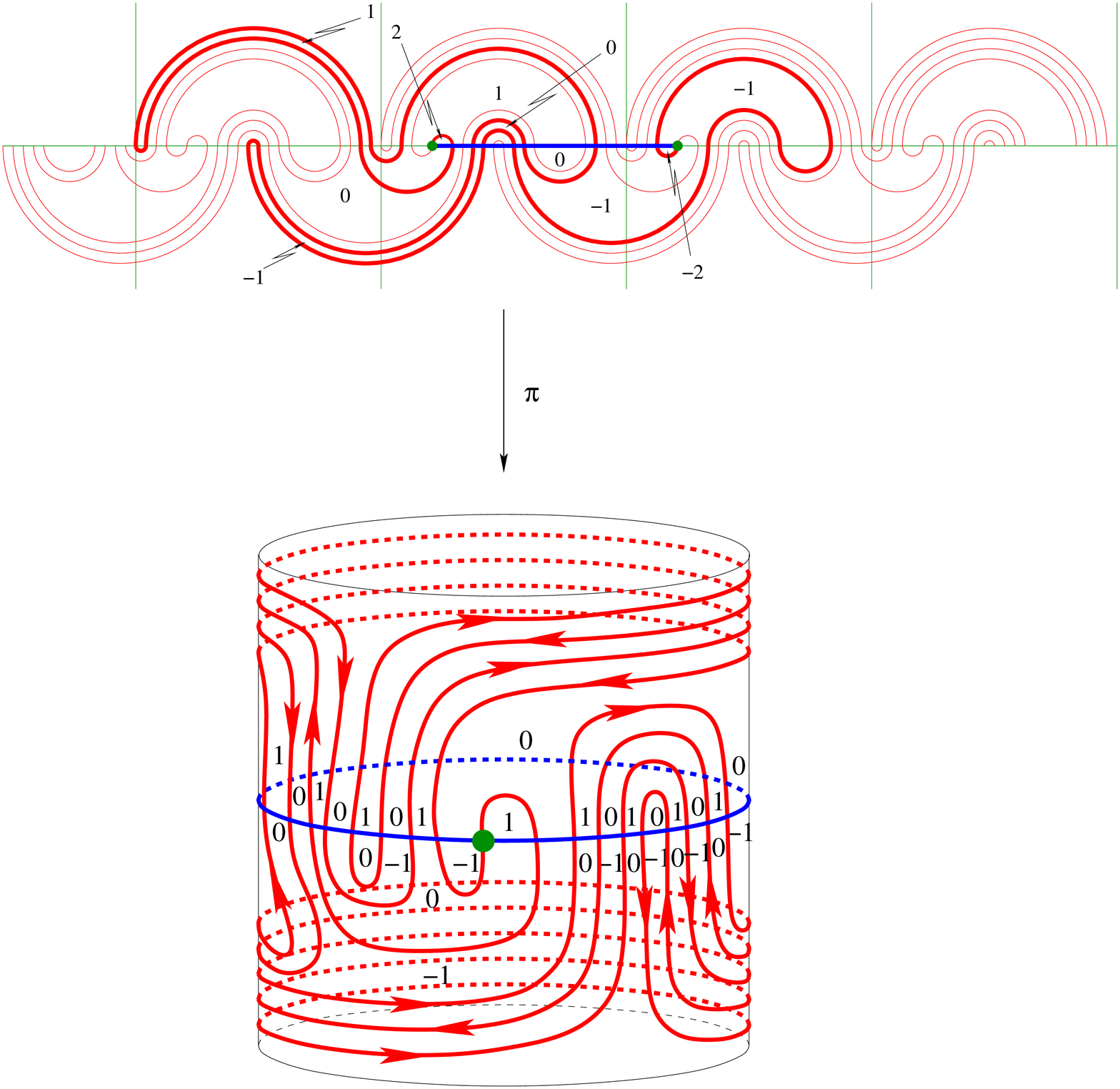} 
\caption{{An $(\alpha,\beta)$-trace on the annulus 
with $x=y$.}}
\label{fig:annulus3}
\end{figure}

\begin{proof}
By hypothesis, we have $\talpha=g_0\talpha$
and $\tbeta=g_0\tbeta$.  Hence $\alpha$ and $\beta$
are noncontractible embedded circles and some iterate 
of $\alpha$ is homotopic to some iterate of $\beta$.
Hence, by Lemma~\ref{le:dbae3}, $\alpha$ must 
be homotopic to $\beta$ (with some orientation).
Hence we may assume, without loss of generality, 
that $\pi(\R)=\alpha$, the map $\tz\mapsto\tz+1$ is a deck transformation, 
$\pi$ maps the interval $[0,1)$ bijectively onto~$\alpha$,
$\R=\talpha$, $\tx=0\in\talpha\cap\tbeta$, $\tbeta=\tbeta+1$,
and that $\ty=\ell>0$ is an integer.  
Then $g_0$ is the translation 
$$
g_0(\tz)=\tz+\ell.
$$ 
Let $\tA:=[0,\ell]\subset\talpha$ and let
$\tB\subset\tbeta$ be the arc connecting $0$ to $\ell$.
Then, for $\tz\in\C\setminus(\tA\cup\tB)$, the integer
$\widetilde\w(\tz)$ is the winding number of $\tA-\tB$ about $\tz$.
Define the projection $\pi_0:\C\to\C$ by
$$
\pi_0(\tz):=e^{2\pi\i\tz/\ell},
$$
denote $\alpha_0:=\pi_0(\talpha)=S^1$ and $\beta_0:=\pi(\tbeta)$,
and let $\Lambda_0=(1,1,\w_0)$ be the induced
$(\alpha_0,\beta_0)$-trace in $\C$ with
$\w_0(z) := \sum_{\tz\in\pi^{-1}(z)}\widetilde\w(\tz)$.
Then $\Lambda_0$ satisfies the conditions of Step~8, Case~3 
in the proof of Proposition~\ref{prop:maslovC} and its boundary is given
by $\nu_{\alpha_0}=\p\w_0|_{\alpha_0\setminus\beta_0}\equiv1$ and 
$\nu_{\beta_0}=\p\w_0|_{\beta_0\setminus\alpha_0}\equiv1$.  
Hence $\Lambda_0$ and $\tLa$ have Viterbo--Maslov index zero.

It remains to prove that $m_{g\tx}(\tLa)=0$ for every 
$g\in\Gamma\setminus\{\id,g_0\}$.  To see this we use 
the fact that the embedded loops $\alpha$
and $\beta$ are homotopic with fixed endpoint $x$.
Hence, by a Theorem of Epstein, they are
isotopic with fixed basepoint $x$
(see~\cite[Theorem~4.1]{EPSTEIN}).
\phantomsection\label{EPSTEIN0}  
Thus there exists a smooth map 
$f:\R/\Z\times[0,1]\to\Sigma$ such that
$$
f(s,0)\in\alpha,\qquad f(s,1)\in\beta,\qquad
f(0,t)=x,
$$
for all $s\in\R/\Z$ and $t\in[0,1]$,
and the map $\R/\Z\to\Sigma:s\mapsto f(s,t)$ 
is an embedding for every $t\in[0,1]$.
Lift this homotopy to the universal cover 
to obtain a map $\tf:\R\times[0,1]\to\C$
such that $\pi\circ\tf = f$ and
$$
\tf(s,0)\in[0,1],\quad
\tf(s,1)\in\tB_1,\quad
\tf(0,t)=\tx,\quad
\tf(s+k,t)=\tf(s,t)+k
$$
for all $s,t\in[0,1]$ and $k\in\Z$.  
Here $\tB_1\subset\tB$ denotes the arc in $\tB$ from $0$ to~$1$.
Since the map $\R/\Z\to\Sigma:s\mapsto f(s,t)$ is injective
for every $t$, we have 
$$
g\tx\notin\{\tx,\tx+1,\dots,\tx+\ell\}\qquad\implies\qquad
g\tx \notin \tf([0,\ell]\times[0,1])
$$
for every every $g\in\Gamma$. Now choose a smooth map 
$\tu:\D\to\C$ with $\Lambda_\tu=\tLa$ (see Theorem~\ref{thm:trace}).  
Define the homotopy $F_\tu:[0,\ell]\times[0,1]\to\C$
by $F_\tu(s,t):=\tu(-\cos(\pi s/\ell),t\sin(\pi s/\ell))$.
Then, by Theorem~\ref{thm:trace}, 
$F_\tu$ is homotopic to $\tf|_{[0,\ell]\times[0,1]}$
subject to the boundary conditions $\tf(s,0)\in\talpha=\R$,
$\tf(s,1)\in\tbeta$, $\tf(0,t)=\tx$, $\tf(\ell,t)=\ty$.
Hence, for every $\tz\in\C\setminus(\talpha\cup\tbeta)$, we have 
$$
\widetilde\w(\tz) = \deg(\tu,z) = \deg(F_\tu,\tz) = \deg(\tf,\tz).
$$
In particular, choosing $\tz$ near $g\tx$, we find
$
m_{g\tx}(\tLa) = 4\deg(\tf,g\tx) = 0
$
for every $g\in\Gamma$ that is not one of the translations 
$\tz\mapsto\tz+k$ for $k=0,1,\dots,\ell$.  This proves the
assertion in the case $\ell=1$.  

\bigbreak

If $\ell>1$ it remains to prove $m_k(\tLa)=0$ for $k=1,\dots,\ell-1$.
To see this, let $\tA_1:=[0,1]$, $\tB_1\subset\tB$ 
be the arc from $0$ to $1$, $\widetilde\w_1(\tz)$ be the winding 
number of $\tA_1-\tB_1$ about $\tz\in\C\setminus(\tA_1\cup\tB_1)$,
and define 
$
\tLa_1 := (0,1,\widetilde\w_1).
$
Then, by what we have already proved, the 
$(\talpha,\tbeta)$-trace $\tLa_1$ satisfies $m_{g\tx}(\tLa_1)=0$
for every $g\in\Gamma$ other than the translations by $0$ or $1$.
In particular, we have $m_j(\tLa_1)=0$ for every $j\in\Z\setminus\{0,1\}$
and also $m_0(\tLa_1)+m_1(\tLa_1)=2\mu(\tLa_1)=0$.
Since $\widetilde\w(\tz)=\sum_{j=0}^{\ell-1}\widetilde\w_1(\tz-j)$
for $\tz\in\C\setminus(\tA\cup\tB)$, we obtain
$$
m_k(\tLa) = \sum_{j=0}^{\ell-1}m_{k-j}(\tLa_1) = 0
$$
for every $k\in\Z\setminus\{0,\ell\}$. 
This proves Lemma~\ref{le:zero}.
\end{proof}

The next example shows that Lemma~\ref{le:wg} cannot 
be strengthened to assert the identity $m_{g\tx}(\tLa)=0$ 
for every $g\in\Gamma$ with $g\tx,g\ty\notin\tA\cup\tB$.

\begin{example}\label{ex:annulus1}\rm
Figure~\ref{fig:annulus1} depicts an $(\alpha,\beta)$-trace 
$\Lambda=(x,y,\w)$ on the annulus $\Sigma=\C/\Z$ 
that has Viterbo--Maslov index one and satisfies the arc condition. 
The lift satisfies $m_\tx(\tLa)=-3$,  $m_{\tx+1}(\tLa)=4$, 
$m_{\ty}(\tLa)=5$, and $m_{\ty-1}(\tLa)=-4$. 
Thus $m_x(\Lambda)=m_y(\Lambda)=1$.
\begin{figure}[htp]
\centering 
\includegraphics[scale=0.4]{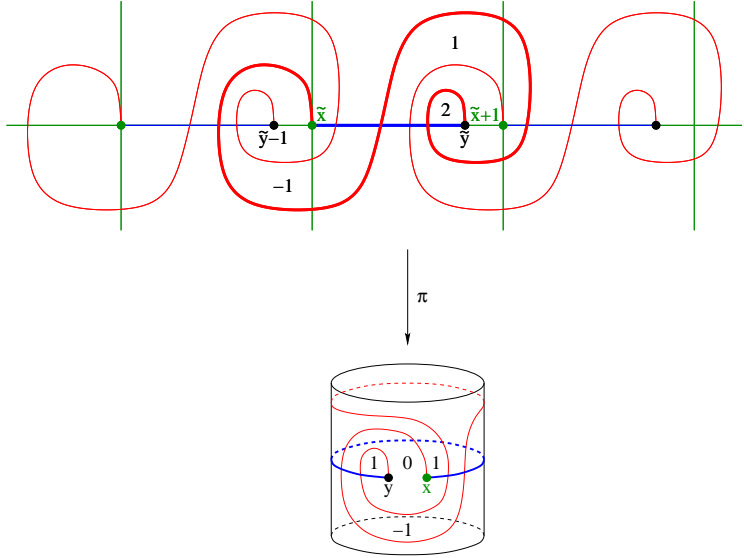} 
\caption{{An $(\alpha,\beta)$-trace on the annulus 
satisfying the arc condition.}}
\label{fig:annulus1}
\end{figure}
\end{example}

\begin{proof}[Proof of Proposition~\ref{prop:gm}] 
\phantomsection\label{proof:gm}
The proof has five steps.  

\medskip\noindent{\bf Step~1.}
{\it Let $\tA,\tB\subset\C$ be as in Lemma~\ref{le:AB}
and let $g\in\Gamma$ such that
$$
g\tx\in\tA\setminus\tB,\qquad g\ty\notin\tA\cup\tB.
$$ 
(An example is depicted in Figure~\ref{fig:torus}.)
Then~\eqref{eq:GM} holds.}

\begin{figure}[htp]
\centering 
\includegraphics[scale=0.2]{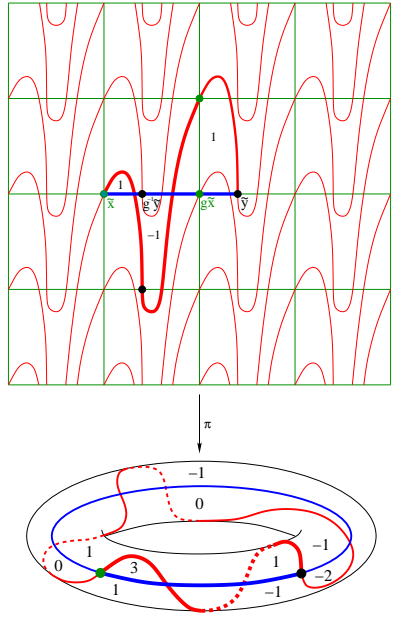} 
\caption{{An $(\alpha,\beta)$-trace on the torus 
not satisfying the arc condition.}}
\label{fig:torus}
\end{figure}

\medskip\noindent
The proof is a refinement of the winding number 
comparison argument in Lemma~\ref{le:wg}.
Since $g\tx\notin\tB$ we have $g\ne\id$ and, 
since $\tx,g\tx\in\tA\subset\talpha$,
it follows that $\alpha$ is a noncontractible embedded circle.
Hence we may choose the universal covering $\pi:\C\to\Sigma$
and the lifts $\talpha$, $\tbeta$, $\tLa$ such that $\pi(\R)=\alpha$, 
the map $\tz\mapsto\tz+1$ is a deck transformation, 
the projection $\pi$ maps the interval $[0,1)$ 
bijectively onto $\alpha$, and
$$
\talpha=\R,\qquad \tx=0\in\talpha\cap\tbeta,\qquad \ty>0.
$$
By hypothesis and Lemma~\ref{le:AB} 
there is an integer $k$ such that
$$
0<k<\ty,\qquad g\tx=k,\qquad g^{-1}\ty = \ty-k.
$$
Thus $g$ is the deck transformation $\tz\mapsto\tz+k$.

\smallbreak

Since $g\tx\notin\tB$ and $g\ty\notin\tB$ it follows from 
Lemma~\ref{le:AB} that $g^{-1}\ty\notin\tB$ 
and $g^{-1}\tx\notin\tB$ and hence, again
by Lemma~\ref{le:AB}, we have
$$
\tB\cap g\tB=\tB\cap g^{-1}\tB=\emptyset.
$$ 
With $\gamma_\talpha$ and $\gamma_\tbeta$ chosen as in 
Lemma~\ref{le:AB}, this implies
\begin{equation}\label{eq:zero}
\gamma_\tbeta\cdot(\gamma_\tbeta-k)
= (\gamma_\tbeta+k)\cdot\gamma_\tbeta
= 0.
\end{equation}
Since $k,-k,\ty+k,\ty-k\notin\tB$, there exists
a constant $\eps>0$ such that
$$
-\eps\le t\le\eps\qquad\implies\qquad
k+\i t,\;\;-k+\i t,\;\;
\ty-k+\i t,\;\;\ty+k+\i t\notin\tB.
$$
The paths $g\gamma_\talpha\pm\i\eps$ 
and $g\gamma_\tbeta\pm\i\eps$ both
connect the point $g\tx\pm\i\eps$ to $g\ty\pm\i\eps$.
Likewise, the paths $g^{-1}\gamma_\talpha\pm\i\eps$ 
and $g^{-1}\gamma_\tbeta\pm\i\eps$ both
connect the point $g^{-1}\tx\pm\i\eps$ to $g^{-1}\ty\pm\i\eps$.
Hence
\begin{eqnarray*}
\widetilde\w(g\ty\pm\i\eps)
- \widetilde\w(g\tx\pm\i\eps)
&=&
(\gamma_\talpha-\gamma_\tbeta)\cdot(g\gamma_\talpha\pm\i\eps) \\
&=&
(\gamma_\talpha-\gamma_\tbeta)\cdot(\gamma_\talpha+k\pm\i\eps) \\
&=&
(\gamma_\talpha+k\pm\i\eps)\cdot\gamma_\tbeta \\
&=&
\gamma_\talpha\cdot(\gamma_\tbeta-k\mp\i\eps) \\
&=&
(\gamma_\talpha-\gamma_\tbeta)\cdot(\gamma_\tbeta-k\mp\i\eps) \\
&=&
(\gamma_\talpha-\gamma_\tbeta)\cdot(g^{-1}\gamma_\tbeta\mp\i\eps) \\
&=&
\widetilde\w(g^{-1}\ty\mp\i\eps)
- \widetilde\w(g^{-1}\tx\mp\i\eps).
\end{eqnarray*}
Here the last but one equation follows from~\eqref{eq:zero}.
Thus we have proved
\begin{equation}\label{eq:step1}
\begin{split}
\widetilde\w(g\tx+\i\eps) + \widetilde\w(g^{-1}\ty-\i\eps)
&= \widetilde\w(g^{-1}\tx-\i\eps) + \widetilde\w(g\ty+\i\eps), \\
\widetilde\w(g\tx-\i\eps) + \widetilde\w(g^{-1}\ty+\i\eps)
&= \widetilde\w(g^{-1}\tx+\i\eps) + \widetilde\w(g\ty-\i\eps).
\end{split}
\end{equation}
Since
\begin{equation*}
\begin{split}
m_{g\tx}(\tLa) &= 2\widetilde\w(g\tx+\i\eps) 
+ 2\widetilde\w(g\tx-\i\eps), \\
m_{g\ty}(\tLa) &= 2\widetilde\w(g\ty+\i\eps) 
+ 2\widetilde\w(g\ty-\i\eps), \\
m_{g^{-1}\tx}(\tLa) &= 2\widetilde\w(g^{-1}\tx+\i\eps) 
+ 2\widetilde\w(g^{-1}\tx-\i\eps),\\
m_{g^{-1}\ty}(\tLa) &= 2\widetilde\w(g^{-1}\ty+\i\eps) 
+ 2\widetilde\w(g^{-1}\ty-\i\eps),
\end{split}
\end{equation*}
Step~1 follows by taking the sum 
of the two equations in~\eqref{eq:step1}.

\medskip\noindent{\bf Step~2.}
{\it Let $\tA,\tB\subset\C$ be as in Lemma~\ref{le:AB}
and let $g\in\Gamma$.  Suppose that either
$g\tx,g\ty\notin\tA$ or $g\tx,g\ty\notin\tB$.
Then~\eqref{eq:GM} holds.}

\medskip\noindent
If $g\tx,g\ty\notin\tA\cup\tB$ the assertion 
follows from Lemma~\ref{le:wg}.
If $g\tx\in\tA\setminus\tB$ and $g\ty\notin\tA\cup\tB$
the assertion follows from Step~1.
If $g\tx\notin\tA\cup\tB$ and $g\ty\in\tA\setminus\tB$
the assertion follows from Step~1 by interchanging $\tx$
and $\ty$.   Namely, \eqref{eq:GM} 
holds for $\tLa$ if and only if it holds for
the $(\talpha,\tbeta)$-trace
$
-\tLa:=(\ty,\tx,-\widetilde\w).
$
This covers the case $g\tx,g\ty\notin\tB$. 
If $g\tx,g\ty\notin\tA$ the assertion follows 
by interchanging $\tA$ and $\tB$.
Namely, \eqref{eq:GM} 
holds for $\tLa$ if and only if it holds for
the $(\tbeta,\talpha)$-trace
$
\tLa^*:=(\tx,\ty,-\widetilde\w).
$
This proves Step~2.

\medskip\noindent{\bf Step~3.}
{\it Let $\tA,\tB\subset\C$ be as in Lemma~\ref{le:AB}
and let $g\in\Gamma$ such that
$$
g\tx\in\tA\setminus\tB,\qquad g\ty\in\tB\setminus\tA.
$$ 
(An example is depicted in Figure~\ref{fig:annulus2}.)
Then the cancellation formula~\eqref{eq:gm} 
holds for $g$ and $g^{-1}$.}

\begin{figure}[htp]
\centering 
\includegraphics[scale=0.28]{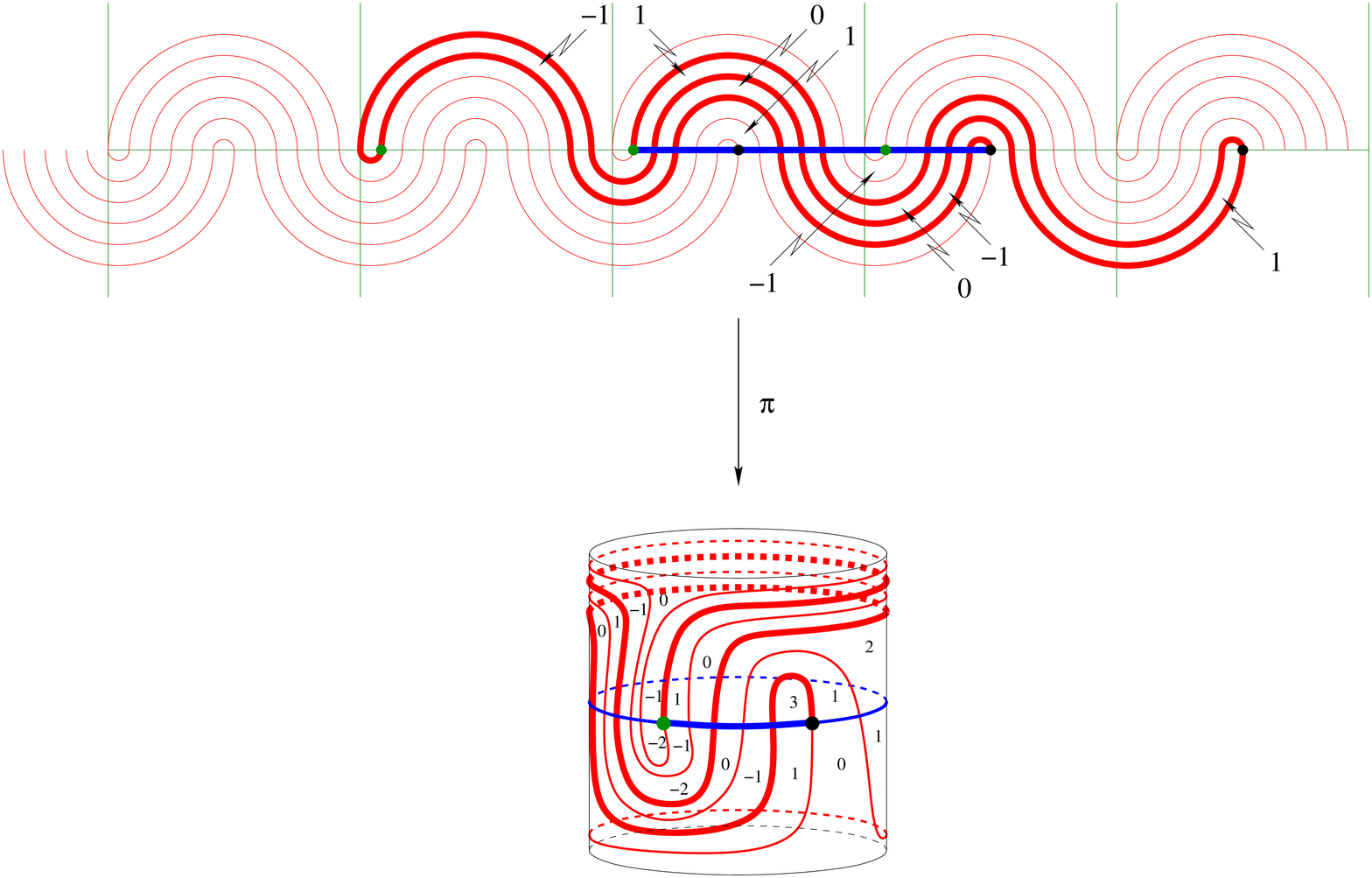} 
\caption{{An $(\alpha,\beta)$-trace on the annulus
with $g\tx\in\tA$ and $g\ty\in\tB$.}}
\label{fig:annulus2}
\end{figure}

\medskip\noindent
Since $g\tx\notin\tB$ 
(and $g\ty\notin\tA$) we have $g\ne\id$ and, 
since $\tx,g\tx\in\tA\subset\talpha$ and $\ty,g\ty\in\tB\subset\tbeta$,
it follows that $g\talpha=\talpha$ and $g\tbeta=\tbeta$.
Hence $\alpha$ and $\beta$ are noncontractible embedded circles
and some iterate of $\alpha$ is homotopic to some iterate of $\beta$.
So $\alpha$ is homotopic to~$\beta$ (with some orientation), 
by Lemma~\ref{le:dbae3}.

Choose the universal covering $\pi:\C\to\Sigma$
and the lifts ${\talpha,\tbeta,\tLa}$ such that $\pi(\R)=\alpha$, 
the map $\tz\mapsto\tz+1$ is a deck transformation, 
$\pi$~maps the interval $[0,1)$ bijectively onto $\alpha$, and 
$$
\talpha=\R,\qquad 
\tx=0\in\talpha\cap\tbeta,\qquad 
\ty>0.
$$
Thus $\tA=[0,\ty]$ is the arc in $\talpha$ from $0$ to $\ty$ 
and $\tB$ is the arc in $\tbeta$ from $0$ to $\ty$. 
Moreover, since $\alpha$ is homotopic to $\beta$, we have
$$
\tbeta=\tbeta+1
$$ 
and the arc in $\tbeta$ from $0$ to $1$ is a fundamental domain 
for $\beta$.  Since $g\talpha=\talpha$, the deck transformation $g$ 
is given by $\tz\mapsto\tz+\ell$ for some integer $\ell$.  
Since $g\tx\in\tA\setminus\tB$ and $g\ty\in\tB$,
we have $g^{-1}\ty\notin\tB$ and $g^{-1}\tx\in\tB$ 
by Lemma~\ref{le:AB}.  Hence
$$
0< \ell<\ty,\qquad \ell\notin\tB,\qquad \ty+\ell\in\tB,\qquad
\ty-\ell\notin\tB,\qquad -\ell\in\tB.
$$
This shows that, walking along $\tbeta$ from $0$ to $\ty$
(traversing $\tB$) one encounters some negative integer 
and therefore no positive integers.  Hence
$$
\tA\cap\Z=\left\{0,1,2,\cdots,k_\tA\right\},\qquad
\tB\cap\Z=\left\{0,-1,-2,\cdots,-k_\tB\right\},
$$
where $k_\tA$ is the number of fundamental domains
of $\talpha$ contained in $\tA$ and $k_\tB$ is the number of 
fundamental domains of $\tbeta$ contained in $\tB$ 
(see Figure~\ref{fig:annulus2}).
For $0\le k\le k_\tA$ let $\tA_k\subset\talpha$ and 
$\tB_k\subset\tbeta$ be the arcs from $0$ to $\ty-k$.
Thus $\tA_k$ is obtained from $\tA$ by removing $k$ 
fundamental domains at the end, while $\tB_k$
is obtained from $\tB$ by attaching $k$ fundamental 
domains at the end.  Consider the $(\talpha,\tbeta)$-trace
$$
\tLa_k:=(0,\ty-k,\widetilde\w_k),\qquad
\p\tLa_k:=(0,\ty-k,\tA_k,\tB_k),
$$
where $\widetilde\w_k:\C\setminus(\tA_k\cup\tB_k)\to\Z$ 
is the winding number of $\tA_k-\tB_k$. Then
$$
\tB_k\cap\Z = \left\{0,-1,-2,\cdots,-k_\tB-k\right\}
$$
and $\tLa_0=\tLa$.  We prove that, for each $k$,
the $(\talpha,\tbeta)$-trace $\tLa_k$ satisfies 
\begin{equation}\label{eq:jmk}
m_j(\tLa_k) + m_{\ty-k-j}(\tLa_k) = 0\qquad
\forall\,j\in\Z\setminus\{0\}.
\end{equation} 
If $\ty$ is an integer, then~\eqref{eq:jmk} 
follows from Lemma~\ref{le:zero}. 
Hence we may assume that $\ty$ is not an integer. 

\bigbreak

We prove equation~\eqref{eq:jmk} by reverse induction on $k$.
First let $k=k_\tA$.  Then we have $j,\ty-k+j\notin\tA_k$ 
for every $j\in\N$.  Hence it follows from Step~2 that
\begin{equation}\label{eq:JMK}
m_j(\tLa_k) + m_{\ty-k-j}(\tLa_k) = 
m_{-j}(\tLa_k) + m_{\ty-k+j}(\tLa)
\qquad\forall\,j\in\N.
\end{equation} 
Thus we can apply Lemma~\ref{le:gm}
to the projection of $\tLa_k$ to the quotient $\C/\Z$.
Hence $\tLa_k$ satisfies~\eqref{eq:jmk}.

Now fix an integer $k\in\{0,1,\dots,k_\tA-1\}$ and suppose,
by induction, that $\tLa_{k+1}$ satisfies~\eqref{eq:jmk}. 
Denote by $\tA'\subset\talpha$ and $\tB'\subset\tbeta$
the arcs from $\ty-k-1$ to $1$, and by
$\tA''\subset\talpha$ and $\tB''\subset\tbeta$
the arcs from $1$ to $\ty-k$.  
Then $\tLa_k$ is the catenation 
of the $(\talpha,\tbeta)$-traces
\begin{equation*}
\begin{split}
\tLa_{k+1} := (0,\ty-k-1,\widetilde\w_{k+1}),\quad
&\p\tLa_{k+1} = (0,\ty-k-1,\tA_{k+1},\tB_{k+1}),\\
\tLa' := (\ty-k-1,1,\widetilde\w'),\quad
&\p\tLa' = (\ty-k-1,1,\tA',\tB'),\\
\tLa'' :=  (1,\ty-k,\widetilde\w''),\quad
&\p\tLa'' =  (1,\ty-k,\tA'',\tB'').
\end{split}
\end{equation*}
Here $\widetilde\w'(\tz)$ is the winding number of 
the loop $\tA'-\tB'$ about $\tz\in\C\setminus(\tA'\cup\tB')$ 
and simiarly for $\widetilde\w''$. 
Note that $\tLa''$ is the shift of $\tLa_{k+1}$ by $1$. 
The catenation of $\tLa_{k+1}$ and $\tLa'$ 
is the $(\talpha,\tbeta)$-trace from $0$ to $1$.
Hence it has Viterbo--Maslov index zero, 
by Lemma~\ref{le:zero}, and satisfies
\begin{equation}\label{eq:step3a}
m_j(\tLa_{k+1})+m_j(\tLa')=0\qquad
\forall j\in\Z\setminus\{0,1\}.
\end{equation}
Since the catenation of $\tLa'$ and $\tLa''$ is the 
$(\talpha,\tbeta)$-trace from $\ty-k-1$ to $\ty-k$,
it also has Viterbo--Maslov index zero and satisfies
\begin{equation}\label{eq:step3b}
m_{\ty-k-j}(\tLa')+m_{\ty-k-j}(\tLa'') = 0
\qquad\forall j\in\Z\setminus\{0,1\}. 
\end{equation}
Moreover, by the induction hypothesis, we have 
\begin{equation}\label{eq:step3c}
m_j(\tLa_{k+1})+m_{\ty-k-1-j}(\tLa_{k+1}) = 0
\qquad\forall j\in\Z\setminus\{0\}. 
\end{equation}
Combining the equations~\eqref{eq:step3a}, \eqref{eq:step3b}, \eqref{eq:step3c} 
we find that, for $j\in\Z\setminus\{0,1\}$, 
\begin{eqnarray*}
m_j(\tLa_k)+m_{\ty-k-j}(\tLa_k) 
&=&
m_j(\tLa_{k+1})+m_j(\tLa')+m_j(\tLa'') \\
&&
+\,m_{\ty-k-j}(\tLa_{k+1}) + m_{\ty-k-j}(\tLa') +m_{\ty-k-j}(\tLa'') \\
&=&
m_j(\tLa_{k+1})+m_j(\tLa') \\
&&
+\, m_{\ty-k-j}(\tLa') + m_{\ty-k-j}(\tLa'')  \\
&&
+\, m_{j-1}(\tLa_{k+1}) + m_{\ty-k-j}(\tLa_{k+1})  \\
&=&
0.
\end{eqnarray*}
For $j=1$ we obtain
\begin{eqnarray*}
m_1(\tLa_k)+m_{\ty-k-1}(\tLa_k) 
&=&
m_1(\tLa_{k+1})+m_1(\tLa')+m_1(\tLa'') \\
&&
+\,m_{\ty-k-1}(\tLa_{k+1}) + m_{\ty-k-1}(\tLa') +m_{\ty-k-1}(\tLa'') \\
&=&
m_1(\tLa_{k+1}) + m_{\ty-k-2}(\tLa_{k+1})  \\
&&
+\, m_0(\tLa_{k+1}) + m_{\ty-k-1}(\tLa_{k+1}) \\
&&
+\, m_{\ty-k-1}(\tLa') + m_1(\tLa')  \\
&=&
2\mu(\tLa_{k+1}) + 2\mu(\tLa') \\
&=&
0.
\end{eqnarray*}
Here the last but one equation follows from 
equation~\eqref{eq:step3c} and Proposition~\ref{prop:maslovC},
and the last equation follows from Lemma~\ref{le:zero}.
Hence $\tLa_k$ satisfies~\eqref{eq:jmk}.
This completes the induction argument for the proof of Step~3.

\medskip\noindent{\bf Step~4.}
{\it Let $\tA,\tB\subset\C$ be as in Lemma~\ref{le:AB}
and let $g\in\Gamma$ such that
$$
g\tx\in\tA\cap\tB,\qquad g\ty\notin\tA\cup\tB.
$$ 
Then the cancellation formula~\eqref{eq:gm} holds for $g$ and $g^{-1}$.}

\medskip\noindent
The proof is by induction and catenation 
based on Step~2 and Lemma~\ref{le:zero}.  
Since $g\ty\notin\tA\cup\tB$ we have $g\ne\id$.
Since $g\tx\in\tA\cap\tB$ we have $\talpha=g\talpha$ and $\tbeta=g\tbeta$.  
Hence $\alpha$ and $\beta$ are noncontractible 
embedded circles, and they are homotopic 
(with some orientation) by Lemma~\ref{le:dbae3}.
Thus we may choose $\pi:\C\to\Sigma$, $\talpha$, $\tbeta$, $\tLa$ 
as in Step~3.  By hypothesis there is an integer $k\in\tA\cap\tB$. 
Hence $\tA$ and $\tB$ do not contain any negative integers.
Choose $k_\tA,k_\tB\in\N$ such that
$$
\tA\cap\Z=\left\{0,1,\dots,k_\tA\right\},\qquad
\tB\cap\Z=\left\{0,1,\dots,k_\tB\right\}.
$$
Assume without loss of generality that 
$$
k_\tA\le k_\tB.
$$ 
For $0\le k\le k_\tA$ denote by $\tA_k\subset\tA$ and 
$\tB_k\subset\tB$ the arcs from $0$ to $\ty-k$
and consider the $(\talpha,\tbeta)$-trace
$$
\tLa_k:=(0,\ty-k,\widetilde\w_k),\qquad
\p\tLa_k:=(0,\ty-k,\tA_k,\tB_k).
$$
In this case 
$$
\tB_k\cap\Z = \{0,1,\dots,k_\tB-k\}.
$$
As in Step~3, it follows by reverse induction on $k$ 
that $\tLa_k$ satisfies~\eqref{eq:jmk} for every $k$.   
We assume again that~$\ty$ is not an integer. 
(Otherwise~\eqref{eq:jmk} follows from Lemma~\ref{le:zero}).
If $k=k_\tA$ then $j,\ty-k+j\notin\tA_k$ for every $j\in\N$,
hence it follows from Step~2 that $\tLa_k$ satisfies~\eqref{eq:JMK},
and hence it follows from Lemma~\ref{le:gm} for the 
projection of $\tLa_k$ to the annulus $\C/\Z$ that $\tLa_k$
also satisfies~\eqref{eq:jmk}.  The induction step is verbatim the 
same as in Step~3 and will be omitted. 
This proves Step~4.

\medskip\noindent{\bf Step~5.}
{\it We prove Proposition~\ref{prop:gm}.}

\medskip\noindent
If both points $g\tx,g\ty$ are contained in $\tA$
(or in $\tB$) then $g=\id$ by Lemma~\ref{le:AB}, and in this 
case equation~\eqref{eq:GM} is a tautology.
If both points $g\tx,g\ty$ are not contained in $\tA\cup\tB$,
equation~\eqref{eq:GM} has been established in Lemma~\ref{le:wg}.
Moreover, we can interchange $\tx$ and $\ty$
or $\tA$ and $\tB$ as in the proof of Step~2.
Thus Steps~1 and~4 cover the case where precisely one 
of the points $g\tx,g\ty$ is contained in $\tA\cup\tB$
while Step~3 covers the case where $g\ne\id$ and 
both points $g\tx,g\ty$ are contained in $\tA\cup\tB$. 
This shows that equation~\eqref{eq:GM} holds for every 
$g\in\Gamma\setminus\{\id\}$. Hence, by Lemma~\ref{le:gm}, 
the cancellation formula~\eqref{eq:gm} holds 
for every $g\in\Gamma\setminus\{\id\}$.
This proves Proposition~\ref{prop:gm}.
\end{proof}

\begin{proof}[Proof of Theorem~\ref{thm:maslov} 
in the Non Simply Connected Case] 
\phantomsection\label{proof:maslov2}
Choose a universal covering 
$\pi:\C\to\Sigma$ and let $\Gamma$, $\talpha$, $\tbeta$, 
and $\tLa=(\tx,\ty,\widetilde\w)$ be as in Proposition~\ref{prop:gm}.  
Then
$$
m_x(\Lambda)+m_y(\Lambda)
-m_\tx(\tLa)-m_\ty(\tLa)
= \sum_{g\ne\id}\left(m_{g\tx}(\tLa) 
+ m_{g^{-1}\ty}(\tLa)\right)
= 0.
$$
Here the last equation follows from the cancellation formula 
in Proposition~\ref{prop:gm}. Hence, by 
Proposition~\ref{prop:maslovC}, we have
$$
\mu(\Lambda) = \mu(\tLa)
=\frac{m_\tx(\tLa)+m_\ty(\tLa)}{2}
=\frac{m_x(\Lambda)+m_y(\Lambda)}{2}.
$$
This proves the trace formula in the case where 
$\Sigma$ is not simply connected.
\end{proof}

\newpage
\part*{II. Combinatorial Lunes}
\addcontentsline{toc}{part}{Part II. Combinatorial Lunes}


\section{Lunes and Traces}\label{sec:LUNE}

We denote the universal covering of $\Sigma$ by\hypertarget{Part_II}{}
$$
\pi:\tSi\to\Sigma
$$
and, when $\Sigma$ is not diffeomorphic to the $2$-sphere,
we assume $\tSi=\C$. 

\begin{definition}[{\bf Smooth Lunes}]\label{def:lune}\rm 
\index{smooth!lune}\index{lune!smooth} 
Assume~\hyperlink{property_H}{(H)}.  
A {\bf smooth $(\alpha,\beta)$-lune} is an orientation 
preserving immersion $u:\D\to\Sigma$ such that
$$
u(\D\cap\R)\subset\alpha,\qquad
u(\D\cap S^1)\subset\beta,
$$
Three examples of smooth lunes are depicted 
in Figure~\ref{fig:3lunes}.
Two lunes are said to be {\bf equivalent} 
\index{equivalent!lunes}\index{lunes!equivalent}
iff there is an orientation preserving diffeomorphism
$\phi:\D\to\D$ such that
$$
\phi(-1)=-1,\qquad
\phi(1)=1,\qquad
u'=u\circ\phi.
$$
The equivalence class of $u$ is denoted by $[u]$.
That $u$ is an immersion means that $u$ is smooth and $du$
is injective in all of~$\D$, even at the corners $\pm1$.
The set $u(\D\cap\R)$
is called the {\bf bottom boundary} of the lune, \index{bottom boundary}
and the set $u(\D\cap S^1)$
is called the {\bf top boundary}. \index{top boundary}
The points  
$$
x=u(-1),\qquad y=u(1)
$$
are called respectively the left and right 
{\bf endpoints} of the lune. \index{endpoints of a lune}
\index{lune!endpoints of} The locally constant function
$$
\Sigma\setminus u(\p\D)\to\N: z\mapsto\# u^{-1}(z)
$$
is called the {\bf counting function} of the lune. 
\index{counting function of a lune}\index{lune!counting function of}
(This function is locally constant because a proper 
local homeomorphism is a covering projection.)  
A smooth lune is said to be {\bf embedded} 
iff the map $u$ is injective. 
\index{embedded lune}\index{lune!embedded}
These notions depend only on the equivalence 
class $[u]$ of the smooth lune $u$.
\end{definition}

Our objective is to characterize smooth lunes in terms 
of their boundary behavior, i.e.\ to say when a pair of immersions 
${u_\alpha:(\D\cap\R,-1,1)\to(\alpha,x,y)}$ and 
${u_\beta:(\D\cap S^1,-1,1)\to(\beta,x,y)}$ extends 
to a smooth $(\alpha,\beta)$-lune $u$.  
Recall the following definitions and theorems from~Part I.
\begin{figure}[htp] 
\centering 
\includegraphics[scale=.9]{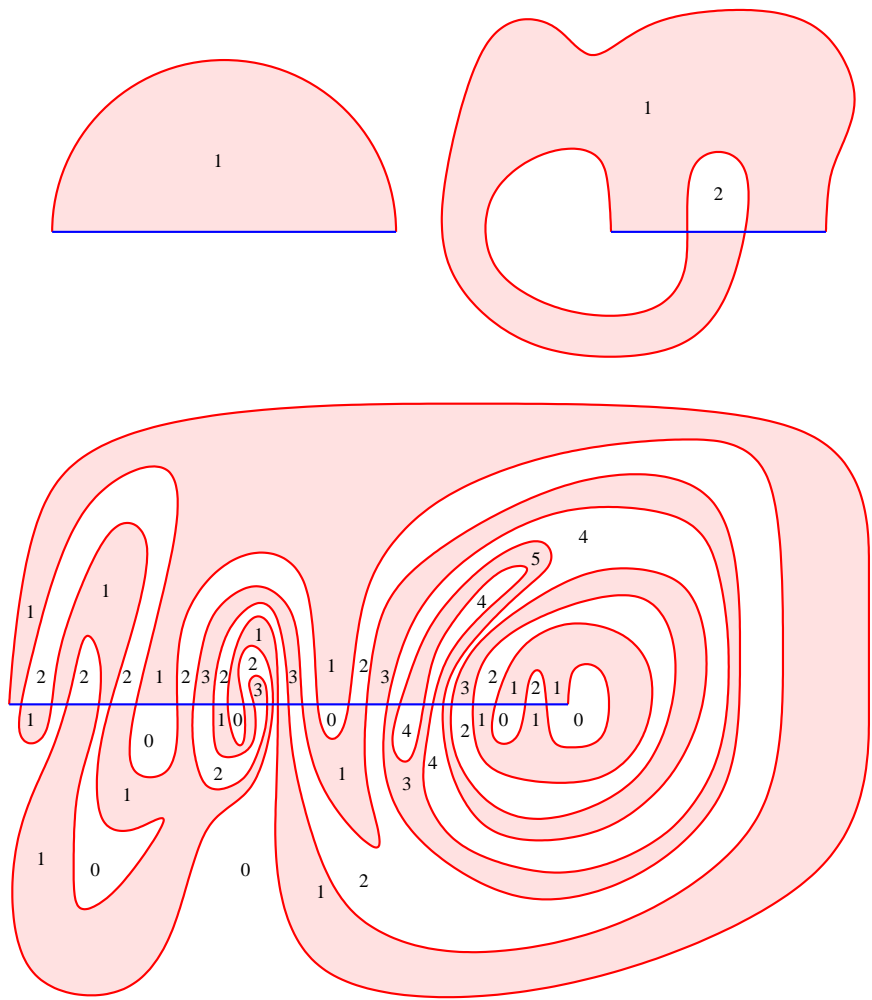}      
\caption{{Three lunes.}}\label{fig:3lunes}      
\end{figure}

\begin{definition}[{\bf Traces}]
\label{def:trace-CF}\rm \index{trace}
Assume~\hyperlink{property_H}{(H)}. 
An {\bf $(\alpha,\beta)$-trace} is a triple
$$
\Lambda=(x,y,\w)
$$
such that $x,y\in\alpha\cap\beta$ and 
$\w:\Sigma\setminus(\alpha\cup\beta)\to\Z$
is a locally constant function such that 
there exists a smooth map $u:\D\to\Sigma$ 
satisfying 
\begin{equation}\label{eq:bc}
u(\D\cap\R)\subset\alpha,\qquad u(\D\cap S^1)\subset\beta,
\end{equation}
\begin{equation}\label{eq:xy}
u(-1)=x,\qquad u(1)=y,
\end{equation}
\begin{equation}\label{eq:w-CF}
\w(z) = \deg(u,z), \qquad z\in\Sigma\setminus(\alpha\cup\beta).
\end{equation}
The $(\alpha,\beta)$-trace associated to a smooth map
$u:\D\to\Sigma$ satisfying~\eqref{eq:bc} is denoted by $\Lambda_u$. 

The {\bf boundary of an $(\alpha,\beta)$-trace} 
$\Lambda=(x,y,\w)$ is the triple 
$$
\p\Lambda:=(x,y,\p\w).  
$$
Here
$$
\p\w:(\alpha\setminus\beta)
\cup(\beta\setminus\alpha)\to\Z
$$
is the locally constant function
that assigns to $z\in\alpha\setminus\beta$ 
the value of $\w$ slightly to the left of $\alpha$  
minus the value of $\w$ slightly to the right of $\alpha$ 
near $z$, and to $z\in\beta\setminus\alpha$ 
the value of $\w$ slightly to the right of $\beta$ 
minus the value of $\w$ slightly to the left of $\beta$ 
near $z$.
\end{definition}

In Lemma~\ref{le:boundary} above it was shown that,  
if $\Lambda=(x,y,\w)$ is the $(\alpha,\beta)$-trace 
of a smooth map $u:\D\to\Sigma$ that satisfies~\eqref{eq:bc}, 
then $\p\Lambda_u=(x,y,\nu)$, where the function 
$\nu:=\p\w:(\alpha\setminus\beta)
\cup(\beta\setminus\alpha)\to\Z$
is given by
\begin{equation}\label{eq:nuu}
\nu(z) = 
\left\{\begin{array}{rl}
\deg(u\big|_{\p\D\cap\R\;\,}:\p\D\cap\R\;\to\alpha,z),&
\mbox{for } z\in\alpha\setminus\beta, \\
-\deg(u\big|_{\p\D\cap S^1}:\p\D\cap S^1\to\beta,z),&
\mbox{for } z\in\beta\setminus\alpha.
\end{array}\right.
\end{equation}
Here we orient the one-manifolds $\D\cap\R$ and 
$\D\cap S^1$ from $-1$ to $+1$. 
Moreover, in Theorem~\ref{thm:trace} above it was shown 
that the homotopy class of a smooth map $u:\D\to\Sigma$ 
satisfying the boundary condition~\eqref{eq:bc} 
is uniquely determined by its trace $\Lambda_u=(x,y,\w)$. 
If $\Sigma$ is not diffeomorphic to the $2$-sphere 
then its universal cover is diffeomorphic to the $2$-plane. 
In this situation it was also shown in~Theorem~\ref{thm:trace} 
that the homotopy class of $u$ and the degree function $\w$ 
are uniquely determined by the triple $\p\Lambda_u=(x,y,\nu)$.

\begin{remark}[{\bf The Viterbo--Maslov index}]
\label{rmk:VM}\rm 
\index{Viterbo--Maslov index}\index{Maslov index}
\index{Viterbo--Maslov index!of an $(\alpha,\beta)$-trace}
Let $\Lambda=(x,y,\w)$ be an $(\alpha,\beta)$-trace
and denote by $\mu(\Lambda)$ its Viterbo--Maslov index 
as defined in~\ref{def:maslov} above 
(see also~\cite{VITERBO}).\phantomsection\label{V3}
For $z\in\alpha\cap\beta$ let $m_z(\Lambda)$
be the sum of the four values of the function 
$\w$ encountered when walking along a small 
circle surrounding~$z$. In Theorem~\ref{thm:maslov}
it was shown that the Viterbo--Maslov index of $\Lambda$
is given by the trace\index{trace!formula}
formula\index{Viterbo--Maslov index!trace formula} 
\begin{equation}\label{eq:VM1}
\mu(\Lambda) = \frac{m_x(\Lambda)+m_y(\Lambda)}{2}.
\end{equation}
Let $\Lambda'=(y,z,\w')$ be another $(\alpha,\beta)$-trace.
The {\bf catenation of $\Lambda$ and $\Lambda'$} 
is defined by \index{catenation}
$$
\Lambda\#\Lambda' := (x,z,\w+\w').
$$
It is again an $(\alpha,\beta)$-trace
and has Viterbo--Maslov index
\begin{equation}\label{eq:VM2}
\mu(\Lambda\#\Lambda')=\mu(\Lambda)+\mu(\Lambda').
\end{equation}
For a proof see~\cite{VITERBO,RS2}.
\phantomsection\label{V4}\label{RS2d}
\end{remark}

\begin{definition}[{\bf Arc Condition}]
\label{def:arc-CF}\rm \index{arc condition}
Let $\Lambda=(x,y,\w)$ be an $(\alpha,\beta)$-trace
and 
$$
\nu_\alpha:=\p\w|_{\alpha\setminus\beta},\qquad
\nu_\beta:=-\p\w|_{\beta\setminus\alpha}.
$$
$\Lambda$ is said to satisfy the {\bf arc condition} if
\begin{equation}\label{eq:arc-CF}
x\ne y,\qquad \min\Abs{\nu_\alpha} = \min\Abs{\nu_\beta}=0.
\end{equation}
When $\Lambda$ satisfies the arc condition
there are arcs $A\subset\alpha$ and $B\subset\beta$ 
from $x$ to $y$ such that
\begin{equation}\label{eq:nuAB-CF}
\nu_\alpha(z) = \left\{\begin{array}{rl}
\pm1,&\mbox{if }z\in A,\\
0,&\mbox{if }z\in\alpha\setminus\overline A,
\end{array}\right.\;\;
\nu_\beta(z) = \left\{\begin{array}{rl}
\pm1,&\mbox{if }z\in B,\\
0,&\mbox{if }z\in\beta\setminus\overline B.
\end{array}\right.
\end{equation}
Here the plus sign is chosen iff the orientation of $A$ from 
$x$ to $y$ agrees with that of $\alpha$, respectively the 
orientation of $B$ from $x$ to $y$ agrees with that of~$\beta$.
In this situation the quadruple $(x,y,A,B)$ and the triple
$(x,y,\p\w)$ determine one another and we also write
$$
\p\Lambda = (x,y,A,B)
$$ 
for the boundary of $\Lambda$.
When $u:\D\to\Sigma$ is a smooth map satisfying~\eqref{eq:bc} 
and $\Lambda_u=(x,y,\w)$ satisfies the arc condition 
and $\p\Lambda_u=(x,y,A,B)$ then the path
$
s\mapsto u(-\cos(\pi s),0)
$
is homotopic in $\alpha$ 
to a path traversing $A$ and the path 
$
s\mapsto u(-\cos(\pi s),\sin(\pi s))
$
is homotopic in~$\beta$ to a path traversing $B$.
\end{definition}

\begin{theorem}\label{thm:arc}
Assume~\hyperlink{property_H}{(H)}. If $u:\D\to\Sigma$ is a smooth 
$(\alpha,\beta)$-lune then its $(\alpha,\beta)$-trace 
$\Lambda_u$ satisfies the arc condition.
\end{theorem}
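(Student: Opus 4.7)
The strategy is to lift $u$ to the universal cover and analyze the resulting picture there. Since $\D$ is simply connected, $u$ lifts to an orientation-preserving immersion $\tu\colon \D \to \tSi$, and by connectedness of $\D\cap\R$ and $\D\cap S^1$ there exist unique connected components $\talpha$ and $\tbeta$ of $\pi^{-1}(\alpha)$ and $\pi^{-1}(\beta)$ with $\tu(\D\cap\R) \subset \talpha$ and $\tu(\D\cap S^1) \subset \tbeta$. Each such lift is diffeomorphic to $\R$ (when the underlying curve is non-compact or a non-contractible embedded circle) or to $S^1$ (when the underlying curve is a contractible embedded circle).

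The heart of the proof is to show that the two boundary restrictions are embeddings onto arcs $\tilde A := \tu(\D\cap\R) \subset \talpha$ and $\tilde B := \tu(\D\cap S^1) \subset \tbeta$ with distinct endpoints, and moreover that no non-identity deck transformation of $\tSi$ carries $\tilde A$ back into itself (and similarly for $\tilde B$). The embedding statement is immediate when $\talpha \cong \R$ since any immersion of a compact interval into $\R$ is strictly monotonic. The harder statements --- ruling out wrapping in the contractible-circle subcase, and ruling out non-trivial deck self-overlap --- are handled by a global argument: at any point of overlap, two local sheets of $\tu$ would lie on the same side of $\talpha$ by orientation preservation, so the degree function $\widetilde\w = \deg(\tu,\cdot) \ge 0$ is forced to be positive along a region that extends, via a deck translate of the overlap, into the unbounded complement of $\tu(\D)$ in $\tSi$, contradicting compact support of $\widetilde\w$. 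The same sort of wedge-and-degree consideration at the corners $\pm 1$ of $\D$ shows that $\tx := \tu(-1) \neq \tu(1) =: \ty$ and that no deck transformation identifies $\tx$ with $\ty$.

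With these in hand, the $(\talpha,\tbeta)$-trace $\tLa_\tu$ in $\tSi$ manifestly satisfies the arc condition, with boundary $(\tx,\ty,\tilde A,\tilde B)$ in the sense of Definition~\ref{def:arc-CF}. Descending to $\Sigma$, the equality $\nu_\alpha(z) = \eps_\alpha \cdot \#\{\tz \in \tilde A : \pi(\tz) = z\}$ for a fixed sign $\eps_\alpha \in \{+1,-1\}$, together with injectivity of $\pi$ on $\tilde A$, gives $\pi(\tilde A) \subsetneq \alpha$; thus $\nu_\alpha$ vanishes on the complementary arc, yielding $\min\abs{\nu_\alpha} = 0$. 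The analogous argument on the top boundary gives $\min\abs{\nu_\beta} = 0$, and $x \neq y$ in $\Sigma$ follows from $\tx \neq \ty$ together with the absence of a deck identification. Hence $\Lambda_u$ satisfies the arc condition.

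The principal obstacle is the non-overlap analysis of $\tilde A$ with its deck translates --- especially the subcase where $\alpha$ is a contractible embedded circle and $\talpha\cong S^1$ --- because the interval immersion $\tu|_{\D\cap\R}$ into such a circle does not automatically have embedded image. The required embedding and non-wrapping must be extracted from the global lune structure: the orientation-preservation of $\tu$, the non-negativity and compact support of $\widetilde\w$, and the transverse wedge behavior at the two corners.
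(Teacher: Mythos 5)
Your outline correctly identifies the content of the theorem: one must show that the boundary images $A = u(\D\cap\R)$ and $B = u(\D\cap S^1)$ are embedded arcs, and the only nontrivial cases are when $\alpha$ or $\beta$ is an embedded circle (the noncompact case being trivial). Lifting to the universal cover and observing that an immersion of an interval into $\R$ is monotone is also the right starting observation.

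However, the central step --- ruling out that $\tilde A$ overlaps a deck translate of itself (equivalently, that $\pi|_{\tilde A}$ fails to be injective), and ruling out wrapping when $\talpha\cong S^1$ --- is handled by an argument that does not hold up. You claim that a self-overlap would force $\widetilde\w=\deg(\tu,\cdot)$ to be positive on an unbounded region, contradicting compact support. But $\widetilde\w$ is the degree of the \emph{single} lift $\tu$, whose image $\tu(\D)$ is automatically compact, so $\widetilde\w$ has compact support no matter how $\tilde A$ sits relative to its deck translates. The overlap of $\tilde A$ with $\tilde A + 1$ produces positivity of \emph{different} functions ($\widetilde\w$ and its translate $\widetilde\w(\cdot - 1)$) on translated regions; it does not propagate positivity of $\widetilde\w$ itself beyond $\tu(\D)$. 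Nor does it help that $\tu$ is orientation-preserving: for an arbitrary lune the image $\tu(\D)$ need not stay on one side of $\talpha$, so ``$\widetilde\w=0$ below $\talpha$'' is not available either. In short, there is no contradiction to extract from compact support alone.

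The paper's actual argument is substantially different and uses the \emph{other} curve in an essential way. For the case where $\alpha$ is a noncontractible circle (Proposition~\ref{prop:arc}), it normalizes $\talpha=\R$, locates a specific component $\gamma$ of $\tB\cap\{\IM\ge0\}$ across which $\widetilde\w$ passes from zero to positive, shows its endpoints $b,c$ satisfy $b\le 0\le a\le c$, and then rules out $c\ge b+1$ by a local path-lifting argument showing $\gamma$ and $\gamma+1$ cannot intersect properly (two lifts of the embedded circle $\beta$ through the same point must coincide locally). This is what bounds $a<1$ and hence makes $A$ a genuine arc. The case where both $\alpha$ and $\beta$ are contractible circles is handled by a completely separate inductive argument on $S^2$ (Proposition~\ref{prop:S2arc}) that cancels primitive lunes one at a time; this case cannot be absorbed into the ``monotone interval in $\R$'' picture because $\talpha\cong S^1$ there. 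Theorem~\ref{thm:arc} is then assembled from these two propositions by a case split. Your proposal collapses all of this into a single degree heuristic, which is where it breaks down.

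Two smaller points: the paper gets $x\ne y$ directly from the fact that $\alpha$ and $\beta$ have opposite intersection indices at the two corners of an immersion --- this is cleaner than a degree-at-corners argument and worth adopting. And the reduction step you propose at the end (from $\tilde A$, $\tilde B$ being arcs to the arc condition for $\Lambda_u$) is fine once the overlap issue is genuinely resolved.
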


\begin{proof}
See Section~\ref{sec:ARC} page~\pageref{proof:arc}.
\end{proof}

\begin{definition}[{\bf Combinatorial Lunes}]
\label{def:clune} 
\index{combinatorial!lune}\index{lune!combinatorial}
Assume~\hyperlink{property_H}{(H)}.
A {\bf combinatorial $(\alpha,\beta)$-lune} is
an $(\alpha,\beta)$-trace $\Lambda=(x,y,\w)$
with boundary $\p\Lambda=:(x,y,A,B)$ that satisfies 
the arc condition and the following.
\begin{description}
\item[(I)]
$\w(z)\ge 0$ for every 
$z\in\Sigma\setminus(\alpha\cup\beta)$.
\item[(II)] 
The intersection index of $A$ and $B$ 
at $x$ is $+1$ and at $y$ is $-1$.
\item[(III)]
$\w(z)\in\{0,1\}$ for $z$ sufficiently
close to $x$ or $y$.
\end{description}
Condition~(II) says
that the angle from $A$ to $B$
at $x$ is between zero and $\pi$
and the angle from $B$ to $A$
at $y$ is also between zero and $\pi$.
\end{definition}

\begin{figure}[htp]
\centering 
\includegraphics[scale=0.5]{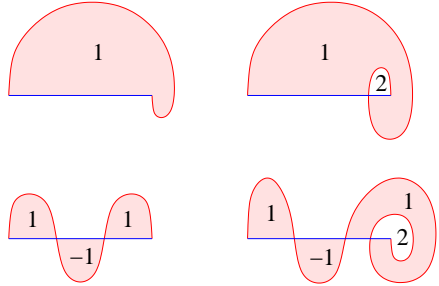}      
\caption{{$(\alpha,\beta)$-traces which satisfy 
the arc condition but are not lunes.}}
\label{fig:prelune}
\end{figure}

\begin{theorem}[{\bf Existence}]\label{thm:lune1} 
\index{existence of a lune}\index{lune!existence}
Assume~\hyperlink{property_H}{(H)} and let 
$
\Lambda=(x,y,\w)
$ 
be an $(\alpha,\beta)$-trace.
Consider the following three conditions.

\smallskip\noindent{\bf (i)}
There exists a smooth $(\alpha,\beta)$-lune $u$
such that $\Lambda_u=\Lambda$.

\smallskip\noindent{\bf (ii)}
$\w\ge 0$ and $\mu(\Lambda)=1$.

\medskip\noindent{\bf (iii)}
$\Lambda$ is a combinatorial $(\alpha,\beta)$-lune.

\medskip\noindent
Then~(i)$\implies$(ii)$\iff$(iii).
If $\Sigma$ is simply connected then all three conditions
are equivalent.
\end{theorem}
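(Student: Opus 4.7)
For $(\mathrm{i})\Rightarrow(\mathrm{ii})$, start from a smooth lune $u$. Non-negativity $\w=\deg(u,\cdot)\ge 0$ holds because $u$ is an orientation preserving immersion. For $\mu=1$ I use the intrinsic trivialization $\Phi(z,\zeta):=du(z)\zeta$ of $u^{*}T\Sigma$, which is a well-defined orientation preserving bundle isomorphism because $du(z)$ is an isomorphism everywhere. Under $\Phi$ the Lagrangian paths of Definition~\ref{def:maslov} reduce to $\lambda_{0}(s)\equiv\R$ and $\lambda_{1}(s)=T_{(-\cos\pi s,\sin\pi s)}(\D\cap S^{1})$, since $du(z)$ carries $T_z(\D\cap\R)$ onto $T_{u(z)}\alpha$ and $T_z(\D\cap S^1)$ onto $T_{u(z)}\beta$. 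These paths depend only on $\D$, not on $u$, and the degree of the prescribed loop in $\RP^{1}$ is one by direct inspection.

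The equivalence $(\mathrm{ii})\iff(\mathrm{iii})$ is combinatorial via the trace formula $\mu=(m_{x}+m_{y})/2$ of Theorem~\ref{thm:maslov}. For $(\mathrm{iii})\Rightarrow(\mathrm{ii})$: condition~(I) gives $\w\ge 0$; conditions~(II), (III) together with the unit jumps of $\nu_{\alpha},\nu_{\beta}$ at $x$ and $y$ pin the four sector values of $\w$ near each endpoint to a permutation of $(1,0,0,0)$, so $m_{x}=m_{y}=1$. For $(\mathrm{ii})\Rightarrow(\mathrm{iii})$, first rule out $x=y$: lifting to the universal cover yields either $\tx=\ty$ (so $\mu$ is even by Step~8 of the proof of Proposition~\ref{prop:maslovC}) or $\ty=g\tx$ for some $g\ne\id$ (so $\mu=0$ by Lemma~\ref{le:zero}), both contradicting $\mu=1$. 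Once $x\ne y$, the constraint $a-b+c-d=\pm 1$ at $x$ arising from the unit jumps, combined with $a,b,c,d\ge 0$, forces $m_{x}$ to be odd and at least one; the same holds at $y$, so the trace formula pins $m_{x}=m_{y}=1$. The resulting sector patterns yield~(II) and~(III), and the arc condition follows because $\nu_{\alpha}$ vanishes on one side of $\alpha$ at $x$, hence on an entire component of $\alpha\setminus\{x,y\}$, and similarly for $\nu_{\beta}$.

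The main obstacle is $(\mathrm{ii})\Rightarrow(\mathrm{i})$ in the simply connected case, i.e.\ constructing a smooth lune realizing an abstract combinatorial lune. I would induct on the total mass $M(\Lambda):=\sum_{F}\w(F)$ summed over two-cells. In the base case $M=1$, conditions~(II), (III) plus simple connectedness of $\Sigma$ imply $\w$ is the indicator function of a single topological disc $F$ whose boundary is $A\cup B$: any $z\in A\cap B\setminus\{x,y\}$ would force the four sector values at $z$ to be $(c+2,c+1,c,c+1)$ for some $c\ge 0$ (analogous local analysis), contributing at least four to $M$; hence $A\cap B=\{x,y\}$ and $F$ is an embedded disc which we parametrize by an orientation preserving diffeomorphism $\D\to\overline F$ sending $-1\mapsto x,\ 1\mapsto y$. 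For the inductive step the plan is to split $\Lambda$ as a catenation $\Lambda_{1}\#\Lambda_{2}$ into a combinatorial sub-lune with $\mu=1$ and a combinatorial disc trace at $y$ or $x$ with $\mu=0$, both of strictly smaller mass, apply the induction hypothesis to obtain smooth realizations, and glue them via Lemma~\ref{le:catenation} with Maslov indices kept consistent by the additivity of Remark~\ref{rmk:mascat}. The technical crux, namely proving that such a splitting always exists whenever $M>1$ in a simply connected $\Sigma$, is deferred to Sections~\ref{sec:ARC}--\ref{sec:CL}.
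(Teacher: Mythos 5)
Your arguments for $(\mathrm{i})\Rightarrow(\mathrm{ii})$ and $(\mathrm{ii})\iff(\mathrm{iii})$ are essentially the paper's. For $(\mathrm{i})\Rightarrow(\mathrm{ii})$ you use $\Phi(z):=du(z)$ as the orientation-preserving trivialization, which reduces $\lambda_0$ to the constant path $\R$ and $\lambda_1$ to the tangent of $\D\cap S^1$; this is the same idea as the paper's proof (which also uses $du$ to define $\lambda_t$), presented a touch more directly, and it is correct. The $(\mathrm{ii})\iff(\mathrm{iii})$ computation by pinning down the four sector values at $x$ and $y$ matches the paper's approach, though the paper's argument for ruling out $x=y$ is simpler (odd $\mu$ forces $\eps_x\ne\eps_y$) and covers $\Sigma=S^2$, whereas your lifting argument invokes Step~8 and Lemma~\ref{le:zero}, which are stated only when $\Sigma$ is not diffeomorphic to $S^2$.

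The genuine gap is in your inductive step for $(\mathrm{iii})\Rightarrow(\mathrm{i})$. You propose splitting $\Lambda=\Lambda_1\#\Lambda_2$ with $\mu(\Lambda_1)=1$, $\mu(\Lambda_2)=0$, both of strictly smaller mass, then gluing smooth realizations via Lemma~\ref{le:catenation}. There are two problems. First, such a splitting essentially never exists: if $\Lambda_2=(z,y,\w_2)$ with $z\ne y$, $\w_2\ge 0$ and $\mu(\Lambda_2)=0$, the trace formula gives $m_z(\Lambda_2)+m_y(\Lambda_2)=0$, and nonnegativity forces $m_z=m_y=0$, which contradicts the required unit jumps of $\nu_\alpha,\nu_\beta$ at the endpoint $z$; if instead $z=y$, Step~8 of the proof of Proposition~\ref{prop:maslovC} shows $\mu(\Lambda_2)=0$ and $\w_2\ge0$ force $\w_2\equiv 0$ unless one of $\alpha,\beta$ is a contractible embedded circle, so the mass does not drop. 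Second, even when a nontrivial splitting does exist, the catenation $u\#_\lambda u'$ of Definition~\ref{def:catenation} is constructed from maps that are constant near $\pm1$, so $u\#_\lambda u'$ is constant on a neighborhood of the imaginary axis and is therefore never an immersion; gluing via Lemma~\ref{le:catenation} can produce a map with the right trace, but not a smooth lune. The paper's mechanism in Lemma~\ref{le:move} is of a different nature: it uses Lemma~\ref{le:Lambda0} to extract a primitive combinatorial sub-lune $\Lambda_0$ whose arcs meet only at their endpoints, realizes $\Lambda_0$ as an embedded lune by Proposition~\ref{prop:simple}, and then isotopes $\beta$ along a vector field supported near the image of that embedded lune to cancel the two intersection points, proving a bijection between lunes for $\Lambda$ and lunes for the reduced trace $\Lambda'$. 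This is an isotopy/cancellation argument, not a catenation, and the induction is on the cardinality of $A\cap B$, not on total mass. Your deferral ``to Sections~\ref{sec:ARC}--\ref{sec:CL}'' does not fill the gap, because those sections do not prove the splitting you assume; you would need to adopt the isotopy argument of Lemma~\ref{le:move} (or supply a genuinely new mechanism that preserves the immersion property).
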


\begin{proof}
See Section~\ref{sec:CL} page~\pageref{proof:lune1}.
\end{proof}

\begin{theorem}[{\bf Uniqueness}]\label{thm:lune2} 
\index{uniqueness of lunes}\index{lune!uniqueness}
Assume~\hyperlink{property_H}{(H)}.  
If two smooth $(\alpha,\beta)$-lunes
have the same trace then they are equivalent.
\end{theorem}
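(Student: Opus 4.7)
The plan is to reduce the theorem in three stages: first normalize the boundary parametrizations so that $u|_{\p\D} = u'|_{\p\D}$; then lift to the universal cover to obtain a planar problem; and finally extract the required diffeomorphism by induction on the maximum of the counting function.

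For the boundary normalization, by Theorem~\ref{thm:arc} the common trace $\Lambda_u = \Lambda_{u'}$ satisfies the arc condition, so its boundary has the form $(x,y,A,B)$ with $|\nu_\alpha|$ equal to $1$ on $A$ and $0$ on $\alpha \setminus A$, and similarly for $\nu_\beta$. Since the immersion $u|_{\D\cap\R}$ has image in $\alpha$, is a local diffeomorphism everywhere (even at the corners), has degree $\pm 1$ onto $A$, and sends $-1 \mapsto x$ and $1 \mapsto y$, a monotonicity argument shows it is an orientation-preserving diffeomorphism onto $A$; the same applies to $u'|_{\D\cap\R}$ and to the restrictions to $\D \cap S^1$. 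Hence $u|_{\p\D}$ and $u'|_{\p\D}$ differ by a unique diffeomorphism $\psi_0$ of $\p\D$ fixing $\pm 1$. I would extend $\psi_0$ to a corner-fixing $\psi \in \Diff^+(\D)$ (by a standard radial construction) and replace $u'$ by $u' \circ \psi^{-1}$, reducing to the case where $u = u'$ on $\p\D$. It then suffices to produce $\phi \in \Diff^+(\D)$ fixing $\p\D$ pointwise with $u \circ \phi = u'$.

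Next I would lift both maps to the universal cover $\pi : \tSi \to \Sigma$, choosing lifts so that the bases agree, to obtain immersions $\tu, \tu' : \D \to \tSi$ with a common lifted trace $\tLa = (\tx, \ty, \widetilde\w)$ and identical boundary parametrizations, with $\tu(\p\D) = \tu'(\p\D) = \tA \cup \tB$ where $\tA, \tB$ are embedded arcs from $\tx$ to $\ty$. (The case $\Sigma = S^2$ is handled separately by first passing to $\Sigma \setminus \{z_0\}$ for a point $z_0$ outside the common support of $\w$.) In the special case where $\tu$ is an embedding — equivalently, $\widetilde\w \in \{0,1\}$ and $\tA \cap \tB = \{\tx, \ty\}$ — the map $\tu$ is a diffeomorphism from $\D$ onto the Jordan domain bounded by $\tA \cup \tB$, and one can simply take $\phi := \tu^{-1} \circ \tu'$; this $\phi$ is automatically an element of $\Diff^+(\D)$ fixing $\p\D$, and projecting gives $u \circ \phi = u'$.

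For the general non-embedded case I would argue by induction on $\max \widetilde\w$, using the combinatorial decomposition machinery developed in Section~\ref{sec:CL}. The idea is to choose an embedded arc $\gamma \subset \Sigma$ determined intrinsically by the trace — for instance, one joining a point where $\widetilde\w$ is maximal to a suitable boundary point — so that $\tu^{-1}(\gamma)$ and $(\tu')^{-1}(\gamma)$ each cut $\D$ into a pair of matching smooth sub-lunes whose traces have strictly smaller complexity. Applying the inductive hypothesis to each matched pair yields partial diffeomorphisms which can be glued along the cuts to produce the global $\phi$. The main obstacle I anticipate is guaranteeing that the two preimage arcs $\tu^{-1}(\gamma)$ and $(\tu')^{-1}(\gamma)$ are ambient isotopic in $\D$ rel $\p\D$, so that the glued map is smooth rather than merely continuous; this will require the fine geometric analysis of how an immersion can fold, of the type developed in Sections~\ref{sec:ARC} and~\ref{sec:CL}, and is where the bulk of the work lies.
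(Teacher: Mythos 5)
Your overall framework (normalize boundaries, lift to the universal cover, handle the embedded base case via $\phi := \tu^{-1}\circ\tu'$, then induct) is sound up to the base case, but the inductive step is where the proof actually lives and your proposal leaves the central difficulty unresolved; moreover the mechanism you propose differs from the one the paper uses.

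The paper does not cut the domain disc at all. Instead it works in the target: given a combinatorial lune $\Lambda$ with $A\cap B\ne\{x,y\}$, Lemma~\ref{le:Lambda0} locates a primitive embedded sub-lune $\Lambda_0=(x_0,y_0,A_0,B_0)$ with $A_0\cap B=A\cap B_0=\{x_0,y_0\}$, and Lemma~\ref{le:move} deforms $\beta$ by a compactly supported vector field near the image of that sub-lune to remove the two intersection points $x_0,y_0$, producing a new trace $\Lambda'$ with $\#(A\cap B')<\#(A\cap B)$. The key claim is that the induced map $u\mapsto u'$ on smooth lunes is a \emph{bijection} between $\{u:\Lambda_u=\Lambda\}$ and $\{u':\Lambda_{u'}=\Lambda'\}$, so the count $\nu(\Lambda)$ of equivalence classes is preserved. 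Iterating reduces to the embedded case (Proposition~\ref{prop:simple}), where $\nu=1$. Uniqueness then comes out as a counting statement, $\nu(\Lambda)=1$, rather than as a direct construction of a diffeomorphism $\phi$. The hard case analysis lives in the surjectivity half of the bijection (Figure~\ref{fig:deformation} together with Lemma~\ref{le:one}), not in an arc-matching problem.

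The gap in your proposal is the one you flag yourself: you need $\tu^{-1}(\gamma)$ and $(\tu')^{-1}(\gamma)$ to be ambient isotopic rel $\p\D$ so that the two decompositions match and the glued map is smooth. But this is not a technical refinement that can be deferred --- it is essentially the content of the theorem. The trace records only the degree function $\w$ and the endpoints; it does not a priori determine how the immersion folds, and two immersions with the same counting function could in principle have preimage arcs that separate $\D$ in combinatorially different ways. You have no mechanism to rule this out. The paper sidesteps the issue entirely by never trying to match cuts: it uses the bijection of Lemma~\ref{le:move}, whose proof instead controls the preimage $U'=(u')^{-1}(u_0(\D_\eps))$ and shows $U'\cap\R=\emptyset$ via an orientation argument and Lemma~\ref{le:one}. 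There are two further problems. First, your induction parameter $\max\widetilde\w$ need not strictly decrease under the cut you describe (an arc from a $\w$-maximal point to the boundary has preimage with $\max\widetilde\w$ branch points, so the pieces are not sub-lunes in the required sense, and even for a $\beta$-arc cut the maximum of $\w$ may persist on one side); the paper inducts instead on $\#(A\cap B)$, which does decrease under the $\beta$-isotopy. Second, your treatment of $\Sigma=S^2$ by removing a point $z_0$ outside the support of $\w$ fails when the lune is surjective (then $\w\ge 1$ everywhere); the paper's argument for the sphere goes through the same isotopy reduction directly, since $S^2$ is already simply connected.
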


\begin{proof}
See Section~\ref{sec:CL} page~\pageref{proof:lune2}.
\end{proof}

\begin{corollary}\label{cor:lune}
Assume~\hyperlink{property_H}{(H)} and let 
$$
\Lambda=(x,y,\w)
$$
be an $(\alpha,\beta)$-trace. 
Choose a universal covering $\pi:\tSi\to\Sigma$, a point 
$$
\tx\in\pi^{-1}(x), 
$$
and lifts $\talpha$ and~$\tbeta$ of $\alpha$ and $\beta$ such that
$$
\tx\in\talpha\cap\tbeta.
$$
Let 
$$
\tLa=(\tx,\ty,\widetilde\w)
$$
be the lift of $\Lambda$ to the universal cover. 

\smallskip\noindent{\bf (i)}
If $\tLa$ is a combinatorial $(\talpha,\tbeta)$-lune
then $\Lambda$ is a combinatorial $(\alpha,\beta)$-lune.

\smallskip\noindent{\bf (ii)}
$\tLa$ is a combinatorial $(\talpha,\tbeta)$-lune
if and only if there exists a smooth $(\alpha,\beta)$-lune $u$
such that $\Lambda_u=\Lambda$. 
\end{corollary}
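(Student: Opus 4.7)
The plan is to reduce both parts to the simply connected setting by working on the universal cover $\tSi$, which is either $\C$ or $S^2$ and in either case simply connected. On $\tSi$, Theorem~\ref{thm:lune1} yields the full equivalence between being a combinatorial $(\talpha,\tbeta)$-lune and being the trace of a smooth $(\talpha,\tbeta)$-lune; on $\Sigma$ only the implication (i)$\Rightarrow$(iii) of that theorem is available in general, but this will suffice. Part~(i) will follow at once from the forward direction of~(ii) combined with Theorem~\ref{thm:lune1}~(i)$\Rightarrow$(iii) applied on $\Sigma$.

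\textbf{Forward direction of (ii).} Assume $\tLa$ is a combinatorial $(\talpha,\tbeta)$-lune. Since $\tSi$ is simply connected, Theorem~\ref{thm:lune1} produces a smooth $(\talpha,\tbeta)$-lune $\tu:\D\to\tSi$ with $\Lambda_\tu=\tLa$. Set $u:=\pi\circ\tu$. Because $\pi$ is an orientation preserving local diffeomorphism, $u$ is an orientation preserving immersion satisfying $u(\D\cap\R)\subset\alpha$ and $u(\D\cap S^1)\subset\beta$, hence is a smooth $(\alpha,\beta)$-lune. Its endpoints are $\pi(\tx)=x$ and $\pi(\ty)=y$, and for every $z\in\Sigma\setminus(\alpha\cup\beta)$
\[
\deg(u,z)=\sum_{\tz\in\pi^{-1}(z)}\deg(\tu,\tz)=\sum_{\tz\in\pi^{-1}(z)}\widetilde{\w}(\tz)=\w(z)
\]
by Remark~\ref{rmk:trace}(iii). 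Thus $\Lambda_u=\Lambda$. Combined with Theorem~\ref{thm:lune1}~(i)$\Rightarrow$(iii) on $\Sigma$, this also proves~(i).

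\textbf{Reverse direction of (ii).} Let $u$ be a smooth $(\alpha,\beta)$-lune with $\Lambda_u=\Lambda$. Since $\D$ is simply connected, $u$ lifts uniquely to a smooth map $\tu:\D\to\tSi$ with $\tu(-1)=\tx$, and $\tu$ is again an orientation preserving immersion. The bottom edge of $\tu$ is the continuous lift through $\tx$ of the bottom edge of $u$, so it lies in $\talpha$; likewise the top edge lies in $\tbeta$, and hence $\tu$ is a smooth $(\talpha,\tbeta)$-lune. By Lemma~\ref{le:boundary}, the bottom edge of $u$ and the path $\gamma_\alpha$ of Definition~\ref{def:trace-V} are both paths in $\alpha$ from $x$ to $y$ with the same degree function $\nu_\alpha=\p\w|_{\alpha\setminus\beta}$; since $\alpha$ is one-dimensional they are homotopic in $\alpha$ rel endpoints, so their lifts from $\tx$ share a common endpoint, which by Remark~\ref{rmk:trace}(iii) is $\ty$. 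Hence $\tu(1)=\ty$. When $\Sigma=S^2$ the identification $\Lambda_\tu=\tLa$ is immediate; otherwise $\tSi=\C$ is not diffeomorphic to $S^2$, so Theorem~\ref{thm:trace}(iii) applied on $\tSi$ forces $\w_\tu=\widetilde{\w}$, since both two-chains have boundary $\p\widetilde{\w}$ by the same lifting argument applied to each edge. Therefore $\Lambda_\tu=\tLa$, and Theorem~\ref{thm:lune1}~(i)$\Rightarrow$(iii) on $\tSi$ shows $\tLa$ is a combinatorial lune, completing~(ii).

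\textbf{Main obstacle.} The only step that is not a direct application of Theorem~\ref{thm:lune1} on the two surfaces is the identification $\tu(1)=\ty$ in the reverse direction of~(ii); this reduces, via the one-dimensionality of $\alpha$, to Lemma~\ref{le:boundary}, which determines the homotopy class in $\alpha$ rel endpoints of the bottom edge of $u$ from its degree function $\nu_\alpha$. Everything else is a routine consequence of the covering-space formalism and Remark~\ref{rmk:trace}(iii).
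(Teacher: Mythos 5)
Your proposal is correct and follows the same route as the paper, which simply observes that lifting gives a one-to-one correspondence between smooth $(\alpha,\beta)$-lunes with trace $\Lambda$ and smooth $(\talpha,\tbeta)$-lunes with trace $\tLa$ and then invokes Theorem~\ref{thm:lune1}. You fill in the covering-space verifications (that $\pi\circ\tu$ is a lune with trace $\Lambda$, and that the lift of $u$ through $\tx$ is a lune with trace $\tLa$, using Lemma~\ref{le:boundary}, Remark~\ref{rmk:trace}(iii), and Theorem~\ref{thm:trace}(iii)) that the paper leaves implicit in its two-sentence proof.
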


\begin{proof}
Lifting defines a one-to-one correspondence between 
smooth $(\alpha,\beta)$-lunes with trace $\Lambda$
and smooth $(\talpha,\tbeta)$-lunes with trace $\tLa$. 
Hence the assertions follow from Theorem~\ref{thm:lune1}.
\end{proof}

\begin{remark}\label{rmk:lune}\rm
Assume~\hyperlink{property_H}{(H)} 
and let $\Lambda$ be an $(\alpha,\beta)$-trace.  
We conjecture that the three conditions in Theorem~\ref{thm:lune1} 
are equivalent, even when $\Sigma$ is not simply connected, i.e.\ 

\medskip\centerline{\it 
If $\Lambda$ is a combinatorial $(\alpha,\beta)$-lune}
\centerline{\it 
then there exists a smooth $(\alpha,\beta)$-lune 
$u$ such that $\Lambda=\Lambda_u$.}

\medskip\noindent
Theorem~\ref{thm:lune1} shows that
this conjecture is equivalent to the following.

\medskip\centerline{\it 
If $\Lambda$ is a combinatorial $(\alpha,\beta)$-lune}
\centerline{\it 
then $\tLa$ is a combinatorial $(\talpha,\tbeta)$-lune.}

\medskip\noindent
The hard part is to prove that $\tLa$ satisfies~(I), i.e.\ that
the winding numbers are nonnegative.  
\end{remark}

\begin{remark}\label{rmk:algorithm}\rm
Assume~\hyperlink{property_H}{(H)}. 
Corollary~\ref{cor:lune} and Theorem~\ref{thm:lune2}
suggest the following algorithm for finding a smooth
$(\alpha,\beta)$-lune. 

\medskip\noindent{\bf 1.}
Fix two points $x,y\in\alpha\cap\beta$
with opposite intersection indices,
and two oriented embedded arcs 
$A\subset\alpha$ and $B\subset\beta$ 
from $x$ to~$y$ so that~(II) holds.

\medskip\noindent{\bf 2.}
If $A$ is not homotopic to $B$
with fixed endpoints discard this pair.\footnote{
This problem is solvable via Dehn's algorithm.
See the wikipedia article
\href{http://en.wikipedia.org/wiki/Small_cancellation_theory}{Small Cancellation Theory}
and the references cited therein.}
Otherwise $(x,y,A,B)$ is the boundary of 
an $(\alpha,\beta)$-trace $\Lambda=(x,y,\w)$ 
satisfying the arc condition and~(II) 
(for a suitable function $\w$ to be chosen below). 

\medskip\noindent{\bf 3a.}
If $\Sigma$ is diffeomorphic to the $2$-sphere let
$
\w:\Sigma\setminus(A\cup B)\to\Z
$
be the winding number of the loop $A-B$ 
in $\Sigma\setminus\{z_0\}$, where 
$z_0\in\alpha\setminus A$ is chosen close to $x$.
Check if $\w$ satisfies~(I) and~(III).  
If yes, then $\Lambda=(x,y,\w)$ is a 
combinatorial $(\alpha,\beta)$-lune and hence, 
by Theorems~\ref{thm:lune1} and~\ref{thm:lune2}, 
gives rise to a smooth $(\alpha,\beta)$-lune 
$u$, unique up to isotopy.

\medskip\noindent{\bf 3b.}
If $\Sigma$ is not diffeomorphic to the $2$-sphere choose 
lifts $\tA$ of $A$ and $\tB$ of~$B$ to a universal covering 
$\pi:\C\to\Sigma$ connecting $\tx$ and $\ty$
and let 
$$
\widetilde\w:\C\setminus(\tA\cup\tB)\to\Z
$$
be the winding number of $\tA-\tB$.  
Check if $\widetilde\w$ satisfies~(I) and~(III).  
If yes, then $\tLa:=(\tx,\ty,\widetilde\w)$ 
is a combinatorial $(\talpha,\tbeta)$-lune and hence, 
by Theorem~\ref{thm:lune1}, gives rise to a smooth
$(\alpha,\beta)$-lune $u$ such that
$$
\Lambda_u=\Lambda:=(x,y,\w),\qquad
\w(z):=\sum_{\tz\in\pi^{-1}(z)}\widetilde\w(\tz).
$$
By Theorem~\ref{thm:lune2}, the $(\alpha,\beta)$-lune $u$
is uniquely determined by $\Lambda$ up to isotopy.
\end{remark}

\begin{proposition}\label{prop:trace1}
Assume~\hyperlink{property_H}{(H)} 
and let $\Lambda=(x,y,\w)$ be an $(\alpha,\beta)$-trace 
that satisfies the arc condition and let $\p\Lambda=:(x,y,A,B)$.
Let $S$ be a connected component of $\Sigma\setminus(A\cup B)$
such that $\w|_S\not\equiv0$. Then $S$ is diffeomorphic 
to the open unit disc in $\C$.
\end{proposition}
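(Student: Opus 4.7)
The plan is to reduce to the simply connected case via the universal cover, where Alexander duality yields the result.

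In the simply connected case, $\Sigma$ is either planar or the $2$-sphere. Choose a smooth map $u:\D\to\Sigma$ with $\Lambda_u=\Lambda$ by Theorem~\ref{thm:trace}, so $\w(z)=\deg(u,z)$. The set $A\cup B$ is a compact connected subset of $S^2$ (using one-point compactification in the plane case), and Alexander duality gives $\widetilde H_1(S^2\setminus(A\cup B))\cong\widetilde{\check H}^0(A\cup B)=0$, so every component of $S^2\setminus(A\cup B)$ is simply connected. If $\w|_S\not\equiv 0$, then $S$ is not the ``exterior'' component (where $\w$ would vanish: in the planar case because the winding number of $A-B$ is zero at infinity; in the sphere case because $\w$ vanishes off the compact set $u(\D)$). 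Thus $S$ is a bounded component of $\C\setminus(A\cup B)$ in the planar case and a proper component of $S^2\setminus(A\cup B)$ in the sphere case. In both situations $S$ is a simply connected proper open subset of $\Sigma$, hence diffeomorphic to the open unit disc by uniformization.

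For the general case, lift to the universal cover $\pi:\tSi=\C\to\Sigma$. Pick $\tx\in\pi^{-1}(x)$ and let $\ty,\tA,\tB,\widetilde\w$ be as in Remark~\ref{rmk:trace}(iii); then $\tLa=(\tx,\ty,\widetilde\w)$ is an $(\talpha,\tbeta)$-trace satisfying the arc condition, and
$$
\w(z)=\sum_{\tz\in\pi^{-1}(z)}\widetilde\w(\tz).
$$
The hypothesis $\w|_S\not\equiv 0$ furnishes some $\tz\in\pi^{-1}(S)$ with $\widetilde\w(\tz)\neq 0$; let $\tS'$ be the component of $\tSi\setminus(\tA\cup\tB)$ containing $\tz$. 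Applying the simply connected case to $\tLa$ yields that $\tS'$ is diffeomorphic to the open unit disc.

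It remains to show that $\pi|_{\tS'}$ is a diffeomorphism onto~$S$. The main obstacle, and the technical heart of the argument, is to show that no other lift $g(\tA\cup\tB)$, for $g\in\Gamma\setminus\{\mathrm{id}\}$, meets $\tS'$. The plan is a winding-number comparison: since $\widetilde\w(\tz)$ is the winding number of $\tA-\tB$ about $\tz$ (Remark~\ref{rmk:trace}(iii)) and since the winding number of $g(\tA-\tB)$ about any $\tz'\in\tSi$ equals $\widetilde\w(g^{-1}\tz')$, if $g(\tA\cup\tB)$ were disjoint from $\tA\cup\tB$ then $g(\tA\cup\tB)$ would lie in a single component $\tS''$ of $\tSi\setminus(\tA\cup\tB)$, and the vanishing of winding outside $\overline{\tS''}$ combined with $\widetilde\w$ being nonzero and constant on $\tS'$ would force $\tS''=\tS'$ and hence $g\tx,g\ty\in\overline{\tS'}$. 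The remaining case, where $g(\tA\cup\tB)$ meets $\tA\cup\tB$, is handled by a case analysis exploiting that distinct lifts of the embedded curve $\alpha$ (resp.\ $\beta$) are disjoint (so $g\tA\cap\tA\neq\emptyset$ forces $g\talpha=\talpha$, etc.) together with the torsion-freeness of $\pi_1(\Sigma)$, analogous to Lemma~\ref{le:AB}; these together rule out every nontrivial $g$. With this disjointness in hand, $\pi(\tS')\subset\Sigma\setminus(A\cup B)$, so by connectedness $\pi(\tS')\subset S$, and $\pi|_{\tS'}$ is injective, hence a diffeomorphism onto its image, which equals $S$ by a further connectedness/openness argument.
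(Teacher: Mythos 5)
Your approach is genuinely different from the paper's: you reduce to the simply connected case via the universal cover and apply Alexander duality there. The simply connected half is essentially correct --- $A\cup B$ is a compact connected subset of the plane or sphere, so by Alexander duality each complementary component has trivial first homology, and an open planar subsurface with trivial $H_1$ is simply connected; ruling out the component that meets infinity (on which the winding number vanishes) gives a disc.

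However, the reduction of the general case to the simply connected one has a genuine gap. The step you yourself single out as ``the technical heart of the argument'' --- that no translate $g(\tA\cup\tB)$, $g\ne\id$, meets the component $\widetilde S'\subset\C\setminus(\tA\cup\tB)$ on which $\widetilde\w\ne 0$ --- is asserted but not proved. The winding-number comparison you sketch is not coherent as written: the winding number of $g(\tA-\tB)$ about $\tz'$ equals $\widetilde\w(g^{-1}\tz')$, which does not directly constrain where $g(\tA\cup\tB)$ may lie relative to $\widetilde S'$, and the sub-case you treat (where $g(\tA\cup\tB)$ is disjoint from $\tA\cup\tB$) concludes only that $g\tx,g\ty\in\overline{\widetilde S'}$, which is not by itself a contradiction. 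For the sub-case where $g(\tA\cup\tB)$ does meet $\tA\cup\tB$ you invoke a ``case analysis analogous to Lemma~\ref{le:AB}''; but Lemma~\ref{le:AB} controls $g\tA\cap\tA$, whereas you need to control $g(\tA\cup\tB)\cap\widetilde S'$, a different and harder statement, and the argument is not supplied. Even granting disjointness, the final claim that $\pi|_{\widetilde S'}$ is injective also needs justification (one could use that $\widetilde S'$ is bounded --- since $\widetilde\w$ vanishes on the unbounded component --- together with the fact that nontrivial deck transformations of $\C\to\Sigma$ preserve no bounded set, but you do not say this).

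For comparison, the paper's proof works entirely on $\Sigma$ and avoids the universal cover. If $S$ were not a disc there would be an embedded loop $\gamma$ in $S$ having intersection number one with some loop $\gamma'$ in $\Sigma$. Taking $u\in\cD$ with $\Lambda_u=\Lambda$, transverse to $\gamma$, the preimage $u^{-1}(\gamma(\R/\Z))$ is a union of circles in $\INT(\D)$ (because $\gamma$ avoids $u(\p\D)\subset A\cup B$); since the degree of $u$ on $u^{-1}(S)$ equals $\w|_S\ne 0$, some such circle covers $\gamma$ with nonzero degree $d$, so $\gamma^d$ is contractible via $u$ and the contractibility of $\D$, and Lemma~\ref{le:dbae2} then forces $\gamma$ itself to be contractible --- contradicting $\gamma\cdot\gamma'=1$. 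This is shorter and, crucially, does not require the disjointness lemma on which your argument hinges.
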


\begin{proof}
By Definition~\ref{def:trace-CF}, there is a smooth map
$u:\D\to\Sigma$ satisfying~\eqref{eq:bc}
such that $\Lambda_u=\Lambda$. By a homotopy argument we may assume, 
without loss of generality, that $u(\D\cap\R)=A$ and $u(\D\cap S^1)=B$.
Let $S$ be a connected component of $\Sigma\setminus(A\cup B)$
such that $\w$ does not vanish on $S$.  We prove in two steps 
that $S$ is diffeomorphic to the open unit disc in $\C$.  

\medskip\noindent{\bf Step~1.}
{\it If $S$ is not diffeomorphic to the open unit disc in $\C$ 
then there is an embedded loop $\gamma:\R/\Z\to S$ 
and a loop $\gamma':\R/\Z\to\Sigma$ with intersection number 
$\gamma\cdot\gamma'=1$.}

\medskip\noindent
If $S$ has positive genus there are in fact two embedded loops in $S$ 
with intersection number one.  If $S$ has genus zero but is not 
diffeomorphic to the disc it is diffeomorphic to a multiply connected 
subset of $\C$, i.e.\ a disc with at least one hole cut out. 
Let $\gamma:\R/\Z\to S$ be an embedded loop encircling one 
of the holes and choose an arc in $\overline S$ which 
connects two boundary points and has intersection number
one with $\gamma$. (For an elegant construction 
of such a loop in the case of an open subset of $\C$ 
see Ahlfors~\cite{AHLFORS}.)\phantomsection\label{AHLFORS}
Since $\Sigma\setminus S$ is connected the arc 
can be completed to a loop in $\Sigma$ which still has
intersection number one with $\gamma$. 
This proves Step~1.  

\medskip\noindent{\bf Step~2.}
{\it $S$ is diffeomorphic to the open unit disc in $\C$.}

\medskip\noindent
Assume, by contradiction, that this is false 
and choose $\gamma$ and $\gamma'$ as in Step~1.
By transversality theory we may assume that $u$ is transverse to $\gamma$. 
Since $C:=\gamma(\R/\Z)$ is disjoint from $u(\p\D)=A\cup B$ it follows that
$\Gamma := u^{-1}(C)$ is a disjoint union of embedded circles
in $\Delta:=u^{-1}(S)\subset\D$.  Orient $\Gamma$ such that the degree 
of $u|_\Gamma:\Gamma\to C$ agrees with the degree 
of $u|_\Delta:\Delta\to S$.  More precisely,  
let $z\in\Gamma$ and $t\in\R/\Z$ such that $u(z)=\gamma(t)$.
Call a nonzero tangent vector $\hat z\in T_z\Gamma$ positive if 
the vectors $\dot\gamma(t),du(z)\i\hat z$ form a positively oriented basis
of $T_{u(z)}\Sigma$.  Then, if $z\in\Gamma$ is a regular point of both
$u|_\Delta:\Delta\to S$ and $u|_\Gamma:\Gamma\to C$, the linear 
map $du(z):\C\to T_{u(z)}\Sigma$ has the same sign as 
its restriction $du(z):T_z\Gamma\to T_{u(z)}C$. 
Thus $u|_\Gamma:\Gamma\to C$ has nonzero degree. 
Choose a connected component $\Gamma_0$ of $\Gamma$ such that
$u|_{\Gamma_0}:\Gamma_0\to C$ has degree $d\ne 0$.  
Since $\Gamma_0$ is a loop in $\D$ it follows that
the $d$-fold iterate of $\gamma$ is contractible.  
Hence $\gamma$ is contractible by~\ref{le:dbae2} 
in Appendix~\ref{app:path}.
This proves Step~2 and Proposition~\ref{prop:trace1}.
\end{proof}


\section{Arcs}\label{sec:ARC}

In this section we prove Theorem~\ref{thm:arc}. 
The first step is to prove the arc condition 
under the hypothesis that $\alpha$ and $\beta$ are not 
contractible (Proposition~\ref{prop:arc}). The second step 
is to characterize embedded lunes in terms of their traces
(Proposition~\ref{prop:simple}).  The third step is to prove 
the arc condition for lunes in the two-sphere 
(Proposition~\ref{prop:S2arc}).

\begin{proposition}\label{prop:arc}
Assume~\hyperlink{property_H}{(H)}, 
suppose $\Sigma$ is not simply connected,
and choose a universal covering $\pi:\C\to\Sigma$.  
Let $\Lambda=(x,y,\w)$ be an $(\alpha,\beta)$-trace
and denote 
$$
\nu_\alpha:=\p\w|_{\alpha\setminus\beta},\qquad
\nu_\beta:=-\p\w|_{\beta\setminus\alpha}.
$$
Choose lifts $\talpha$, $\tbeta$, and
$
\tLa=(\tx,\ty,\widetilde\w)
$
of $\alpha$, $\beta$, and $\Lambda$ 
such that $\tLa$ is an $(\talpha,\tbeta)$-trace.  
Thus $\tx,\ty\in\talpha\cap\tbeta$ and the path 
from $\tx$ to $\ty$ in $\talpha$ (respecively~$\tbeta$) 
determined by $\p\widetilde\w$ is the lift of the path 
from $x$ to $y$ in $\alpha$ (respectively $\beta$) 
determined by $\p\w$.  Assume
$$
\widetilde\w\ge 0,\qquad \widetilde\w\not\equiv 0.
$$
Then the following holds

\smallskip\noindent{\bf (i)}
If $\alpha$ is a noncontractible embedded circle then there 
exists an oriented arc $A\subset\alpha$ from $x$ to~$y$
(equal to $\{x\}$ in the case $x=y$) such that
\begin{equation}\label{eq:A}
\nu_\alpha(z)=\left\{\begin{array}{rl}
\pm1,&\mbox{for }z\in A\setminus\beta,\\
0,&\mbox{for }z\in\alpha\setminus(A\cup\beta).
\end{array}\right.
\end{equation}
Here the plus sign is chosen if and only if the orientations of $A$ 
and $\alpha$ agree.  If $\beta$ is a noncontractible embedded circle
the same holds for $\nu_\beta$.  

\smallskip\noindent{\bf (ii)}
If $\alpha$ and $\beta$ are both noncontractible
embedded circles then $\Lambda$ satisfies
the arc condition. 
\end{proposition}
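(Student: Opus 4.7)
The plan is to work in the universal cover $\pi:\C\to\Sigma$ and identify $\talpha$ with the real axis $\R$, so that the subgroup of the deck transformation group $\Gamma$ preserving $\talpha$ is infinite cyclic, generated by a translation $\tau(\tz)=\tz+T$ for some $T>0$ (such a $\tau$ exists because $\alpha$ is a noncontractible embedded circle in $\Sigma$). The first step is to describe $\nu_\talpha=\p\widetilde\w|_{\talpha\setminus\tbeta}$ along $\talpha$. By Remark~\ref{rmk:trace}(i) applied to the trace $\tLa$, $\nu_\talpha$ extends continuously across each point of $(\talpha\cap\tbeta)\setminus\{\tx,\ty\}$, so it is constant on each connected component of $\talpha\setminus\{\tx,\ty\}$. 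Because $\widetilde\w$ has compact support (it is the degree of some smooth $\tu:\D\to\C$ with $\Lambda_\tu=\tLa$), the one-sided values $\widetilde\w^\pm$ of $\widetilde\w$ just above and below $\talpha$ tend to zero at $\pm\infty$; combined with local constancy this forces $\nu_\talpha\equiv 0$ on the two unbounded components. On the remaining bounded component, the open interval from $\tx$ to $\ty$ (present only when $\tx\ne\ty$), the identity $\nu_\talpha(\tz)=\deg(\gamma_\talpha,\tz)$ with $\gamma_\talpha$ a path in $\talpha\cong\R$ yields $\nu_\talpha\equiv\pm 1$.

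The main step, and the principal obstacle, is to show that $|\ty-\tx|<T$ whenever $\tx\ne\ty$. Arguing by contradiction, suppose $\ty-\tx\ge T$ after swapping $\tx,\ty$ if necessary. Applying the cancellation formula of Proposition~\ref{prop:gm} to $g=\tau$ gives
$$
m_{\tx+T}(\tLa)+m_{\ty-T}(\tLa)=0.
$$
Since $\widetilde\w\ge 0$, every local quadrant value contributing to either summand is nonnegative, so both summands vanish. In particular the four quadrant values of $\widetilde\w$ around $\tx+T$ are zero, whence $\widetilde\w^+(\tx+T)=\widetilde\w^-(\tx+T)=0$. When $\ty-\tx>T$ the point $\tx+T$ lies strictly inside $(\tx,\ty)$, where the first paragraph gives $\nu_\talpha(\tx+T)=\pm 1$, a contradiction. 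When $\ty-\tx=T$ we have $\tau\tx=\ty$, so the same cancellation identity forces $m_\tx=m_\ty=0$; then the four quadrant values around $\tx$ vanish, $\widetilde\w^\pm=0$ just to the right of $\tx$, and again $\nu_\talpha=0$ there contradicts $\nu_\talpha\equiv\pm 1$ on $(\tx,\ty)$. In the degenerate case $x=y$ we moreover have $\ty-\tx\in T\Z$, so the strict bound $|\ty-\tx|<T$ upgrades to $\ty=\tx$.

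With $|\ty-\tx|<T$ in hand, the projection $\pi$ restricted to $[\tx,\ty]$ is injective, and the image $A:=\pi([\tx,\ty])\subset\alpha$ (equal to $\{x\}$ when $\tx=\ty$) is the desired oriented arc. Summing $\nu_\talpha$ over $\pi^{-1}(z)\cap\talpha$ for $z\in\alpha\setminus\beta$ yields $\nu_\alpha\equiv\pm 1$ on $A\setminus\beta$ and $\nu_\alpha\equiv 0$ on $\alpha\setminus(A\cup\beta)$, proving (i); the analogous argument with $\alpha$ and $\beta$ exchanged gives the matching statement for $\nu_\beta$. For (ii) only the condition $x\ne y$ remains: if $x=y$ then (i) applied to both $\alpha$ and $\beta$ forces $\nu_\alpha\equiv\nu_\beta\equiv 0$, so $\widetilde\w$ has no jumps across $\talpha$ or $\tbeta$ and, being compactly supported and locally constant on the connected set $\C$, must vanish identically, contradicting $\widetilde\w\not\equiv 0$.
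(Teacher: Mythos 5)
Your proof is correct, but it takes a genuinely different route from the paper's. The paper's argument for part~(i) is elementary and self-contained: it works directly with the lifted arc $\tB\subset\tbeta$, walks along the real axis to locate a component $\gamma$ of $\tB\cap(\R\times[0,\infty))$ separating $\{\widetilde\w=0\}$ from $\{\widetilde\w>0\}$ with endpoints $b\le 0\le a\le c$, and then derives a contradiction from $c\ge b+1$ by intersecting $\gamma$ with its translate $\gamma+1$ and invoking a lifting argument along $\beta$. You instead import the cancellation formula of Proposition~\ref{prop:gm} from Part~I, and observe that under the hypothesis $\widetilde\w\ge 0$ each term $m_{g\tx}(\tLa)$, $m_{g^{-1}\ty}(\tLa)$ is a sum of nonnegative quadrant values; hence the identity $m_{g\tx}(\tLa)+m_{g^{-1}\ty}(\tLa)=0$ (applied with $g=\tau$ the generating translation of the stabilizer of $\talpha$) forces both sides to vanish, so $\widetilde\w$ vanishes in all quadrants near $\tau\tx$, contradicting $\nu_\talpha\equiv\pm1$ on $(\tx,\ty)$ whenever $\ty-\tx\ge T$. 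This is a clean shortcut; the tradeoff is that you call on the full strength of Proposition~\ref{prop:gm} (a substantial global result whose proof is one of the longest in the memoir), whereas the paper keeps the argument local and independent of the Viterbo--Maslov machinery. A few points worth noting for completeness of your write-up: when $\tau\tx$ lies on another lift $\tbeta'$ of $\beta$ (or even on $\tbeta$ itself at a point other than $\tx,\ty$), the quantity $m_{\tau\tx}(\tLa)$ is still a sum of nonnegative values, and the continuity of $\nu_\talpha$ across such a crossing (Remark~\ref{rmk:trace}(i)) is exactly what you need to run the contradiction, so the argument does go through; and for the case $x=y$ one should say explicitly (as you implicitly do) that $\ty-\tx\in T\Z$ because $\ty\in\talpha$ and the lifts of $\alpha$ are pairwise disjoint or equal, so the stabilizer $\langle\tau\rangle$ is the full set of deck transformations sending $\tx$ to a point of $\talpha$. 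Your argument for part~(ii) coincides with the paper's.
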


\begin{proof}
We prove~(i).
The universal covering
$\pi:\C\to\Sigma$ and the lifts $\talpha$, $\tbeta$, and
$
\tLa=(\tx,\ty,\widetilde\w)
$
can be chosen such that
$$
\talpha=\R,\qquad
\tx=0,\qquad
\ty=a\ge 0,\qquad
\pi(\tz+1)=\pi(\tz),
$$
and $\pi$ maps the interval $[0,1)$
bijectively onto $\alpha$.  
Denote by 
$$
\tB\subset\tbeta
$$
the closure of the support of 
$$
\nu_\tbeta:=-\p\widetilde\w|_{\tbeta\setminus\talpha}.
$$ 
If $\beta$ is noncontractible then $\tB$ is the unique 
arc in $\tbeta$ from~$0$ to~$a$. If $\beta$ is contractible
then $\tbeta\subset\C$ is an embedded circle and $\tB$ is either
an arc in $\tbeta$ from $0$ to $a$ or is equal to $\tbeta$.
We must prove that~$A:=\pi([0,a])$ is an arc or,
equivalently, that $a<1$.  

\begin{figure}[htp]
\centering 
\includegraphics[scale=0.9]{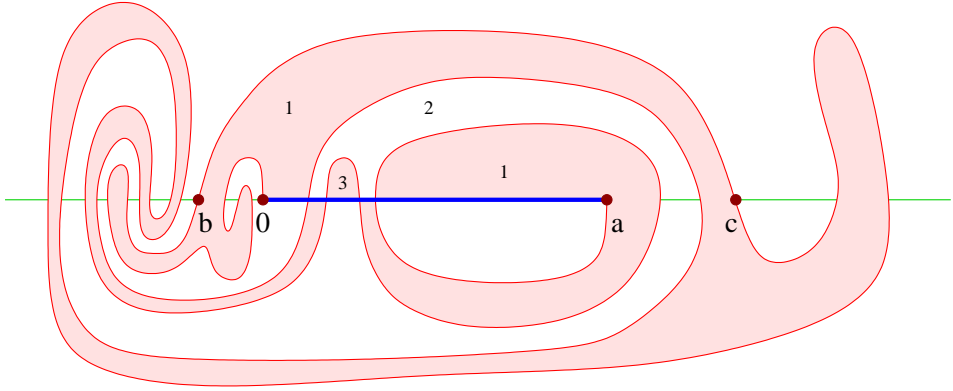}      
\caption{{The lift of an $(\alpha,\beta)$-trace 
with $\widetilde\w\ge0$.}}\label{fig:lift}
\end{figure}

Let $\Gamma$ be the set of connected components $\gamma$ of
$\tB\cap(\R\times[0,\infty))$ such that the function
$\widetilde\w$ is zero on one side of $\gamma$ and
positive on the other. If $\gamma\in\Gamma$, 
neither end point of $\gamma$ can lie in the open 
interval $(0,a)$ since the function $\widetilde\w$ 
is at least one above this interval.  
We claim that there exists a connected component
$\gamma\in \Gamma$ whose endpoints $b$ and $c$
satisfy
\begin{equation}\label{eq:b}
b\le 0\le a\le c, \qquad \p\gamma=\{b,c\}.
\end{equation}
(See Figure~\ref{fig:lift}.)
To see this walk slightly above the real axis towards
zero, starting at $-\infty$.
Just before the first crossing $b_1$ with $\tB$
turn left and follow the arc in $\tB$
until it intersects the real axis again at $c_1$.
The two intersections $b_1$ and $c_1$ are the endpoints
of an  element $\gamma_1$ of $\Gamma$.
Obviously $b_1\le 0$ and, as noted above,
$c_1$ cannot lie in the interval $(0,a)$.
For the same reason $c_1$ cannot be equal to zero.
Hence either $c_1<0$ or $c_1\ge a$.  In the latter
case $\gamma_1$ is the required arc $\gamma$.
In the former case we continue walking towards
zero along the real axis until the next intersection
with $\tB$ and repeat the above procedure.
Because the set of intersection points
of $\tB$ with $\talpha=\R$ is finite the
process must terminate after finitely  many steps.
Thus we have proved the existence of an arc 
$\gamma\in\Gamma$ satisfying~(\ref{eq:b}).

Assume that
$$
c\ge b+1.
$$
If $c=b+1$ then $c\in\tbeta\cap(\tbeta+1)$ and hence $\tbeta=\tbeta+1$.  
It follows that the intersection numbers of $\R$ and $\tbeta$ 
at $b$ and $c$ agree. But this contradicts the fact that
$b$ and $c$ are the endpoints of an arc in $\tbeta$ contained
in the closed upper halfplane.  Thus we have
$
c> b+1.
$ 
When this holds the arc $\gamma$ and its translate
$\gamma+1$ must intersect and their intersection 
does not contain the endpoints $b$ and $c$.  
We denote by 
$
\zeta\in\gamma\setminus\{b,c\}
$ 
the first point in $\gamma+1$ we encounter 
when walking along $\gamma$ from $b$ to~$c$.  
Let 
$$
U_0\subset\tbeta,\qquad
U_1\subset\tbeta+1
$$
be sufficiently small connected open neighborhoods 
of $\zeta$, so that $\pi:U_0\to\beta$ and $\pi:U_1\to\beta$ 
are embeddings and their images agree.  
Thus 
$$
\pi(U_0)=\pi(U_1)\subset\beta
$$
is an open neighborhood of $z:=\pi(\zeta)$ in~$\beta$.  
Hence it follows from a lifting argument that 
$U_0=U_1\subset\gamma+1$ and this contradicts 
our choice of $\zeta$.  This contradiction shows that 
our hypothesis $c\ge b+1$ must have been wrong.  
Thus we have proved that 
$$
b\le 0\le a\le c< b+1\le 1.
$$
Hence 
$
0\le a< 1
$
and so $A=\pi([0,a])$ is an arc, as claimed.
In the case $a=0$ we obtain the trivial arc from $x=y$ 
to itself.  This proves~(i).  

We prove~(ii).  Assume that $\alpha$ and $\beta$ 
are noncontractible embedded circles.  Then it follows from~(i)
that there exist oriented arcs $A\subset\alpha$ and $B\subset\beta$
from $x$ to $y$ such that $\nu_\alpha$ and $\nu_\beta$ are given 
by~\eqref{eq:nuAB-CF}.  If $x=y$ it follows also from~(i) that $A=B=\{x\}$,
hence 
$$
\nu_\talpha\equiv0,\qquad \nu_\tbeta\equiv 0,
$$ 
and hence $\widetilde\w\equiv0$, in contradiction to our hypothesis.
Thus $x\ne y$ and so $\Lambda$ satisfies the arc condition. 
This proves~(ii) and Proposition~\ref{prop:arc}.
\end{proof}

\begin{example}\label{ex:arc}\rm
Let $\alpha\subset\Sigma$ be a noncontractible embedded circle
and $\beta\subset\Sigma$ be a contractible embedded circle
intersecting $\alpha$ transversally.  Suppose $\beta$ is oriented 
as the boundary of an embedded disc $\Delta\subset\Sigma$.  
Let 
$$
x=y\in\alpha\cap\beta,\qquad
\nu_\alpha\equiv 0,\qquad \nu_\beta\equiv 1,
$$
and define 
$$
\w(z):=\left\{\begin{array}{ll}
1,&\mbox{for }z\in\Delta\setminus(\alpha\cup\beta),\\
0,&\mbox{for }z\in\Sigma\setminus(\Delta\cup\alpha\cup\beta).
\end{array}\right.
$$
Then $\Lambda=(x,y,\nu_\alpha,\nu_\beta,\w)$ is an $(\alpha,\beta)$-trace
that satisfies the hypotheses of Proposition~\ref{prop:arc}~(i)
with $x=y$ and $A=\{x\}$.
\end{example}

\begin{definition}\label{def:primitive}\rm
An $(\alpha,\beta)$-trace 
$
\Lambda=(x,y,\w)
$
is called {\bf primitive} if it satisfies the arc condition \index{primitive trace}
with boundary $\p\Lambda=:(x,y,A,B)$ and
$$
A\cap\beta=\alpha\cap B=\{x,y\}.
$$
A smooth $(\alpha,\beta)$-lune $u$ is called {\bf primitive}
if its $(\alpha,\beta)$-trace $\Lambda_u$ is primitive.  
It is called {\bf embedded} if $u:\D\to\Sigma$ is injective. \index{embedded lune}
\end{definition}

The next proposition is the special case of Theorems~\ref{thm:lune1}
and~\ref{thm:lune2} for embedded lunes.  
It shows that isotopy classes of primitive smooth 
$(\alpha,\beta)$-lunes are in one-to-one correspondence 
with the simply connected components of 
$\Sigma\setminus(\alpha\cup\beta)$ with two corners.
We will also call such a component a 
{\bf primitive $(\alpha,\beta)$-lune}.

\begin{proposition}[{\bf Embedded lunes}]\label{prop:simple}
Assume~\hyperlink{property_H}{(H)} and let 
$\Lambda=(x,y,\w)$ be an $(\alpha,\beta)$-trace.
The following are equivalent.

\smallskip\noindent{\bf (i)}
$\Lambda$ is a combinatorial lune 
and its boundary $\p\Lambda=(x,y,A,B)$
satisfies 
$$
A\cap B=\{x,y\}.
$$

\smallskip\noindent{\bf (ii)}
There exists an embedded $(\alpha,\beta)$-lune $u$ such that 
$\Lambda_u=\Lambda$.

\medskip\noindent
If $\Lambda$ satisfies~(i)
then any two smooth $(\alpha,\beta)$-lunes~$u$ and~$v$
with $\Lambda_u=\Lambda_v=\Lambda$ are equivalent.
\end{proposition}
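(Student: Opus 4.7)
The plan is to handle the forward direction (ii)$\Rightarrow$(i) by direct inspection and the converse by identifying the lune with the closure of the unique disc component on which the counting function equals $1$; uniqueness then drops out from the rigidity of embedded discs.

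For (ii)$\Rightarrow$(i): given an embedded smooth lune $u$, the image $\bar D:=u(\D)$ is a closed embedded disc in $\Sigma$ whose boundary decomposes as $A\cup B$ with $A\cap B=\{x,y\}$. The counting function equals $1$ on $\INT\bar D$ and $0$ elsewhere, immediately yielding (I), (III), and the arc condition. Condition (II) follows from the fact that $u$ is an orientation-preserving immersion at the corners: the standard oriented frame at $\pm 1\in\D$ pushes forward to a positive frame at $x,y$ whose first vector is tangent to $A$ and whose second vector has positive component along $T_{x,y}B$, forcing the intersection index of $A$ and $B$ to be $+1$ at $x$ and $-1$ at $y$.

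For (i)$\Rightarrow$(ii): because $\nu_\alpha$ vanishes on $\alpha\setminus A$ and $\nu_\beta$ on $\beta\setminus B$, the function $\w$ extends to a locally constant function $\bar\w$ on $\Sigma\setminus(A\cup B)$. Since $A\cap B=\{x,y\}$, the set $A\cup B$ is a piecewise smooth embedded circle with two corners, and $\bar\w$ jumps by exactly $\pm 1$ across each arc. A local computation at each of $x$ and $y$, using conditions (II) and (III), shows that $\bar\w$ takes the value $1$ in precisely the single sector between the outgoing rays of $A$ and $B$ determined by (II) and $0$ in the three other sectors. Combined with (I) and the fact that the jumps are $\pm 1$, this forces $\bar\w\in\{0,1\}$ everywhere, and the open set $D:=\{\bar\w=1\}$ is a single component of $\Sigma\setminus(A\cup B)$ with topological boundary $A\cup B$. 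By Proposition~\ref{prop:trace1}, $D$ is an embedded open disc, and by (II) its two boundary corners at $x$ and $y$ have interior angle strictly between $0$ and $\pi$. A standard construction then produces an orientation-preserving diffeomorphism $u:\D\to\bar D$ of manifolds with corners, smooth up to and including the corners, with $u(-1)=x$, $u(1)=y$, $u(\D\cap\R)=A$, and $u(\D\cap S^1)=B$. This $u$ is the required embedded lune, with $\Lambda_u=\Lambda$.

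For uniqueness, suppose $u$ and $v$ are any two smooth $(\alpha,\beta)$-lunes with $\Lambda_u=\Lambda_v=\Lambda$. As both are orientation-preserving immersions, the counting function equals the signed preimage count at regular values; since $\bar\w\in\{0,1\}$, each regular value has at most one preimage and values outside $\bar D$ have none, so both $u$ and $v$ are injective with common image $\bar D$. Hence $\phi:=u^{-1}\circ v:\D\to\D$ is an orientation-preserving diffeomorphism with $\phi(\pm 1)=\pm 1$ and $v=u\circ\phi$, so $u$ and $v$ are equivalent in the sense of Definition~\ref{def:lune}. (If one prefers a smooth isotopy between $u$ and $v$, Appendix~\ref{app:diffeos_of_D} shows $\phi$ is isotopic to the identity through corner-fixing diffeomorphisms.) The main technical obstacle is the corner smoothness in (i)$\Rightarrow$(ii): while the existence of a topological disc $\bar D$ with prescribed boundary decomposition is elementary, producing $u$ as a smooth map up to the corners relies crucially on the angle information in (II), which is the only place where that sign condition is used non-trivially.
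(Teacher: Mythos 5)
Your treatment of (ii)$\Rightarrow$(i) and of the uniqueness clause essentially matches the paper's argument: use the embeddedness to identify the boundary arcs, observe the counting function is $\{0,1\}$-valued, and form $\phi:=u^{-1}\circ v$ for uniqueness. The interesting divergence is in (i)$\Rightarrow$(ii). The paper proves a separate ``Claim'' that $\Sigma\setminus(A\cup B)$ has two components of which one is a disc (via Epstein's theorem that a contractible embedded circle bounds a disc), and then runs a contradiction argument to show the disc is in fact the component on which $\w=1$ (this contradiction argument needs the uniqueness statement from Theorem~\ref{thm:trace}, which requires splitting into the $S^2$ and non-$S^2$ cases). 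You instead invoke Proposition~\ref{prop:trace1} to conclude directly that the component $D=\{\w=1\}$ is an open disc. This is a genuine simplification: it bypasses Epstein, the case split on $\Sigma\cong S^2$, and the contradiction argument entirely, and it correctly exploits the hypothesis $\w\ge0$ already present in Proposition~\ref{prop:trace1}. Note also that the fact that $\overline{D}$ is compact — needed to produce the closed half-disc — is not automatic from $D$ being an open disc, but does follow because two-chains are by definition required to have compactly supported values.

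The one place you move too quickly is the deduction ``this forces $\bar\w\in\{0,1\}$ everywhere and $D=\{\bar\w=1\}$ is a single component''. A purely local computation at $x$ and $y$ plus the jump structure does not, by itself, control $\w$ on a component that does not touch $x$ or $y$. What makes it work is the topological fact that the embedded piecewise-smooth circle $A\cup B$ disconnects $\Sigma$ into at most two pieces (and exactly two here, since $\w$ is non-constant — it jumps by $\pm1$ across $A$). Once you have precisely two components of $\Sigma\setminus(A\cup B)$, the $\{0,1\}$ values near $x$ exhaust both components and your conclusion follows. The paper's Claim delivers this two-component structure explicitly; you should state it rather than leave it implicit. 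With that supplied, your route through Proposition~\ref{prop:trace1} is correct and arguably cleaner than the paper's.
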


\begin{proof}
We prove that~(ii) implies~(i).
Let $u:\D\to\Sigma$ be an embedded $(\alpha,\beta)$-lune
with $\Lambda_u=\Lambda$. Then $u|_{\D\cap\R}:\D\cap\R\to\alpha$ 
and $u|_{\D\cap S^1}:\D\cap S^1\to\beta$ 
are embeddings.  Hence $\Lambda$
satisfies the arc condition and 
$\p\Lambda=(x,y,A,B)$ with $A=u(\D\cap\R)$ 
and $B=u(\D\cap S^1)$.  Since $\w$ is the counting 
function of $u$ it takes only the values zero and one. 
If $z\in A\cap B$ then $u^{-1}(z)$ 
contains a single point which must lie 
in $\D\cap\R$ and $\D\cap S^1$,
hence is either $-1$ or $+1$, 
and so $z=x$ or $z=y$.
The assertion about the intersection indices follows
from the fact that $u$ is an immersion.
Thus we have proved that~(ii) implies~(i).

We prove that~(i) implies~(ii).  
This relies on the following.

\medskip\noindent{\bf Claim.}
{\it Let $\Lambda=(x,y,\w)$ be an $(\alpha,\beta)$-trace
that satisfies the arc condition and $\p\Lambda=:(x,y,A,B)$
with ${A\cap B=\{x,y\}}$. Then $\Sigma\setminus(A\cup B)$ has 
two components and one of these is homeomorphic to the disc.}

\medskip\noindent
To prove the claim, let $\Gamma\subset\Sigma$ be an embedded 
circle obtained from $A\cup B$ by smoothing the corners.
Then $\Gamma$ is contractible and hence,
by a theorem of Epstein~\cite{EPSTEIN}, 
\phantomsection\label{EPSTEIN1}
bounds a disc. This proves the claim. 

Now suppose that $\Lambda=(x,y,\w)$ is an 
$(\alpha,\beta)$-trace that satisfies~(i)
and let $\p\Lambda=:(x,y,A,B)$. By the claim, 
the complement $\Sigma\setminus(A\cup B)$
has two components, one of which is homeomorphic to the disc.  
Denote the components by $\Sigma_0$ and $\Sigma_1$.
Since $\Lambda$ is a combinatorial lune, it follows that
$\w$ only takes the values zero and one.  
Hence we may choose the indexing such that
$$
\w(z)=\left\{\begin{array}{ll}
0,&\mbox{for }z\in\Sigma_0\setminus(\alpha\cup\beta),\\
1,&\mbox{for }z\in\Sigma_1\setminus(\alpha\cup\beta).
\end{array}\right.
$$
We prove that $\Sigma_1$ is homeomorphic to the disc.
Suppose, by contradiction, that $\Sigma_1$ is not homeomorphic 
to the disc.  Then $\Sigma$ is not diffeomorphic to the $2$-sphere
and, by the claim, $\Sigma_0$ is homeomorphic to the disc.
By Definition~\ref{def:arc-CF}, there is a smooth map
$u:\D\to\Sigma$ that satisfies the boundary 
condition~\eqref{eq:bc} such that $\Lambda_u=\Lambda$.
Since $\Sigma$ is not diffeomorphic to the $2$-sphere, 
the homotopy class of $u$ is uniquely determined by
the quadruple $(x,y,A,B)$ (see~Theorem~\ref{thm:trace} above).
Since $\Sigma_0$ is homeomorphic to the disc we may choose $u$ 
such that $u(\D)=\overline\Sigma_0$ and hence $\w(z)=\deg(u,z)=0$ 
for $z\in\Sigma_1\setminus(\alpha\cup\beta)$,
in contradiction to our choice of indexing. This shows 
that $\Sigma_1$ must be homeomorphic to the disc.
Let $N$ denote the closure of $\Sigma_1$:
$$
N:=\overline\Sigma_1 = \Sigma_1\cup A\cup B.
$$
Then the orientation of $\p N=A\cup B$ agrees with the 
orientation of $A$ and is opposite to the orientation of $B$,
i.e.\ $N$ lies to the left of $A$ and to the right of $B$.
Since the intersection index of $A$ and $B$ at $x$ is
$+1$ and at $y$ is $-1$, it follows that the angles
of $N$ at $x$ and $y$ are between zero and $\pi$
and hence $N$ is a $2$-manifold with two corners.
Since $N$ is simply connected there exists 
a diffeomorphism $u:\D\to N$ such that
$$
u(-1)=x,\qquad u(1)=y,\qquad 
u(\D\cap\R)=A,\qquad u(\D\cap S^1)=B.
$$
This diffeomorphism is the required
embedded $(\alpha,\beta)$-lune.

We prove that the embedded $(\alpha,\beta)$-lune in~(ii)
is unique up to equivalence.  Let $v:\D\to\Sigma$ be another 
smooth $(\alpha,\beta)$-lune such that $\Lambda_v=\Lambda$. 
Then $v$ maps the boundary of $\D$ bijectively onto $A\cup B$,
because $A\cap B=\{x,y\}$.  Moreover, $\w$ is the counting 
function of $v$ and $\#v^{-1}(z)$ is constant on each component 
of $\Sigma\setminus(A\cup B)$.  Hence $\#v^{-1}(z)=0$ for 
$z\in\Sigma_0$ and  $\#v^{-1}(z)=1$ for $z\in\Sigma_1$.
This shows that $v$ is injective and
$
v(\D) = N = u(\D).
$
Since~$u$ and~$v$ are embeddings the composition
$
\phi:=u^{-1}\circ v:\D\to\D
$
is an orientation preserving diffeomorphism 
such that $\phi(\pm1)=\pm1$.  Hence $v=u\circ\phi$ is 
equivalent to $u$. This proves Proposition~\ref{prop:simple}.
\end{proof}

\begin{lemma}\label{le:slune1}
Assume~\hyperlink{property_H}{(H)} and let 
$
u:\D\to\Sigma
$ 
be a smooth $(\alpha,\beta)$-lune.

\smallskip\noindent{\bf (i)}
Let $S$ be a connected component of $\Sigma\setminus(\alpha\cup\beta)$.
If $S\cap u(\D)\ne\emptyset$ then $S\subset u(\D)$
and $S$ is diffeomorphic to the open unit  disc in $\C$.

\smallskip\noindent{\bf (ii)}
Let  $\Delta$ be a connected component 
of  $\D\setminus u^{-1}(\alpha\cup\beta)$.
Then  $\Delta$  is diffeomorphic to the open unit disc 
and the restriction of  $u$ to $\Delta$ 
is a diffeomorphism onto  the open set 
$S := u(\Delta)\subset\Sigma$. 
\end{lemma}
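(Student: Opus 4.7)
The plan is to treat both parts simultaneously: part~(i) supplies the simple connectivity needed to upgrade the covering map constructed in part~(ii) to a diffeomorphism. The key ingredients are the immersion hypothesis (providing local diffeomorphisms), compactness of $\D$ (providing properness), and Proposition~\ref{prop:trace1} together with Theorem~\ref{thm:arc} (identifying certain regions as discs).

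First I would establish the containment $S\subset u(\D)$ in part~(i) by showing that $S\cap u(\D)$ is both open and closed in $S$. Closedness follows from compactness of $u(\D)$. For openness: given $z=u(w)\in S\cap u(\D)$, the conditions $z\notin\alpha\cup\beta$ and $u(\p\D)\subset\alpha\cup\beta$ force $w$ to lie in the interior of $\D$; the immersion property then provides an open neighborhood of $w$ mapping diffeomorphically onto a neighborhood of $z$, whose intersection with $S$ is an open neighborhood of $z$ in $S$ contained in $u(\D)$. Connectedness of $S$ yields $S\cap u(\D)=S$.

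For part~(ii), since $du$ is injective and $\alpha,\beta$ have codimension one in $\Sigma$, the map $u$ is automatically transverse to $\alpha$ and $\beta$ at interior points of $\D$, making $u^{-1}(\alpha\cup\beta)\cap\INT\D$ a $1$-submanifold. Hence $u|_\Delta$ is a local diffeomorphism into $\Sigma\setminus(\alpha\cup\beta)$, with connected image lying in a single component $S$ of this set. The restriction $u|_\Delta:\Delta\to S$ is proper: $\overline\Delta$ is compact, and for compact $K\subset S$ the set $u^{-1}(K)\cap\overline\Delta$ equals $u^{-1}(K)\cap\Delta$ because boundary points of $\Delta$ in $\D$ map into $\alpha\cup\beta$, disjoint from $K$. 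A proper local diffeomorphism between connected manifolds is a covering map.

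To conclude both parts I need $S$ to be simply connected. By Theorem~\ref{thm:arc}, $\Lambda_u$ satisfies the arc condition with boundary $(x,y,A,B)$. Orientation-preservation of $u$ together with $S\subset u(\D)$ gives $\w(z)=\#u^{-1}(z)\ge 1$ on $S$, so the unique component $S'$ of $\Sigma\setminus(A\cup B)$ containing $S$ satisfies $\w|_{S'}\not\equiv 0$; by Proposition~\ref{prop:trace1}, $S'$ is diffeomorphic to the open disc. A final argument identifies $S$ with $S'$ by excluding arcs of $(\alpha\cup\beta)\setminus(A\cup B)$ from $S'$: such an arc would contradict the support condition on $\nu_\alpha$ or $\nu_\beta$ from the arc condition, once one exploits that $u(\D\cap\R)=A$ and $u(\D\cap S^1)=B$ (since $u|_{\p\D}$ is an immersion between $1$-manifolds, hence monotonic onto its image). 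Once $S=S'$ is known to be a disc, the covering $u|_\Delta:\Delta\to S$ has degree one, so $u|_\Delta$ is a diffeomorphism and $\Delta$ is diffeomorphic to the open disc $S$, completing both~(i) and~(ii).

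The main obstacle is the last step: passing from ``$S'$ is a disc'' (which Proposition~\ref{prop:trace1} directly supplies) to the refined statement in~(i) that the component $S$ of the strictly larger removed set $\alpha\cup\beta$ is a disc. Ruling out stray arcs of $\alpha\cup\beta$ inside $S'\cap u(\D)$ requires care and is the most delicate part of the argument; by contrast, the open-closed containment argument in~(i) and the covering-space argument in~(ii) are comparatively standard.
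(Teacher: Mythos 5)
Your open--closed containment argument in~(i) and the proper-covering-map argument in~(ii) are both correct and essentially match the paper. The gap is in the step where you pass from the component $S'$ of $\Sigma\setminus(A\cup B)$ to the component $S$ of $\Sigma\setminus(\alpha\cup\beta)$.

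You claim that an arc of $(\alpha\cup\beta)\setminus(A\cup B)$ inside $S'$ ``would contradict the support condition on $\nu_\alpha$ or $\nu_\beta$ from the arc condition.'' This is not so. The arc condition says $\nu_\alpha$ vanishes on $\alpha\setminus\overline{A}$, i.e.\ that $\w$ takes the \emph{same} value on both sides of any such arc; it does not say these arcs stay out of the region where $\w>0$. An arc of $\alpha$ with $\nu_\alpha=0$ can perfectly well run through the interior of $S'$, splitting it into several components of $\Sigma\setminus(\alpha\cup\beta)$ (or, worse, a collection of arcs of $\alpha$ and $\beta$ can enclose a subregion, making $S$ multiply connected inside the disc $S'$). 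So $S=S'$ is unjustified, and even knowing $S'$ is a disc does not by itself force $S$ to be one.

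The paper does not reduce to $S'$ at all. It reruns the \emph{proof} of Proposition~\ref{prop:trace1} directly on $S$: since $S$ is a component of $\Sigma\setminus(\alpha\cup\beta)$, any embedded loop $\gamma$ in $S$ is automatically disjoint from $u(\p\D)\subset\alpha\cup\beta$, so $u^{-1}(\gamma(\R/\Z))$ is a disjoint union of circles in $\D$; the degree argument then produces a contractible power of $\gamma$, and Lemma~\ref{le:dbae2} forces $\gamma$ to be contractible. Nothing about $A$, $B$, or the arc condition enters. That route avoids the comparison between $S$ and $S'$ entirely, which is exactly the point where your version breaks down. If you want to salvage your outline, replace the ``$S=S'$'' step by re-running the Step~1/Step~2 argument from the proof of Proposition~\ref{prop:trace1} with $S$ in place of the component of $\Sigma\setminus(A\cup B)$, noting that the hypothesis actually used there is that $\gamma$ misses $u(\p\D)$, which holds here for free.
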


\begin{proof}
That $S\cap u(\D)\ne\emptyset$ implies $S\subset u(\D)$
follows from the fact that $u$ is an immersion.
That this implies that $S$ is diffeomorphic 
to the open unit disc in $\C$ follows 
as in Proposition~\ref{prop:trace1}. This proves~(i).  
By~(i) the open set $S:=u(\Delta)$ in~(ii) 
is diffeomorphic to the disc and hence is simply connected. 
Since $u:\Delta\to S$ is a proper covering it follows 
that $u:\Delta\to S$ is a diffeomorphism. 
This proves Lemma~\ref{le:slune1}.
\end{proof}

Let $u:\D\to\Sigma$ be a smooth $(\alpha,\beta)$-lune.
The image under $u$ of the connected component of
$\D\setminus u^{-1}(\alpha\cup\beta)$ whose closure 
contains $-1$ is called the {\bf left end of~$u$}.
\index{left end of a lune}\index{lune!left end of}
The image under $u$ of the connected component 
of\index{end of a lune}
$\D\setminus u^{-1}(\alpha\cup\beta)$ whose closure 
contains $+1$ is called the {\bf right end of~$u$}. 
\index{right end of a lune}\index{lune!right end of}

\begin{lemma}\label{le:slune2}
Assume~\hyperlink{property_H}{(H)} 
and let $u$ be a smooth $(\alpha,\beta)$-lune.
If there is a primitive $(\alpha,\beta)$-lune with the same left 
or right end as $u$ it is equivalent to~$u$.
\end{lemma}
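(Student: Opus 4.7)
The plan is to show that if a primitive lune $u_0$ has the same left end as $u$, then $u$ itself is a primitive lune with the same $(\alpha,\beta)$-trace as $u_0$, whereupon the uniqueness clause of Proposition~\ref{prop:simple} gives $u\simeq u_0$. The shared-right-end case is symmetric under the reflection $z\mapsto -\bar z$ of $\D$.

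By Proposition~\ref{prop:simple}, $u_0:\D\to\bar S$ is an embedding onto the closure of a simply connected component $S\subset\Sigma\setminus(\alpha\cup\beta)$ whose boundary consists of arcs $A_0\subset\alpha$ and $B_0\subset\beta$ with corners $x,y$ and satisfying $A_0\cap B_0=\{x,y\}$; since $u_0^{-1}(\alpha\cup\beta)=\partial\D$, the left end of $u_0$ is all of $S$. Let $\Delta$ be the component of $\D\setminus u^{-1}(\alpha\cup\beta)$ with $-1$ in its closure; the hypothesis forces $u(\Delta)=S$, so Lemma~\ref{le:slune1}(ii) gives that $u|_\Delta:\Delta\to S$ is a diffeomorphism. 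Because $u$ is an immersion on all of $\D$ (including the corners $\pm1$), the restriction $u|_{\bar\Delta}:\bar\Delta\to\bar S$ is a proper local diffeomorphism of compact surfaces with corners, hence a covering; since $\bar S$ is simply connected and $\bar\Delta$ is connected, this covering has a single sheet, so $u|_{\bar\Delta}$ is itself a diffeomorphism. In particular $\bar\Delta$ has exactly two corners, $-1$ (mapped to $x$) and some $q\in\D$ (mapped to $y$), and $\partial\bar\Delta=\gamma_\alpha\cup\gamma_\beta$ with $u(\gamma_\alpha)=A_0$ and $u(\gamma_\beta)=B_0$.

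The main step is to show that $q=+1$, equivalently $\gamma_\alpha=\D\cap\R$ and $\gamma_\beta=\D\cap S^1$. Parametrize $\gamma_\alpha:[0,1]\to\D$ with $\gamma_\alpha(0)=-1$. Then $u\circ\gamma_\alpha$ is a smooth immersion $[0,1]\to A_0$ between intervals, hence strictly monotonic and injective, so every interior point of $\gamma_\alpha$ maps into $A_0\setminus\{x,y\}\subset\alpha\setminus\beta$ by the primitive condition. Near any interior $p\in\gamma_\alpha$ lying in $\D\cap\R\setminus\{\pm1\}$, $u$ is a local diffeomorphism onto a neighborhood of $u(p)\in\alpha\setminus\beta$ in $\Sigma$, so $u^{-1}(\alpha)$ is locally a single smooth curve through $p$; since $\D\cap\R$ already supplies such a curve, uniqueness forces $u^{-1}(\alpha)=\D\cap\R$ locally near $p$. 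Setting $s:=\sup\{t\in[0,1]:\gamma_\alpha([0,t])\subset\D\cap\R\}>0$, this rules out $\gamma_\alpha(s)\in(-1,+1)$; hence $\gamma_\alpha(s)\in\{+1,-1\}$, and since $s>0$ and the local analysis at $+1$ gives $u^{-1}(\alpha)=\D\cap\R$ there, we conclude $\gamma_\alpha(s)=+1$ and $s=1$. The parallel argument gives $\gamma_\beta=\D\cap S^1$.

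The main obstacle is precisely this boundary analysis: without the primitive condition $A_0\cap\beta=\{x,y\}$, the arc $\gamma_\alpha$ could in principle detour into the interior of $\D$ through an intermediate point mapping to $y$, and further input would be needed to exclude this. Once $\partial\bar\Delta=\partial\D$ is established, connectedness of $\D$ forces $\bar\Delta=\D$; thus $u^{-1}(\alpha\cup\beta)=\partial\D$ and $u:\D\to\bar S$ is itself an embedded $(\alpha,\beta)$-lune with the same trace as $u_0$. The uniqueness clause of Proposition~\ref{prop:simple} then yields $u\simeq u_0$.
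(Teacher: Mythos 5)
Your proof is correct, and it takes a genuinely different route from the paper's. The paper argues by a quick corner count: walking from $-1$ along $\D\cap\R$ and $\D\cap S^1$ to the first points $z_0\in u^{-1}(\beta)$ and $z_1\in u^{-1}(\alpha)$ exhibits three distinct corners $-1,z_0,z_1$ of the left-end component $\bar\Delta$ whenever $u$ is not primitive; since the end of a primitive lune is a bigon with only two corners, $u$ must be primitive, and Proposition~\ref{prop:simple} finishes. You instead make explicit the fact this count tacitly rests on---that $u$ is injective on $\bar\Delta$---by proving $u|_{\bar\Delta}\colon\bar\Delta\to\bar S$ is a one-sheeted covering (using Lemma~\ref{le:slune1} for the diffeomorphism on the interior), hence a homeomorphism, and you then finish not by counting corners but by directly tracking the two boundary arcs $\gamma_\alpha,\gamma_\beta$ of $\bar\Delta$: the primitive condition $A_0\cap\beta=\alpha\cap B_0=\{x,y\}$ together with the local structure of $u^{-1}(\alpha)$ and $u^{-1}(\beta)$ forces $\gamma_\alpha=\D\cap\R$ and $\gamma_\beta=\D\cap S^1$, so $\bar\Delta=\D$. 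Your version buys a self-contained treatment of the injectivity issue at the cost of length---once $\bar\Delta\cong\bar S$ is in hand, the paper's three-corner observation finishes in one line. Two small points to tighten. First, the clause ``this rules out $\gamma_\alpha(s)\in(-1,+1)$'' is only justified when $s<1$; the correct deduction of $q=+1$ combines $\gamma_\alpha\subset[-1,1]$ with the parallel conclusion $\gamma_\beta\subset\D\cap S^1$, whose common second endpoint is then forced into $\{-1,+1\}$ and hence equals $+1$. Second, $z\mapsto-\bar z$ is orientation-reversing and so does not carry $(\alpha,\beta)$-lunes to $(\alpha,\beta)$-lunes; the shared-right-end case is better handled by rerunning the same argument with $+1$ in place of $-1$.
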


\begin{proof}
If $u$ is not a primitive lune its ends have 
at least three corners. To see this, walk along $\D\cap\R$
(respectively $\D\cap S^1$) from $-1$ to $1$ 
and let $z_0$ (respectively $z_1$) be the first 
intersection point with $u^{-1}(\beta)$
(respectively $u^{-1}(\alpha)$). Then 
$u(-1)$, $u(z_0)$, $u(z_1)$ are corners 
of the left end of~$u$. Hence the hypotheses of 
Lemma~\ref{le:slune2} imply that $u$ is a primitive lune.  
Two primitive lunes with the same ends 
are equivalent by Proposition~\ref{prop:simple}.  
This proves Lemma~\ref{le:slune2}.
\end{proof}

\begin{proposition}\label{prop:S2arc}
Assume~\hyperlink{property_H}{(H)} and suppose 
that $\Sigma$ is diffeomorphic to the $2$-sphere.  
If $u$ is a smooth $(\alpha,\beta)$-lune 
then $\Lambda_u$ satisfies the arc condition.
\end{proposition}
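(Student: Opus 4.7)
The plan is to proceed by contradiction and reduce to the planar case via stereographic projection. Suppose $\Lambda_u=(x,y,\w)$ does not satisfy the arc condition. The restrictions $u|_{\D\cap\R}\colon[-1,1]\to\alpha$ and $u|_{\D\cap S^1}\colon\D\cap S^1\to\beta$ are immersions of compact intervals into circles, hence monotonic. Consequently, the failure of the arc condition forces at least one of the boundary paths $\gamma_{u,\alpha}$ or $\gamma_{u,\beta}$ to wrap strictly around its target circle (the case $x=y$ automatically forces both to wrap, since they then become loops). Without loss of generality, assume $u(\D\cap\R)=\alpha$, so $\alpha\subset u(\D)$.

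The first key step is to produce a point $p\in S^2\setminus(u(\D)\cup\alpha\cup\beta)$. Because $u$ is orientation preserving, each half-disc neighborhood of a preimage in $\D\cap\R$ is carried by $u$ onto a neighborhood of $\alpha$ on a single orientation-determined side; by Lemma~\ref{le:slune1}(i), every component of $S^2\setminus(\alpha\cup\beta)$ abutting $\alpha$ on that side is entirely contained in $u(\D)$. A local analysis of the four quadrants of $S^2\setminus(\alpha\cup\beta)$ meeting at $x\in\alpha\cap\beta$, using the wrapping of $\gamma_{u,\alpha}$ (together with the wrapping or non-wrapping of $\gamma_{u,\beta}$ in the other cases), identifies at least one quadrant whose ambient component is not locally contained in $u(\D)$. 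By Lemma~\ref{le:slune1}(i) this component is then entirely disjoint from $u(\D)$, and any interior point $p$ of it off $\alpha\cup\beta$ satisfies the requirement.

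Stereographic projection $\sigma$ from $p$ produces a smooth $(\alpha_0,\beta_0)$-lune $u_0=\sigma\circ u\colon\D\to\C$, where $\alpha_0=\sigma(\alpha)$ and $\beta_0=\sigma(\beta)$ are embedded circles in $\C$ and $u_0(\D)$ is compact. The unbounded component of $\C\setminus(\alpha_0\cup\beta_0)$ is not diffeomorphic to a disc, so by Lemma~\ref{le:slune1}(i) it is disjoint from $u_0(\D)$; hence the counting function $\w_0$ vanishes there. The wrapping hypothesis transports to $\gamma_{u_0,\alpha_0}$ wrapping strictly around $\alpha_0$, so the induced function $\nu_{\alpha_0}$ is nowhere zero on $\alpha_0\setminus\beta_0$. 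Combining this with $\w_0=0$ on the unbounded region and the identity $\nu_{\alpha_0}=\partial\w_0|_{\alpha_0\setminus\beta_0}$ from Lemma~\ref{le:boundary}, a careful examination at $x_0=\sigma(x)$ shows that the local pattern of $\w_0$-values across $\alpha_0$ cannot simultaneously satisfy the sign/wrapping prescription from $\gamma_{u_0,\alpha_0}$ and the vanishing at infinity, producing the contradiction.

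The hardest step is the last one: turning the planar data into an actual contradiction. The existence of $p$ via the quadrant analysis is relatively routine once the orientation bookkeeping is set up, but the final incompatibility requires tracking the interplay between the orientation of $\alpha_0$ (inherited from $\alpha$ via $\sigma$), the compactness of $u_0(\D)$, and the wrapping multiplicities along $\alpha_0$. A possible alternative is to instead use the trace formula (Theorem~\ref{thm:maslov}) together with Proposition~\ref{prop:maslovC} to compute the Viterbo--Maslov index of $\Lambda_{u_0}$ in two incompatible ways, deriving the contradiction without tracking signs directly.
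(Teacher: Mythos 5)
Your approach is genuinely different from the paper's.  The paper proves Proposition~\ref{prop:S2arc} by induction on $\#(\alpha\cap\beta)$: whenever there is a primitive $(\alpha,\beta)$-lune with endpoints in $\Sigma\setminus\{x,y\}$, it is removed by an isotopy of $\beta$ (Step~1), and once no such primitive lune remains a direct combinatorial analysis of the cyclic order of $\alpha\cap\beta$ along $\alpha$ forces the arc condition (Step~2).  You instead try to locate a point $p\in S^2\setminus u(\D)$ and reduce to the plane by stereographic projection.

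The weak link is the quadrant analysis you use to produce $p$.  Local information about $u$ along $\D\cap\R$ and $\D\cap S^1$ near $\pm1$ cannot exclude an \emph{interior} preimage of $x$: if some $z_0\in\mathrm{int}(\D)$ satisfies $u(z_0)=x$, then $u$ is a local diffeomorphism near $z_0$ and covers all four quadrants at $x$ simultaneously, so no quadrant at $x$ is ``not locally contained in $u(\D)$.''  Ruling out such interior preimages is precisely one of the non-obvious consequences of Theorems~\ref{thm:arc} and~\ref{thm:lune1} (see the Introduction's remark that a lune cannot contain its endpoints in the interior of its image), so it cannot be taken for granted here.  The final ``careful examination at $x_0$'' is likewise not fleshed out, and I do not see that the raw data $\w_0\ge0$, $\nu_{\alpha_0}$ nowhere zero, and $\w_0=0$ on the unbounded region produce a sign contradiction by themselves.

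The alternative you mention in passing is in fact a sound and shorter route, and it makes the stereographic projection unnecessary: Theorem~\ref{thm:maslov} is proved for $\Sigma=S^2$ in Part~I, so you may apply the trace formula directly.  Since $u$ is an orientation-preserving immersion, the intersection indices of $\alpha$ and $\beta$ at $x$ and $y$ are opposite, whence $x\ne y$; the counting function satisfies $\w\ge0$; the Lagrangian-path homotopy argument (as in the paper's proof that (i)$\Rightarrow$(ii) of Theorem~\ref{thm:lune1}, which does not use the arc condition) gives $\mu(\Lambda_u)=1$; the left and right ends of $u$ (Lemma~\ref{le:slune1}) each fill one quadrant, so $m_x(\Lambda_u)\ge1$ and $m_y(\Lambda_u)\ge1$; the trace formula forces $m_x=m_y=1$, so three of the four values of $\w$ near $x$ (and near $y$) are $0$ and one is $1$, which gives $\min\Abs{\nu_\alpha}=\min\Abs{\nu_\beta}=0$.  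If you want to pursue your route, I recommend replacing the quadrant analysis and the final planar step with this argument and presenting it in full rather than as a remark.
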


\begin{proof}
The proof is by induction on the number 
of intersection points of $\alpha$ and $\beta$.
It has three steps.

\medskip\noindent{\bf Step~1.}
{\it Let  $u$ be a smooth $(\alpha,\beta)$-lune whose 
$(\alpha,\beta)$-trace
$$
\Lambda=\Lambda_u=(x,y,\w)
$$
does not satisfy the arc condition. Suppose there is a primitive 
$(\alpha,\beta)$-lune with endpoints in $\Sigma\setminus\{x,y\}$.
Then there is an embedded loop $\beta'$, isotopic to~$\beta$
and transverse to $\alpha$, and a smooth $(\alpha,\beta')$-lune 
$u'$ with endpoints $x,y$ such that
$\Lambda_{u'}$ does not satisfy the arc condition 
and $\#(\alpha\cap\beta')<\#(\alpha\cap\beta)$.}

\smallbreak

\medskip\noindent
By Proposition~\ref{prop:simple}, there exists a 
primitive smooth $(\alpha,\beta)$-lune $u_0:\D\to\Sigma$ 
whose endpoints $x_0:=u_0(-1)$ and $y_0:=u_0(+1)$
are contained in $\Sigma\setminus\{x,y\}$.  
Use this lune to remove the intersection points
$x_0$ and $y_0$ by an isotopy of~$\beta$, supported
in a small neighborhood of the image of $u_0$.
More precisely, extend~$u_0$ to an embedding
(still denoted by $u_0$) of the open set
$$
\D_\eps:=\{z\in\C\,|\,\IM\,z>-\eps,|z|<1+\eps\}
$$
for $\eps>0$ sufficiently small such that
$$
u_0(\D_\eps)\cap\beta = u_0(\D_\eps\cap S^1),\qquad
u_0(\D_\eps)\cap\alpha = u_0(\D_\eps\cap\R).
$$
Choose a smooth cutoff function $\rho:\D_\eps\to[0,1]$ which 
vanishes near the boundary and is equal to one on $\D$.
Consider the vector field $\xi$ on $\Sigma$
that vanishes outside $u_0(\D_\eps)$ and satisfies
$$
u_0^*\xi(z) = -\rho(z)\i.
$$
Let $\psi_t:\Sigma\to\Sigma$ 
be the isotopy generated by $\xi$ and, 
for $T>0$ sufficiently large, define
$$
\beta':=\psi_T(\beta),\qquad
\Lambda':=(x,y,\nu_\alpha,\nu_{\beta'},\w').
$$
Here $\nu_{\beta'}:\beta'\setminus\alpha\to\Z$ is the 
unique one-chain equal to $\nu_\beta$ on 
$\beta\setminus u_0(\D_\eps)$ and 
$\w':\Sigma\setminus(\alpha\cup\beta')\to\Z$ 
is the unique two-chain equal to $\w$ on 
$\Sigma\setminus u_0(\D_\eps)$. Since~$\Lambda$ does
not satisfy the arc condition, neither does $\Lambda'$.  
Let $U\subset\D$ be the union of the components of
$u^{-1}(u_0(\D_\eps))$ that contain an arc
in $\D\cap S^1$ and define the map 
$u':\D\to\Sigma$ by
$$
u'(z) := \left\{\begin{array}{rl}
\psi_T(u(z)),&\mbox{if }z\in U,\\
u(z),&\mbox{if }z\in\D\setminus U.
\end{array}\right.
$$
We prove that $U\cap\R=\emptyset$. 
To see this, note that the restriction of $u$
to each connected component of $U$ 
is a diffeomorphism onto its image which is
either equal to $u_0(\{z\in\D_\eps\,|\,\Abs{z}\ge 1\})$
or equal to $u_0(\{z\in\D_\eps\,|\,\Abs{z}\le 1\})$ 
(see Figure~\ref{fig:deformation} below).  Thus 
$$
u(U)\cap(\alpha\setminus\{x_0,y_0\})\subset\mathrm{int}(u(U))
$$ 
and hence $U\cap\R=\emptyset$ as claimed.
This implies that $u'$ is a smooth $(\alpha,\beta')$-lune
such that ${\Lambda_{u'}=\Lambda'}$.  Hence $\Lambda_{u'}$
does not satisfy the arc condition. 
This proves Step~1.

\begin{figure}[htp]
\centering 
\includegraphics[scale=0.50]{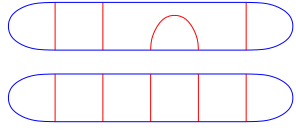}      
\caption{{$\alpha$ encircles at least two primitive lunes.}}
\label{fig:primlune1}
\end{figure}

\medskip\noindent{\bf Step~2.}
{\it Let $u$ be a smooth $(\alpha,\beta)$-lune with 
endpoints $x,y$ and suppose that every primitive 
$(\alpha,\beta)$-lune has $x$  or $y$ as one of its endpoints.
Then $\Lambda_u$ satisfies the arc condition.}

\medskip\noindent
Both connected components of $\Sigma\setminus\alpha$ are discs, 
and each of these discs contains at least two primitive 
$(\alpha,\beta)$-lunes.  If it contains more than two there is one with 
endpoints in $\Sigma\setminus\{x,y\}$.  Hence, under 
the assumptions of Step~2, each connected component 
of $\Sigma\setminus\alpha$ contains precisely two primitive 
$(\alpha,\beta)$-lunes. (See Figure~\ref{fig:primlune1}.)  
Thus each connected component of $\Sigma\setminus(\alpha\cup\beta)$ 
is either a quadrangle or a primitive $(\alpha,\beta)$-lune and 
there are precisely four primitive $(\alpha,\beta)$-lunes, 
two in each connected component of $\Sigma\setminus\alpha$. 
At least two primitive $(\alpha,\beta)$-lunes contain~$x$  
and at least two contain~$y$. 
(See Figure~\ref{fig:primlune2}.)
\begin{figure}[htp]
\centering 
\includegraphics[scale=0.35]{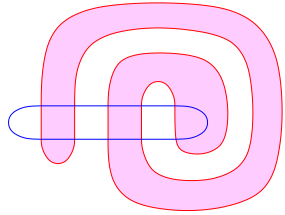} 
\caption{{Four primitive lunes in the 2-sphere.}}
\label{fig:primlune2}
\end{figure}

As $\Sigma$ is diffeomorphic to $S^2$, the number of intersection points 
of $\alpha$ and~$\beta$ is even. Write
$
\alpha\cap\beta = \{x_0,\dots,x_{2n-1}\},
$
where the ordering is chosen along~$\alpha$, starting at $x_0=x$.
Then $x_0$ is contained in two primitive $(\alpha,\beta)$-lunes,
one with endpoints $x_0,x_{2n-1}$ and one with endpoints $x_0,x_1$.  
Each connected component of $\Sigma\setminus\alpha$ 
determines an equivalence relation on $\alpha\cap\beta$:
distinct points are equivalent iff they are connected 
by a $\beta$-arc in this component.  Let $A$ be the connected 
component containing the $\beta$-arc from $x_0$ to $x_{2n-1}$ 
and $B$ be the connected component containing the $\beta$-arc 
from $x_0$ to $x_1$.  Then $x_{j-1}\sim_A x_{2n-j}$ and 
$x_{j+1}\sim_B x_{2n-j}$ for  $j=1,\dots,n$.
Thus the only other intersection point contained in 
two primitive $(\alpha,\beta)$-lunes is $y=x_n$. 
Moreover, $\alpha$ and $\beta$ have opposite intersection indices 
at $x_i$ and $x_{i+1}$ for each~$i$, because the arcs in $\alpha$ 
from $x_{i-1}$ to $x_i$ and from $x_i$ to $x_{i+1}$ are contained 
in different connected components of $\Sigma\setminus\beta$.  
Since $\alpha$ and $\beta$ have opposite intersection 
indices at $x$ and $y$ it follows that $n$ is odd.
Now the image of a neighborhood of $\R\cap\D$ under $u$ 
is contained in either $\overline{A}$ or $\overline{B}$.
Hence, when $n=2k+1\ge 3$, Figure~\ref{fig:primlune3} 
shows that one of the ends of $u$ is a quadrangle 
and the other end is a primitive $(\alpha,\beta)$-lune, 
in contradiction to Lemma~\ref{le:slune2}.
Hence the number of intersection points is $2n=2$,
each component of $\Sigma\setminus(\alpha\cup\beta)$
is a primitive $(\alpha,\beta)$-lune, and all four primitive 
$(\alpha,\beta)$-lunes contain $x$ and $y$.  
By Lemma~\ref{le:slune2}, one of them is equivalent to $u$. 
This proves Step~2.

\begin{figure}[htp]
\centering 
\includegraphics[scale=0.50]{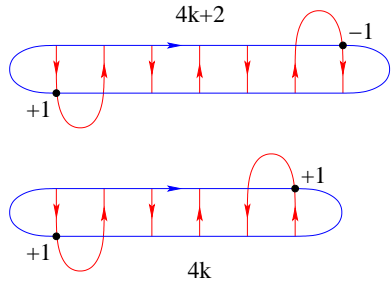}      
\caption{{$\alpha$ intersects $\beta$ in $4k$ or $4k+2$ points.}}
\label{fig:primlune3}
\end{figure}

\medskip\noindent{\bf Step~3.}
{\it We prove the proposition.}

\medskip\noindent
Assume, by contradiction, that there is a smooth
$(\alpha,\beta)$-lune $u$ such that $\Lambda_u$ 
does not satisfy the arc condition.
By Step~1 we can reduce the number of intersection points
of $\alpha$ and $\beta$ until there are no primitive 
$(\alpha,\beta)$-lunes with endpoints in $\Sigma\setminus\{x,y\}$. 
Once this algorithm terminates the resulting lune still does not
satisfy the arc condition, in contradiction to Step~2.
This proves Step~3 and Proposition~\ref{prop:S2arc}.
\end{proof}

\begin{proof}[Proof of Theorem~\ref{thm:arc}] 
\phantomsection\label{proof:arc}
Let 
$
u:\D\to\Sigma
$ 
be a smooth $(\alpha,\beta)$-lune with $(\alpha,\beta)$-trace
$
\Lambda_u=:(x,y,\w)
$
and denote $A:=u(\D\cap\R)$ and $B:=u(\D\cap S^1)$.
Since $u$ is an immersion, $\alpha$ and $\beta$ 
have opposite intersection indices at $x$ and~$y$,
and hence $x\ne y$.  We must prove that $A$ and $B$
are arcs.  It is obvious that $A$ is an arc whenever $\alpha$
is not compact, and $B$ is an arc whenever $\beta$ 
is not compact. It remains to show that $A$ and $B$ 
are arcs in the remaining cases. We prove this in four steps.

\medskip\noindent{\bf Step~1.}
{\it If $\alpha$ is not a contractible embedded
circle then $A$ is an arc.}

\medskip\noindent
This follows immediately from Proposition~\ref{prop:arc}.

\medskip\noindent{\bf Step~2.}
{\it If $\alpha$ and $\beta$ are contractible embedded circles
then $A$ and $B$ are arcs.}

\medskip\noindent
If $\Sigma$ is diffeomorphic to $S^2$ this 
follows from Proposition~\ref{prop:S2arc}. 
Hence assume that $\Sigma$ is not diffeomorphic to $S^2$.  
Then the universal cover of $\Sigma$ is diffeomorphic 
to the complex plane.  Choose a universal covering 
$\pi:\C\to\Sigma$ and a point 
$
\tx\in\pi^{-1}(x).
$
Choose lifts $\talpha,\tbeta\subset\C$
of $\alpha,\beta$ such that 
$
\tx\in\talpha\cap\tbeta.
$
Then $\talpha$ and $\tbeta$ 
are embedded loops in $\C$ and $u$ lifts to 
a smooth $(\talpha,\tbeta)$-lune 
$\tu:\D\to\C$ such that 
$
\tu(-1)=\tx.
$
Compactify  $\C$  to get the 2-sphere.
Then, by Proposition~\ref{prop:S2arc}, the subsets 
$\tA:=\tu(\D\cap\R)\subset\talpha$ and 
$\tB:=\tu(\D\cap S^1)\subset\tbeta$ are arcs. 
Since the restriction of $\pi$ to $\talpha$
is a diffeomorphism from $\talpha$ to $\alpha$
it follows that $A\subset\alpha$ is an arc.
Similarly for $B$.  This proves Step~2.

\medskip\noindent{\bf Step~3.}
{\it If $\alpha$ is not a contractible embedded circle 
and $\beta$ is a contractible embedded circle 
then $A$ and $B$ are arcs.}

\medskip\noindent
That $A$ is an arc follows from Step~1.
To prove that $B$ is an arc choose a universal covering
$\pi:\C\to\Sigma$ with $\pi(0)=x$ and lifts $\talpha$, $\tbeta$, 
$\tu$ with $0\in\talpha\cap\tbeta$ and $\tu(-1)=0$
as in the proof of Step~2. Then $\tbeta\subset\C$
is an embedded loop and we may assume without
loss of generality that $\talpha=\R$ and $\tA=[0,a]$
with $0<a<1$.  (If $\alpha$ is a noncontractible embedded 
circle we choose the lift such that $\tz\mapsto\tz+1$ is a covering
transformation and $\pi$ maps the interval $[0,1)$ 
bijectively onto $\alpha$; if $\alpha$ is not compact we choose 
the universal covering such that $\pi$ maps the interval $[0,a]$
bijectively onto $A$ and $\tbeta$ is transverse to~$\R$,
and then replace $\talpha$ by $\R$.)  In the Riemann sphere 
$
S^2\cong\overline{\C}=\C\cup\{\infty\}
$
the real axis $\talpha=\R$ compactifies to a great circle.
Hence it follows from Proposition~\ref{prop:S2arc} 
that $\tB$ is an arc.  Since 
$
\pi:\tbeta\to\beta
$
is a diffeomorphism it follows that $B$ 
is an arc. This proves Step~3. 

\medskip\noindent{\bf Step~4.}
{\it If $\beta$ is not a contractible embedded circle 
then $A$ and $B$ are arcs.}

\medskip\noindent
That $B$ is an arc follows from Step~1 by interchanging 
$\alpha$ and $\beta$ and replacing $u$ with the 
smooth $(\beta,\alpha)$-lune
$$
v(z) := u\left(\frac{\i-z}{1-\i z}\right).
$$
If $\alpha$ is not a contractible embedded circle 
then $A$ is an arc by Step~1.
If $\alpha$ is a contractible embedded circle 
then $A$ is an arc by Step~3
with $\alpha$ and $\beta$ interchanged.
This proves Step~4. The assertion of Theorem~\ref{thm:arc}
follows from Steps~2, 3, and~4.
\end{proof}


\section{Combinatorial Lunes}\label{sec:CL}

In this section we prove Theorems~\ref{thm:lune1} and~\ref{thm:lune2}. 
Proposition~\ref{prop:simple} establishes the equivalence 
of~(i) and~(iii) in Theorem~\ref{thm:lune1} under the additional 
hypothesis that $\Lambda=(x,y,A,B,\w)$ satisfies the 
arc condition and $A\cap B=\{x,y\}$.  In this case the hypothesis
that $\Sigma$ is simply connected can be dropped.
The induction argument for the proof of Theorems~\ref{thm:lune1}
and~\ref{thm:lune2} is the content of the next three lemmas.

\begin{lemma}\label{le:Lambda0}
Assume~\hyperlink{property_H}{(H)} and suppose 
that $\Sigma$ is simply connected. 
Let $\Lambda=(x,y,\w)$ be a combinatorial
$(\alpha,\beta)$-lune with boundary $\p\Lambda=(x,y,A,B)$ 
such that 
$$
A\cap B\ne\{x,y\}. 
$$
Then there exists a combinatorial $(\alpha,\beta)$-lune 
$
\Lambda_0=(x_0,y_0,\w_0)
$
with boundary $\p\Lambda_0=(x_0,y_0,A_0,B_0)$
such that $\w\ge\w_0$ and
\begin{equation}\label{eq:Lambda0}
A_0\subset A\setminus\{x,y\},\quad
B_0\subset B\setminus\{x,y\},\quad
A_0\cap B = A\cap B_0 = \{x_0,y_0\}.
\end{equation}
\end{lemma}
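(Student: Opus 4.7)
The plan is to locate the primitive sub-lune $\Lambda_0$ as an innermost positive-winding bigon of the arrangement $G := A \cup B$ viewed as a finite graph embedded in $\Sigma$: its vertex set is $A \cap B$ (with at least one interior point by hypothesis) and its edges are the closures of the connected components of $(A \cup B) \setminus (A \cap B)$. Since $\Sigma$ is simply connected and $\w \not\equiv 0$ (because $\mu(\Lambda) = 1$), every face of $\Sigma \setminus G$ is a disc on which $\w$ is a nonnegative integer constant, and at least one face carries $\w \ge 1$.

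Given any bigon face $R$ of $G$ with $\w(R) \ge 1$ and both corners in $(A \cap B) \setminus \{x, y\}$, I would take $x_0, y_0$ to be its corners, $A_0 \subset A$ and $B_0 \subset B$ the bounding sub-arcs, and $\w_0 := \mathbf{1}_R$. Then $\Lambda_0 := (x_0, y_0, \w_0)$ satisfies $\w \ge \w_0$ (since $\w \ge 1$ on $R$); the relations $A_0 \subset A \setminus \{x, y\}$, $B_0 \subset B \setminus \{x, y\}$, and $A_0 \cap B = A \cap B_0 = \{x_0, y_0\}$ hold automatically because $R$ is a face of $G$ with interior corners; $\w_0 \in \{0, 1\}$ gives (III); the intersection indices at $x_0, y_0$ are forced to be opposite because $R$ occupies at each corner the unique quadrant in which $\w_0 = 1$ (condition~(II)); and the trace formula of Theorem~\ref{thm:maslov} yields $\mu(\Lambda_0) = (m_{x_0}(\Lambda_0) + m_{y_0}(\Lambda_0))/2 = 1$. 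Hence $\Lambda_0$ is a combinatorial $(\alpha, \beta)$-lune with all the required properties.

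The core of the proof, and the main obstacle, is producing such a bigon $R$. My plan would be to start from a face $F^{\ast}$ of $\Sigma \setminus G$ maximizing $\w$; by Remark~\ref{rmk:trace}(i) the four quadrant values at each interior vertex take the form $\{k-1, k, k, k+1\}$ with $k \ge 1$, so at each vertex of $\partial F^{\ast}$ the face $F^{\ast}$ sits in the unique $(k+1)$-quadrant, and the two boundary edges of $F^{\ast}$ at that vertex are one from $A$ and one from $B$ (in particular $\partial F^{\ast}$ is a $2m$-gon with alternating $A$- and $B$-edges). If $F^{\ast}$ is itself a bigon with interior corners we are done; otherwise I would perform an innermost-disc minimization among closed discs $D \subset \Sigma$ whose boundary is the concatenation of a sub-arc of $A$ and a sub-arc of $B$ meeting at two points of $(A \cap B) \setminus \{x, y\}$ and with $\w \ge 1$ throughout $D$. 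Any such minimizer is necessarily a bigon face, since an interior edge of $G$ would split $D$ into two admissible sub-discs, contradicting minimality.

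The hard part will be ensuring that the class of admissible discs is non-empty, i.e.\ that one can avoid corners at $x$ and $y$. I expect to handle this by exploiting properties~(II) and~(III) at the endpoints together with the hypothesis $A \cap B \ne \{x, y\}$: the local winding values $\{0,0,0,1\}$ at $x$ and $y$ combined with the quadrant values $\{k-1,k,k,k+1\}$ at interior vertices rule out the small configurations $|A \cap B| \le 3$, and for $|A \cap B| \ge 4$ the quadrant analysis at an interior vertex of $\partial F^{\ast}$ produces an admissible disc with both corners interior by following the two $(k+1)$-quadrant edges into the rest of the arrangement.
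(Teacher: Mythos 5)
Your approach is genuinely different from the paper's. The paper argues combinatorially: it orders $A\cap B=\{x_0,\dots,x_n\}$ along $A$, introduces the ``successor along $B$'' permutation $\sigma$, forms the index set $I=\{i\,:\,\eps_i=+1,\ \eps_{\sigma(i)}=-1\}$, and proves four properties (a)--(d) of $I$: (a) gives $I\ne\emptyset$ (follow $B$ from $x$ and take the last positive crossing), (b) and (c) give a shrinking mechanism using simple connectedness of $\Sigma$, and (d), which is where conditions~(II) and~(III) enter, shows that $0\in I$ or $n\in\sigma(I)$ would force $A\cap B=\{x,y\}$. The induction then lands on an $i$ with $\sigma(i)\in\{i-1,i+1\}$, automatically excluding $x$ and $y$ as corners. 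Your topological reformulation --- innermost positive-winding bigon face of $A\cup B$ --- is in the same spirit (it is essentially what properties~(a)--(c) produce), and several of your supporting observations are correct: faces carrying $\w\ge 1$ are discs (this is Proposition~\ref{prop:trace1}, not an automatic consequence of simple connectedness); a $\w$-maximizing face $F^{\ast}$ does have alternating $A$/$B$ boundary; and once an admissible disc with interior corners exists, $x,y\notin\overline{D}$ automatically (they are endpoints of $A$ and $B$), so the shrinking step preserves admissibility.

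The genuine gap is precisely at the step you flag as ``the hard part'': you never actually establish that an admissible disc exists. The phrase ``following the two $(k{+}1)$-quadrant edges into the rest of the arrangement'' is not an argument, and I cannot see how to turn it into one. Concretely: if $\max\w\ge 2$ then $F^{\ast}$ has interior corners (since $\w\le1$ near $x$ and $y$ by~(III)), but $F^{\ast}$ may be a $2m$-gon with $m\ge 2$ and it is not explained how to extract a bigon with interior corners from it; if $\max\w=1$ then $F^{\ast}$ itself may have $x$ or $y$ as a corner and the construction has no starting point. What the paper proves at this point is nontrivial: existence of an element of $I$ by the trajectory argument, and the translation of~(II)+(III) into property~(d) (via $m_0=1$, $m_n=0$). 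You would need to supply something of comparable substance here --- the parity observation ($|A\cap B|$ even, so $\ge 4$) is a start but does not by itself produce an interior bigon. Secondarily, the claim that a minimal admissible disc $D$ is automatically a face because ``an interior edge of $G$ would split $D$'' also needs care: an edge of $G$ in $\INT(D)$ can have one or both endpoints at a vertex interior to $D$, in which case it does not split $D$ into two admissible sub-discs by itself, and one must argue further with the structure of the planar graph $G\cap\overline{D}$.
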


\begin{proof}
Let $\prec$ denote the order relation on $A$ determined
by the orientation from $x$ to $y$. Denote the intersection 
points of $A$ and $B$ by
$$
x = x_0 \prec x_1 \prec \cdots
\prec x_{n-1} \prec x_n = y.
$$
Define a function
$
\sigma:\{0,\dots,n-1\}\to\{1,\dots,n\}
$
as follows.  Walk along $B$ towards $y$,
starting at $x_i$ and denote the next intersection
point encountered by $x_{\sigma(i)}$.
This function $\sigma$ is bijective.
Let $\eps_i\in\{\pm1\}$ be the intersection index
of $A$ and $B$ at~$x_i$. Thus
$$
\eps_0 = 1,\qquad \eps_n = -1,\qquad
\sum_{i=0}^n\eps_i = 0.
$$
Consider the set
$$
I := \{i\in\N\,|\,0\le i\le n-1,\,
\eps_i=1,\,\eps_{\sigma(i)}=-1\}.
$$
We prove that this set has the following properties.
\begin{description}
\item[(a)] $I\ne\emptyset$.
\item[(b)] If $i\in I$, $i<j<\sigma(i)$, and $\eps_j=1$,
then $j\in I$ and $i<\sigma(j)<\sigma(i)$.
\item[(c)] If $i\in I$, $\sigma(i)<j<i$, and $\eps_j=1$,
then $j\in I$ and $\sigma(i)<\sigma(j)<i$.
\item[(d)] $0\in I$ if and only if $n\in\sigma(I)$ 
if and only if $n=1=\sigma(0)$.
\end{description}

\bigbreak

\noindent
To see this, denote by $m_i$ the value of $\w$
in the right upper quadrant near~$x_i$. Thus
$$
m_j = m_0+\sum_{i=1}^j\eps_i
$$
for $j=1,\dots,n$ and
\begin{equation}\label{eq:Sigma}
m_{\sigma(i)} = m_i + \eps_{\sigma(i)}
\end{equation}
for $i=0,\dots,n-1$. (See Figure~\ref{fig:reduce}.)

\begin{figure}[htp]
\centering 
\includegraphics[scale=0.9]{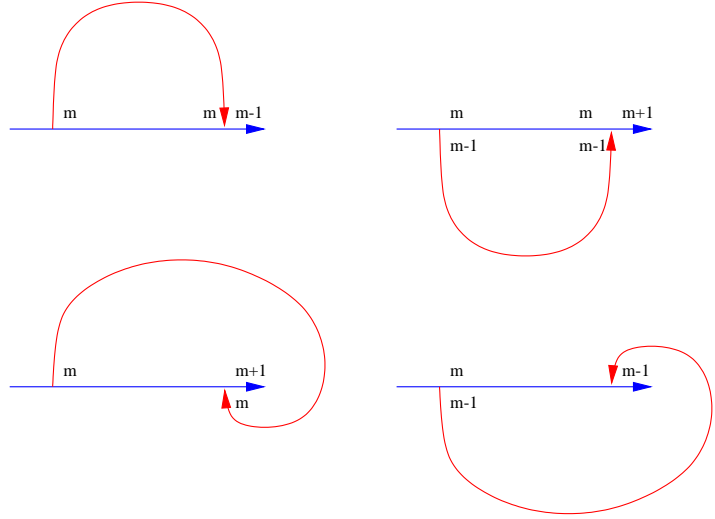}      
\caption{{Simple arcs.}}\label{fig:reduce}
\end{figure}

We prove that $I$ satisfies~(a).
Consider the sequence
$$
i_0:=0,\qquad
i_1:=\sigma(i_0),\qquad
i_2:=\sigma(i_1),\dots.
$$
Thus the points $x_i$ are encountered in the order
$$
x = x_0 = x_{i_0},
x_{i_1},\dots,x_{i_{n-1}},
x_{i_n} = x_n = y
$$
when walking along $B$ from $x$ to $y$.
By~(\ref{eq:Sigma}), we have
$$
\eps_{i_0} = 1,\qquad
\eps_{i_n}=-1,\qquad
m_{i_k} = m_{i_{k-1}}+\eps_{i_k}.
$$
Let $k\in\{0,\dots,n-1\}$ be the largest integer
such that $\eps_{i_k}=1$.  Then we have
$
\eps_{\sigma(i_k)}=\eps_{i_{k+1}}=-1
$
and hence $i_k\in I$.  Thus $I$ is nonempty.

We prove that $I$ satisfies~(b) and~(c).  
Let $i\in I$ such that $\sigma(i)>i$. 
Then $\eps_i=1$ and $\eps_{\sigma(i)}=-1$.  
Hence
$$
m_{\sigma(i)} = m_i + \eps_{\sigma(i)} = m_i-1,
$$
and hence, in the interval $i<j<\sigma(i)$,
the numbers of intersection points with positive
and with negative intersection indices agree.
Consider the arcs $A_i\subset A$ and $B_i\subset B$
that connect $x_i$ to $x_{\sigma(i)}$.
Then $A\cap B_i=\{x_i,x_{\sigma(i)}\}$.
Since $\Sigma$ is simply connected the piecewise smooth
embedded loop $A_i-B_i$ is contractible.  This implies that 
the complement $\Sigma\setminus(A_i\cup B_i)$ has two 
connected components. Let $\Sigma_i$ be the connected component
of $\Sigma\setminus(A_i\cup B_i)$ that contains the points 
slightly to the left of~$A_i$. 
Then any arc on $B$ that starts at $x_j\in A_i$ with $\eps_j=1$
is trapped in $\Sigma_i$ and hence must exit it through~$A_i$.  
Hence
$$
x_j\in A_i,\quad \eps_j=1\qquad\implies\qquad
x_{\sigma(j)}\in A_i,\quad \eps_{\sigma(j)}=-1.  
$$
Thus we have proved that $I$ satisfies~(b).  
That it satisfies~(c) follows by a similar argument.

We prove that $I$ satisfies~(d).  
Here we use the fact that $\Lambda$ satisfies~(III) or, 
equivalently, $m_0=1$ and $m_n=0$. If $0\in I$ then 
$
m_{\sigma(0)}=m_0+\eps_{\sigma(0)}=0.
$
Since $m_i>0$ for $i<n$ this implies $\sigma(0)=n=1$.
Conversely, suppose that $n\in\sigma(I)$ and let 
$i:=\sigma^{-1}(n)\in I$.  Then 
$
m_i=m_n-\eps_{\sigma(i)}=1.
$
Since $m_i>1$ for $i\in I\setminus\{0\}$ this implies $i=0$. 
Thus $I$ satisfies~(d).

It follows from~(a), (b), and (c) by induction 
that there exists a point $i\in I$ such that 
$\sigma(i)\in\{i-1,i+1\}$.  Assume first that $\sigma(i)=i+1$,
denote by $A_i$ the arc in $A$ from $x_i$ to $x_{i+1}$,
and denote by $B_i$ the arc in $B$ from $x_i$ to $x_{i+1}$.
If~$i=0$ it follows from~(d) that $x_i=x_0=x$ and $x_{i+1}=x_n=y$, 
in contradiction to $A\cap B\ne\{x,y\}$.  Hence $i\ne 0$ and 
it follows from~(d) that 
$
0<i<i+1<n.
$ 
The arcs $A_i$ and $B_i$ satisfy 
$$
A_i\cap B=A\cap B_i=\{x_i,x_{i+1}\}.
$$
Let $D_i$ be the connected component of 
$\Sigma\setminus(A\cup B)$ that contains the points
slightly to the left of $A_i$.  This component is bounded
by $A_i$ and $B_i$.  Moreover, the function $\w$ 
is positive on $D_i$. Hence it follows from 
Propostion~\ref{prop:trace1} that $D_i$ 
is diffeomorphic to the open unit disc in $\C$.  
Let $\w_i(z):=1$ for $z\in D_i$ and $\w_i(z):=0$ 
for $z\in\Sigma\setminus\overline D_i$. 
Then the combinatorial lune 
$$
\Lambda_i:=(x_i,x_{i+1},A_i,B_i,\w_i)
$$
satisfies~\eqref{eq:Lambda0} and $\w_i\le\w$.

\bigbreak

Now assume $\sigma(i)=i-1$, denote by $A_i$ the arc in $A$ 
from $x_{i-1}$ to $x_i$, and denote by $B_i$ the arc in $B$ 
from $x_{i-1}$ to $x_i$. Thus the orientation of $A_i$ 
(from $x_{i-1}$ to $x_i$) agrees with the orientation of $A$ 
while the orientation of $B_i$ is opposite to the orientation of $B$.
Moreover, we have 
$
0<i-1<i<n.
$
The arcs $A_i$ and $B_i$ satisfy 
$$
A_i\cap B=A\cap B_i=\{x_{i-1},x_i\}.
$$
Let $D_i$ be the connected component of 
$\Sigma\setminus(A\cup B)$ that contains the points
slightly to the left of $A_i$.  This component is again bounded
by $A_i$ and $B_i$, the function $\w$ is positive on $D_i$,
and so $D_i$ is diffeomorphic to the open unit disc 
in $\C$ by Propostion~\ref{prop:trace1}.  
Let $\w_i(z):=1$ for $z\in D_i$ and $\w_i(z):=0$ 
for $z\in\Sigma\setminus\overline D_i$. 
Then the combinatorial lune 
$$
\Lambda_i:=(x_{i-1},x_i,A_i,B_i,\w_i)
$$
satisfies~\eqref{eq:Lambda0} and $\w_i\le\w$.
This proves Lemma~\ref{le:Lambda0}.
\end{proof}

\begin{lemma}\label{le:one}
Assume~\hyperlink{property_H}{(H)}.
Let $u$ be a smooth $(\alpha,\beta)$-lune
whose $(\alpha,\beta)$-trace
$
\Lambda_u=(x,y,\w)
$
is a combinatorial $(\alpha,\beta)$-lune. 
Let $\gamma:[0,1]\to\D$ be a smooth path such that
$$
\gamma(0)\in(\D\cap\R)\setminus\{\pm1\},\quad
\gamma(1)\in(\D\cap S^1)\setminus\{\pm1\},\quad
u(\gamma(t)) \notin A
$$
for $0<t<1$.  Then $\w(u(\gamma(t)))=1$ for $t$ near $1$. 
\end{lemma}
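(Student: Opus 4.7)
The strategy is to use the degree interpretation $\w(p)=\#u^{-1}(p)$ (valid for regular values of the orientation-preserving immersion $u$) together with unique path-lifting for the proper local diffeomorphism $u:\overline{\D}\to\Sigma$, in order to show that for $t$ near $1$ the point $u(\gamma(t))$ has $\gamma(t)$ as its only preimage in $\D$.

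As a preliminary I would observe that, since $u$ is an immersion, the restrictions $u|_{\D\cap\R}$ and $u|_{\D\cap S^1}$ are strictly monotone immersions of compact intervals into $1$-manifolds, hence diffeomorphisms; by Theorem~\ref{thm:arc} and the arc condition satisfied by $\Lambda_u$, these yield diffeomorphisms $u|_{\D\cap\R}:\D\cap\R\to A$ and $u|_{\D\cap S^1}:\D\cap S^1\to B$. For $t$ slightly less than $1$, $\gamma(t)$ lies in $\INT(\D)$ in a unique component $\Delta$ of $\D\setminus u^{-1}(\alpha\cup\beta)$ whose closure contains $\gamma(1)\in\D\cap S^1$; by Lemma~\ref{le:slune1}(ii), $u|_{\Delta}$ is a diffeomorphism onto a component $S\subset\Sigma\setminus(\alpha\cup\beta)$, and $\w(S)$ equals the number of components of $\D\setminus u^{-1}(\alpha\cup\beta)$ whose image under $u$ is $S$. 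The task reduces to showing this number is $1$.

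Assume for contradiction a second component $\Delta'\neq\Delta$ satisfies $u(\Delta')=S$, and set $z(t):=(u|_{\Delta'})^{-1}(u(\gamma(t)))$ for $t$ close to $1$. Since $u:\overline{\D}\to\Sigma$ is a proper local diffeomorphism, unique path-lifting extends $z$ to a smooth path $z:[0,1]\to\D$ with $u\circ z=u\circ\gamma$ and with $z(t)\neq\gamma(t)$ throughout (the two lifts remain disjoint by uniqueness). The hypothesis $u(\gamma(t))\notin A$ on $(0,1)$, combined with $u(\D\cap\R)=A$, forces $z(t)\notin\D\cap\R$ for $t\in(0,1)$. At $t=0$, $u(z(0))=u(\gamma(0))\in A$: if $z(0)\in\D\cap\R$, then the diffeomorphism $u|_{\D\cap\R}$ gives $z(0)=\gamma(0)$ and uniqueness of lifts forces $z\equiv\gamma$, contradicting $z\neq\gamma$.

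The remaining case, and the principal technical obstacle, is to exclude $z(0)\in\INT(\D)\cap u^{-1}(A)$, i.e.\ $z(0)$ lying on an interior component of $u^{-1}(\alpha)$ that maps to $A$. A local analysis at $z(0)$, using orientation preservation and the immersion property, shows that the cell of $\D\setminus u^{-1}(\alpha\cup\beta)$ from which $z$ arrives at $z(0)$ maps under $u$ onto the same cell of $\Sigma\setminus(\alpha\cup\beta)$ as the cell adjacent to $\D\cap\R$ at $\gamma(0)$ on the lune side of $A$; this produces two distinct cells in $\D$ mapping to a common cell near $u(\gamma(0))\in A$. Combining this with condition~(III) of the combinatorial lune (which localizes $\w\in\{0,1\}$ at the corners) and with the cell structure of $\D$ near its boundary, one derives the desired contradiction. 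Making the final step rigorous — in particular, propagating the local obstruction at $z(0)$ to a genuine contradiction without being able to freely move $\gamma(0)$ toward a corner while preserving the hypothesis $u(\gamma)\notin A$ — is where the main work lies.
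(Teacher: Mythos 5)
Your reduction to the case $z(0)\in\INT(\D)\cap u^{-1}(A)$ identifies the right configuration, but --- as you concede --- the argument stops there: the closing sentences describe a picture but do not derive a contradiction, and there is no obvious way to ``propagate'' a local obstruction at $z(0)$ into one.  The paper instead handles this globally and in one stroke.  Condition~(III) gives $x,y\notin u(\INT(\D))$, so every component of $u^{-1}(A)$ other than $\D\cap\R$ is an embedded arc with both endpoints on $\D\cap S^1$.  These arcs separate $\D$; the hypothesis $u(\gamma(t))\notin A$ for $0<t<1$ means $\gamma$ crosses none of them, so $\gamma(1)$ lies on the $\D\cap\R$ side of every such arc, and the paper observes that $\w(u(\gamma(t)))\ge 2$ near $t=1$ would force $\gamma(1)$ onto the far side of one of them.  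Your $z(0)$ would lie on exactly such an arc, so the two arguments are looking at the same object, but the separation argument dispatches it directly rather than attempting a local analysis that you are unable to complete.

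There is also a second, unacknowledged gap: the path-lifting step is not justified.  Because $\D$ has boundary and corners, $u:\D\to\Sigma$ is a proper local diffeomorphism but not a covering map, so a lift of $u\circ\gamma$ starting in $\INT(\D)$ need not extend over all of $[0,1]$.  Concretely, if at some $t^*\in(0,1)$ the lift $z$ reaches $\D\cap S^1$ at the moment $u\circ\gamma$ is crossing $\beta$ --- which happens whenever $\gamma$ crosses an interior arc of $u^{-1}(B)$, something the hypotheses do not forbid --- then the lift has nowhere to go and terminates before reaching $t=0$.  The paper's proof never lifts a path and so does not encounter this issue.
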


\begin{proof}
Denote $A:=u(\D\cap\R)$.
Since $\Lambda$ is a combinatorial $(\alpha,\beta)$-lune 
we have $x,y\notin u(\mathrm{int}(\D))$.
Hence $u^{-1}(A)$ is a union of embedded arcs, each connecting   
two points in $\D\cap S^1$.  If $\w(u(\gamma(t))\ge 2$ 
for $t$ close to $1$, then $\gamma(1)\in\D\cap S^1$ is separated from 
$\D\cap\R$ by one these arcs in $\D\setminus\R$.
This proves Lemma~\ref{le:one}.
\end{proof}

For each combinatorial $(\alpha,\beta)$-lune
$\Lambda$ the integer $\nu(\Lambda)$ denotes
the number of equivalence classes of smooth 
$(\alpha,\beta)$-lunes $u$ with $\Lambda_u=\Lambda$.
\begin{figure}[htp]
\centering 
\includegraphics[scale=0.5]{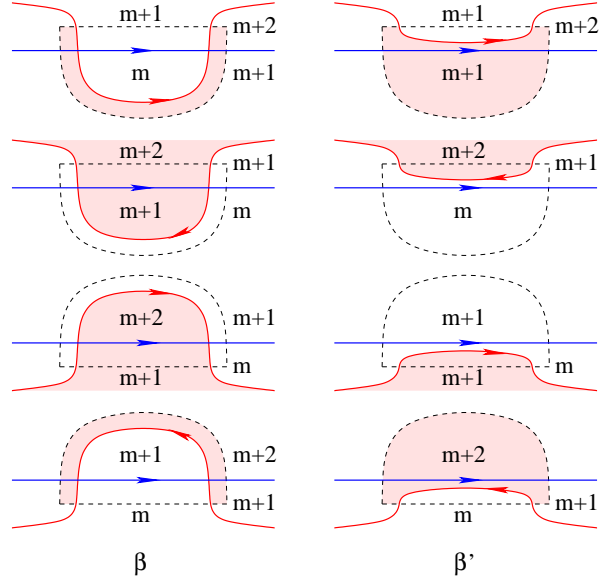}      
\caption{{Deformation of a lune.}}
\label{fig:deformation}
\end{figure}

\begin{lemma}\label{le:move}
Assume~\hyperlink{property_H}{(H)} 
and suppose that $\Sigma$ is simply connected.
Let $\Lambda=(x,y,\w)$ be a combinatorial 
$(\alpha,\beta)$-lune with boundary $\p\Lambda=(x,y,A,B)$
such that $A\cap B\ne\{x,y\}$.
Then there exists an embedded loop $\beta'$, 
isotopic to $\beta$ and transverse 
to $\alpha$, and a combinatorial $(\alpha,\beta')$-lune 
$\Lambda'=(x,y,\w')$ with boundary $\p\Lambda'=(x,y,A,B')$
such that 
$$
\#(A'\cap B') < \#(A\cap B),\qquad
\nu(\Lambda) = \nu(\Lambda').
$$
\end{lemma}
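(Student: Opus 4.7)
The plan is to use Lemma~\ref{le:Lambda0} to extract from $\Lambda$ a combinatorial sub-lune $\Lambda_0$ whose bounding arcs meet only at their endpoints, realize $\Lambda_0$ as an embedded lune via Proposition~\ref{prop:simple}, and then use this embedded lune to support a compactly supported isotopy of $\beta$ that cancels the pair $x_0,y_0$ of intersection points without altering the lune count $\nu$.

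Concretely, Lemma~\ref{le:Lambda0} yields $\Lambda_0=(x_0,y_0,\w_0)$ with $\w_0\le\w$, boundary $\p\Lambda_0=(x_0,y_0,A_0,B_0)$ satisfying~\eqref{eq:Lambda0}, $\{x_0,y_0\}\cap\{x,y\}=\emptyset$, and $A_0\cap B_0=\{x_0,y_0\}$; Proposition~\ref{prop:simple} then gives an embedded $(\alpha,\beta)$-lune $u_0:\D\to\Sigma$ with $\Lambda_{u_0}=\Lambda_0$, and we set $N_0:=u_0(\D)$.  Proceeding exactly as in Step~1 of the proof of Proposition~\ref{prop:S2arc}, extend $u_0$ to an embedding of an enlarged half-disc $\D_\eps=\{z\in\C\,|\,\IM\,z>-\eps,|z|<1+\eps\}$ with $u_0(\D_\eps)\cap\alpha=u_0(\D_\eps\cap\R)$ and $u_0(\D_\eps)\cap\beta=u_0(\D_\eps\cap S^1)$, pick a cutoff $\rho:\D_\eps\to[0,1]$ supported in $\D_\eps$ and equal to $1$ on $\D$, let $\xi$ be the vector field on $\Sigma$ with $u_0^*\xi=-\rho\i$, let $\psi_t$ be its flow, and define $\beta':=\psi_T(\beta)$ for $T$ large.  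Then $\beta'$ is embedded, isotopic to $\beta$, transverse to $\alpha$, equal to $\beta$ off $u_0(\D_\eps)$, and $\alpha\cap\beta'=(\alpha\cap\beta)\setminus\{x_0,y_0\}$.  Let $B'\subset\beta'$ be the arc from $x$ to $y$ agreeing with $B$ off $u_0(\D_\eps)$, and define $\w':=\w-\w_0$ on $\Sigma\setminus(\alpha\cup\beta')$, with $\w_0$ extended by zero off $N_0$.  Then $\Lambda':=(x,y,\w')$ satisfies~(I) by Lemma~\ref{le:Lambda0}, conditions~(II) and~(III) are unchanged at $x,y$ since the modification is supported in $u_0(\D_\eps)$ which avoids $\{x,y\}$, and the trace formula~\eqref{eq:VM1} yields $\mu(\Lambda')=\mu(\Lambda)=1$.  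Thus $\Lambda'$ is a combinatorial $(\alpha,\beta')$-lune with $\#(A\cap B')=\#(A\cap B)-2$.

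The main obstacle is establishing $\nu(\Lambda)=\nu(\Lambda')$ by constructing a bijection on equivalence classes of smooth lunes.  Given a smooth $(\alpha,\beta)$-lune $u$ with $\Lambda_u=\Lambda$, let $U\subset\D$ be the union of those connected components of $u^{-1}(u_0(\D_\eps))$ that contain an arc of $\D\cap S^1$, and define $u':\D\to\Sigma$ by $u'(z):=\psi_T(u(z))$ for $z\in U$ and $u'(z):=u(z)$ otherwise.  Since $\rho$ and hence $\xi$ vanish on $\p u_0(\D_\eps)$, the flow $\psi_T$ is the identity there, so $u'$ matches $u$ smoothly across $\p U$.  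As in Step~1 of the proof of Proposition~\ref{prop:S2arc}, Lemma~\ref{le:slune1} shows that each component of $u^{-1}(u_0(\D_\eps))$ restricts to a diffeomorphism onto one of the two half-neighborhoods of $A_0$ in $u_0(\D_\eps)$, while Lemma~\ref{le:one} forces the $S^1$-arcs of components in $U$ to map into $B_0$ rather than into $\alpha$; hence $U\cap\R=\emptyset$, and $u'$ is a smooth $(\alpha,\beta')$-lune with $\Lambda_{u'}=\Lambda'$.  The analogous construction with $\psi_T$ replaced by $\psi_{-T}$ (starting from an $(\alpha,\beta')$-lune) inverts this procedure; both operations commute with orientation-preserving reparametrizations of $\D$ fixing $\pm 1$, so they descend to mutually inverse maps on equivalence classes, yielding $\nu(\Lambda)=\nu(\Lambda')$.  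The delicate point is precisely this preimage analysis via Lemmas~\ref{le:slune1} and~\ref{le:one}, ensuring the $\psi_T$ modification on $U$ glues into a smooth immersion whose boundary arcs lie on $\alpha$ and $\beta'$.
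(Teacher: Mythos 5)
Your overall strategy matches the paper's: extract the sub-lune $\Lambda_0$ via Lemma~\ref{le:Lambda0}, realize it as an embedded lune via Proposition~\ref{prop:simple}, isotope $B$ past $A_0$ to cancel $x_0,y_0$, and set up a bijection between smooth $(\alpha,\beta)$-lunes and smooth $(\alpha,\beta')$-lunes by modifying each lune on the preimage component(s) $U$ meeting $\D\cap S^1$. The forward direction of this bijection is handled essentially as you describe, and the paper establishes $U\cap\R=\emptyset$ by the argument of Step~1 in the proof of Proposition~\ref{prop:S2arc}.

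The genuine gap is in surjectivity. You write that ``the analogous construction with $\psi_T$ replaced by $\psi_{-T}$ (starting from an $(\alpha,\beta')$-lune) inverts this procedure,'' but this is precisely the delicate step and the two directions are not symmetric. To undo the deformation on an arbitrary smooth $(\alpha,\beta')$-lune $u'$ with $\Lambda_{u'}=\Lambda'$, you must apply $\psi_{-T}$ on the component $U'\subset\D$ of ${u'}^{-1}(u_0(\D_\eps))$ that meets $\D\cap S^1$, and you need $U'\cap\R=\emptyset$: the vector field $\xi$ does not preserve $\alpha$ inside $u_0(\D_\eps)$ (it is $-\rho\i$ in the $\D_\eps$ chart, hence nonzero along $A_0$), so if $U'$ met $\D\cap\R$ the resulting map would no longer satisfy the $\alpha$-boundary condition. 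The paper proves $U'\cap\R=\emptyset$ by distinguishing four cases (Figure~\ref{fig:deformation}); in two of them $u'(U')\cap\alpha=\emptyset$ and the conclusion is immediate, but the other two require, respectively, an orientation argument and an application of Lemma~\ref{le:one}. Your proposal omits this case analysis entirely, and without it the map $u\mapsto u'$ is not shown to be onto, so $\nu(\Lambda)=\nu(\Lambda')$ does not follow. A secondary issue: your stated use of Lemma~\ref{le:slune1} and Lemma~\ref{le:one} in the forward direction is not quite right — Lemma~\ref{le:slune1} is about components of $\D\setminus u^{-1}(\alpha\cup\beta)$, not of $u^{-1}(u_0(\D_\eps))$, and Lemma~\ref{le:one} concerns the winding number near the endpoint of a path, not the image of $S^1$-arcs — though the forward-direction conclusion $U\cap\R=\emptyset$ itself is correct.
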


\begin{proof}
By Lemma~\ref{le:Lambda0} there exists 
a combinatorial $(\alpha,\beta)$-lune 
$$
\Lambda_0=(x_0,y_0,\w_0),\qquad
\p\Lambda_0=(x_0,y_0,A_0,B_0),
$$
that satisfies $\w\ge\w_0$ and~\eqref{eq:Lambda0}.  
In particular, we have
$$
A_0\cap B_0=\{x_0,y_0\}
$$
and so, by Proposition~\ref{prop:simple}, there is
an embedded smooth lune $u_0:\D\to\Sigma$
with bottom boundary $A_0$ and top boundary $B_0$.
As in the proof of Step~1 in Proposition~\ref{prop:S2arc}
we use this lune to remove the intersection points~$x_0$ 
and~$y_0$ by an isotopy of $B$, supported
in a small neighborhood of the image of~$u_0$.
This isotopy leaves the number $\nu(\Lambda)$ unchanged.
More precisely, extend $u_0$ to an embedding
(still denoted by $u_0$) of the open set
$$
\D_\eps:=\{z\in\C\,|\,\IM\,z>-\eps,|z|<1+\eps\}
$$
for $\eps>0$ sufficiently small such that
$$
u_0(\D_\eps)\cap B = u_0(\D_\eps\cap S^1),\qquad
u_0(\D_\eps)\cap A = u_0(\D_\eps\cap\R),
$$
$$
u(\{z\in\D_\eps\,|\,|z|>1\})\cap\beta=\emptyset,\quad
u(\{z\in\D_\eps\,|\,\RE\,z<0\})\cap\alpha=\emptyset.
$$
Choose a smooth cutoff function $\rho:\D_\eps\to[0,1]$
that vanishes near the boundary of $\D_\eps$
and is equal to one on $\D$. 
Consider the vector field $\xi$ on $\Sigma$
that vanishes outside $u_0(\D_\eps)$ and satisfies
$$
u_0^*\xi(z) = -\rho(z)\i.
$$
Let $\psi_t:\Sigma\to\Sigma$ 
be the isotopy generated by $\xi$ and, 
for $T>0$ sufficiently large, define
$$
\beta':=\psi_T(\beta),\qquad
B':=\psi_T(B),\qquad
\Lambda':=(x,y,A,B',\w'),
$$
where $\w':\Sigma\setminus(\alpha\cup\beta')\to\Z$ 
is the unique two-chain that agrees with $\w$ on 
$\Sigma\setminus u_0(\D_\eps)$.  Thus $\w'$ corresponds 
to the homotopy from $A$ to $B$ determined by $\w$
followed by the homotopy $\psi_t$ from $B$ to $B'$.
Then $\Lambda'$ is a combinatorial $(\alpha,\beta')$-lune.
If $u:\D\to\Sigma$ is a smooth $(\alpha,\beta)$-lune
let $U\subset\D$ be the unique component of
$u^{-1}(u_0(\D_\eps))$ that contains an arc
in $\D\cap S^1$.  Then $U$ 
does not intersect $\D\cap\R$. 
(See Figure~\ref{fig:deformation}.)
Hence the map $u':\D\to\Sigma$, defined by
$$
u'(z) := \left\{\begin{array}{rl}
\psi_T(u(z)),&\mbox{if }z\in U,\\
u(z),&\mbox{if }z\in\D\setminus U,
\end{array}\right.
$$
is a smooth $(\alpha,\beta')$-lune such that
$\Lambda_{u'}=\Lambda'$.  

We claim that the map 
$u\mapsto u'$ defines a one-to-one correspondence
between smooth $(\alpha,\beta)$-lunes $u$ such that $\Lambda_u=\Lambda$
and smooth $(\alpha,\beta')$-lunes $u'$ such that $\Lambda_{u'}=\Lambda'$.
The map $u\mapsto u'$ is obviously injective.  To prove that
it is surjective we choose a smooth $(\alpha,\beta')$-lune 
$u'$ such that $\Lambda_{u'}=\Lambda'$.  Denote by 
$$
U'\subset\D
$$ 
the unique connected component of 
${u'}^{-1}(u_0(\D_\eps))$ that contains an arc in $\D\cap S^1$.
There are four cases as depicted in Figure~\ref{fig:deformation}.
In two of these cases (second and third row) we have 
$u'(U')\cap \alpha=\emptyset$ and hence $U'\cap\R=\emptyset$.  
In the casees where $u'(U')\cap \alpha\ne\emptyset$
it follows from an orientation argument (fourth row)
and from Lemma~\ref{le:one} (first row)
that $U'$ cannot intersect $\D\cap\R$.  
Thus we have shown that $U'$ does not intersect 
$\D\cap\R$ in all four cases.  This implies that $u'$ 
is in the image of the map $u\mapsto u'$.  
Hence the map $u\mapsto u'$ is bijective as claimed,
and hence 
$$
\nu(\Lambda)=\nu(\Lambda').
$$
This proves Lemma~\ref{le:move}.
\end{proof}

\begin{proof}[Proof of Theorems~\ref{thm:lune1} and~\ref{thm:lune2}] 
\phantomsection\label{proof:lune1}
Assume first that $\Sigma$ is simply connected.
We prove that~(iii) implies~(i) in Theorem~\ref{thm:lune1}. 
Let $\Lambda$ be a combinatorial $(\alpha,\beta)$-lune.
By Lemma~\ref{le:move}, reduce the number of intersection 
points of $\Lambda$, while leaving the number $\nu(\Lambda)$ 
unchanged.  Continue by induction until reaching an embedded 
combinatorial lune in $\Sigma$. By Proposition~\ref{prop:simple}, 
such a lune satisfies $\nu=1$. Hence $\nu(\Lambda)=1$.
In other words, there is a smooth $(\alpha,\beta)$-lune 
$u:\D\to\Sigma$, unique up to equivalence, 
such that $\Lambda_u=\Lambda$.
Thus we have proved that~(iii) implies~(i).
We have also proved, in the simply connected case, that
$u$ is uniquely determined by $\Lambda_u$ up to equivalence.
From now on we drop the hypothesis that $\Sigma$ is simply connected.

We prove Theorem~\ref{thm:lune2}.
\phantomsection\label{proof:lune2}
Let $u:\D\to\Sigma$ and $u':\D\to\Sigma$ be smooth
$(\alpha,\beta)$-lunes such that 
$$
\Lambda_u=\Lambda_{u'}.
$$
Let $\tu:\D\to\tSi$ and $\tu':\D\to\tSi$ be lifts to the universal
cover such that 
$$
\tu(-1)=\tu'(-1).  
$$
Then $\Lambda_{\tu}=\Lambda_{\tu'}$.
Hence, by what we have already proved, $\tu$ is equivalent to $\tu'$
and hence $u$ is equivalent to $u'$.  This proves Theorem~\ref{thm:lune2}.

We prove that~(i) implies~(ii) in Theorem~\ref{thm:lune1}.
Let $u:\D\to\Sigma$ be a smooth $(\alpha,\beta)$-lune and denote by
$\Lambda_u=:(x,y,A,B,\w)$ be its $(\alpha,\beta)$-trace.
Then 
$
\w(z)=\#u^{-1}(z)
$
is the counting function of $u$ and hence is nonnegative.  
For $0<t\le 1$ define the curve
$\lambda_t:[0,1]\to\RP^1$ by 
$$
du(-\cos(\pi s),t\sin(\pi s)) \lambda_t(s) 
:= \R \frac{\p}{\p s} u(-\cos(\pi s),t\sin(\pi s)),\quad
0\le s\le 1.
$$
For $t=0$ use the same definition for $0<s<1$ and extend 
the curve continuously to the closed interval $0\le s\le 1$.
Then
\begin{equation*}
\begin{split}
\lambda_0(s) &= du(-\cos(\pi s),0)^{-1}
T_{u(-\cos(\pi s),0)}\alpha,\\
\lambda_1(s) &= du(-\cos(\pi s),\sin(\pi s))^{-1}
T_{u(-\cos(\pi s),\sin(\pi s))}\beta,
\end{split}
\qquad
0\le s\le 1.
\end{equation*}
The Viterbo--Maslov index $\mu(\Lambda_u)$
is, by definition, the relative Maslov index of the 
pair of Lagrangian paths $(\lambda_0,\lambda_1)$,
denoted by $\mu(\lambda_0,\lambda_1)$
(see Definition~\ref{def:maslov} above or~\cite{VITERBO,RS2}).
\phantomsection\label{V5}\label{RS2e}  Hence it follows 
from the homotopy axiom for the relative Maslov index that
$$
\mu(\Lambda_u) 
= \mu(\lambda_0,\lambda_1)
= \mu(\lambda_0,\lambda_t)
$$ 
for every $t>0$.  Choosing $t$ sufficiently close
to zero we find that $\mu(\Lambda_u)=1$.

\bigbreak

We prove that~(iii) implies~(ii) in Theorem~\ref{thm:lune1}. 
If $\Lambda=(x,y,\w)$ is a combinatorial $(\alpha,\beta)$-lune, 
then $\w\ge0$ by~(I) in Definition~\ref{def:clune}. 
Moreover, by the trace formula~\eqref{eq:VM1}, we have
$$
\mu(\Lambda) = \frac{m_x(\Lambda)+m_y(\Lambda)}{2}.
$$
It follows from~(II) and~(III) 
in Definition~\ref{def:clune} that
$
m_x(\Lambda)=m_y(\Lambda)=1
$
and hence $\mu(\Lambda)=1$.  
Thus we have proved that~(iii) implies~(ii).

We prove that~(ii) implies~(iii) in Theorem~\ref{thm:lune1}. 
Let $\Lambda=(x,y,\w)$ be an $(\alpha,\beta)$-trace 
such that $\w\ge 0$ and $\mu(\Lambda)=1$.
Denote $\nu_\alpha:=\p\w|_{\alpha\setminus\beta}$ and 
$\nu_\beta:=-\p\w|_{\beta\setminus\alpha}$. 
Reversing the orientation of $\alpha$ or $\beta$, if necessary, 
we may assume that $\nu_\alpha\ge0$ and $\nu_\beta\ge 0$.
Let $\eps_x,\eps_y\in\{\pm1\}$ be the intersection indices 
of $\alpha$ and $\beta$ at $x,y$ with these orientations, 
and let 
$$
n_\alpha := \min\nu_\alpha\ge 0,\qquad
n_\beta:=\min\nu_\beta\ge 0.
$$ 
As before, denote by $m_x$ (respectively $m_y$) 
the sum of the four values of $\w$ encountered
when walking along a small circle surrounding~$x$ 
(respectvely~$y$).  Since the Viterbo--Maslov 
index of $\Lambda$ is odd, we have 
$\eps_x\ne\eps_y$ and thus $x\ne y$. 
This shows that~$\Lambda$ satisfies the arc condition
if and only if $n_\alpha=n_\beta=0$.

We prove that~$\Lambda$ satisfies~(II).
Suppose, by contradiction, that $\Lambda$ does
not satisfy~(II).  Then $\eps_x=-1$ and $\eps_y=1$.  
This implies that the values of~$\w$ near~$x$
are given by $k$, $k+n_\alpha+1$, 
$k+n_\alpha+n_\beta+1$, $k+n_\beta+1$
for some integer~$k$.  Since $\w\ge 0$ 
these numbers are all nonnegative.
Hence $k\ge 0$ and hence $m_x\ge 3$.
The same argument shows that $m_y\ge 3$
and, by the trace formula~\eqref{eq:VM1}, we have 
$
\mu(\Lambda)=(m_x(\Lambda)+m_y(\Lambda))/2\ge 3,
$
in contradiction to our hypothesis. 
This shows that $\Lambda$ satisfies~(II).

We prove that $\Lambda$ satisfies the arc condition and~(III).
By~(II) we have $\eps_x=1$. Hence the values of $\w$ near $x$
in counterclockwise order are given by 
$
k_x,
$
$
k_x+n_\alpha+1,
$
$
k_x+n_\alpha-n_\beta,
$
$
k_x-n_\beta
$
for some integer $k_x\ge n_\beta\ge0$.   
This implies
$$
m_x(\Lambda)=4k_x-2n_\beta+2n_\alpha+1
$$
and, similarly,
$
m_y(\Lambda)=4k_y-2n_\beta+2n_\alpha+1
$
for some integer $k_y\ge n_\beta\ge 0$.
Hence, by the trace formula~\eqref{eq:VM1}, we have 
$$
1 = \mu(\Lambda) 
= \frac{m_x(\Lambda)+m_y(\Lambda)}{2}
= k_x + (k_x-n_\beta) + k_y + (k_y-n_\beta) + 2n_\alpha + 1.
$$
Hence
$
k_x=k_y=n_\alpha=n_\beta=0
$ 
and so $\Lambda$ satisfies the arc condition and~(III). 
Thus we have shown that~(ii) implies~(i).
This proves Theorem~\ref{thm:lune1}.
\end{proof}

\begin{example}\label{ex:maslov}\rm
The arguments in the proof of Theorem~\ref{thm:lune1} 
can be used to show that, if~$\Lambda$ is an 
$(\alpha,\beta)$-trace with 
$
\mu(\Lambda)=1,
$
then (I)$\implies$(III)$\implies$(II).
Figure~\ref{fig:maslov1} shows three 
$(\alpha,\beta)$-traces that satisfy the arc condition 
and have Viterbo--Maslov index one but do not satisfy~(I); 
one that still satisfies~(II) and~(III), 
one that satisfies~(II) but not~(III),
and one that satisfies neither~(II) nor~(III).
Figure~\ref{fig:maslov2} shows an $(\alpha,\beta)$-trace
of Viterbo--Maslov index two that satisfies~(I) and~(III) 
but not~(II). Figure~\ref{fig:maslov3} shows 
an $(\alpha,\beta)$-trace of Viterbo--Maslov index three 
that satisfies~(I) and~(II) but not~(III).
\end{example}

\begin{figure}[htp]
\centering 
\includegraphics[scale=0.7]{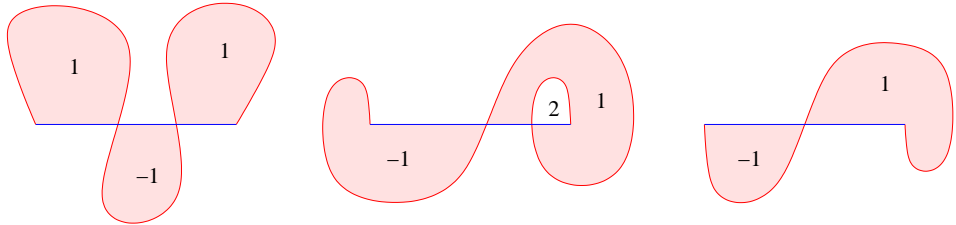} 
\caption{{Three $(\alpha,\beta)$-traces 
with Viterbo--Maslov index one.}}
\label{fig:maslov1}
\end{figure}

\begin{figure}[htp]
\centering 
\includegraphics[scale=0.7]{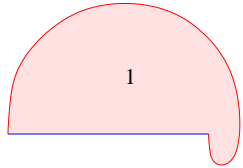} 
\caption{{An $(\alpha,\beta)$-trace 
with Viterbo--Maslov index two.}}
\label{fig:maslov2}
\end{figure}

\begin{figure}[htp]
\centering 
\includegraphics[scale=0.7]{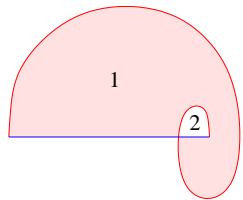} 
\caption{{An $(\alpha,\beta)$-trace 
with Viterbo--Maslov index three.}}
\label{fig:maslov3}
\end{figure}

We close this section with two results about lunes
that will be useful below.

\begin{proposition}\label{prop:luni}
Assume~\hyperlink{property_H}{(H)} and suppose that $\alpha$ and $\beta$ 
are noncontractible nonisotopic transverse embedded circles
and let 
$
x,y\in\alpha\cap\beta.
$
Then there is at most one $(\alpha,\beta)$-trace
from $x$ to $y$ that satisfies the arc condition.  
Hence, by Theorems~\ref{thm:arc} and~\ref{thm:lune2}, 
there is at most one equivalence class of smooth 
$(\alpha,\beta)$-lunes from $x$ to $y$.
\end{proposition}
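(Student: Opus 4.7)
The plan is to show that an $(\alpha,\beta)$-trace from $x$ to $y$ satisfying the arc condition is uniquely determined by its boundary arcs $(A, B)$, and then to show that at most one such pair of arcs exists. Since $\alpha$ is a noncontractible embedded circle, $\Sigma$ is not diffeomorphic to the $2$-sphere, so Theorem~\ref{thm:trace}~(iii) implies that the two-chain $\w$ of a trace satisfying the arc condition is uniquely determined by $(x, y, \p \w)$, hence by the pair of arcs $(A, B)$. Thus the task reduces to establishing the uniqueness of this pair.

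Because $\alpha$ and $\beta$ are embedded circles, the two points $x, y$ cut each into exactly two arcs. Label them $A_1, A_2 \subset \alpha$ and $B_1, B_2 \subset \beta$. By Definition~\ref{def:arc-CF} together with Remark~\ref{rmk:nu}, a pair $(A_i, B_j)$ arises as the boundary of an $(\alpha,\beta)$-trace satisfying the arc condition if and only if $A_i$ and $B_j$ are homotopic in $\Sigma$ with fixed endpoints. The claim is that at most one such pair exists.

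Suppose, toward contradiction, that $(A_i, B_j)$ and $(A_{i'}, B_{j'})$ are two distinct such pairs. If $i = i'$ and $j \ne j'$, then $B_j$ and $B_{j'}$ are both homotopic to $A_i$ rel endpoints, so the loop obtained by concatenating $B_j$ with the reverse of $B_{j'}$ is null-homotopic at $x$; but this loop represents $[\beta]^{\pm 1}$ in $\pi_1(\Sigma, x)$, contradicting the noncontractibility of $\beta$. The case $i \ne i'$, $j = j'$ is symmetric and contradicts noncontractibility of $\alpha$. In the remaining case $i \ne i'$ and $j \ne j'$, the two homotopies $A_i \simeq B_j$ and $A_{i'} \simeq B_{j'}$ rel endpoints give $A_i \cdot A_{i'}^{-1} \simeq B_j \cdot B_{j'}^{-1}$ as loops at $x$, where the left-hand side represents $[\alpha]^{\pm 1}$ and the right-hand side represents $[\beta]^{\pm 1}$. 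Hence $\alpha$ is freely homotopic to $\beta$ (with some orientation), and by Epstein's theorem~\cite{EPSTEIN} this forces $\alpha$ to be isotopic to $\beta$ with that orientation, contradicting the nonisotopy hypothesis.

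This proves uniqueness of the trace. The final assertion about lunes follows from Theorem~\ref{thm:arc} (the trace of any smooth lune satisfies the arc condition) together with Theorem~\ref{thm:lune2} (two smooth lunes with the same trace are equivalent). The one genuinely geometric input — as opposed to elementary $\pi_1$ manipulation — is Epstein's theorem, invoked in the third case, and this is precisely where the nonisotopy hypothesis is used.
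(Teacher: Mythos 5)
Your proof is correct and follows essentially the same approach as the paper's: observe that $x,y$ cut each of $\alpha,\beta$ into two arcs, reduce uniqueness of the trace to uniqueness of the pair of boundary arcs via Theorem~\ref{thm:trace}~(iii), and then rule out the competing pairs using noncontractibility of $\alpha$, noncontractibility of $\beta$, and nonisotopy, in turn. The only cosmetic differences are that the paper fixes one admissible pair and shows the other three quadruples cannot be traces (rather than assuming two distinct admissible pairs and deriving a contradiction), and that you make the appeal to Theorem~\ref{thm:trace}~(iii) and to Epstein's homotopy-implies-isotopy theorem explicit where the paper leaves them implicit.
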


\begin{proof}
Let 
$$
\alpha=\alpha_1\cup\alpha_2,\qquad
\beta=\beta_1\cup\beta_2, 
$$
where $\alpha_1$ and $\alpha_2$
are the two arcs of~$\alpha$ with endpoints $x$ and $y$,
and similarly for~$\beta$.  Assume that the quadruple 
$(x,y,\alpha_1,\beta_1)$ is an $(\alpha,\beta)$-trace. 
Then $\alpha_1$ is homotopic to $\beta_1$ with fixed endpoints.
Since $\alpha$ is not contractible,
$\alpha_2$ is not homotopic to~$\beta_1$ with fixed endpoints.
Since $\beta$ is not contractible,
$\beta_2$ is not homotopic to~$\alpha_1$ with fixed endpoints.
Since $\alpha$ is not isotopic to~$\beta$,
$\alpha_2$ is not homotopic to~$\beta_2$ with fixed endpoints.
Hence the quadruple $(x,y,\alpha_i,\beta_j)$ 
is not an $(\alpha,\beta)$-trace unless $i=j=1$. 
This proves Proposition~\ref{prop:luni}.
\end{proof}

The hypotheses that the loops $\alpha$ and $\beta$
are not contractible and not isotopic to each other cannot 
be removed in Proposition~\ref{prop:luni}.
A pair of isotopic circles with precisely two intersection 
points is an example.  Another example is a pair 
consisting of a contractible and a non-contractible loop,
again with precisely two intersection points.

\begin{proposition}\label{prop:embedded}
Assume~\hyperlink{property_H}{(H)}. 
If there is a smooth $(\alpha,\beta)$-lune 
then there is a primitive $(\alpha,\beta)$-lune.
\end{proposition}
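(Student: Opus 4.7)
The plan is to argue by strong induction on the positive integer $n(u):=\sum_F\w_u(F)$, summed over the 2-cells $F$ of $\Sigma\setminus(\alpha\cup\beta)$.  The base case $n(u)=1$ will be essentially automatic: the trace of $u$ is supported on a single 2-cell $F_0$ with value $1$, and since $\mu(\Lambda_u)=1$ by Theorems~\ref{thm:arc} and~\ref{thm:lune1}, the trace formula together with conditions (II)--(III) of Definition~\ref{def:clune} will force $m_x(\Lambda_u)=m_y(\Lambda_u)=1$, so that $F_0$ has exactly two corners and $u$ is itself primitive.

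For the inductive step, suppose $n(u)>1$ and $u$ is not primitive, with $\p\Lambda_u=(x,y,A,B)$.  After possibly reparametrizing by the M\"obius involution $z\mapsto(\i-z)/(1-\i z)$, which swaps $\D\cap\R$ with $\D\cap S^1$ and exchanges $A$ with $B$, I may assume $A\cap\beta\ne\{x,y\}$.  I will pick $z_0\in(A\cap\beta)\setminus\{x,y\}$.  Since by the arc condition $u|_{\D\cap\R}:\D\cap\R\to A$ is a diffeomorphism, there is a unique $p_0\in\D\cap\R\setminus\{\pm1\}$ with $u(p_0)=z_0$; because $\alpha$ and $\beta$ are transverse at $z_0$, the immersion property will make $u^{-1}(\beta)$ a smooth 1-manifold through $p_0$ that crosses $\D\cap\R$ transversally there.

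Let $\gamma\subset\D$ be the component of $u^{-1}(\beta)\setminus(\D\cap S^1)$ through $p_0$.  The key structural claim---which I expect to be the main technical obstacle---is that both endpoints of $\gamma$ in $\overline\D$ lie in $\D\cap\R\setminus\{\pm1\}$.  The argument will be that $u|_{\D\cap S^1}$ is tangent to $\beta$ everywhere, forcing any local smooth branch of $u^{-1}(\beta)$ that meets $\D\cap S^1$ to coincide with $\D\cap S^1$ itself; this rules out endpoints on $\D\cap S^1$ and, by the same tangency consideration at the corners, rules out $\pm 1$.  Denote the endpoints by $p,p'$, the closed segment of $\D\cap\R$ joining them by $I$, and the closed topological half-disc of $\overline\D$ bounded by $I\cup\gamma$ (via the Jordan curve theorem inside $\D$) by $D_0$.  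I will then replace $\gamma$ by an innermost choice, so that no other arc of $u^{-1}(\beta)\setminus(\D\cap S^1)$ lies in the interior of $D_0$; this is a standard innermost-disc argument since any competing arc with both endpoints in $I$ must lie in $D_0$.

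To close the induction, I will pick an orientation-preserving diffeomorphism $\phi:\D\to D_0$ with $\phi(\D\cap\R)=I$, $\phi(\D\cap S^1)=\gamma$, $\phi(-1)=p$, $\phi(+1)=p'$, so that $u_0:=u\circ\phi$ is an orientation-preserving immersion taking $\D\cap\R$ into $\alpha$, $\D\cap S^1$ into $\beta$, and the corners into $\alpha\cap\beta$---i.e., a smooth $(\alpha,\beta)$-lune.  Since $u(p)=z_0\ne x$ forces $p\ne-1$ and hence $-1\notin D_0$, the strict inclusion $D_0\subsetneq\D$ will give $\w_{u_0}\le\w_u$ pointwise on 2-cells with strict inequality on any 2-cell in $u(\D)\setminus u(D_0)$ adjacent to $x$, so $n(u_0)<n(u)$.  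The inductive hypothesis applied to $u_0$ will then produce a primitive $(\alpha,\beta)$-lune, completing the proof.
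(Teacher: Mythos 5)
Your proof is correct in its overall strategy, and it takes a genuinely different route from the paper. The paper first treats the case where $\alpha$ or $\beta$ is a contractible embedded circle (invoking Epstein's theorem), and for the noncontractible case it runs \emph{two} successive innermost-arc arguments: one on $v^{-1}(\alpha)$ to produce an embedded lune $u$ with $u^{-1}(\alpha)=\D\cap\R$ (using Proposition~\ref{prop:simple}), and a second one on $u^{-1}(\beta)$ which then yields a primitive lune outright. You replace this with a \emph{single} innermost-arc argument together with a strong induction on $n(u)=\sum_F\w_u(F)$, which has the pleasant side effect of treating contractible and noncontractible $\alpha,\beta$ uniformly (the circle components of $u^{-1}(\beta)$ are simply irrelevant, since $\gamma$ is forced to be an arc by the fact that it has the boundary point $p_0$). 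The cost is that a single reduction step need not produce a primitive lune, only a lune with strictly smaller $n$, so you need the induction to terminate; the paper's arrangement gets the primitive lune in exactly two cuts.

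There is, however, a gap in the way you argue the base case. From $n(u)=1$ you correctly deduce $m_x(\Lambda_u)=m_y(\Lambda_u)=1$ (via the trace formula and conditions~(II)--(III)), but this only says that $\w$ equals $1$ in exactly one of the four quadrants at $x$ and at $y$; it does not by itself preclude $F_0$ from having additional corners at other intersection points (a quadrilateral cell, say, with $m$-value $1$ at all four of its corners is not ruled out by those conditions). So ``$F_0$ has exactly two corners'' does not follow from ``$m_x=m_y=1$'' alone. The conclusion you want is still true, but the clean way to see it within your own framework is to note that the base case is a \emph{consequence} of your inductive step: if $u$ is not primitive, your construction produces a lune $u_0$ with $1\le n(u_0)<n(u)$, hence $n(u)\ge 2$. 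Equivalently, you can give a direct argument using Lemma~\ref{le:slune1}~(ii): when $n(u)=1$ the restriction of $u$ to $\mathrm{int}(\D)$ is a diffeomorphism onto $F_0$, which is disjoint from $\alpha\cup\beta$, so the putative arc of $u^{-1}(\beta)$ emanating from $p_0$ into $\mathrm{int}(\D)$ would map into $F_0\cap\beta=\emptyset$, a contradiction. Either fix closes the gap; the remainder of the inductive step is sound.
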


\begin{proof}
The proof has three steps.

\medskip\noindent{\bf Step~1.}
{\it If $\alpha$ or $\beta$ is a contractible embedded circle
and $\alpha\cap\beta\ne\emptyset$ 
then there exists a primitive $(\alpha,\beta)$-lune.}

\medskip\noindent
Assume $\alpha$ is a contractible embedded circle.   
Then, by a theorem of 
Epstein~\cite{DBAE},\phantomsection\label{DBAE1} 
there exists an embedded closed disc 
$D\subset\Sigma$ with boundary $\p D=\alpha$.
Since $\alpha$ and $\beta$ intersect transversally, 
the set $D\cap\beta$ is a finite union of arcs.
Let $\cA$ be the set of all arcs $A\subset\alpha$
which connect the endpoints of an arc $B\subset D\cap\beta$.
Then~$\cA$ is a nonempty finite set, partially ordered
by inclusion.  Let $A_0\subset\alpha$ be a minimal 
element of $\cA$ and $B_0\subset D\cap\beta$ be the 
arc with the same endpoints as~$A_0$.  Then $A_0$ and $B_0$
bound a primitive $(\alpha,\beta)$-lune. 
This proves Step~1 when $\alpha$ is a contractible embedded circle.
When $\beta$ is a contractible embedded circle the proof is analogous. 

\medskip\noindent{\bf Step~2.}
{\it Assume $\alpha$ and $\beta$ are not contractible 
embedded circles.  If there exists a smooth $(\alpha,\beta)$-lune
then there exists an embedded $(\alpha,\beta)$-lune $u$
such that $u^{-1}(\alpha)=\D\cap\R$.}

\medskip\noindent
Let $v:\D\to\Sigma$ be a smooth $(\alpha,\beta)$-lune.
Then the set
$$
X := v^{-1}(\alpha) \subset \D
$$
is a smooth $1$-manifold with boundary
$
\p X = v^{-1}(\alpha)  \cap S^1.
$
The interval $\D\cap\R$ is one component of $X$
and no component of $X$ is a circle.
(If $X_0\subset X$ is a circle,
then $v|_{X_0}:X_0\to\Sigma$ is a contractible 
loop covering~$\alpha$ finitely many times.  
Hence, by Lemma~\ref{le:dbae2} in the appendix,
it would follow that $\alpha$ is a contractible embedded 
circle, in contradiction to the hypothesis of Step~2.)
Write 
$$
\p X = \{e^{\i\theta_1},\dots,e^{\i\theta_n}\},\qquad
\pi=\theta_1 > \theta_2 >\cdots > \theta_{n-1} > \theta_n=0.
$$
Then there is a permutation $\sigma\in S_n$ such that
the arc of $v^{-1}(\alpha)$ that starts at $e^{\i\theta_j}$
ends at $e^{\i\theta_{\sigma(j)}}$.  This permutation
satisfies $\sigma\circ\sigma = \id$, $\sigma(1)=n$, and
$$
j < k < \sigma(j)
\qquad\implies\qquad
j<\sigma(k)<\sigma(j).
$$
Hence, by induction, there exists a 
$j\in\{1,\dots,n-1\}$ such that $\sigma(j)=j+1$.
Let $X_0\subset X$ be the submanifold
with boundary points $e^{\i\theta_j}$
and $e^{\i\theta_{j+1}}$ and denote
$$
Y_0 := \left\{e^{\i\theta}\,|\,
\theta_{j+1}\le\theta\le\theta_j\right\}.
$$
Then the closure of the domain $\Delta\subset\D$ 
bounded by~$X_0$ and $Y_0$ is diffeomorphic 
to the half disc and $\overline{\Delta}\cap v^{-1}(\alpha)=X_0$.  
Hence there exists an orientation preserving embedding 
$\phi:\D\to\D$  that maps $\D\cap\R$ 
onto~$X_0$ and maps $\D\cap S^1$ onto $Y_0$. 
It follows that 
$$
u:=v\circ\phi:\D\to\Sigma
$$ 
is a smooth 
$(\alpha,\beta)$-lune such that 
$$
u^{-1}(\alpha)=\phi^{-1}(\overline{\Delta}\cap v^{-1}(\alpha)) 
= \phi^{-1}(X_0)=\D\cap\R.
$$
Moreover, $\Lambda_u=(x,y,A,B,\w)$ with
$$
x:=v(e^{\i\theta_j}),\qquad y:=v(e^{\i\theta_{j+1}}),\qquad
A:=v(X_0),\qquad B:=v(Y_0).
$$  
Since $A\cap B=\alpha\cap B=\{x,y\}$,
it follows from Proposition~\ref{prop:simple}
that $u$ is an embedding.
This proves Step~2.

\medskip\noindent{\bf Step~3.}
{\it Assume $\alpha$ and $\beta$ are not contractible embedded circles.
If there exists an embedded $(\alpha,\beta)$-lune~$u$
such that $u^{-1}(\alpha)=\D\cap\R$ then there exists
a primitive $(\alpha,\beta)$-lune.}

\medskip\noindent
Repeat the argument in the proof of Step~2 with 
$v$ replaced by $u$ and the set $v^{-1}(\alpha)$ replaced 
by the $1$-manifold 
$$
Y:=u^{-1}(\beta)\subset\D
$$
with boundary 
$$
\p Y=u^{-1}(\beta)\cap\R.
$$ 
The argument produces an arc $Y_0\subset Y$ 
with boundary points $a<b$ such that
the closed interval ${X_0:=[a,b]}$ intersects $Y$
only in the endpoints.  Hence the arcs $A_0:=u(X_0)$ and 
${B_0:=u(Y_0)}$ bound a primitive $(\alpha,\beta)$-lune.  
This proves Step~3 and Proposition~\ref{prop:embedded}.
\end{proof}

\newpage
\part*{III. Floer Homology}
\addcontentsline{toc}{part}{Part III. Floer Homology}


\section{Combinatorial Floer Homology}\label{sec:FLOER}

We assume throughout this section that $\Sigma$
is an oriented 2-manifold without boundary
and that $\alpha,\beta\subset\Sigma$ are 
noncontractible nonisotopic transverse embedded circles.  
We orient $\alpha$ and $\beta$.\hypertarget{Part_III}{}   
There are three ways we can count the number of points 
in their intersection:
\begin{itemize}
\item 
The {\bf numerical intersection number} 
$\num{\alpha}{\beta}$\index{numerical intersection number}
is the actual number of intersection 
points.\index{intersection number!numerical}
\item 
The \hypertarget{geometric_intersection_number}{{\bf geometric intersection number}}
$\geo{\alpha}{\beta}$\index{geometric intersection number} is defined as the 
minimum \index{intersection number}
of the numbers $\num{\alpha}{\beta'}$ over all embedded
loops $\beta'$ that are transverse to $\alpha$
and isotopic to $\beta$.\index{intersection number!geometric}
\item 
The {\bf algebraic intersection number} $\alg{\alpha}{\beta}$ is the sum
\index{algebraic intersection number}\index{intersection number!algebraic}
$$
\alpha\cdot\beta=\sum_{x\in\alpha\cap\beta}\pm1
$$
where the plus sign is chosen iff the orientations match
in the direct sum 
$
T_x\Sigma=T_x\alpha\oplus T_x\beta.
$
\end{itemize}
Note that
$
\Abs{\alg{\alpha}{\beta}}\le \geo{\alpha}{\beta}\le\num{\alpha}{\beta}.
$

\begin{theorem}\label{thm:floer1}
Define a chain complex\index{Floer chain complex}
$
\p:\CF(\alpha,\beta)\to \CF(\alpha,\beta)
$
by
\begin{equation}\label{eq:floer}
\CF(\alpha,\beta) =\bigoplus_{x\in\alpha\cap\beta}\Z_2 x,
\qquad
\p x = \sum_y n(x,y)y,
\end{equation}
where $n(x,y)$ denotes the number modulo two 
of equivalence classes of smooth $(\alpha,\beta)$-lunes 
from $x$ to $y$. Then
$$
\p\circ\p = 0.
$$
The homology group of this chain complex is denoted by
$$
\HF(\alpha,\beta) := \ker\p/\im\p
$$
and is called 
the \hypertarget{floer_homology}{{\bf Combinatorial Floer Homology}} 
of the pair $(\alpha,\beta)$.\index{combinatorial!Floer homology}
\index{Floer homology!combinatorial}
\index{Floer homology}
\end{theorem}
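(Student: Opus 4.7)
The plan is to show $\p\circ\p=0$ by constructing a fixed-point-free involution on the set of ``broken hearts.'' Expanding the definition,
$$
\p\p x = \sum_{z\in\alpha\cap\beta}\Bigl(\sum_{y\in\alpha\cap\beta} n(x,y)n(y,z)\Bigr)z,
$$
so it suffices, for fixed $x,z\in\alpha\cap\beta$, to show that the set
$$
\cH(x,z):=\bigsqcup_{y\in\alpha\cap\beta}\bigl\{([u_1],[u_2])\,:\,u_1\in\cD(x,y),\ u_2\in\cD(y,z)\text{ smooth lunes}\bigr\}
$$
has even cardinality. I would call an element of $\cH(x,z)$ a \emph{broken heart} and the intermediate point $y$ its \emph{pinch point}.

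The first step is to pass from broken hearts to traces. Given $([u_1],[u_2])\in\cH(x,z)$, form the catenation $\Lambda:=\Lambda_{u_1}\#\Lambda_{u_2}=(x,z,\w_{u_1}+\w_{u_2})$. By Remark~\ref{rmk:mascat} this has Viterbo--Maslov index $\mu(\Lambda)=\mu(\Lambda_{u_1})+\mu(\Lambda_{u_2})=2$, and by Theorem~\ref{thm:lune1}(i$\Rightarrow$ii) its degree function is nonnegative. Call such a $\Lambda$ a \emph{heart from $x$ to $z$}, and call a pair $(\Lambda_1,\Lambda_2)$ of $(\alpha,\beta)$-lune traces with $\Lambda_1\#\Lambda_2=\Lambda$ a \emph{breaking} of $\Lambda$. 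Theorem~\ref{thm:lune2} says that each smooth lune is determined up to equivalence by its trace, so the map
$$
([u_1],[u_2])\longmapsto\bigl(\Lambda,(\Lambda_{u_1},\Lambda_{u_2})\bigr)
$$
is a bijection from $\cH(x,z)$ onto the set of pairs $(\Lambda,(\Lambda_1,\Lambda_2))$ where $\Lambda$ is a heart and $(\Lambda_1,\Lambda_2)$ is a breaking of $\Lambda$. Hence it is enough to prove that every heart $\Lambda$ admits exactly two breakings.

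To establish the ``exactly two breakings'' claim, I would fix a heart $\Lambda=(x,z,\w)$ and analyze its structure combinatorially using the trace formula and the characterization of lunes from Part~II. Given one breaking $\Lambda=\Lambda_1\#\Lambda_2$ with pinch point $y$, Theorem~\ref{thm:lune1} forces $\Lambda_1$ and $\Lambda_2$ each to satisfy the arc condition and conditions (I)--(III) of Definition~\ref{def:clune}; in particular the local winding numbers $\w$ around $y$ take the pattern $(0,1,1,0)$ or $(1,0,0,1)$ (depending on the sign of the catenation), giving $m_y(\Lambda)=2$. Conversely, I would show that any interior intersection point $y'\in(\alpha\cap\beta)\setminus\{x,z\}$ with $m_{y'}(\Lambda)=2$ and the correct corner configuration produces a breaking by splitting $\w$ according to the two adjacent regions of winding number $1$. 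Using the trace formula $\mu(\Lambda)=(m_x(\Lambda)+m_z(\Lambda))/2=2$ together with $\w\ge0$ and the conditions at $x$ and $z$, one computes that the sum of $m_{y'}(\Lambda)$ over all candidate pinch points is exactly $4$, and each contributing $y'$ contributes exactly $2$; hence there are precisely two pinch points and correspondingly two breakings.

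The main obstacle is proving this global counting statement: controlling the combinatorics of an index $2$ trace with nonnegative winding number tightly enough to conclude there are exactly two valid pinch points, not merely an even number. My plan is to reduce to the simply connected case by lifting to the universal cover (using Proposition~\ref{prop:arc} and the arc condition, as in Section~\ref{sec:LIFT}), where Theorem~\ref{thm:lune1} gives the equivalence of the three conditions; there the heart lifts to a combinatorial region whose boundary is a concatenation of four embedded arcs in $\talpha\cup\tbeta$ meeting only at the endpoints $\tx,\ty',\tz,\ty''$, and the two breakings correspond to the two pinch points $\ty',\ty''$. The swap $\ty'\leftrightarrow\ty''$ gives the desired fixed-point-free involution on $\cH(x,z)$, yielding $|\cH(x,z)|\equiv0\pmod 2$ and hence $\p\p=0$.
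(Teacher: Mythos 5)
Your overall strategy matches the paper's: reduce $\p\circ\p=0$ to showing that the set of broken hearts $\cH(x,z)$ has even cardinality, and find a fixed-point-free involution by realizing that each ``heart'' can be broken in exactly two ways. However, the execution diverges in the crucial step, and the proposal as written has a genuine gap.

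The gap is the claim that ``every heart admits exactly two breakings,'' which you acknowledge is the main obstacle and then sketch an argument that does not work. Your proposed counting ($\sum_{y'} m_{y'}(\Lambda)=4$ over candidate pinch points) is not established and does not follow from the trace formula, which constrains $m_x(\Lambda)+m_z(\Lambda)$ but says nothing a priori about $m_{y'}(\Lambda)$ at other intersection points; indeed, for a heart that wraps around a region several times, these local sums can be arbitrarily large. Your geometric picture in the last paragraph is also incorrect: lifting to the universal cover, the region is not a quadrilateral with four corners $\tx,\ty',\tz,\ty''$; it is a bigon with corners only at $\tx$ and $\tz$ (the model domain $H$ of Definition~\ref{def:s-heart}), and the two pinch points $y', y''$ are interior points of its two boundary arcs (one lying on the $\alpha$-side and one on the $\beta$-side, or both on the same side, depending on the type of the broken heart), not corners. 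This is precisely the content of Proposition~\ref{prop:heart}, which you would need but do not prove: that each broken heart falls into exactly one of four types (a), (b), (c), (d), according to whether the two $\alpha$-arcs or the two $\beta$-arcs meet only at the pinch point and which of the other pair is nested.

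What the paper does, and what your argument is missing, is the construction of the \emph{smooth heart} as an intermediate geometric object. Proposition~\ref{prop:heartbreaker}(i) shows that each broken heart of type (a) or (c) is compatible with a unique (up to equivalence) smooth $(\alpha,\beta)$-heart of type (ac) --- an immersion $w:H\to\Sigma$ of the fixed model bigon --- obtained by gluing the two lunes along their common boundary arc. Proposition~\ref{prop:heartbreaker}(ii) then shows that each such $w$ is compatible with exactly one broken heart of type (a) and exactly one of type (c): these are produced by cutting $w$ along the arc of $w^{-1}(\beta)$ (respectively $w^{-1}(\alpha)$) emanating from the corner $1+\i$ into the interior, and the endpoint of that cut arc is the pinch point. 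The uniqueness on both sides (which uses Theorem~\ref{thm:lune2}, Proposition~\ref{prop:luni}, and the topology of arcs in $H$) is what turns ``at most two'' and ``at least two'' into ``exactly two.'' Your purely trace-theoretic attempt to split $\w$ directly at a candidate pinch point never establishes that the two pieces are $(\alpha,\beta)$-traces of actual lunes, nor that such a split is unique; the smooth heart supplies precisely this geometric realization. To repair the argument you would need to reproduce something equivalent to Propositions~\ref{prop:heart} and~\ref{prop:heartbreaker}.
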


\begin{proof} 
See Section~\ref{sec:HEART} 
page~\pageref{proof:floer1}. 
\end{proof}

\bigbreak

\begin{theorem}\label{thm:floer2}
Combinatorial Floer homology is invariant under isotopy: 
If  $\alpha',\beta'\subset\Sigma$ are noncontractible 
transverse embedded circles such that $\alpha$ 
is isotopic to $\alpha'$ and $\beta$ is isotopic 
to $\beta'$ then 
$$
\HF(\alpha,\beta)\cong \HF(\alpha',\beta').
$$
\end{theorem}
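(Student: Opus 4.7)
The plan is to deform $\beta$ to $\beta'$ through a smooth generic isotopy (the deformation $\alpha \rightsquigarrow \alpha'$ is handled symmetrically by swapping the roles of the two curves, or by composing) and show that at each moment of the isotopy the chain complex either is literally unchanged or changes by a quasi-isomorphism. First I would pick a smooth ambient isotopy $\{\phi_t\}_{t\in[0,1]} \subset \Diff(\Sigma)$ with $\phi_0=\id$ and $\phi_1(\beta)=\beta'$, generic enough that $\beta_t := \phi_t(\beta)$ meets $\alpha$ transversally except at finitely many values $t_1<\cdots<t_N \in (0,1)$, and that at each exceptional $t_i$ the failure of transversality is a single quadratic tangency (all other intersections remaining transverse). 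Standard jet transversality shows this is a dense condition. Throughout the isotopy, $\beta_t$ stays noncontractible and nonisotopic to $\alpha$, so the pair $(\alpha,\beta_t)$ satisfies the standing hypotheses whenever it is transverse.

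On each open interval of generic $t$, the intersection points $\alpha\cap\beta_t$ and the cell decomposition of $\Sigma$ vary smoothly. Pulling back a smooth lune by $\phi_{t+\eps}\circ\phi_t^{-1}$ (on a neighborhood of its image) and using the uniqueness theorem~\ref{thm:lune2} together with the openness of the immersion condition, I would argue that equivalence classes of lunes are in bijection across nearby generic times; hence $\CF(\alpha,\beta_t)$ and its boundary operator, and therefore $\HF(\alpha,\beta_t)$, are locally constant away from the $t_i$.

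All content lies at the exceptional times. At $t=t_i$ a tangency appears; just before or just after, either two new intersection points $p_-, p_+$ are born or a pair cancels. Consider a birth without loss of generality. The two new points co-bound a tiny embedded bigon, so by Proposition~\ref{prop:embedded} and Theorem~\ref{thm:lune2} there is a unique primitive $(\alpha,\beta_{t_i+\eps})$-lune between them, giving $n(p_-,p_+)=1$. I would then write, as $\Z_2$-modules,
\[
\CF(\alpha,\beta_{t_i+\eps}) \;=\; \CF(\alpha,\beta_{t_i-\eps}) \oplus \Z_2 p_- \oplus \Z_2 p_+,
\]
express the new boundary operator $\p^+$ in block form relative to this decomposition, and invoke the algebraic deformation lemma (Appendix~\ref{app:homological}) to conclude that adjoining the acyclic summand $\Z_2 p_- \xrightarrow{\;1\;}\Z_2 p_+$ (with suitable off-diagonal corrections) gives a chain complex with the same homology.

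The hard part will be the block-form analysis of $\p^+$, i.e.\ comparing the lune counts before and after the tangency. Lunes at time $t_i+\eps$ whose endpoints avoid $\{p_-,p_+\}$ correspond bijectively to lunes at time $t_i-\eps$ with the same endpoints, by the persistence argument used in the second paragraph applied locally away from the tangency. Lunes at $t_i+\eps$ with $p_-$ or $p_+$ as an endpoint must, after leaving a small neighborhood of the tangency, either catenate with the primitive bigon to yield an old lune or be absorbed into it; using catenation additivity $\mu(\Lambda\#\Lambda')=\mu(\Lambda)+\mu(\Lambda')$, the index constraint $\mu=1$, and uniqueness (Theorem~\ref{thm:lune2}), one extracts precisely the factorization required by the deformation lemma. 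Carefully ruling out spurious Viterbo--Maslov index~$1$ lunes attaching to the bigon in ways not accounted for by catenation — and verifying the parity match under $\Z_2$ coefficients — is the principal technical obstacle, and is where Proposition~\ref{prop:luni} (at most one trace between two given intersection points) will be repeatedly invoked to make the combinatorics manageable.
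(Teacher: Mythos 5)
Your proposal takes essentially the same route as the paper: reduce to $\alpha=\alpha'$ by an ambient isotopy, deform $\beta$ through a generic family with finitely many birth/death tangencies, observe that the chain complex is locally constant away from those times, and at each tangency verify the hypotheses of Floer's algebraic cancellation lemma (Lemma~\ref{le:floer} of Appendix~\ref{app:homological}). The ``principal technical obstacle'' you flag — showing the old boundary count equals the reduced one, i.e.\ $n'(x',y')=n(x',y')+n(x',y)n(x,y')$ — is precisely equation~\eqref{eq:2floer} in the paper, and your plan of attack (catenation additivity, index one, uniqueness of lunes, Proposition~\ref{prop:luni}) is in the right spirit. One thing you underweight: the key workhorse is Proposition~\ref{prop:nolune} (no lune from $x'$ to $y'$ when there are lunes from $x'$ to $y$, from $x$ to $y'$, and a primitive lune from $x$ to $y$), which is itself a nontrivial consequence of the trace formula from Part~I; Proposition~\ref{prop:luni} alone only gives uniqueness of the trace, not the vanishing you need, and the actual verification requires the universal-cover winding-number case analysis depicted in Figure~\ref{fig:disappear} together with the splitting construction of Figure~\ref{fig:splitting} for the surjectivity direction in Step~6. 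So the scaffolding is correct, but the combinatorial core is harder than the sketch suggests and leans on Part~I's index machinery more heavily than your outline acknowledges.
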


\begin{proof} 
See Section~\ref{sec:ISOTOPY} page~\pageref{proof:floer2}.
\end{proof}

\begin{theorem}\label{thm:floer3}
Combinatorial Floer homology is isomorphic to
the original analytic Floer homology.
In fact, the two chain complexes agree.
\end{theorem}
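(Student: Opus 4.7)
The plan is to establish a natural bijection between equivalence classes of smooth $(\alpha,\beta)$-lunes from $x$ to $y$ and $\R$-translation classes of index-one holomorphic strips $v:\S\to\Sigma$ with $v(\R)\subset\alpha$, $v(\R+\i)\subset\beta$, $v(-\infty)=x$, $v(+\infty)=y$. Both chain complexes have the same underlying $\Z_2$-module $\bigoplus_{x\in\alpha\cap\beta}\Z_2 x$, and in each case the boundary operator is given by a mod-two count of the relevant objects joining $x$ to $y$; a count-preserving bijection will therefore show that $\p_{\comb}=\p_{\Floer}$ as operators, which is the asserted agreement of chain complexes. Under the conformal identification $\S\cong\D\setminus\{\pm1\}$, a holomorphic strip can equivalently be viewed as a holomorphic map $u:\D\to\Sigma$ with corners at $\pm1$ mapped to $x,y$ and the standard Lagrangian boundary conditions on $\alpha$ and $\beta$; the required regularity at the corners is supplied by the asymptotic analysis of Appendix~\ref{app:asymptotic}.

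First I would invoke the index formula~\eqref{eq:MASLOV} of Theorem~\ref{thm:MASLOV}, which expresses $\mu(\Lambda_v)$ for a holomorphic strip $v$ as a sum of nonnegative integer contributions from interior critical points, boundary critical points, and the asymptotic angles at $x$ and $y$. Imposing $\mu(\Lambda_v)=1$ forces every critical-point multiplicity to vanish (so $v$ is an immersion) and constrains the asymptotic angle at each of $x,y$ to lie strictly between $0$ and $\pi$; this last condition is precisely~(II) in Definition~\ref{def:clune}. Combined with the orientation-preserving property of holomorphic maps, this shows that the disc-reparametrization $u:\D\to\Sigma$ of an index-one holomorphic strip is a smooth $(\alpha,\beta)$-lune. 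For the reverse direction, given a smooth $(\alpha,\beta)$-lune $u:\D\to\Sigma$, pull back the complex structure of $\Sigma$ to $\D$ via the immersion $u$; since any two complex structures on $\D$ with two distinguished boundary punctures are conformally equivalent, precomposing with a suitable diffeomorphism produces a genuine holomorphic strip. Lune-equivalences (orientation-preserving diffeomorphisms of $\D$ fixing $\pm1$) correspond exactly to translations of $\S$, so the correspondence descends to equivalence classes in both directions.

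Next I would upgrade this bijection to an equality of mod-two counts via the linear index formula~\eqref{eq:index} of Lemma~\ref{le:index}, which gives an analogous decomposition of the Fredholm index of the linearized Cauchy--Riemann operator. Applied to an index-one strip, the formula forces the cokernel of the linearized operator to vanish, so every index-one strip is automatically regular. Consequently the analytic moduli space $\cM^{\mathrm{an}}(x,y;\alpha,\beta)/\R$ is a finite discrete set, and the bijection above identifies its cardinality mod two with the combinatorial count $n(x,y)$; together with Theorems~\ref{thm:lune1} and~\ref{thm:lune2} on the uniqueness and trace characterization of lunes this yields $\p_{\comb}=\p_{\Floer}$.

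The main obstacle will be establishing the index formulas~\eqref{eq:MASLOV} and~\eqref{eq:index}, which require a careful local analysis at each interior branch point, each boundary tangency, and each asymptotic corner, and an exact accounting of how these local models contribute to the Viterbo--Maslov index and to the Fredholm index respectively. A secondary technical point is the well-definedness of the backward map from lunes to strips up to $\R$-translation: one must verify that two holomorphic parametrizations of the same lune differ by a translation of $\S$, which reduces via Appendix~\ref{app:diffeos_of_D} to the connectedness of the group of orientation-preserving diffeomorphisms of $\D$ fixing $\{-1,+1\}$, together with the uniqueness of the conformal structure with two marked boundary points.
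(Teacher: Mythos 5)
Your proposal follows essentially the same strategy as the paper: both reduce the statement to (a) regularity of every holomorphic $(\alpha,\beta)$-strip, proven via the index formula~\eqref{eq:MASLOV} and the linear index formula~\eqref{eq:index}, so that the Floer boundary operator is defined by a finite mod-two count; and (b) a bijection between equivalence classes of smooth lunes and translation classes of index-one holomorphic strips, which is Theorem~\ref{thm:lunestrip}. You correctly identify both pillars, including the use of the asymptotic analysis from Appendix~\ref{app:asymptotic} to handle regularity at the corners, the Riemann mapping/uniformization argument for passing from a lune to a strip, and the need to check that reparametrizations of lunes correspond exactly to translations of $\S$.

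A few small points of imprecision worth flagging. The claim that the asymptotic angle condition ``is precisely~(II) in Definition~\ref{def:clune}'' conflates two things: condition~(II) concerns intersection indices of the boundary arcs, which for a smooth lune follow from $u$ being an orientation-preserving immersion; the constraint $\nu_x(v)=0$, $\nu_y(v)=1$ from the index formula is the asymptotic decay rate of the strip, which is what guarantees (after the reparametrization $\rho_\lambda,\sigma_\lambda$ in Step~4 of Theorem~\ref{thm:lunestrip}) that the disc reparametrization extends $C^1$ up to the corners and is an immersion there. Also, the linear index formula~\eqref{eq:index} by itself does not ``force the cokernel to vanish''; rather, it shows that any nonzero kernel element forces the index to be strictly positive, and one then applies this to the formal adjoint (Lemma~\ref{le:onto}) to conclude surjectivity. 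Finally, your invocation of Appendix~\ref{app:diffeos_of_D} for the backward map is a reasonable alternative to the paper's direct argument in Step~1 of Theorem~\ref{thm:lunestrip}, which instead observes that the composed reparametrization of $\S$ is holomorphic and hence a time shift; either route works. None of these affects the correctness of the overall plan.
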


\begin{proof} 
See Section~\ref{sec:LS} page~\pageref{proof:floer3}.
\end{proof}

\begin{corollary}\label{cor:nolune}
If $\geo{\alpha}{\beta} = \num{\alpha}{\beta}$
there is no smooth $(\alpha,\beta)$-lune.
\end{corollary}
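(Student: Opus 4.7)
The plan is to argue by contradiction using the primitive lune existence result. Suppose a smooth $(\alpha,\beta)$-lune exists. Since $\alpha$ and $\beta$ are (by the standing assumption of Part~III) noncontractible transverse embedded circles, Proposition~\ref{prop:embedded} produces a primitive $(\alpha,\beta)$-lune $u_0\colon\D\to\Sigma$, with endpoints $x_0:=u_0(-1)$ and $y_0:=u_0(+1)$ in $\alpha\cap\beta$. I would then use $u_0$ to perform an isotopy of $\beta$ that removes precisely the two intersection points $x_0$ and $y_0$ while leaving all others unchanged, contradicting the minimality hypothesis $\geo{\alpha}{\beta}=\num{\alpha}{\beta}$.

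First I would carry out the isotopy construction. This is the same push-across-the-lune device already used in Step~1 of Proposition~\ref{prop:S2arc} and in Lemma~\ref{le:move}. Namely, extend $u_0$ to an embedding of a slightly larger set $\D_\eps=\{z\in\C\,|\,\IM z>-\eps,\,|z|<1+\eps\}$ with
\[
u_0(\D_\eps)\cap\beta=u_0(\D_\eps\cap S^1),\qquad u_0(\D_\eps)\cap\alpha=u_0(\D_\eps\cap\R),
\]
choose a cutoff $\rho\colon\D_\eps\to[0,1]$ equal to $1$ on $\D$ and vanishing near $\p\D_\eps$, let $\xi$ be the vector field on $\Sigma$ supported in $u_0(\D_\eps)$ with $u_0^*\xi(z)=-\rho(z)\i$, and let $\psi_t$ be its flow. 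For $T>0$ large, $\beta':=\psi_T(\beta)$ is an embedded loop isotopic to $\beta$ and transverse to $\alpha$. The top arc $u_0(\D\cap S^1)\subset\beta$ is pushed across $u_0(\D\cap\R)\subset\alpha$ and out of $u_0(\D_\eps)$, so the two points $x_0,y_0$ of $\alpha\cap\beta$ disappear, while the rest of $\alpha\cap\beta$ is untouched because $\xi$ is supported in $u_0(\D_\eps)$ and this set meets $\alpha$ only in $u_0(\D\cap\R)$.

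Second, I would conclude. We obtain
\[
\num{\alpha}{\beta'}\le\num{\alpha}{\beta}-2,
\]
and $\beta'$ is admissible for the definition of $\geo{\alpha}{\beta}$, so $\geo{\alpha}{\beta}\le\num{\alpha}{\beta}-2<\num{\alpha}{\beta}$, contradicting the hypothesis. Hence no smooth $(\alpha,\beta)$-lune exists.

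The only subtle point is verifying that the isotopy truly decreases the intersection count by exactly two and does not create new intersections elsewhere. This is handled by the choice of the neighbourhood $u_0(\D_\eps)$: because $u_0$ is an embedded primitive lune, $u_0(\D_\eps)\cap\alpha$ is a single arc on $\alpha$ containing $x_0$ and $y_0$ (and $u_0(\D_\eps)\cap\beta$ is a single arc on $\beta$ with the same endpoints), so after pushing the $\beta$-arc off through $\D_\eps$ one loses exactly the two intersections at the corners and gains none. This local verification is routine and is precisely the content already implicit in the cited earlier arguments, so no new machinery is required.
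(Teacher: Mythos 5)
Your proof is correct and follows essentially the same route as the paper: invoke Proposition~\ref{prop:embedded} to obtain a primitive $(\alpha,\beta)$-lune from the hypothesized smooth lune, then push $\beta$ across that primitive lune to strictly decrease the intersection count, contradicting $\geo{\alpha}{\beta}=\num{\alpha}{\beta}$. The paper states this more tersely; your version merely spells out the isotopy construction (the push-across device from Lemma~\ref{le:move} and Proposition~\ref{prop:S2arc}) that the paper leaves implicit.
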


\begin{proof}
If there exists a smooth $(\alpha,\beta)$-lune
then, by Proposition~\ref{prop:embedded},
there exists a primitive $(\alpha,\beta)$-lune
and hence there exists an embedded curve $\beta'$
that is isotopic to $\beta$ and satisfies
$\num{\alpha}{\beta'}<\num{\alpha}{\beta}$.
This contradicts our hypothesis.  
\end{proof}

\begin{corollary}\label{cor:HF}
\index{Floer homology!dimension}
$\dim\HF(\alpha,\beta) = \geo{\alpha}{\beta}$.
\end{corollary}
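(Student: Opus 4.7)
The plan is to combine the isotopy invariance of combinatorial Floer homology (Theorem~\ref{thm:floer2}) with the absence of lunes at the geometric minimum (Corollary~\ref{cor:nolune}). The key observation is that at a configuration realizing $\geo{\alpha}{\beta}$, the Floer differential vanishes for purely geometric reasons, reducing the computation of Floer homology to counting intersection points.

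First, I would choose an embedded loop $\beta'\subset\Sigma$ that is isotopic to $\beta$, transverse to $\alpha$, and realizes the minimum in the definition of the geometric intersection number:
$$
\num{\alpha}{\beta'} \;=\; \geo{\alpha}{\beta}.
$$
Such a $\beta'$ exists by the very definition of $\geo{\alpha}{\beta}$. Since $\beta$ is a noncontractible embedded circle not isotopic to~$\alpha$, the same holds for $\beta'$, so the pair $(\alpha,\beta')$ satisfies the standing hypothesis of Section~\ref{sec:FLOER} and the Floer complex $\CF(\alpha,\beta')$ is defined.

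Next, I would apply Corollary~\ref{cor:nolune} to the pair $(\alpha,\beta')$: because $\num{\alpha}{\beta'}=\geo{\alpha}{\beta}=\geo{\alpha}{\beta'}$, there is no smooth $(\alpha,\beta')$-lune. Consequently the counting numbers $n(x,y)$ defined in~\eqref{eq:floer} all vanish, so the boundary operator $\p:\CF(\alpha,\beta')\to\CF(\alpha,\beta')$ is identically zero. Therefore
$$
\HF(\alpha,\beta') \;=\; \CF(\alpha,\beta') \;=\; \bigoplus_{x\in\alpha\cap\beta'}\Z_2\,x,
$$
and in particular $\dim\HF(\alpha,\beta')=\num{\alpha}{\beta'}=\geo{\alpha}{\beta}$.

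Finally, Theorem~\ref{thm:floer2} provides an isomorphism $\HF(\alpha,\beta)\cong\HF(\alpha,\beta')$, since $\alpha$ is (trivially) isotopic to itself and $\beta$ is isotopic to $\beta'$. Combining this with the previous dimension computation yields $\dim\HF(\alpha,\beta)=\geo{\alpha}{\beta}$, as desired. There is no real obstacle: the content of the corollary is entirely packaged inside Theorems~\ref{thm:floer1}--\ref{thm:floer2} and Corollary~\ref{cor:nolune}, which in turn rest on the hard work of Parts~I and~II (the trace formula, the characterization of smooth lunes, and the existence of primitive lunes used in Corollary~\ref{cor:nolune}).
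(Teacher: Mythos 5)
Your proposal is correct and follows the paper's own argument essentially verbatim: replace $\beta$ by an isotopic minimizing representative $\beta'$, invoke Corollary~\ref{cor:nolune} to see that the differential vanishes there, and then transport the dimension count back via the isotopy invariance of Theorem~\ref{thm:floer2}. Nothing to add.
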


\begin{proof}
By Theorem~\ref{thm:floer2} we may assume that
$\num{\alpha}{\beta}=\geo{\alpha}{\beta}$.
In this case there is no $(\alpha,\beta)$-lune
by Corollary~\ref{cor:nolune}. 
Hence the Floer boundary operator is zero,
and hence the dimension of
$$
\HF(\alpha,\beta)\cong\CF(\alpha,\beta)
$$
is the geometric intersection number
$\geo{\alpha}{\beta}$.
\end{proof}

\begin{corollary}\label{cor:LUNE}
If $\geo{\alpha}{\beta} < \num{\alpha}{\beta}$
there is a primitive $(\alpha,\beta)$-lune.
\end{corollary}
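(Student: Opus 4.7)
The plan is to deduce this directly from Corollary~\ref{cor:HF} together with Proposition~\ref{prop:embedded}. The key observation is that $\dim\CF(\alpha,\beta) = \num{\alpha}{\beta}$ by the very definition of the chain complex in~\eqref{eq:floer}, while $\dim\HF(\alpha,\beta) = \geo{\alpha}{\beta}$ by Corollary~\ref{cor:HF}. Under the hypothesis $\geo{\alpha}{\beta} < \num{\alpha}{\beta}$ we therefore have $\dim\HF(\alpha,\beta) < \dim\CF(\alpha,\beta)$, which forces the boundary operator $\p:\CF(\alpha,\beta)\to\CF(\alpha,\beta)$ to be nonzero.

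Next, since $\p \ne 0$, there must exist intersection points $x,y \in \alpha\cap\beta$ with $n(x,y) \ne 0$ in $\Z/2\Z$. By the definition of $n(x,y)$ this means there is at least one equivalence class of smooth $(\alpha,\beta)$-lunes from $x$ to $y$; in particular a smooth $(\alpha,\beta)$-lune exists. Finally, invoking Proposition~\ref{prop:embedded}, the existence of any smooth $(\alpha,\beta)$-lune implies the existence of a primitive $(\alpha,\beta)$-lune, which is exactly what we needed.

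There is no real obstacle here: the argument is purely a dimension count for the Floer chain complex combined with the structural result of Proposition~\ref{prop:embedded}. The corollary is essentially the contrapositive of Corollary~\ref{cor:nolune}, upgraded from ``smooth lune'' to ``primitive lune'' via Proposition~\ref{prop:embedded}. One might worry about whether Corollary~\ref{cor:HF} is already available at this point of the text, but inspecting the excerpt confirms that it is stated and proved just before, so the argument is legitimate and very short.
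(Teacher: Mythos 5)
Your proof is correct and follows exactly the same route as the paper: a dimension count via Corollary~\ref{cor:HF} shows the boundary operator is nonzero, hence a smooth lune exists, and Proposition~\ref{prop:embedded} upgrades this to a primitive lune. No issues.
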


\begin{proof}
By Corollary~\ref{cor:HF}, the
Floer homology group has dimension
$$
\dim\,\HF(\alpha,\beta)=\geo{\alpha}{\beta}.
$$
Since the Floer chain complex has dimension
$$
\dim\,\CF(\alpha,\beta)=\num{\alpha}{\beta}
$$ 
it follows that the Floer boundary operator is nonzero.
Hence there exists a smooth $(\alpha,\beta)$-lune
and hence, by Proposition~\ref{prop:embedded},
there exists a primitive $(\alpha,\beta)$-lune.
\end{proof}

\begin{remark}[{\bf Action Filtration}]
\label{rmk:action}\rm \index{action filtration}
Consider the space
$$
    \Om_{\alpha,\beta}
    := \left\{x\in\Cinf([0,1],\Sigma)\,|\,
       x(0)\in\alpha,\,x(1)\in\beta\right\}
$$
of paths connecting $\alpha$ to $\beta$.
Every intersection point $x\in\alpha\cap\beta$ determines
a constant path in $\Om_{\alpha,\beta}$ and hence a component 
of $\Om_{\alpha,\beta}$. In general, $\Om_{\alpha,\beta}$ is not 
connected and different intersection points may determine 
different components (see~\cite{POZNIAK} for 
the case $\Sigma=\T^2$).\phantomsection\label{POZNIAK}
By Proposition~\ref{prop:dbae} in Appendix~\ref{app:path},
each component of $\Om_{\alpha,\beta}$ is simply connected.
Now fix a positive area form $\om$ on $\Sigma$ and define
a $1$-form $\Theta$ on $\Om_{\alpha,\beta}$ by
$$
\Theta(x;\xi) := \int_0^1\om(\dot x(t),\xi(t))\,dt
$$
for $x\in\Om_{\alpha,\beta}$ and
$\xi\in T_x\Om_{\alpha,\beta}$.
This form is closed and hence exact.
Let
$$
\cA:\Om_{\alpha,\beta}\to\R
$$
be a function whose differential is $\Theta$.
Then the critical points of $\cA$ are the zeros of $\Theta$.
These are the constant paths and hence
the intersection points of $\alpha$ and $\beta$.
If $x,y\in\alpha\cap\beta$ belong to the same connected
component of $\Om_{\alpha,\beta}$ then
$$
\cA(x) - \cA(y) = \int u^*\om
$$
where $u:[0,1]\times[0,1]\to\Sigma$ is any smooth function
that satisfies 
$$
u(0,t)=x(t),\qquad u(0,1)=y(t),\qquad
u(s,0)\in\alpha,\qquad  u(s,1)\in\beta
$$
for all $s$ and $t$ (i.e.\ the map $s\mapsto u(s,\cdot)$
is a path in $\Om_{\alpha,\beta}$ connecting
$x$ to $y$). In particular, if $x$ and $y$ are the
endpoints of a smooth lune then $\cA(x)-\cA(y)$
is the area of that lune.  Figure~\ref{fig:large}
shows that there is no upper bound (independent 
of~$\alpha$ and~$\beta$ in fixed isotopy classes) 
on the area of a lune.
\begin{figure}[htp]
\centering 
\includegraphics[scale=0.9]{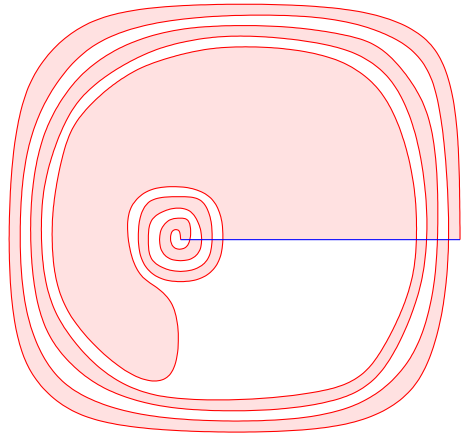} 
\caption{{A lune of large area.}}
\label{fig:large}
\end{figure}
\end{remark}

\begin{proposition} \label{prop:prec}
Define a relation $\prec$ on $\alpha\cap\beta$
by 
$
x\prec y
$
if and only if there is a sequence
$
x=x_0,x_1,\dots,x_n=y
$ 
in $\alpha\cap\beta$ such that, for each $i$,
there is a lune from $x_i$ to $x_{i-1}$
(see Figure~\ref{fig:lune3}).
Then $\prec$ is a strict partial order.
\end{proposition}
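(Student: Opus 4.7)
The plan is to establish the two defining properties of a strict partial order, namely transitivity and irreflexivity; asymmetry then follows automatically since $x\prec y$ and $y\prec x$ would give $x\prec x$. Transitivity is essentially free: given witnessing sequences $x=x_0,\dots,x_n=y$ for $x\prec y$ and $y=y_0,\dots,y_m=z$ for $y\prec z$, the concatenation $x_0,\dots,x_n,y_1,\dots,y_m$ witnesses $x\prec z$. So the entire content of the proposition is irreflexivity: no cycle of lunes can return to its starting point.

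The natural tool for irreflexivity is the action functional $\cA:\Om_{\alpha,\beta}\to\R$ from Remark~\ref{rmk:action}. I will first note that a lune $u:\D\to\Sigma$ from $x_i$ to $x_{i-1}$ places $x_i$ and $x_{i-1}$ in the same connected component of $\Om_{\alpha,\beta}$: the reparametrization $\phi(s,t):=(-\cos(\pi s),t\sin(\pi s))$ of $[0,1]^2$ into $\D$ turns $u\circ\phi$ into a homotopy in $\Om_{\alpha,\beta}$ from the constant path at $x_i$ to the constant path at $x_{i-1}$ (the boundary conditions send $s=0,1$ to the endpoints and $t=0,1$ into $\alpha,\beta$ respectively). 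Consequently $\cA$ is unambiguously defined on all points that appear in any chain witnessing $x\prec y$.

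The key inequality is then an application of the displayed formula in Remark~\ref{rmk:action}:
$$
\cA(x_i) - \cA(x_{i-1}) \;=\; \int_{[0,1]^2}(u\circ\phi)^*\om \;=\; \int_\D u^*\om \;>\; 0,
$$
where strict positivity uses that $u$ is an orientation preserving immersion (Definition~\ref{def:lune}) and $\om$ is a positive area form. Iterating along a witnessing sequence $x=x_0,x_1,\dots,x_n=y$ for $x\prec y$, we obtain $\cA(x_0) > \cA(x_1) > \cdots > \cA(x_n)$, hence $\cA(x) > \cA(y)$. In particular $x\ne y$, which is precisely $x\not\prec x$, i.e.\ irreflexivity.

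There is no real obstacle here; the only things to check carefully are bookkeeping items: that the reparametrization $\phi$ is orientation preserving so the two integrals above agree in sign, and that the endpoint convention ($x=u(-1)$ corresponds to the $s=0$ slice, $y=u(1)$ to the $s=1$ slice) matches the convention of Remark~\ref{rmk:action} for the sign $\cA(x)-\cA(y)$. Once these are confirmed, the argument is essentially a one-line consequence of the remark, and the full statement of Proposition~\ref{prop:prec} follows.
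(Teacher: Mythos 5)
Your proof uses exactly the same tool as the paper's, namely the strict monotonicity of the action functional $\cA$ of Remark~\ref{rmk:action} along a chain of lunes; the paper's own proof is a one-liner saying precisely this, while you additionally spell out the transitivity and connected-component bookkeeping, which the paper leaves implicit. The argument is correct in substance, but there is a sign slip in the final chaining step: you correctly derive $\cA(x_i)-\cA(x_{i-1})>0$, i.e.\ $\cA(x_{i-1})<\cA(x_i)$, but then iterating this gives $\cA(x_0)<\cA(x_1)<\cdots<\cA(x_n)$, so $\cA(x)<\cA(y)$ rather than $\cA(x)>\cA(y)$ as you wrote. The conclusion $\cA(x)\ne\cA(y)$, hence $x\ne y$, is unaffected, so irreflexivity still follows and the proposition is proved.
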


\begin{proof}
Since there is an $(\alpha,\beta)$-lune from $x_i$ to $x_{i-1}$
we have $\cA(x_{i-1})<\cA(x_i)$ for every $i$ and hence,
by induction, $\cA(x_0)<\cA(x_k)$.
\end{proof}
\begin{figure}[htp]
\centering 
\includegraphics[scale=1.4]{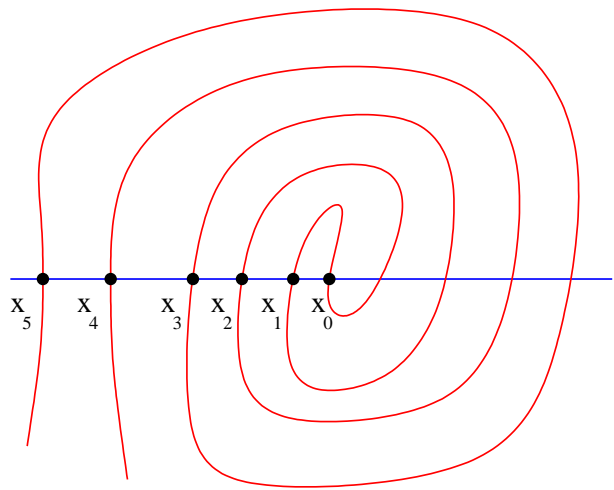}      
\caption{{Lunes from $x_i$ to $x_{i-1}$.}}
\label{fig:lune3}
\end{figure}

\begin{remark}[{\bf Mod Two Grading}]
\label{rmk:Z2grading} \rm \index{mod two grading} 
\index{grading!mod two}  The endpoints of a lune 
have opposite intersection indices.
Thus we may choose a $\Z/2\Z$-grading of $\CF(\alpha,\beta)$
by first choosing orientations of $\alpha$ and $\beta$
and then defining $\CF_0(\alpha,\beta)$ to be generated 
by the intersection points with intersection index $+1$ 
and $\CF_1(\alpha,\beta)$ to be generated 
by the intersection points with intersection index $-1$.
Then the boundary operator interchanges these two subspaces
and we have
$$
\alg{\alpha}{\beta} 
= \dim\HF_0(\alpha,\beta)-\dim\HF_1(\alpha,\beta).
$$
\end{remark}

\begin{remark}[{\bf Integer Grading}]\label{rmk:Zgrading}\rm
Since each component of the path space $\Om_{\alpha,\beta}$ 
is simply connected the $\Z/2\Z$-grading in 
Remark~\ref{rmk:Z2grading} \index{integer!grading} 
\index{grading!integer} can be refined to an integer grading. 
The grading is only well defined up to a global shift 
and the relative grading is given by the Viterbo--Maslov index. 
Then we obtain
$$
\alg{\alpha}{\beta} = \chi(\HF(\alpha,\beta)) 
= \sum_{i\in\Z}(-1)^i\dim\HF_i(\alpha,\beta).
$$
Figure~\ref{fig:lune3} shows that there is no upper or lower bound 
on the relative index in the combinatorial Floer 
chain complex. Figure~\ref{fig:lune5} 
shows that there is no upper bound on 
the dimension of $\CF_i(\alpha,\beta)$. 

In the case of the $2$-torus $\Sigma=\T^2=\R^2/\Z^2$
the shift in the integer grading can be fixed using 
Seidel's notion of a graded Lagrangian 
submanifold~\cite{SEIDEL}.\phantomsection\label{SEIDEL}
Namely, the tangent bundle of $\T^2$ is trivial 
so that each tangent space is equipped 
with a canonical isomorphism to $\R^2$.
Hence every embedded circle $\alpha\subset\T^2$ determines 
a map $\alpha\to\RP^1:z\mapsto T_z\alpha$.
A grading of $\alpha$ is a lift of this map 
to the universal cover of $\RP^1$. 
A choice of gradings for $\alpha$ and~$\beta$
can be used to fix an integer grading of 
the combinatorial Floer homology.
\end{remark}

\begin{figure}[htp]
\centering 
\includegraphics[scale=1.07]{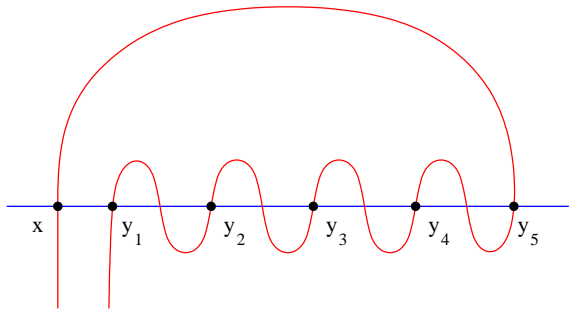}      
\caption{{Lunes from $x$ to $y_i$.}}
\label{fig:lune5}
\end{figure}

\begin{remark}[{\bf Integer Coefficients}]
\label{rmk:Zcoeff}\rm \index{integer!coefficients}
One can define combinatorial Floer homology with integer
coefficients as follows.  Fix an orientation of $\alpha$.
Then each $(\alpha,\beta)$-lune 
$
u:\D\to\Sigma
$
comes with a sign
$$
\nu(u) := \left\{\begin{array}{ll}
+1,&\mbox{if the arc }u|_{\D\cap\R}:\D\cap\R\to\alpha
\mbox{ is orientation preserving},\\
-1,&\mbox{if the arc }u|_{\D\cap\R}:\D\cap\R\to\alpha
\mbox{ is orientation reversing}.
\end{array}\right.
$$
Now define the chain complex by
$$
\CF(\alpha,\beta;\Z) =\bigoplus_{x\in\alpha\cap\beta}\Z x,
$$
and 
$$
\p x := \sum_{y\in\alpha\cap\beta} n(x,y;\Z)y,\qquad
n(x,y;\Z) := \sum_{[u]}\nu(u),
$$
where the sum runs over all equivalence classes $[u]$
of smooth $(\alpha,\beta)$-lunes from $x$ to $y$. 
The results of Section~\ref{sec:HEART} show that 
Theorem~\ref{thm:floer1} remains valid with this refinement, 
and the results of Section~\ref{sec:ISOTOPY} show 
that Theorem~\ref{thm:floer2} also remains valid.  
We will not discuss here any orientation issue for 
the analytic Floer theory and leave it to others 
to investigate the validity of Theorem~\ref{thm:floer3} 
with integer coefficients.
\end{remark}


\section{Hearts}\label{sec:HEART}

\begin{definition}\label{def:b-heart}\rm
Let $x,z\in\alpha\cap\beta$.  
A \hypertarget{broken_hearts}{{\bf broken $(\alpha,\beta)$-heart}}
\index{broken heart}\index{heart!broken} from $x$ to $z$ is a triple
$$
h = (u,y,v)
$$
such that $y\in\alpha\cap\beta$,
$u$ is a smooth $(\alpha,\beta)$-lune from $x$ to $y$,
and $v$ is a smooth $(\alpha,\beta)$-lune from $y$ to $z$.
The point $y$ is called the {\bf midpoint} of the heart.
\index{midpoint of a heart}\index{heart!midpoint of}
By Theorem~\ref{thm:lune2} the broken $(\alpha,\beta)$-heart 
$h$ is uniquely determined by the septuple
$$
\Lambda_h := (x,y,z,u(\D\cap\R),v(\D\cap\R),
u(\D\cap S^1),v(\D\cap S^1)).
$$
Two broken $(\alpha,\beta)$-hearts $h=(u,y,z)$ and $h'=(u',y',z')$
from $x$ to $z$ are called {\bf equivalent} if $y'=y$,  
\index{equivalent!broken hearts}\index{hearts!equivalent}
$u'$ is equivalent to $u$, and $v'$ is equivalent to~$v$.   
The equivalence class of $h$ is denoted by $[h]=([u],y,[v])$.
The set of equivalence classes of broken 
$(\alpha,\beta)$-hearts from $x$ to $z$
will be denoted by $\cH(x,z)$.
\end{definition}

\begin{proposition}\label{prop:heart}
Let $h=(u,y,v)$ be a broken $(\alpha,\beta)$-heart from
$x$ to $z$ and write
$
     \Lambda_h=:(x,y,z,A_{xy},A_{yz},B_{xy},B_{yz}).
$
Then exactly one of the following four alternatives
(see Figure~\ref{fig:hearts}) holds:
\begin{equation*}
\begin{split}
{\bf(a)}\;
A_{xy}\cap A_{yz}=\{y\},\;
B_{yz}\subsetneq B_{xy}.\quad
&
{\bf(b)}\;
A_{xy}\cap A_{yz}=\{y\},\;
B_{xy}\subsetneq B_{yz}.
\\
{\bf(c)}\;
B_{xy}\cap B_{yz}=\{y\},\;
A_{yz}\subsetneq A_{xy}.\quad
&
{\bf(d)}\;
B_{xy}\cap B_{yz}=\{y\},\;
A_{xy}\subsetneq A_{yz}.
\end{split}
\end{equation*}
\end{proposition}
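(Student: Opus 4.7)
The plan is to analyze the configuration of the four boundary arcs at the midpoint $y$ using intersection indices, and then globalize. By Definition~\ref{def:clune}(II), $A_{xy}\cdot B_{xy}=-1$ at the right endpoint $y$ of $u$ and $A_{yz}\cdot B_{yz}=+1$ at the left endpoint $y$ of $v$. Encoding the signs of the tangent vectors at $y$ of these four arcs relative to the orientations of $\alpha$ and $\beta$ by $\sigma^A_{xy},\sigma^A_{yz},\sigma^B_{xy},\sigma^B_{yz}\in\{\pm1\}$ and letting $\epsilon_y$ denote the intersection index of $\alpha$ and $\beta$ at $y$, the two identities give $\sigma^A_{xy}\sigma^B_{xy}=-\epsilon_y$ and $\sigma^A_{yz}\sigma^B_{yz}=+\epsilon_y$, whence
\[
\sigma^A_{xy}\sigma^A_{yz}\;=\;-\,\sigma^B_{xy}\sigma^B_{yz}.
\]
The pair $A_{xy},A_{yz}$ meets only at $y$ in a neighborhood of $y$ precisely when $\sigma^A_{xy}=\sigma^A_{yz}$ (the two arcs emanate from $y$ in opposite directions along $\alpha$), and analogously for $B$. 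Thus exactly one of the two pairs is \emph{transverse} at $y$ while the other \emph{overlaps} (emanates in a common direction from $y$). By the $\alpha\leftrightarrow\beta$ symmetry I treat the case where the $A$'s are transverse and the $B$'s overlap at $y$.

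To globalize the $B$-overlap, note that $B_{xy},B_{yz}$ are embedded arcs on the circle $\beta$ sharing the endpoint $y$ and extending from $y$ in a common direction; parametrizing $\beta$ from $y$ in that direction presents them as intervals $[0,\ell_1]$ and $[0,\ell_2]$ with other endpoints $x$ and $z$ respectively, so one must be contained in the other. Equality would force $x=z$ and $B_{xy}=B_{yz}$; then the top boundary of the catenation $u\#_\lambda v$ traces $B_{xy}$ there-and-back, which is null-homotopic rel endpoints. Via the disc domain of $u\#_\lambda v$, this path is homotopic rel endpoints to the bottom boundary $A_{xy}\cdot A_{yz}$. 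But the latter is a loop at $x=z$ that traverses $\alpha$ exactly once (by the $A$-transversality at $y$), representing $[\alpha]\in\pi_1(\Sigma,x)$, forcing $[\alpha]=0$ and contradicting the noncontractibility of $\alpha$. Hence $B_{yz}\subsetneq B_{xy}$ or $B_{xy}\subsetneq B_{yz}$, and $x\ne z$.

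The main obstacle is to show $A_{xy}\cap A_{yz}=\{y\}$ globally, not merely locally. Since the $A$'s emanate from $y$ in opposite directions along $\alpha$ and are embedded arcs on the circle $\alpha$, any further intersection point would force $A_{xy}\cup A_{yz}$ to cover all of $\alpha$, leaving a second overlap arc $D$ disjoint from $y$ and with endpoints in $\{x,z\}$. I plan to rule out this configuration by using $D$ to construct a distinct $(\alpha,\beta)$-lune with the same boundary data as $u$ or $v$: cutting off the portion of one lune whose image covers $D$ and re-attaching via the overlapping arc produces an alternate lune with the same $(\alpha,\beta)$-trace, contradicting the uniqueness Proposition~\ref{prop:luni}, which relies essentially on the nonisotopy hypothesis. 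Once $A_{xy}\cap A_{yz}=\{y\}$ is established, (a) or (b) follows according to the direction of the $B$-containment; the symmetric case (with $\alpha$ and $\beta$ interchanged) yields (c) or (d). Mutual exclusivity of the four alternatives is immediate from their defining conditions.
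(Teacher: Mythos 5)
Your approach is genuinely different from the paper's, and the local-index dichotomy at $y$ is a nice observation, but there is a real gap in the globalization of the transversality.

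The paper proceeds by the Viterbo--Maslov index: the catenation $\Lambda_{xy}\#\Lambda_{yz}$ has index $2$, and since $m_x(\Lambda_{xy})=m_z(\Lambda_{yz})=1$ the trace formula gives $m_x(\Lambda_{yz})+m_z(\Lambda_{xy})=2$. Combined with $x\ne z$ (Proposition~\ref{prop:prec}) and nonnegativity of the winding numbers, this forces $m_x(\Lambda_{yz})\ge 2$ whenever $x\in A_{yz}\cup B_{yz}$ and $\ge 4$ whenever $x\in A_{yz}\cap B_{yz}$, and symmetrically for $z$; the four alternatives then drop out immediately. Your alternative route via the local intersection indices at $y$ is correct as far as it goes: the identity $\sigma^A_{xy}\sigma^A_{yz}=-\sigma^B_{xy}\sigma^B_{yz}$ does show that exactly one pair of boundary arcs overlaps near $y$, your globalization of the overlapping pair to a containment is sound, and the $x\ne z$ argument via contractibility of $\alpha$ is valid.

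The gap is in ruling out the ``second overlap arc'' $D$ when the $A$-arcs are transverse at $y$. Note that $A_{xy}\cap A_{yz}\ne\{y\}$ is equivalent (given local transversality at $y$) to the simultaneous conditions $x\in A_{yz}$ \emph{and} $z\in A_{xy}$, and nothing established so far excludes this alongside $z\in B_{xy}$. Your proposed remedy --- ``cutting off the portion of one lune whose image covers $D$ and re-attaching via the overlapping arc produces an alternate lune with the same $(\alpha,\beta)$-trace, contradicting the uniqueness Proposition~\ref{prop:luni}'' --- does not work as stated: producing another lune with the same trace is precisely what Proposition~\ref{prop:luni} and Theorem~\ref{thm:lune2} say should happen (the two lunes would be equivalent), so there is no contradiction unless you can show the new lune is inequivalent, and the cut-and-paste construction is not described precisely enough to assess this. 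The obstruction that actually kills the $D$ configuration is the numerical constraint $m_x(\Lambda_{yz})+m_z(\Lambda_{xy})=2$: if both $x\in A_{yz}$ and $z\in A_{xy}$ then each term is $\ge 2$ and the sum is $\ge 4$. Your argument has no substitute for this, so either import the trace-formula constraint at this point, or replace the cut-and-paste sketch with an actual construction together with a proof of inequivalence.
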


\begin{figure}[htp]
\centering 
\includegraphics[scale=0.7]{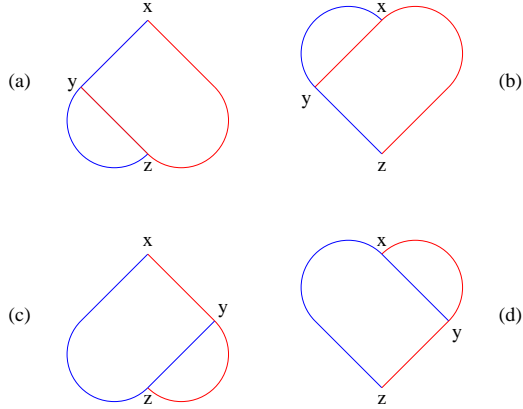}  
\caption{{Four broken hearts.}}\label{fig:hearts}
\end{figure}

\begin{proof}
The combinatorial $(\alpha,\beta)$-lunes
$\Lambda_{xy}:=\Lambda_u$ and $\Lambda_{yz}:=\Lambda_v$
have boundaries
$$
\p\Lambda_{xy} = (x,y,A_{xy},B_{xy}),\qquad
\p\Lambda_{yz} = (y,z,A_{yz},B_{yz})
$$
and their catenation 
$
\Lambda_{xz} := \Lambda_{xy}\#\Lambda_{yz}
= (x,z,\w_{xy}+\w_{yz})
$
has Viterbo--Maslov index two, by~\eqref{eq:VM2}.
Hence $m_x(\Lambda_{xy})+m_x(\Lambda_{yz})
+ m_z(\Lambda_{xy})+m_z(\Lambda_{yz}) = 4$,
by the trace formula~\eqref{eq:VM1}. Since 
$m_x(\Lambda_{xy})=m_z(\Lambda_{yz})=1$ 
this implies
\begin{equation}\label{eq:xyz}
m_x(\Lambda_{yz})+m_z(\Lambda_{xy})=2.
\end{equation}
By Proposition~\ref{prop:prec} we have $x\ne z$. 
Hence $x$ cannot be an endpoint of~$\Lambda_{yz}$.
Thus $m_x(\Lambda_{yz})\ge 2$ whenever $x\in A_{yz}\cup B_{yz}$
and $m_x(\Lambda_{yz})\ge 4$ whenever $x\in A_{yz}\cap B_{yz}$.
The same holds for $m_z(\Lambda_{xy})$. 
Hence it follows from~\eqref{eq:xyz} that
$\Lambda_{xy}$ and $\Lambda_{yz}$ satisfy
precisely one of the following conditions.
\begin{equation*}
\begin{split}
\mbox{\bf (a)}\;
x\notin A_{yz}\cup B_{yz},\;
z\in B_{xy}\setminus A_{xy}.\quad &
\mbox{\bf (b)}\;
x\in B_{yz}\setminus A_{yz},\;
z\notin A_{xy}\cup B_{xy}.\\
\mbox{\bf (c)}\;
x\notin A_{yz}\cup B_{yz},\;
z\in A_{xy}\setminus B_{xy}.\quad&
\mbox{\bf (d)}\;
x\in A_{yz}\setminus B_{yz},\;
z\notin A_{xy}\cup B_{xy}.
\end{split}
\end{equation*}
This proves Proposition~\ref{prop:heart}.
\end{proof}

Let $N\subset\C$ be an embedded convex half disc such that
$$
      [0,1]\cup \i[0,\eps)\cup (1+\i[0,\eps))\subset\p N,\qquad
      N\subset [0,1]+\i[0,1]
$$
for some $\eps>0$ and define
$$
      H := ([0,1]+\i[0,1])\cup(\i+N)\cup(1+\i-\i N).
$$
(See Figure~\ref{fig:s-heart}.)
The boundary of $H$ decomposes as
$$
      \p H = \p_0H\cup\p_1H
$$
where $\p_0H$ denotes the boundary arc from $0$ to $1+\i$
that contains the horizontal interval $[0,1]$
and $\p_1H$ denotes the arc from $0$ to $1+\i$
that contains the vertical interval $\i[0,1]$.
\begin{figure}[htp]
\centering 
\includegraphics[scale=0.7]{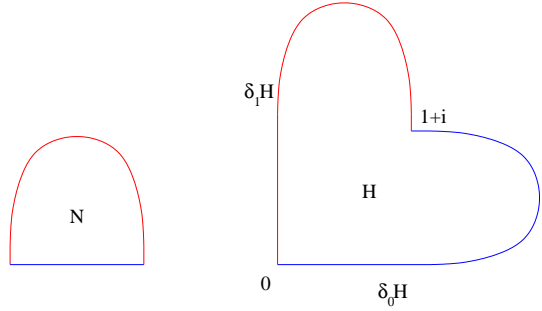}      
\caption{{The domains $N$ and $H$.}}
\label{fig:s-heart}
\end{figure}

\begin{definition}\label{def:s-heart}\rm
Let $x,z\in\alpha\cap\beta$.\index{heart!smooth}
A {\bf smooth $(\alpha,\beta)$-heart of type~(ac) from $x$ to $z$} 
\index{smooth!heart}\index{hearts!of type (ac) and (bd)}
is an orientation preserving immersion 
$
w:H\to\Sigma
$ 
that satisfies
$$
     w(0) = x,\qquad w(1+\i)=z,\qquad
     w(\p_0H)\subsetneq\alpha,\qquad
     w(\p_1H)\subsetneq\beta.
     \eqno(ac)
$$
Two smooth $(\alpha,\beta)$-hearts $w,w':H\to\Sigma$
are called {\bf equivalent} iff there exists 
an orientation \index{equivalent!smooth hearts}
\index{hearts!equivalent} preserving diffeomorphism 
$\chi:H\to H$ such that
$$
     \chi(0)=0,\qquad
     \chi(1+\i)=1+\i,\qquad
     w'=w\circ\chi.
$$
A {\bf smooth $(\alpha,\beta)$-heart of type~(bd) from $x$ to $z$}
is a smooth $(\beta,\alpha)$-heart of type~(ac) from $z$ to $x$.
Let $w$ be a smooth $(\alpha,\beta)$-heart
of type~(ac) from $x$ to $y$ and $h=(u,y,v)$ be a
broken $(\alpha,\beta)$-heart from
$x$ to $y$ of type~(a) or~(c).
The broken heart $h$ is called {\bf compatible} 
\index{compatible hearts}\index{hearts!compatible}
with the smooth heart $w$ if there exist
orientation preserving embeddings
$\phi:\D\to H$ and $\psi:\D\to H$ such that
\begin{equation}\label{eq:1-1}
      \phi(-1)=0,\qquad \psi(1)=1+\i,
\end{equation}
\begin{equation}\label{eq:phipsi}
      H = \phi(\D)\cup\psi(\D),\qquad
      \phi(\D)\cap\psi(\D)=\phi(\p\D)\cap\psi(\p\D),
\end{equation}
\begin{equation}\label{eq:uvw}
     u = w\circ\phi,\qquad
     v = w\circ\psi.
\end{equation}
\end{definition}

\begin{lemma}\label{le:heart}
Let $h=(u,y,v)$ be a broken $(\alpha,\beta)$-heart of type~(a),
write 
$$
\Lambda_h=:(x,y,z,A_{xy},A_{yz},B_{xy},B_{yz}),
$$
and define $A_{xz}$ and $B_{xz}$ by
$$
     A_{xz} := A_{xy}\cup A_{yz},\qquad
     B_{xy} =: B_{xz}\cup B_{yz},\qquad
     B_{xz}\cap B_{yz} = \{z\}.
$$
Let $w$ be a smooth $(\alpha,\beta)$-heart of type~(ac)
from $x$ to $z$ that is compatible with $h$ and
let $\phi,\psi:\D\to H$ be embeddings
that satisfy~(\ref{eq:1-1}), (\ref{eq:phipsi}),
and~(\ref{eq:uvw}). Then
\begin{equation}\label{eq:phi-z}
     \phi(e^{\i\theta_1}) = 1+\i,
\end{equation}
where $\theta_1\in[0,\pi]$ is defined by $u(e^{\i\theta_1})=z$,
and
\begin{equation}\label{eq:phicappsi}
     \phi(\D)\cap\psi(\D) = \psi(\D\cap S^1),
\end{equation}
\begin{equation}\label{eq:xz}
     w(\p_0H) = A_{xz},\qquad w(\p_1H)=B_{xz}.
\end{equation}
\end{lemma}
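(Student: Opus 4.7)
The plan is to determine the combinatorial structure of the sub-disc decomposition $H = \phi(\D) \cup \psi(\D)$ dictated by the type~(a) hypothesis on $\Lambda_h$. By~\eqref{eq:phipsi} the two sub-discs have disjoint interiors and meet only along their boundaries, so the intersection
\[
C := \phi(\D) \cap \psi(\D) = \phi(\p\D) \cap \psi(\p\D)
\]
is a single embedded arc whose endpoints lie on $\p H$.

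First I would locate $w(C)$ in $\Sigma$. From the compatibility relations $u = w\circ\phi$ and $v = w\circ\psi$, together with $u(\p\D) = A_{xy}\cup B_{xy}$ and $v(\p\D) = A_{yz}\cup B_{yz}$, one has $w(C) \subset (A_{xy}\cup B_{xy}) \cap (A_{yz}\cup B_{yz})$. In type~(a) we have $A_{xy}\cap A_{yz} = \{y\}$ and $B_{xy}\cap B_{yz} = B_{yz}$, while $A_{xy}\cap B_{yz}$ and $B_{xy}\cap A_{yz}$ are contained in the discrete set $\alpha\cap\beta$. Since $w$ is an immersion and $C$ is a nontrivial arc, $w(C)$ is connected and not a point, so the only component of this intersection it can lie in is $B_{yz}$; thus $w(C) \subset B_{yz}$.

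Next I would identify $C$ inside $H$ using the boundary parametrizations. Because $u|_{\D\cap S^1}$ and $v|_{\D\cap S^1}$ are proper local diffeomorphisms onto the arcs $B_{xy}$ and $B_{yz}$ respectively, they are in fact homeomorphisms, and so $\phi^{-1}(C) \subset \D\cap S^1$ and $\psi^{-1}(C) \subset \D\cap S^1$ are the sub-arcs carried onto $w(C)$ by $u$ and $v$. The two endpoints of $C$ must match under both parametrizations; using $w(\phi(\pm 1)) = \{x, y\}$ and $w(\psi(\pm 1)) = \{y, z\}$, and the fact that $w(C) \subset B_{yz}$ excludes $x$, the only consistent pairing is $\phi(1) = \psi(-1)$ (both preimages of $y$) and $\phi(e^{\i\theta_1}) = \psi(1) = 1+\i$ (both preimages of $z$). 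This proves~\eqref{eq:phi-z} and forces $\psi^{-1}(C) = \D\cap S^1$ in full, so $C = \psi(\D\cap S^1)$, which is~\eqref{eq:phicappsi}.

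For the final step, I would observe that $\p H = (\phi(\p\D) \setminus C) \cup (\psi(\p\D) \setminus C)$ and, from $C = \psi(\D\cap S^1)$, read off $\psi(\p\D)\setminus C = \psi(\D\cap\R)$ and $\phi(\p\D)\setminus C = \phi(\D\cap\R) \cup (\phi(\D\cap S^1)\setminus C)$. The two $\alpha$-arcs $\phi(\D\cap\R)$ and $\psi(\D\cap\R)$ join at $\phi(1) = \psi(-1)$ to form $\p_0 H$, with image $A_{xy}\cup A_{yz} = A_{xz}$, while $\phi(\D\cap S^1)\setminus C$ runs from $0$ to $1+\i$ in $\p H$ mapping into $\beta$ and hence is $\p_1 H$, with image $B_{xz}$; this yields~\eqref{eq:xz}. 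The main obstacle is the second step, specifically ruling out the alternative $w(C)\subset\alpha$: the type~(a) hypothesis $A_{xy}\cap A_{yz} = \{y\}$ forces any connected $\alpha$-image to be the single point $y$, contradicting $C$ being a nontrivial arc; the same rigidity also underpins the identification $\phi(1) = \psi(-1)$ of the two preimages of the midpoint $y$ in $H$.
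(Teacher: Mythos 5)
Your plan inverts the paper's order: the paper first establishes the equations~\eqref{eq:xz}, then deduces~\eqref{eq:phi-z} by observing that $\phi(S_1)$ and $\p_1H$ are two arcs in $w^{-1}(\beta)$ sharing the point $0$ and having the same $w$-image $B_{xz}$, hence coincide because $w$ is an immersion; \eqref{eq:phicappsi} then follows by the same arc-matching device applied to $\phi(S_0)$ and $\psi(\D\cap S^1)$, plus a short case analysis for the reverse inclusion. You instead start from the intersection set $C=\phi(\D)\cap\psi(\D)$ and try to determine its combinatorics directly. The structure is sensible, and the identification $w(C)\subset B_{yz}$ from the type~(a) hypothesis and connectedness is essentially correct (modulo the remark that $w(C)$ could a priori contain isolated points of $\alpha\cap\beta$, which you do address).

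The genuine gap is in the very first sentence and in the ``endpoint pairing'' step. You assert, without justification, that $C$ is a single embedded arc whose endpoints lie on $\p H$. This is actually true (a Mayer--Vietoris argument gives connectedness and trivial $H_1$, and a cut-point argument rules out $C$ degenerating to a point), but it is not obvious and needs a proof; the paper avoids needing it entirely. More seriously, even granting that $C$ is an arc, you then claim its two endpoints ``must match under both parametrizations'' and that the only consistent pairing sends the endpoints to $y$ and $z$. But at that stage you only know $\phi^{-1}(C)$ is a sub-arc of $S_0$ and $\psi^{-1}(C)$ is a sub-arc of $\D\cap S^1$; nothing yet forces those sub-arcs to reach the corners $\pm 1$ or $e^{\i\theta_1}$, so the endpoints of $C$ could in principle have $w$-image strictly inside $B_{yz}$. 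You use $w(\phi(\pm1))\in\{x,y\}$ and $w(\psi(\pm1))\in\{y,z\}$ as if the endpoints of $C$ were already known to be among $\phi(\pm1),\psi(\pm1)$, but that is precisely what has to be proved. The missing ingredient is the arc-matching principle the paper uses: two embedded arcs in the source of an immersion that share a point and have the same embedded image must coincide. Applied to $\phi(S_1)$ and $\p_1H$ (both contain $0$, both map onto $B_{xz}$) this directly yields $\phi(e^{\i\theta_1})=1+\i$ and closes the gap; once~\eqref{eq:phi-z} is in hand, your derivation of~\eqref{eq:phicappsi} and~\eqref{eq:xz} goes through.
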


\begin{proof}
By definition of a smooth heart of type~(ac),
$w(\p_0H)$ is arc in $\alpha$ connecting
$x$ to $z$ and $w(\p_1H)$ is an arc in
$\beta$ connecting $x$ to $z$.
Moreover, by~(\ref{eq:uvw}),
$$
     \phi(\D\cap\R)\cup\psi(\D\cap\R)
     \subset w^{-1}(\alpha),\qquad
     \phi(\D\cap S^1)\cup\psi(\D\cap S^1)
     \subset w^{-1}(\beta).
$$
Now $w^{-1}(\alpha)$
is a union of disjoint embedded arcs,
and so is $w^{-1}(\beta)$.
One of the arcs in $w^{-1}(\alpha)$ contains $\p_0H$
and one of the arcs in $w^{-1}(\beta)$ contains $\p_1H$.
Since $\phi(-1)=0\in\p_0H$ and the arc
$$
w\circ\phi(\D\cap\R)=u(\D\cap\R)=A_{xy}
$$
does not contain $z$ we have
$$
     \phi(\D\cap\R)\subset \p_0H,\qquad
     A_{xy}\subset w(\p_0H).
$$
This implies the first equation in~(\ref{eq:xz}).
Since $w\circ\phi(\D\cap S^1)=u(\D\cap S^1)=B_{xy}$
is an arc containing $z$ we have
$$
     \p_1H\subset\phi(\D\cap S^1),\qquad
     w(\p_1H)\subset B_{xy}.
$$
This implies the second equation in~(\ref{eq:xz}).

We prove~(\ref{eq:phicappsi}). Choose $\theta_1\in[0,\pi]$
so that $u(e^{\i\theta_1})=z$ and denote
$$
     S_0 := \left\{e^{\i\theta}\,|\,
     0\le\theta\le\theta_1\right\},\qquad
     S_1 := \left\{e^{\i\theta}\,|\,
     \theta_1\le\theta\le\pi\right\},
$$
So that
$$
     \D\cap S^1 = S_0\cup S_1,\qquad
     u(S_0) = B_{yz},\qquad
     u(S_1) = B_{xz}.
$$
Hence $w\circ\phi(S_1)=u(S_1)=B_{xz}=w(\p_1H)$ and
$0\in\phi(S_1)\cap\p_1H$.  Since $w$ is an immersion
it follows that
$$
     \phi(S_1) = \p_1H,\qquad
     \phi(e^{\i\theta_1}) = 1+\i = \psi(1).
$$
This proves~(\ref{eq:phi-z}).
Moreover, by~(\ref{eq:uvw}),
$$
     w\circ\phi(S_0)
     = u(S_0)
     = B_{yz}
     = v(\D\cap S^1)
     = w\circ\psi(\D\cap S^1)
$$
and $1+\i$ is an endpoint of both
arcs $\phi(S_0)$ and $\psi(\D\cap S^1)$.
Since $w$ is an immersion it follows that
$$
     \psi(\D\cap S^1)
     = \phi(S_0)
     \subset\phi(\D)\cap\psi(\D).
$$
To prove the converse inclusion,
let $\zeta\in\phi(\D)\cap\psi(\D)$.
Then, by definition of a smooth heart,
$\zeta\in\phi(\p\D)\cap\psi(\p\D)$.
If $\zeta\in\phi(\D\cap\R)\cap\psi(\D\cap\R)$
then $w(\zeta)\in A_{xy}\cap A_{yz}=\{y\}$
and hence $\zeta=\psi(-1)\in\psi(\D\cap S^1)$.
Now suppose $\zeta=\phi(e^{\i\theta})\in\psi(\D\cap\R)$
for some $\theta\in[0,\pi]$.
Then we claim that $\theta\le\theta_1$.
To see this, consider the curve $\psi(\D\cap\R)$.
By~(\ref{eq:uvw}), this curve is mapped
to $A_{yz}$ under $w$ and it contains
the point $\psi(1)=1+\i$.  Hence
$\psi(\D\cap\R)\subset\p_0H\setminus\{0\}$.
But if $\theta>\theta_1$ then
$\phi(e^{\i\theta})\in\p_1H\setminus\{1+\i\}$
and this set does not intersect $\p_0H\setminus\{0\}$.
Thus we have proved that $\theta\le\theta_1$
and hence
$\phi(e^{\i\theta})\in\phi(S_0)=\psi(\D\cap S^1)$,
as claimed. This proves Lemma~\ref{le:heart}.
\end{proof}

\begin{proposition}\label{prop:heartbreaker}
{\bf (i)}
Let $h=(u,y,v)$ be a broken $(\alpha,\beta)$-heart 
of type~(a) or~(c) from $x$ to $z$. Then there exists 
a smooth $(\alpha,\beta)$-heart $w$ of type~(ac) from $x$ to $z$, 
unique up to equivalence, that is compatible with $h$.

\smallskip\noindent{\bf (ii)}
Let $w$ be a smooth $(\alpha,\beta)$-heart
of type~(ac) from $x$ to $z$.
Then there exists precisely one equivalence class of 
broken $(\alpha,\beta)$-hearts of type~(a) from~$x$ to~$z$
that are compatible with $w$, and precisely one equivalence class
of broken $(\alpha,\beta)$-hearts of type~(c) from $x$ to~$z$
that are compatible with $w$.
\end{proposition}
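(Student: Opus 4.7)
The plan is to realize (i) and (ii) as mutually inverse constructions: gluing two lunes along a shared boundary arc for (i), and cutting a smooth heart along an interior $\beta$-arc emanating from the corner $p_1=1+\i$ for (ii). I treat type~(a) throughout; type~(c) is handled by the dual construction (interchanging $\alpha$ and $\beta$, equivalently $w^{-1}(\beta)$ and $w^{-1}(\alpha)$).

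For (i), let $h=(u,y,v)$ be a broken heart of type~(a). The inclusion $B_{yz}\subsetneq B_{xy}$ picks out a unique sub-arc $S_0\subset\D\cap S^1$ in the domain of $u$ with $u(S_0)=B_{yz}$, and $v|_{\D\cap S^1}^{-1}\circ u|_{S_0}$ is a canonical diffeomorphism from $S_0$ onto $\D\cap S^1$ (the corresponding arc in the domain of $v$). I form the quotient of $\D\sqcup\D$ by this identification and define $w$ piecewise by $u$ and $v$. Since $u$ and $v$ extend to immersions on neighborhoods of $\D$ and their images lie on opposite sides of $B_{yz}$, the resulting $w$ is smooth and orientation preserving across the identified arc. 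The quotient has two corners, $x=u(-1)$ (interior angle $\pi/2$) and $z=u(e^{\i\theta_1})=v(+1)$ (combined angle $\pi+\pi/2=3\pi/2$), while the midpoint $y=u(+1)=v(-1)$ becomes a smooth boundary point (combined angle $\pi/2+\pi/2=\pi$); these corner data match $H$ exactly, so the quotient is diffeomorphic to $H$. For uniqueness, given two compatible hearts $w,w'$ with pairs $(\phi,\psi)$ and $(\phi',\psi')$, the pieces $\phi'\circ\phi^{-1}$ on $\phi(\D)$ and $\psi'\circ\psi^{-1}$ on $\psi(\D)$ agree on the common arc $\phi(S_0)=\psi(\D\cap S^1)$, because $w'$ restricts there to a diffeomorphism onto $B_{yz}$ and both maps correspond to the unique parametrization of $B_{yz}$; they glue to an orientation preserving $\chi:H\to H$ fixing the corners with $w'=w\circ\chi$, and Appendix~\ref{app:diffeos_of_D} produces the equivalence $w\sim w'$.

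For (ii), a direct computation from $H=([0,1]+\i[0,1])\cup(\i+N)\cup(1+\i-\i N)$ shows that the interior angle of $H$ at $p_1$ is $3\pi/2$ (with missing upper-right quadrant) while the interior angle at $p_0=0$ is only $\pi/2$. Consequently the tangent line to $w^{-1}(\beta)$ at $p_1$, which coincides with the tangent line of $\p_1H$ since $w(\p_1H)\subset\beta$, has both of its directions pointing into $H$: one continues along $\p_1H$, the other spawns an interior smooth arc $\gamma_\beta\subset H$. Because $\beta$ is embedded and $w$ is a local diffeomorphism, $w^{-1}(\beta)\cap H$ is a properly embedded $1$-submanifold without branch points, so $\gamma_\beta$ cannot meet $\p_1H$ at an interior point (that would require two coincident branches of $w^{-1}(\beta)$); it cannot close into a loop (which would make $\beta$ bound a disc in $\Sigma$ via $w$, contradicting Lemma~\ref{le:dbae2}); and it cannot terminate at $p_0$, where the $\pi/2$ angle admits only $\p_1H$ as a branch of $w^{-1}(\beta)$. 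Hence $\gamma_\beta$ terminates at a unique point $p\in\INT(\p_0H)$ with $y:=w(p)\in\alpha\cap\beta$. Cutting $H$ along $\gamma_\beta$ yields two sub-discs to which $w$ restricts as smooth lunes $u,v$, giving a broken heart $(u,y,v)$ of type~(a) compatible with $w$; the type-(c) construction replaces $\gamma_\beta$ with $\gamma_\alpha$, the analogous continuation of $\p_0H$ through $p_1$ in $w^{-1}(\alpha)$. Uniqueness in both cases follows from the uniqueness of these interior arcs together with Appendix~\ref{app:diffeos_of_D}. The main technical obstacle is the local analysis at $p_1$: in (i) one must verify smoothness of the gluing at the reflex corner $z$, and in (ii) the $3\pi/2$ angle is exactly what permits both branches of $w^{-1}(\beta)$ to enter $H$, while the $\pi/2$ angle at $p_0$ prevents $\gamma_\beta$ from reaching that corner.
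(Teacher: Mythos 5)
Your construction matches the paper's: gluing the lune domains along the shared arc in~(i), and cutting $H$ along the interior branch of $w^{-1}(\beta)$ (resp.\ $w^{-1}(\alpha)$) issuing from $1+\i$ in~(ii). The one substantive gap is the smoothness of the gluing in~(i). The claim that ``since $u$ and $v$ extend to immersions on neighborhoods of $\D$ and their images lie on opposite sides of $B_{yz}$, the resulting $w$ is smooth'' is not a proof: the abstract quotient $\D\sqcup\D/{\sim}$ has no canonical smooth structure along the identified arc, and two immersions parametrizing complementary sides of $B_{yz}$ need not have matching normal jets there (for a model, try to glue $(x,y)\mapsto(x,y)$ on $\{y\le0\}$ with $(x,y)\mapsto(x,y+y^2)$ on $\{y\ge0\}$). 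Moreover the difficulty runs along the whole identified arc, not only at the reflex corner at~$z$. The paper resolves it by constructing the embeddings $\phi,\psi:\D\to H$ explicitly so that, in geodesic normal coordinates along $B_{yz}$, both $u\circ\phi^{-1}$ and $v\circ\psi^{-1}$ coincide with $\exp_{\gamma(t)}(s\zeta(t))$; the freedom in choosing $\phi,\psi$ is precisely what makes the glued map smooth. Your invocations of Appendix~\ref{app:diffeos_of_D} for uniqueness are unnecessary: once $\chi:H\to H$ with $w'=w\circ\chi$ is produced in~(i), or the cutting arc (hence $y$) is pinned down in~(ii), the definition of equivalent hearts, respectively Proposition~\ref{prop:luni}, finishes the argument.
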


\begin{proof}
We prove~(i). Write
$$
\Lambda_h=:(x,y,z,A_{xy},A_{yz},B_{xy},B_{yz})
$$
and assume first that
$\Lambda_h$ satisfies~(a).
We prove the existence of $[w]$.
Choose a Riemannian metric on $\Sigma$ such that
the direct sum decompositions
$$
    T_y\Sigma = T_y\alpha\oplus T_y\beta,\qquad
    T_z\Sigma = T_z\alpha\oplus T_z\beta
$$
are orthogonal and $\alpha$ intersects small neighborhoods
of $y$ and $z$ in geodesic arcs.  Choose a diffeomorphism
$\gamma:[0,1]\to B_{yz}$ such that $\gamma(0)=y$ and $\gamma(1)=z$
and let $\zeta(t)\in T_{\gamma(t)}\Sigma$ be a unit normal
vector field pointing to the right. Then
there are orientation preserving embeddings
$\phi,\psi:\D\to H$ such that
$$
     \phi(\D) = ([0,1]+\i[0,1])\cup(\i+N),\qquad
     \phi(\D\cap\R) = [0,1],
$$
$$
     \psi(\D) = 1+\i-\i N,\qquad
     \psi(\D\cap S^1) = 1+\i[0,1],
$$
and
$$
     u\circ\phi^{-1}(1+s+\i t)=\exp_{\gamma(t)}(s\zeta(t))
$$
for $0\le t\le 1$ and small $s\le 0$, and
$$
     v\circ\psi^{-1}(1+s+\i t)=\exp_{\gamma(t)}(s\zeta(t))
$$
for $0\le t\le 1$ and small $s\ge 0$.
The function $w:H\to\Sigma$, defined by
$$
     w(z) := \left\{
     \begin{array}{rl}
     u\circ\phi^{-1}(z),&\mbox{if }z\in\phi(\D),\\
     v\circ\psi^{-1}(z),&\mbox{if }z\in\psi(\D),
     \end{array}\right.
$$
is a smooth $(\alpha,\beta)$-heart of type~(ac) from $x$ to $z$
that is compatible with~$h$.

We prove the uniqueness of $[w]$.
Suppose that $w':H\to\Sigma$ is another
smooth $(\alpha,\beta)$-heart of type~(ac)
that is compatible with $h$.
Let $\phi':\D\to H$ and $\psi':\D\to H$
be embeddings that satisfy~(\ref{eq:1-1}) and~(\ref{eq:phipsi})
and suppose that $w'$ is given by~(\ref{eq:uvw})
with $\phi$ and $\psi$ replaced by $\phi'$
and $\psi'$. Then
\begin{eqnarray*}
     w'\circ\phi'\circ\phi^{-1}(1+\i t)
&= &
     u\circ\phi^{-1}(1+\i t) \\
&= &
     v\circ\psi^{-1}(1+\i t) \\
&= &
     w'\circ\psi'\circ\psi^{-1}(1+\i t)
\end{eqnarray*}
for $0\le t\le 1$ and, by~(\ref{eq:1-1}) and~(\ref{eq:phi-z}),
$$
     \phi'\circ\phi^{-1}(1+\i)
     = 1+\i 
     = \psi'\circ\psi^{-1}(1+\i).
$$
Since $w'$ is an immersion it follows that
\begin{equation}\label{eq:phiphi'}
     \phi'\circ\phi^{-1}(1+\i t)
     = \psi'\circ\psi^{-1}(1+\i t)
\end{equation}
for $0\le t\le 1$.
Consider the map $\chi:H\to H$ given by
$$
     \chi(\zeta)
     := \left\{\begin{array}{rl}
        \phi'\circ\phi^{-1}(\zeta),&\mbox{for }\zeta\in\phi(\D),\\
        \psi'\circ\psi^{-1}(\zeta),&\mbox{for }\zeta\in\psi(\D).
        \end{array}\right.
$$
By~(\ref{eq:phiphi'}), this map is well defined.
Since $H=\phi'(\D)\cup\psi'(\D)$, the map $\chi$ is surjective.
We prove that $\chi$ is injective.
Let $\zeta,\zeta'\in H$ such that $\chi(\zeta)=\chi(\zeta')$.
If $\zeta,\zeta'\in\phi(\D)$ or $\zeta,\zeta'\in\psi(\D)$
then it is obvious that $\zeta=\zeta'$.  Hence assume
$\zeta\in\phi(\D)$ and $\zeta'\in\psi(\D)$.
Then
$
     \phi'\circ\phi^{-1}(\zeta)
     = \psi'\circ\psi^{-1}(\zeta')
$
and hence, by~(\ref{eq:phicappsi}),
$$
     \psi'\circ\psi^{-1}(\zeta')\in\psi'(\D\cap S^1).
$$
Hence $\zeta'\in\psi(\D\cap S^1)\subset\phi(\D)$,
so $\zeta$ and $\zeta'$ are both contained
in $\phi(\D)$, and it follows that $\zeta=\zeta'$.
Thus we have proved that $\chi:H\to H$ is
a homeomorphism. Since $w'=w\circ\chi$
it follows that $\chi$ is a diffeomorphism.
This proves~(i) in the case~(a).
The case~(c) follows by
reversing the orientation of $\Sigma$
and replacing $u$, $v$, $w$ by
$$
     u'=u\circ\rho,\qquad
     v'=v\circ\rho,\qquad
     w'(\zeta)=w(\i\bar\zeta),\qquad
     \rho(\zeta) = \frac{\i+\bar\zeta}{1+\i\bar\zeta}.
$$
Thus $\rho:\D\to\D$ is an orientation reversing
diffeomorphism with fixed points $\pm1$
that interchanges $\D\cap\R$ and $\D\cap S^1$.
The map $H\to H:\zeta\mapsto\i\bar\zeta$
is an orientation reversing diffeomorphism
with fixed points $0$ and $1+\i$
that interchanges $\p_0H$ and $\p_1H$.
This proves~(i).

We prove~(ii). Let $w:H\to\Sigma$ be a smooth
$(\alpha,\beta)$-heart of type~(ac) and denote
$$
     A_{xz} := w(\p_0H),\qquad
     B_{xz} := w(\p_1H).
$$
Let $\gamma\subset w^{-1}(\beta)$ be the
arc that starts at $1+\i$ and points into the
interior of $H$.  Let $\eta\in\p H$ denote the
second endpoint of $\gamma$.
Since $\beta$ has no self-intersections
we have
$
y := w(\eta) \in A_{xz}.
$
The arc $\gamma$ divides $H$ into two components,
each of which is diffeomorphic to~$\D$.
(See Figure~\ref{fig:b-heart}.)
\begin{figure}[htp]
\centering 
\includegraphics[scale=1.0]{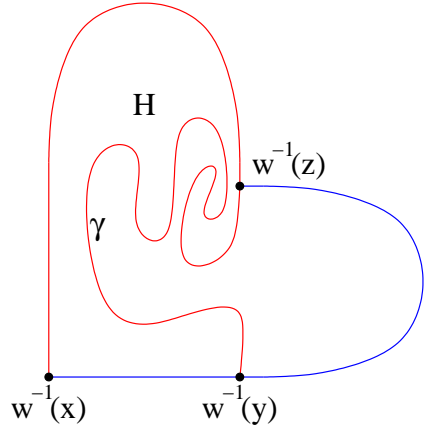}      
\caption{{Breaking a heart.}}
\label{fig:b-heart}
\end{figure}
The component which contains~$0$ gives rise to a smooth
$(\alpha,\beta)$-lune $u$ from $x$ to $y$
and the other component gives rise to a smooth
$(\alpha,\beta)$-lune $v$ from $y$ to $z$.
Let 
$$
\p\Lambda_u=:(x,y,A_{xy},B_{xy}),\qquad
\p\Lambda_v=:(y,z,A_{yz},B_{yz}). 
$$
Then
$$
B_{xy} = B_{xz}\cup B_{yz},\qquad
B_{yz} = w(\gamma).
$$
By Theorem~\ref{thm:arc}, $B_{xy}$ is an arc.
Hence $B_{yz}\subsetneq B_{xy}$,
and hence, by Proposition~\ref{prop:heart},
the broken $(\alpha,\beta)$-heart
$h=(u,y,v)$ from $x$ to $z$ satisfies~(a).
It is obviously compatible with $w$.
A similar argument,
using the arc $\gamma'\subset w^{-1}(\alpha)$
that starts at $1+\i$ and points into the
interior of $H$, proves the existence of a
broken $(\alpha,\beta)$-heart $h'\in\cH(x,z)$
that satisfies~(c) and is compatible with~$w$.
If $\th=(\tu,\ty,\tv)$ is any other broken $(\alpha,\beta)$-heart
of type~(a) that is compatible with~$w$,
then it follows from uniqueness in part~(i) that
$w^{-1}(\ty)=\eta$ is the endpoint of~$\gamma$,
hence $\ty=y$, and hence, by Proposition~\ref{prop:luni}, 
$\th$ is equivalent to~$h$.
This proves~(ii) and Proposition~\ref{prop:heartbreaker}.
\end{proof}

\begin{proof}[Proof of Theorem~\ref{thm:floer1}] 
\phantomsection\label{proof:floer1}
The square of the boundary operator is given by
$$
\p\p x = \sum_{z\in\alpha\cap\beta}n_H(x,z)z,
$$
where
$$
n_H(x,z):= \sum_{y\in\alpha\cap\beta}n(x,y)n(y,z)= \#\cH(x,z).
$$
By Proposition~\ref{prop:heartbreaker},
and the analogous result for smooth $(\alpha,\beta)$-hearts
of type~(bd), there is an involution
$
\tau:\cH(x,z)\to\cH(x,z)
$
without fixed points.
Hence $n_H(x,z)$ is even for all $x$ and $z$
and hence $\p\circ\p=0$.  This proves Theorem~\ref{thm:floer1}.
\end{proof}


\section{Invariance under Isotopy}\label{sec:ISOTOPY}

\begin{proposition}\label{prop:nolune}
Let $x,y,x',y'\in\alpha\cap\beta$ be distinct intersection
points such that
$$
    n(x,y)=n(x',y)=n(x,y')=1.
$$
Let $u:\D\to\Sigma$ be a smooth $(\alpha,\beta)$-lune from $x$ to~$y$
and assume that the boundary $\p\Lambda_u=:(x,y,A,B)$ 
of its $(\alpha,\beta)$-trace satisfies
$$
    A\cap\beta
    = \alpha\cap B
    = \{x,y\}.
$$
Then there is no smooth $(\alpha,\beta)$-lune
from $x'$ to $y'$, i.e.\ 
$$
    n(x',y')=0.
$$
Moreover, extending the arc from $x$ to $y$
(in either $\alpha$ or $\beta$) beyond $y$,
we encounter $x'$ before $y'$
(see Figures~\ref{fig:floer} and~\ref{fig:lune4})
and the two arcs $A'\subset\alpha$ and $B'\subset\beta$
from $x'$ to $y'$ that pass through $x$ and $y$ form
an $(\alpha,\beta)$-trace that satisfies the arc condition.
\end{proposition}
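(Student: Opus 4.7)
My plan is to proceed in three stages: pin down the geometric configuration of $x,y,x',y'$, construct the trace $\Lambda'$, and then rule out an $(x',y')$-lune.

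First I would apply Theorem~\ref{thm:arc} to extract the boundary arcs $(\tilde A_v,\tilde B_v)=\partial\Lambda_v$ of the $(x',y)$-lune $v$ and $(\tilde A_w,\tilde B_w)=\partial\Lambda_w$ of the $(x,y')$-lune $w$.  The crucial local analysis is at $y$: since both $u$ and $v$ have intersection index $-1$ at $y$ (condition~(II)), the incoming tangent pairs of $A,B$ and $\tilde A_v,\tilde B_v$ at $y$ are either both parallel or both antiparallel.  I would argue, using primitivity of $u$, that only the antiparallel case can occur.  In the parallel case, the primitivity conditions $\operatorname{int}(A)\cap\beta=\operatorname{int}(B)\cap\alpha=\emptyset$ together with $x'\in\beta$ force $\tilde A_v\supset A$ and $\tilde B_v\supset B$; so $\Lambda_v$ catenates as $\Lambda_v=\Lambda_{vu}\#\Lambda_u$ with $\mu(\Lambda_{vu})=0$ (by Remark~\ref{rmk:mascat}) and with $\w_{vu}=\w_v-\w_u\ge 0$ on $u$'s image.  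Catenating further with $\Lambda_w$ then produces a Maslov-one, non-negative trace from $x'$ to $y'$ that by Theorem~\ref{thm:lune1}~(ii)$\Rightarrow$(iii) (and Corollary~\ref{cor:lune} in the non-simply-connected setting) would realize as a smooth $(x',y')$-lune, yielding a contradiction down the line.  Hence the antiparallel case at $y$ must hold, placing $x'$ on the extensions of both $\alpha$ and $\beta$ past $y$.  The symmetric analysis at $x$ applied to $w$ gives $\tilde A_w\supset A$ and $\tilde B_w\supset B$, placing $y'$ also past $y$; connectedness of $\tilde A_w$ as an arc through $y$ to $y'$ then forces it to pass through $x'$, establishing the ``$x'$ before $y'$'' ordering beyond $y$ along $\alpha$ (and, by the analogous argument, along $\beta$).

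With the configuration fixed, I would define $A'\subset\alpha$ and $B'\subset\beta$ as the (unique) arcs from $x'$ to $y'$ that traverse both $y$ and $x$.  Their homotopy rel endpoints in $\Sigma$ follows by chaining the homotopy $\tilde A_v\simeq\tilde B_v$ supplied by $v$, the homotopy $A\simeq B$ supplied by $u$, and the homotopy of the extensions past $y$ supplied by $w$.  This exhibits $(x',y',A',B')$ as the boundary of an $(\alpha,\beta)$-trace $\Lambda'$ satisfying the arc condition in the sense of Definition~\ref{def:arc-CF}.

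Finally, for non-existence of a smooth $(x',y')$-lune: if such a $u'$ existed, Theorem~\ref{thm:arc} combined with Proposition~\ref{prop:luni} would force $\Lambda_{u'}=\Lambda'$, and then Theorem~\ref{thm:lune1}~(i)$\Rightarrow$(iii) would require $\Lambda'$ to satisfy condition~(III) of Definition~\ref{def:clune} at $x'$.  But $\w'$ near $x'$ can be computed from the implicit decomposition (with $\w_u$ vanishing near $x'$): since $\tilde A_w$ passes through $x'$ with $\w_w=1$ on its adjacent quadrant and $\w_v=1$ on the same quadrant at $x'$, we obtain $\w'(z)\ge 2$ for points $z$ in that quadrant near $x'$, violating condition~(III).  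Hence no such $u'$ exists and $n(x',y')=0$.  The main obstacle is the first step --- ruling out the parallel-tangent case at $y$.  Specifically, one must verify that the catenated trace $\Lambda_{vu}\#\Lambda_w$ really is a combinatorial lune (so that it yields a smooth lune contradicting the conclusion) and extract the contradiction cleanly, which likely requires the non-isotopy hypothesis on $\alpha$ and $\beta$ together with a careful application of the trace formula of Theorem~\ref{thm:maslov}.
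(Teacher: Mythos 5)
Your proof has two genuine gaps, both traceable to an incorrect geometric picture.

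\emph{First, the parallel/antiparallel dichotomy is circular.}  You try to rule out the ``parallel'' case at $y$ by showing that it would produce a Maslov-one, nonnegative $(\alpha,\beta)$-trace from $x'$ to $y'$ and hence (by Theorem~\ref{thm:lune1}) a smooth $(x',y')$-lune, ``yielding a contradiction down the line.''  But the claim $n(x',y')=0$ is exactly what the proposition is trying to prove; you cannot assume it to rule out a case.  So nothing rules out the parallel case within your argument, and the subsequent steps do not hold unconditionally.

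\emph{Second, the placement of the arcs is wrong, and this corrupts the final winding-number estimate.}  You assert $\tilde A_w\supset A$ and that $\tilde A_w$ passes through $y$ and $x'$ on its way to $y'$.  In fact the opposite is true: the arcs of the $(x,y')$-lune contain neither $y$ nor $x'$, and the arcs of the $(x',y)$-lune contain neither $x$ nor $y'$.  The paper establishes this by catenating traces and applying the trace formula~\eqref{eq:VM1}: since $\Lambda_{yx}\#\Lambda_{xy'}$ has Viterbo--Maslov index zero and $\w_{yx}$ vanishes near every intersection point other than $x,y$ (this is where the primitivity hypothesis $A\cap\beta=\alpha\cap B=\{x,y\}$ enters), one computes $m_y(\Lambda_{xy'})=0$; nonnegativity of $\w_{xy'}$ then forces $\w_{xy'}\equiv0$ near $y$, which is incompatible with $y\in A_{xy'}\cup B_{xy'}$.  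The same method gives $m_x(\Lambda_{x'y})=m_{y'}(\Lambda_{x'y})=m_{x'}(\Lambda_{xy'})=0$.  Because you take the opposite picture (the long arc through $y$ and $x'$), your claimed estimate $\w'\ge 2$ near $x'$ has no basis; in fact $m_{x'}(\Lambda_{xy'})=0$ means $\w_{xy'}$ vanishes near $x'$.  The correct obstruction lives at $x$, not $x'$: after forming $\Lambda_{x'y'}:=\Lambda_{x'y}\#\Lambda_{yx}\#\Lambda_{xy'}$, the identity $m_x(\Lambda_{x'y'})=0$ together with $x\in A_{x'y'}\cap B_{x'y'}$ forces $\w_{x'y'}$ to take a negative value in some quadrant at~$x$, so condition~(I) (not condition~(III)) fails, and then Proposition~\ref{prop:luni} closes the argument.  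Your structural plan --- pin down the configuration, build the candidate trace, then obstruct it --- is sound, but the crucial arc-avoidance facts should be derived from the trace formula rather than from a local tangent analysis at $y$, which in this situation gives the wrong answer.
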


\begin{figure}[htp]
\centering 
\includegraphics[scale=0.7]{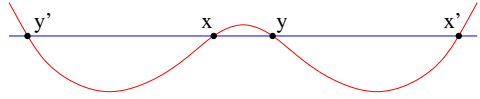}      
\caption{{No lune from $x'$ to $y'$.}}\label{fig:floer}
\end{figure}
\begin{figure}[htp]
\centering 
\includegraphics[scale=0.7]{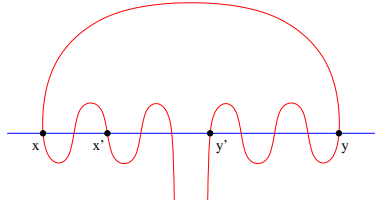}      
\caption{{Lunes from $x$ or $x'$ to $y$ or $y'$.}}
\label{fig:lune4}
\end{figure}

\begin{proof}
The proof has three steps.

\medskip\noindent{\bf Step~1.}
{\it There exist $(\alpha,\beta)$-traces
$\Lambda_{x'y}=(x',y,\w_{x'y})$,
$\Lambda_{yx}=(y,x,\w_{yx})$,
and $\Lambda_{xy'}=(x,y',\w_{xy'})$
with Viterbo--Maslov indices
$$
\mu(\Lambda_{x'y})=1,\qquad
\mu(\Lambda_{yx})=-1,\qquad
\mu(\Lambda_{xy'})=1
$$
such that}
$$
\w_{x'y}\ge 0,\qquad \w_{yx}\le 0,\qquad \w_{x,y'}\ge 0.
$$

\bigbreak

\medskip\noindent
By hypothesis, there exist smooth 
$(\alpha,\beta)$-lunes from $x'$ to $y$, 
from $x$ to $y$, and from $x$ to $y'$. 
By Theorem~\ref{thm:lune1} this implies the 
existence of combinatorial $(\alpha,\beta)$-lunes
$\Lambda_{x'y}=(x',y,\w_{x'y})$, 
$\Lambda_{xy}=(x,y,\w_{xy})$, and
$\Lambda_{xy'}=(x,y',\w_{xy'})$. 
To prove Step~1, reverse the direction 
of $\Lambda_{xy}$ to obtain the required
$(\alpha,\beta)$-trace $\Lambda_{yx}=(y,x,\w_{yx})$
with $\w_{yx}:=-\w_{xy}$.

\medskip\noindent{\bf Step~2.}
{\it Let $\Lambda_{x'y}$, $\Lambda_{yx}$, $\Lambda_{xy'}$ 
be as in Step~1. Then}
\begin{equation}\label{eq:m0}
\begin{split}
m_{x'}(\Lambda_{yx})
= m_{y'}(\Lambda_{yx})
&= 0, \\
m_x(\Lambda_{x'y})
= m_y(\Lambda_{xy'})
&= 0, \\
m_{y'}(\Lambda_{x'y})
= m_{x'}(\Lambda_{xy'})
&= 0.
\end{split}
\end{equation}

\medskip\noindent
By hypothesis, the combinatorial $(\alpha,\beta)$-lune
$\Lambda_{xy}=\Lambda_u$ has the boundary 
$\p\Lambda_{xy}=(x,y,A,B)$ with 
$A\cap\beta=\alpha\cap B=\{x,y\}$.
Hence $\w_{yx}=-\w_{xy}$ vanishes near every 
intersection point of $\alpha$ and $\beta$ 
other than $x$ and $y$.  This proves the first
equation in~\eqref{eq:m0}.
By~\eqref{eq:VM2}, the $(\alpha,\beta)$-trace
$\Lambda_{x'x}:=\Lambda_{x'y}\#\Lambda_{yx}$
has Viterbo--Maslov index index zero.
Hence, by the trace formula~\eqref{eq:VM1},
\begin{eqnarray*}
0 
&=& 
m_{x'}(\Lambda_{x'x})+m_x(\Lambda_{x'x}) \\
&=& 
m_{x'}(\Lambda_{x'y})+m_{x'}(\Lambda_{yx})
+ m_x(\Lambda_{x'y})+m_x(\Lambda_{yx}) \\
&=& 
m_x(\Lambda_{x'y}).
\end{eqnarray*}
Here the last equation follows from the fact that 
$m_{x'}(\Lambda_{yx})=0$, $m_{x'}(\Lambda_{x'y})=1$,
and $m_x(\Lambda_{yx})=-1$. The equation $m_y(\Lambda_{xy'})=0$
is proved by an analogous argument, using the fact that
$\Lambda_{yy'}:=\Lambda_{yx}\#\Lambda_{xy'}$ has
Viterbo--Maslov index zero.  
This proves the second equation in~\eqref{eq:m0}.
To prove the last equation in~\eqref{eq:m0}
we observe that the catenation
\begin{equation}\label{eq:x'y'}
\Lambda_{x'y'}:=\Lambda_{x'y}\#\Lambda_{yx}\#\Lambda_{xy'}
= \left(x',y',\w_{x'y}+\w_{yx}+\w_{xy'}\right)
\end{equation} 
has Viterbo--Maslov index one. 
Hence, by the trace formula~\eqref{eq:VM1}, 
\begin{eqnarray*}
2 
&=& 
m_{x'}(\Lambda_{x'y'})+m_{y'}(\Lambda_{x'y'}) \\
&=&
m_{x'}(\Lambda_{x'y}) + m_{x'}(\Lambda_{yx}) + m_{x'}(\Lambda_{xy'}) \\
&&
+\, m_{y'}(\Lambda_{x'y}) + m_{y'}(\Lambda_{yx}) + m_{y'}(\Lambda_{xy'}) \\
&=&
2 + m_{x'}(\Lambda_{xy'}) + m_{y'}(\Lambda_{x'y}).
\end{eqnarray*}
Here the last equation follows from the first equation 
in~\eqref{eq:m0} and the fact that 
$m_{x'}(\Lambda_{x'y})=m_{y'}(\Lambda_{xy'})=1$.
Since the numbers $m_{x'}(\Lambda_{xy'})$ 
and $m_{y'}(\Lambda_{x'y})$ are nonnegative, 
this proves the last equation in~\eqref{eq:m0}.
This proves Step~2. 

\medskip\noindent{\bf Step~3.}
{\it We prove the Proposition.}

\medskip\noindent
Let $\Lambda_{x'y}$, $\Lambda_{yx}$, $\Lambda_{xy'}$ 
be as in Step~1 and denote
\begin{equation*}
\begin{split}
\p\Lambda_{x'y} &=:(x',y,A_{x'y},B_{x'y}),\\
\p\Lambda_{yx} &=:(y,x,A_{yx},B_{yx}),\\
\p\Lambda_{xy'} &=:(x,y',A_{xy'},B_{xy'}).
\end{split}
\end{equation*}
By Step~2 we have
\begin{equation}\label{eq:notin}
x,y'\notin A_{x'y}\cup B_{x'y},\qquad
x',y'\notin A_{yx}\cup B_{yx},\qquad
x',y\notin A_{xy'}\cup B_{xy'}.
\end{equation}
In particular, the arc in $\alpha$ or $\beta$ from $y$ to $x'$ 
contains neither $x$ nor $y'$.  Hence it is the 
extension of the arc from $x$ to $y$ and we 
encounter $x'$ before $y'$ as claimed.
It follows also from~\eqref{eq:notin} that 
the catenation $\Lambda_{x'y'}$ in~\eqref{eq:x'y'} 
satisfies the arc condition and has boundary 
arcs 
$$
A_{x'y'}:=A_{x'y}\cup A_{yx}\cup A_{xy'},\qquad
B_{x'y'}:=B_{x'y}\cup B_{yx}\cup B_{xy'}.
$$
Thus $x\in A_{x'y'}$ and it follows from Step~2 that
$$
m_x(\Lambda_{x'y'})
= m_x(\Lambda_{x'y})+m_x(\Lambda_{yx})
+ m_x(\Lambda_{xy'})=0.
$$
This shows that the function $\w_{x'y'}$ is not 
everywhere nonnegative, and hence $\Lambda_{x'y'}$
is not a combinatorial $(\alpha,\beta)$-lune.
By Proposition~\ref{prop:luni} there is no other
$(\alpha,\beta)$-trace with endpoints $x',y'$ that
satisfies the arc condition.  Hence $n(x',y')=0$. 
This proves Proposition~\ref{prop:nolune}
\end{proof}

\begin{proof}[Proof of Theorem~\ref{thm:floer2}] 
\phantomsection\label{proof:floer2}
By composing with a suitable ambient isotopy
assume without loss of generality that $\alpha=\alpha'$.
Furthermore assume the isotopy $\{\beta_t\}_{0\le t\le1}$
with $\beta_0=\beta$ and $\beta_1=\beta'$
is generic in the following sense.
There exists a finite sequence of pairs
$(t_i,z_i)\in[0,1]\times\Sigma$ such that
$$
    0 < t_1 < t_2 < \cdots < t_m < 1,
$$
$\alpha\pitchfork_z\beta_t$ unless
$(t,z)=(t_i,z_i)$ for some $i$,
and for each $i$ there exists a coordinate
chart $U_i\to\R^2:z\mapsto(\xi,\eta)$ at $z_i$ such that
\begin{equation}\label{eq:remove}
    \alpha\cap U_i = \left\{\eta=0\right\},\qquad
    \beta_t\cap U_i = \left\{\eta=-\xi^2\pm(t-t_i)\right\}
\end{equation}
for $t$ near $t_i$. It is enough to consider two cases.

Case~1 is $m=0$.  In this case there exists
an ambient isotopy $\phi_t$ such that
$
    \phi_t(\alpha)=\alpha
$
and $\phi_t(\beta)=\beta_t$.  It follows that the
map $\CF(\alpha,\beta)\to\CF(\alpha,\beta')$
induced by $\phi_1:\alpha\cap\beta\to\alpha\cap\beta'$
is a chain isomorphism that identifies the boundary maps.

In Case~2 we have $m=1$,
the isotopy is supported near $U_1$,
and~\eqref{eq:remove} holds with the minus sign.
Thus there are two intersection points in $U_1$ for $t<t_1$, 
no intersection points in $U_1$ for $t>t_1$,
and all other intersection points of $\alpha$ 
and $\beta_t$ are independent of $t$.
Denote by $x,y\in\alpha\cap\beta$ the intersection 
points that cancel at time $t=t_1$ and choose the ordering
such that 
\begin{equation}\label{eq:one}
n(x,y)=1.
\end{equation}
Then $\alpha\cap\beta'=(\alpha\cap\beta)\setminus\{x,y\}$.
We prove in seven steps that
\begin{equation}\label{eq:2floer}
    n'(x',y') = n(x',y') + n(x',y)n(x,y')
\end{equation}
for $x',y'\in\alpha\cap\beta'$,
where $n(x',y')$ denotes the number
of $(\alpha,\beta)$-lunes from $x'$ to $y'$
and $n'(x',y')$ denotes the number
of $(\alpha,\beta')$-lunes from $x'$ to $y'$.

\medskip
\noindent{\bf Step~1.}
{\it
If there is no $(\alpha,\beta)$-trace from
$x'$ to $y'$ that satisfies the arc condition 
then~(\ref{eq:2floer}) holds.
}

\medskip
\noindent
In this case there is no $(\alpha,\beta_t)$-trace
from $x'$ to $y'$ that satisfies the arc condition for any~$t$.  
Hence it follows from Theorem~\ref{thm:arc} that 
$
{n(x',y')=n'(x',y')=0}
$ 
and it follows from Proposition~\ref{prop:nolune} that $n(x',y)n(x,y')=0$.  
Hence~(\ref{eq:2floer}) holds in this case.

\medskip\noindent{\bf Standing Assumptions, Part~1.}
{\it From now on we assume that there is an $(\alpha,\beta)$-trace 
from $x'$ to $y'$ that satisfies the arc condition.
Then there is an $(\alpha,\beta_t)$-trace from $x'$ to $y'$ 
that satisfies the arc condition for every $t$.
By Proposition~\ref{prop:luni}, this $(\alpha,\beta_t)$-trace
is uniquely determined by $x'$ and~$y'$.  We denote it by 
$\Lambda_{x'y'}(t)$ and its boundary by
$$
\p\Lambda_{x'y'}(t) =: (x',y',A_{x'y'},B_{x'y'}(t)).
$$
Choose a universal covering $\pi:\C\to\Sigma$
so that $\talpha=\R$ is a lift of $\alpha$, the map 
$\tz\mapsto\tz+1$ is a covering transformation, and 
$\pi$ maps the interval $[0,1)$ bijectively onto $\alpha$.
Let 
$$
\tLa_{x'y'}(t)=(\tx',\ty',\w_{\tLa_{x'y'}(t)})
$$ 
be a continuous family of lifts 
of the $(\alpha,\beta_t)$-traces to the universal 
cover $\pi:\C\to\Sigma$ with boundaries
$\p\tLa_{x'y'}(t) = (\tx',\ty',\tA_{x'y'},\tB_{x'y'}(t))$.}

\bigbreak

\medskip\noindent{\bf Step~2.}
{\it
The lift $\tLa_{x'y'}(t)$ satisfies condition~(II) (that the intersection
index of $\tA_{x'y'}$ and $\tB_{x'y'}(t)$ at $\tx'$ is $+1$ and at $\ty'$ is $-1$) 
and condition~(III) (that the winding numbers of $\tA_{x'y'}-\tB_{x'y'}(t)$
are zero or one near $\tx'$ and $\ty'$) either for all values of $t$ 
or for no value of $t$.
}

\medskip\noindent
The intersection indices of $\tA_{x'y'}$
and $\tB_{x'y'}(t)$ at $\tx'$ and $\ty'$,
and the winding numbers of $\tA_{x'y'}-\tB_{x'y'}(t)$
near  $\tx'$ and $\ty'$, are obviously independent of $t$.

\medskip
\noindent{\bf Step~3.}
{\it
If one of the arcs $A_{x'y'}(t)$ and $B_{x'y'}(t)$
does not pass through $U_1$ then~\eqref{eq:2floer}
holds.
}

\medskip
\noindent
In this case the winding numbers do not change sign as $t$
varies and hence, by Step~2, $\nu(\Lambda_{x'y'}(t))$
is independent of~$t$.  Hence 
$$
n(x',y')=n'(x',y').
$$
To prove equation~\eqref{eq:2floer} in this case, we must
show that one of the numbers $n(x',y)$ or $n(x,y')$ vanishes.
Suppose otherwise that 
$$
n(x',y)=n(x,y')=1.
$$
Since $n(x,y)=1$, by equation~\eqref{eq:one},
it follows from Proposition~\ref{prop:nolune} that the two arcs 
from $x'$ to $y'$ that pass through $U_1$ form an $(\alpha,\beta)$-trace
that satisfies the arc condition. Hence there are two
$(\alpha,\beta)$-traces from $x'$ to $y'$ that satisfy the 
arc condition, which is impossible by Proposition~\ref{prop:luni}.
This contradiction proves Step~3.

\begin{figure}[htp]
\centering 
\includegraphics[scale=0.9]{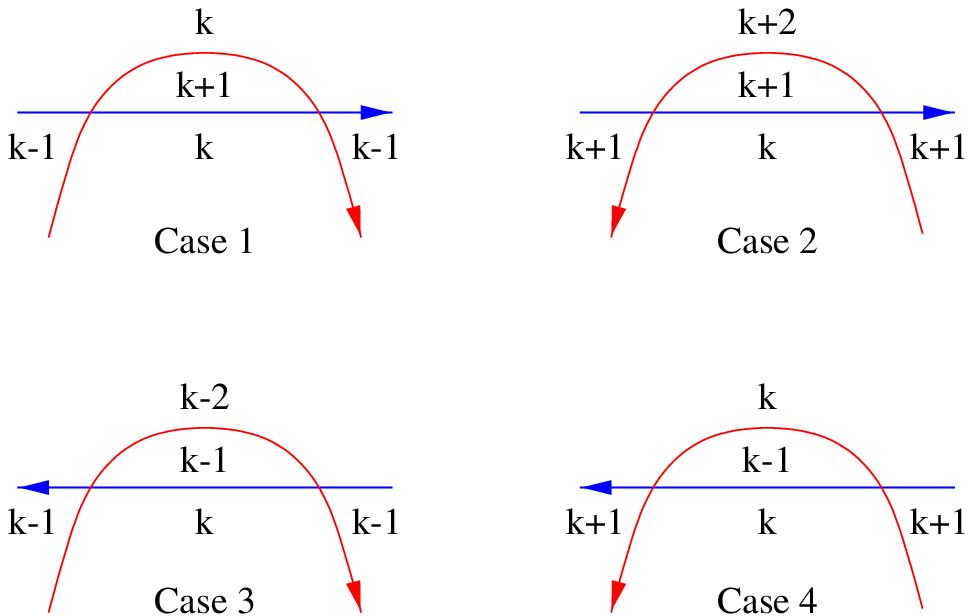}      
\caption{{The winding numbers of
$\tLa_{x'y'}$ in $\tU_1$ for $t=0$.}}
\label{fig:disappear}
\end{figure}

\medskip\noindent{\bf Standing Assumptions, Part~2.}
{\it From now on we assume that $A_{x'y'}$ and $B_{x'y'}(t)$
both pass through $U_1$.  Denote by $\tx$ and $\ty$ the 
unique lifts of $x$ and $y$, respectively, in $\tA_{x'y'}\cap\tB_{x'y'}(0)$.}

\medskip\noindent
Under this assumption the winding number of
$\tLa_{x'y'}(t)$ only changes in the area enclosed 
by the two arcs in the lift~$\tU_1$ of $U_1$ which contains
$\tx$ and $\ty$.  There are four cases, depending 
on the orientations of the two arcs from
$x'$ to $y'$. (See Figure~\ref{fig:disappear}.)
The next step deals with three of these cases.

\medskip
\noindent{\bf Step~4.}
{\it Assume that the orientation of $\alpha$ from $x'$ to $y'$
does not agree with one of the orientations from $x'$ to $y$
or from $x$ to $y'$, or else that this holds for $\beta$
(i.e.\ that one of the Cases~1,2,3 holds in 
Figure~\ref{fig:disappear}). Then~(\ref{eq:2floer}) holds.}

\medskip
\noindent
In Cases~1,2,3 the pattern of winding numbers shows
(for any value of $k$) that $\w_{\tLa_{x'y'}(t)}$
is either nonnegative for all values of $t$ or
is somewhere negative for all values of $t$.
Hence, by Step~2, $\nu(\Lambda_{x'y'}(t))$
is independent of $t$, and hence $n(x',y')=n'(x',y')$.
Moreover, by Proposition~\ref{prop:nolune}, we have
that in these cases $n(x,y')n(x',y)=0$.
Hence~(\ref{eq:2floer}) holds in the Cases~1,2,3.

\medskip\noindent{\bf Standing Assumptions, Part~3.}
{\it From now on we assume that Case~4 holds
in Figure~\ref{fig:disappear},
i.e.\ that the orientations of $\alpha$ and $\beta$
from $x'$ to $y'$ agree with the orientations from $x'$ to~$y$
and with the orientations from $x$ to $y'$.}

\medskip
\noindent{\bf Step~5.}
{\it
Assume Case~4 and $n(x',y)=n(x,y')=1$.
Then~(\ref{eq:2floer}) holds.
}

\medskip
\noindent
By Proposition~\ref{prop:nolune}, we have $n(x',y')=0$.
We must prove that $n'(x',y')=1$.
Let $\Lambda_{x'y}$ and $\Lambda_{xy'}$
be the $(\alpha,\beta)$-traces from $x'$ to $y$, 
respectively from $x$ to~$y'$,
that satisfy the arc condition and denote 
their lifts by $\tLa_{x'y}$ and $\tLa_{xy'}$.
By~\eqref{eq:x'y'}, 
\begin{equation}\label{eq:sum}
\w_{\tLa_{x'y'}(0)}(\tz)
= \w_{\tLa_{x'y}}(\tz) + \w_{\tLa_{xy'}}(\tz)
\qquad\mbox{for}\quad\tz\in\C\setminus\tU_1.
\end{equation}
Thus $\w_{\tLa_{x'y'}(0)}\ge 0$ in $\C\setminus\tU_1$.
Moreover, by Theorem~\ref{thm:lune1}, the lifts $\tLa_{x'y},\tLa_{xy'}$ 
have winding numbers zero in the regions labelled by $k$ and $k-1$
in Figure~\ref{fig:disappear}, Case~4.
Hence, by~\eqref{eq:sum}, we have $k=0$ and hence
$$
\w_{\tLa_{x'y'}(t)}\ge 0\qquad
\mbox{for}\qquad t>t_1.
$$
Thus we have proved that $\tLa_{x'y'}(t)$
satisfies~(I) for $t>t_1$.  Moreover, the Viterbo--Maslov 
index is given by
$$
     \mu(\tLa_{x'y'}(t))
     = \mu(\tLa_{x'y})
     + \mu(\tLa_{xy'})
     - \mu(\tLa_{xy})
     = 1.
$$
Hence, by Theorem~\ref{thm:lune1}, $\tLa_{x'y'}(t)$
is a combinatorial lune for $t>t_1$ and we have 
$n'(x',y')=1$.

\bigbreak

\medskip\noindent{\bf Step~6.}
{\it
Assume Case~4 and $n'(x',y')=1$.
Then~(\ref{eq:2floer}) holds.
}

\medskip
\noindent
The winding numbers of $\tLa_{x'y'}(1)$ are
nonnegative and hence we must have $k\ge 0$
in Figure~\ref{fig:disappear}, Case~4.
If $k>0$ then the winding numbers of
$\tLa_{x'y'}(0)$ are also nonnegative
and hence, by Step~2, $n(x',y')=1$.
Hence, by~\eqref{eq:one} and Proposition~\ref{prop:nolune}, 
one of the numbers $n(x',y)$ and $n(x,y')$ must vanish,
and hence~\eqref{eq:2floer} holds when $k>0$.

\begin{figure}[htp]
\centering 
\includegraphics[scale=1.0]{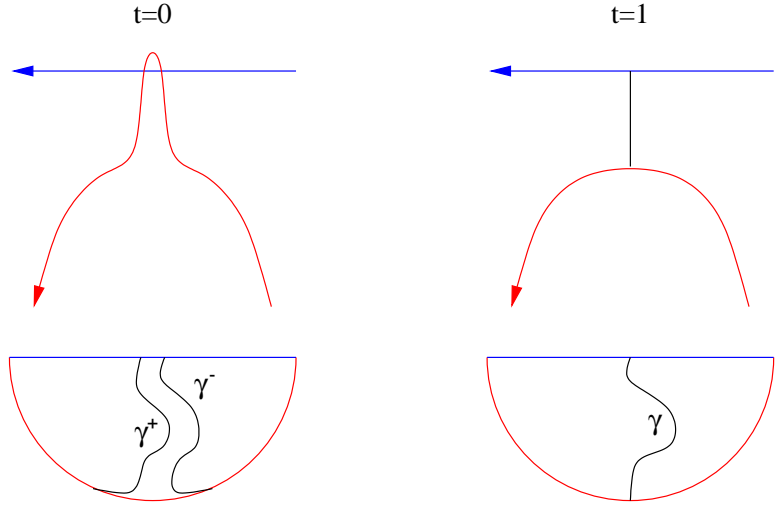}      
\caption{{A lune splits.}}
\label{fig:splitting}
\end{figure}

Now assume $k=0$. Then $n(x',y')=0$ and
we must prove that 
$$
n(x',y)=n(x,y')=1.
$$
To see this, we choose a smooth
$(\talpha,\tbeta')$-lune
$$
\tu':\D\to\C
$$ 
from $\tx'$ to $\ty'$.
Since $k=0$ this lune has precisely one preimage
in the region in $\tU_1$ where there winding number is $k+1=1\ne 0$ 
for $t=1$.  Choose an embedded arc 
$$
\tga:[0,1]\to\C
$$
in $\tU_1$ for $t=1$ connecting
the two branches of $\talpha$
and $\tbeta'$ such that $\tga$
intersects $\talpha$ and $\tbeta'$ only
at the endpoints (see Figure~\ref{fig:splitting}).
Then
$$
      \gamma:={{\tu'\,}}^{-1}(\tga)
$$
divides $\D$ into two components.
Deform the curve $\tbeta'$ along $\tga$
to a curve $\tbeta''$ that intersects $\talpha$ 
in two points $\tx'',\ty''\in\tU_1$.
The preimage of $\tbeta''$ under $\tu'$ contains 
two arcs $\gamma^-$ and $\gamma^+$, parallel to $\gamma$,
in the two components of $\D$.
This results in two half-discs contained in $\D$, 
and the  restriction of $\tu$ to these two half discs 
gives rise to two $(\talpha,\tbeta'')$-lunes, one from
$\tx'$ to $\ty''$ and one from $\tx''$ to $\ty'$
(see Figure~\ref{fig:splitting}).
Moreover $\tbeta''$ descends to an embedded
loop in $\Sigma$ that is isotopic to $\beta$ through
loops that are transverse to $\alpha$.
Hence $n(x',y)=n(x,y')=1$, as claimed.

\medskip
\noindent{\bf Step~7.}
{\it
Assume Case~4 and $n'(x',y')=n(x',y)n(x,y')=0$.
Then (\ref{eq:2floer}) holds.
}

\medskip
\noindent
We must prove that $n(x',y')=0$.
By Step~2, conditions~(II) and~(III) on
$\tLa_{x'y'}(t)$ are independent of $t$
and so we have that $n(x',y')=0$
whenever these conditions are not satisfied.
Hence assume that $\tLa_{x'y'}(t)$ satisfies~(II)
and~(III) for every $t$.
If $k>0$ in Figure~\ref{fig:disappear}
then condition~(I) also holds for every~$t$
and hence $n(x',y')=n'(x',y')=1$, a contradiction.
If $k\le0$ in Figure~\ref{fig:disappear}
then $\tLa_{x'y'}(0)$ does not satisfy~(I)
and hence $n(x',y')=0$, as claimed.

\medskip
\noindent
Thus we have established~(\ref{eq:2floer}).
Hence, by Lemma~\ref{le:floer},
$\HF(\alpha,\beta)$ is isomorphic
to $\HF(\alpha,\beta')$. 
This proves Theorem~\ref{thm:floer2}.
\end{proof}


\section{Lunes and Holomorphic Strips}\label{sec:LS}

We assume throughout that $\Sigma$ and 
$\alpha,\beta\subset\Sigma$
satisfy hypothesis~\hyperlink{property_H}{(H)}.  
We also fix a complex structure $J$ on $\Sigma$.
A {\bf ho\-lo\-mor\-phic $(\alpha,\beta)$-strip}
is a holomorphic map $v:\S\to\Sigma$
of finite energy such that
\begin{equation}\label{eq:BC}
v(\R)\subset\alpha,\qquad v(\R+\i)\subset\beta.
\end{equation}
It follows~\cite[Theorem~A]{ASYMPTOTIC}
\phantomsection\label{ASYMPTOTIC1} 
that the limits
\begin{equation}\label{eq:limit}
x=\lim_{s\to-\infty} v(s+\i t),\qquad
y=\lim_{s\to+\infty} v(s+\i t)
\end{equation}
exist; the convergence is exponential and uniform in $t$;
moreover $\p_sv$ and all its derivatives converge 
exponentially to zero as $s$ tends to $\pm\infty$. 
Call two holomorphic strips {\bf equivalent} 
\index{equivalent!holomorphic strips}
if they differ by a time shift. Every holomorphic strip $v$
has a {\bf Viterbo--Maslov index $\mu(v)$}, 
\index{Viterbo--Maslov index!of a holomorphic strip}
defined as follows.
Trivialize the complex line bundle $v^*T\Sigma\to\S$ such that
the trivialization converges to a frame of $T_x\Sigma$
as $s$ tends to $-\infty$ and to a frame of $T_y\Sigma$
as $s$ tends to $+\infty$ (with convergence uniform in $t$).
Then $s\mapsto T_{v(s,0)}\alpha$
and $s\mapsto T_{v(s,1)}\beta$ are Lagrangian
paths and their relative Maslov index is $\mu(v)$
(see~\cite{VITERBO} and~\cite{RS2}).
\phantomsection\label{V6}\label{RS2f}

\smallbreak

At this point it is convenient to introduce the notation
$$
\cM^\Floer(x,y;J) := 
\frac{
\left\{
v:\S\to\Sigma\,\bigg|\,\begin{array}{l}
v\mbox{ is a holomorphic $(\alpha,\beta)$-strip}\\
\mbox{from $x$ to $y$ with $\mu(v)=1$}
\end{array}\right\}
}
{\mbox{time shift}}
$$
for the moduli space of index one holomorphic strips 
from $x$ to $y$ up to time shift.  This moduli space depends 
on the choice of a complex structure $J$ on $\Sigma$.
We also introduce the notation
$$
\cM^\comb(x,y) := 
\frac{
\left\{
u:\D\to\Sigma\,\bigg|\,\begin{array}{l}
u\mbox{ is a smooth $(\alpha,\beta)$-lune}\\
\mbox{from $x$ to $y$}
\end{array}\right\}
}
{\mbox{isotopy}}
$$
for the moduli space of (equivalence classes of)
smooth $(\alpha,\beta)$-lunes.  This space is independent
of the choice of $J$.  We show that there is a bijection between
these moduli spaces for every pair $x,y\in\alpha\cap\beta$.

Given a smooth $(\alpha,\beta)$-lune
$u:\D\to\Sigma$, the Riemann mapping theorem
gives a unique homeomorphism $\phi_u:\D\to\D$
such that the restriction of $\phi_u$ to $\D\setminus\{\pm1\}$ 
is a diffeomorphism and
\begin{equation}\label{eq:phiu}
\phi_u(-1)=-1,\qquad 
\phi_u(0)=0,\qquad
\phi_u(1)=1,\qquad
\phi_u^*u^*J = \i.
\end{equation}
Let $g:\S\to\D\setminus\{\pm1\}$ be the
holomorphic diffeomorphism given by
\begin{equation}\label{eq:g}
g(s+\i t) := \frac{e^{(s+\i t)\pi/2}-1}{e^{(s+\i t)\pi/2}+1}.
\end{equation}
Then, for every $(\alpha,\beta)$-lune $u$,
the composition $v:=u\circ\phi_u\circ g$
is a holomorphic $(\alpha,\beta)$-strip.

\begin{theorem}\label{thm:lunestrip}
Assume~\hyperlink{property_H}{(H)}, 
let $x,y\in\alpha\cap\beta$, and 
choose a complex structure $J$ on $\Sigma$. 
Then the map $u\mapsto u\circ\phi_u\circ g$ 
induces a bijection 
\begin{equation}\label{eq:lunestrip}
\cM^\comb(x,y)\to\cM^\Floer(x,y;J):
[u]\mapsto[u\circ\phi_u\circ g]
\end{equation}
between the corresponding moduli spaces.
\end{theorem}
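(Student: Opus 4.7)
The plan is to prove Theorem~\ref{thm:lunestrip} by showing the forward map is well-defined, injective, and surjective, with surjectivity being the essential content. I assume the index formula \eqref{eq:MASLOV} in Theorem~\ref{thm:MASLOV} and the linear index formula \eqref{eq:index} in Lemma~\ref{le:index} (both referenced in the statement of intent in the introduction) as the key analytic inputs.

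\textbf{Well-definedness.} First, I would observe that for each smooth $(\alpha,\beta)$-lune $u:\D\to\Sigma$, the pullback complex structure $u^\ast J$ is a smooth complex structure on~$\D$ (with corner singularities at $\pm1$ that are removable by standard Schwarz-reflection arguments, using that $u$ is an immersion up to the corners). The Riemann mapping theorem, together with the three-point normalization in~\eqref{eq:phiu}, produces a unique homeomorphism~$\phi_u$. Thus $v:=u\circ\phi_u\circ g$ is a well-defined holomorphic strip with boundary on $\alpha,\beta$, asymptotic limits $x,y$, and finite energy (since $u$ factors through a compact set). Since the trace $\Lambda_v=\Lambda_u$ has Viterbo--Maslov index~$1$ by Theorem~\ref{thm:lune1}, and the Viterbo--Maslov index of a holomorphic strip equals that of its trace, $v\in\cM^\Floer(x,y;J)$. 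If $u'=u\circ\phi$ is an equivalent lune (so $\phi:\D\to\D$ preserves $\pm1$), then $\phi_u^{-1}\circ\phi\circ\phi_{u'}$ is a biholomorphism of $(\D,u^\ast J)$ fixing $\pm1$; conjugating by $g$ turns it into a biholomorphism of $\S$ fixing $\pm\infty$, hence a time shift. So~\eqref{eq:lunestrip} descends to equivalence classes.

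\textbf{Injectivity.} If $v=u\circ\phi_u\circ g$ and $v'=u'\circ\phi_{u'}\circ g$ differ by a time shift of $\S$, then $u$ and $u'$ have identical images and induce the same $(\alpha,\beta)$-trace, i.e.\ $\Lambda_u=\Lambda_{u'}$. By the uniqueness part Theorem~\ref{thm:lune2}, $[u]=[u']$ in $\cM^\comb(x,y)$.

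\textbf{Surjectivity.} Let $v:\S\to\Sigma$ be an index-one holomorphic $(\alpha,\beta)$-strip from $x$ to $y$. Apply the nonlinear index formula~\eqref{eq:MASLOV}, which expresses $\mu(v)$ as a nonnegative combination of (i) $2$ times the number of interior critical points, (ii) $1$ times the number of boundary critical points, and (iii) the asymptotic angle indices $k_x,k_y\ge 0$ at the two ends (these angle indices being defined via Appendix~\ref{app:asymptotic}). Setting $\mu(v)=1$ forces~$v$ to have no critical points whatsoever and the asymptotic angles at $x,y$ to contribute exactly~$1$ in total, so both ends look like simple corners of a lune. Consequently $v$ is an immersion on $\S$, extends continuously across $\pm\infty$ by the asymptotic analysis of~\cite{ASYMPTOTIC} summarized in Appendix~\ref{app:asymptotic}, and defines a smooth immersion $\bar v:\overline\S\to\Sigma$ on the one-point compactification at each end. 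Transporting by $g^{-1}$ and letting $u:=\bar v\circ g^{-1}:\D\to\Sigma$, extended by $u(-1):=x$, $u(1):=y$, gives a smooth $(\alpha,\beta)$-lune whose image under~\eqref{eq:lunestrip} is $[v]$, because the pulled-back complex structure is by construction~$g_\ast\i$ and hence $\phi_u=\id$.

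\textbf{Main obstacle.} The core difficulty is the nonlinear index formula~\eqref{eq:MASLOV} and the smooth extension across the asymptotic ends. The nonnegativity of each summand in~\eqref{eq:MASLOV} is what makes the implication ``$\mu=1 \Rightarrow$ immersion with simple corners'' go through, and this in turn requires a careful local model for holomorphic strips near an interior critical point, a boundary critical point, and at each asymptotic end. The asymptotic end analysis, in particular, is where Appendix~\ref{app:asymptotic} is indispensable: without the exponential convergence and angle decomposition of $v$ near $\pm\infty$, one cannot define $k_x,k_y$, and one cannot smoothly extend $\bar v$ across the corners of~$\D$ to verify that $u$ is an immersion at $\pm1$. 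Regularity of the linearized operator, on the other hand, is essentially automatic: the linear index formula~\eqref{eq:index} identifies the cokernel with an obstruction space that vanishes whenever the boundary Lagrangian paths have the transversality forced by $\mu=1$.
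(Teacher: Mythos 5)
Your Steps 1--3 (well-definedness, injectivity, and the consequence ``$\mu(v)=1$ forces no critical points and $\nu_x(v)=0$, $\nu_y(v)=1$'') follow essentially the same route as the paper; your injectivity argument via Theorem~\ref{thm:lune2} is a slight detour from the paper's direct construction of a diffeomorphism of $\D$, but it is correct. The genuine gap is in surjectivity.

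You assert that $v$ ``extends continuously across $\pm\infty$ \ldots{} and defines a smooth immersion $\bar v:\overline\S\to\Sigma$ on the one-point compactification at each end,'' and then conclude that $u:=\bar v\circ g^{-1}$ is a smooth lune with $\phi_u=\id$. This is false in general. By the asymptotic expansion~\eqref{eq:mxyv} (with $\nu_x(v)=0$ and $\nu_y(v)=1$), the map $u'=v\circ g^{-1}$ satisfies, in a holomorphic chart near $x$,
$$
\psi_x\bigl(u'(\zeta)\bigr) \;=\; c_x\Bigl(\tfrac{1+\zeta}{1-\zeta}\Bigr)^{2\theta_x/\pi} \;+\; \text{lower order}
$$
for $\zeta$ near $-1$, which is only $C^{0,2\theta_x/\pi}$-H\"older (and not $C^1$) at $\zeta=-1$ unless the intersection angle $\theta_x$ happens to equal $\pi/2$. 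So the na\"ive extension is continuous, and $u'$ is an immersion on $\D\setminus\{\pm 1\}$, but it is \emph{not} a smooth lune. This is exactly why the paper spends the bulk of Step~4 constructing the explicit reparametrization $\rho_\lambda$ in~\eqref{eq:lambda}--\eqref{eq:wla}, whose power $\pi/2\theta_x$ near the corner compensates the anomalous H\"older exponent and turns the composition into a genuine $C^1$ immersion (subsequently approximated by a smooth one). Correspondingly, the homeomorphism $\phi$ produced in the paper is \emph{not} the identity near $\pm1$; it is only after applying Proposition~\ref{prop:disc}-type uniqueness (built into~\eqref{eq:phiu}) that one identifies this $\phi$ with $\phi_u$ and recovers $v=u\circ\phi_u\circ g$. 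Your proposal recognizes that Appendix~\ref{app:asymptotic} matters, but it skips the key mechanism by which the asymptotic expansion is converted into smoothness at the corner, and that mechanism is the substantive part of the theorem.
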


\begin{proof} 
See page~\pageref{proof:lunestrip} below.
\end{proof}

\bigbreak

The proof of Theorem~\ref{thm:lunestrip}  
relies on the asymptotic analysis of holomorphic 
strips in~\cite{ASYMPTOTIC}\phantomsection\label{ASYMPTOTIC2}
(see Appendx~\ref{app:asymptotic} for a summary)
and on an explicit formula for the Viterbo--Maslov index. 
For each intersection point $x\in\alpha\cap\beta$ 
we denote by $\theta_x\in(0,\pi)$ the angle 
from $T_x\alpha$ to $T_x\beta$ with respect 
to our complex structure $J$.  Thus
\begin{equation}\label{eq:thetax}
T_x\beta = \left(\cos(\theta_x)
+ \sin(\theta_x)J\right)T_x\alpha,\qquad
0<\theta_x<\pi.
\end{equation}
Fix a nonconstant holomorphic $(\alpha,\beta)$-strip
$v:\S\to\Sigma$ from $x$ to $y$.  
Choose a holomorphic coordinate chart 
$\psi_y:U_y\to\C$ on an open neighborhood 
$U_y\subset\Sigma$ of $y$ such that 
$\psi_y(y)=0$. 
By~\cite[Theorem~C]{ASYMPTOTIC}\phantomsection\label{ASYMPTOTIC3} 
(see also Corollary~\ref{cor:asymptotic})
there is a complex number $c_y$ 
and a integer $\nu_y(v)\ge1$ such that 
\begin{equation}\label{eq:myv}
\psi_y(v(s+\i t)) = c_ye^{-(\nu_y(v)\pi-\theta_y)(s+\i t)} 
+ O(e^{-(\nu_y(v)\pi-\theta_y+\delta)s})
\end{equation}
for some $\delta>0$ and all $s>0$ sufficiently close to $+\infty$.
The complex number $c_y$ belongs to the 
tangent space $T_0(\psi_y(\alpha\cap U_y))$ 
and the integer $\nu_y(v)$ is independent 
of the choice of the coordinate chart.

Now let us interchange $\alpha$ and $\beta$
as well as $x$ and $y$, and replace $v$ by the 
$(\beta,\alpha)$-holomorphic strip 
$$
s+\i t\mapsto v(-s+\i(1-t))
$$
from $y$ to $x$. Choose a holomorphic coordinate chart 
$\psi_x:U_x\to\C$ on an open neighborhood 
$U_x\subset\Sigma$ of $x$ such that $\psi_x(x)=0$. 
Using~\cite[Theorem~C]{ASYMPTOTIC}\phantomsection\label{ASYMPTOTIC4} 
again we find that there is a complex number $c_x$ and
an integer $\nu_x(v)\ge 0$ such that 
\begin{equation}\label{eq:mxv}
\psi_x(v(s+\i t)) = c_xe^{(\nu_x(v)\pi+\theta_x)(s+\i t)} 
+ O(e^{(\nu_x(v)\pi+\theta_x+\delta)s})
\end{equation}
for some $\delta>0$ and all $s<0$ sufficiently close to $-\infty$.
As before, the complex number $c_x$ belongs to the 
tangent space $T_0(\psi_x(\alpha\cap U_x))$ and
the integer $\nu_x(v)$ is independent 
of the choice of the coordinate chart.

Denote the set of critical points of $v$ by 
$$
C_v := \left\{z\in\S\,|\,dv(z)=0\right\}.
$$
It follows from Corollary~\ref{cor:asymptotic}~(i)
that this is a finite set. For $z\in C_v$ denote by 
$\nu_z(v)\in\N$ the order to which $dv$ vanishes at $z$.  
Thus the first nonzero term in the Taylor expansion 
of $v$ at $z$ (in a local holomorphic coordinate chart 
on~$\Sigma$ centered at $v(z)$) has order $\nu_z(v)+1$.

\begin{theorem}\label{thm:MASLOV}
Assume~\hyperlink{property_H}{(H)} and choose a 
complex structure $J$ on $\Sigma$. 
Let $x,y\in\alpha\cap\beta$ and 
$
v:\S\to\Sigma
$ 
be a nonconstant holomorphic $(\alpha,\beta)$-strip 
from $x$ to $y$.  Then the linearized operator $D_v$  
associated to this strip in Floer theory is surjective.  
Moreover, the Viterbo--Maslov index of $v$ is equal 
to the Fredholm index of $D_v$ and is given by the
{\bf index formula}\index{index formula}
\index{Viterbo--Maslov index!index formula}
\begin{equation}\label{eq:MASLOV}
\mu(v) = \nu_x(v)+\nu_y(v)+\sum_{z\in C_v\cap\p\S}\nu_z(v)
+ 2\sum_{z\in C_v\cap\INT(\S)}\nu_z(v).
\end{equation}
The right hand side in equation~\eqref{eq:MASLOV} 
is positive because all summands 
are nonnegative and ${\nu_y(v)\ge1}$.  
\end{theorem}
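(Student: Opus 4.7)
My plan is to compute the Viterbo--Maslov index $\mu(v)$ and the Fredholm index $\ind(D_v)$ separately, show that both equal the right-hand side of~\eqref{eq:MASLOV}, and then deduce surjectivity from the fact that $\ker D_v$ can be described explicitly, with dimension matching $\ind(D_v)$. The two central inputs are the asymptotic analysis of holomorphic strips summarized in Appendix~\ref{app:asymptotic} (which gives precise local models at $x$, $y$, and at the critical points) and the linear index formula of Lemma~\ref{le:index}, which evaluates the Fredholm index of a Cauchy--Riemann type operator in terms of its asymptotic weights and the orders of vanishing of a prescribed section.

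First I would set up normal forms. Near $y$, the expansion~\eqref{eq:myv} shows that in a suitable holomorphic chart on $\Sigma$ centered at $y$, the strip $v$ decays like $e^{-(\nu_y(v)\pi-\theta_y)(s+\i t)}$, so the asymptotic exponential weight at the right end is $\nu_y(v)\pi-\theta_y\in(0,\infty)$; the analogous statement at $x$ gives weight $\nu_x(v)\pi+\theta_x$. At each critical point $z_0\in C_v$, unique continuation together with holomorphicity produces local holomorphic coordinates on domain and target in which $v(\zeta)=\zeta^{\nu_{z_0}(v)+1}$, preserving the boundary conditions when $z_0\in\p\S$.

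With these normal forms in hand, I would compute $\mu(v)$ using a trivialization of $v^*T\Sigma$ that is unitary at the two ends and adapted to the local models at the critical points. In this trivialization the boundary Lagrangian paths $s\mapsto T_{v(s,0)}\alpha$ and $s\mapsto T_{v(s,1)}\beta$ pick up localized winding contributions: each interior critical point produces a net rotation of $2\pi\nu_{z_0}(v)$ in the trivialized Lagrangian pair and hence contributes $2\nu_{z_0}(v)$ to the Maslov index; each boundary critical point contributes $\nu_{z_0}(v)$ (half as much, since only one side is traversed); and the two ends contribute $\nu_x(v)$ and $\nu_y(v)$ respectively, read off from the angles $\theta_x,\theta_y$ and the integers $\nu_x(v),\nu_y(v)$ in~\eqref{eq:mxv}--\eqref{eq:myv}. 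Summing these local pieces yields the right-hand side of~\eqref{eq:MASLOV}. Applying the linear index formula~\eqref{eq:index} of Lemma~\ref{le:index} to the symbol and asymptotic data of $D_v$ gives $\ind(D_v)$ by the identical arithmetic, so $\mu(v)=\ind(D_v)$.

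The main obstacle will be the surjectivity of $D_v$. Here I would exploit that $D_v$ acts on sections of a complex line bundle over $\S$ with totally real boundary conditions, so one has the automatic regularity phenomenon in real dimension two: any $\xi\in\ker D_v$ is, after trivialization, a solution of a Cauchy--Riemann type equation with prescribed order of vanishing at each critical point of $v$ and a finite-dimensional datum of leading asymptotic coefficients at each end. A linear version of the argument above --- again Lemma~\ref{le:index} applied to enumerate holomorphic sections of the right weight and vanishing order --- forces $\dim\ker D_v$ to equal the right-hand side of~\eqref{eq:MASLOV}, which is strictly positive because $\nu_y(v)\ge 1$. Since this matches the Fredholm index, $\coker D_v=0$. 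The section $\p_sv\in\ker D_v$ is a canonical nonzero element fixing the orientation of the computation and serves as a consistency check, but the core of the surjectivity argument is the dimension count obtained from Lemma~\ref{le:index}.
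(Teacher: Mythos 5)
Your derivation of the index formula is in essence the paper's: one applies the linear index formula of Lemma~\ref{le:index}, and the arithmetic of~\eqref{eq:MASLOV} comes out because the kernel element one uses is $\xi=\Phi^{-1}\p_sv$, whose zeros are exactly the critical points $C_v$ with multiplicities $\nu_z(v)$ and whose asymptotic weights are $\lambda_x=\nu_x(v)\pi+\theta_x$, $\lambda_y=\nu_y(v)\pi-\theta_y$. You relegate $\p_sv$ to a ``consistency check'' at the end, but it is the essential input: Lemma~\ref{le:index} expresses $\mathrm{index}(D_S)$ in terms of the zeros and asymptotics of a \emph{chosen} nonzero kernel element, and it is only for $\xi=\Phi^{-1}\p_sv$ that those data coincide with the quantities $\nu_x(v),\nu_y(v),\nu_z(v)$ appearing in~\eqref{eq:MASLOV}. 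Your separate ``local winding'' computation of $\mu(v)$ is also misleading as stated: interior critical points do not directly affect the boundary Lagrangian paths that define $\mu(v)$ in a fixed trivialization, and they only enter the formula via the choice of $\xi$; moreover the equality $\mu(v)=\mathrm{index}(D_v)$ is already available from the spectral-flow theorem cited from~\cite{RS3}, so this parallel computation is unnecessary.

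The surjectivity argument has a genuine gap. You assert that any $\xi\in\ker D_v$ has ``prescribed order of vanishing at each critical point of $v$'' and that Lemma~\ref{le:index} then ``forces $\dim\ker D_v$ to equal the right-hand side of~\eqref{eq:MASLOV}.'' Both claims fail. General elements of $\ker D_v$ have no reason to vanish at the critical points of $v$ --- that vanishing is special to the section $\p_sv$. And Lemma~\ref{le:index} computes the Fredholm \emph{index}, not the kernel dimension; deducing $\dim\ker D_v=\mathrm{index}(D_v)$ from it presupposes $\coker D_v=0$, which is precisely what you are trying to prove. The correct argument is the one packaged in Lemma~\ref{le:onto}: if $\eta\ne0$ were in the kernel of the formal adjoint $-\p_s+\i\p_t+S^T$, then Lemma~\ref{le:index} applied to $\eta$ (together with the fact that every zero of a solution of a Cauchy--Riemann-type equation has positive local index) would force the adjoint to have \emph{strictly positive} index, contradicting $\mathrm{index}(D_S^*)=-\mathrm{index}(D_S)\le0$. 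Hence $\ker D_S^*=0$, and since $\ker D_S^*$ is the $L^2$-orthogonal complement of the image of $D_S$, the operator $D_S$ is surjective.
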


\begin{proof} 
See page~\pageref{proof:MASLOV} below.  
\end{proof}

The surjectivity statement in Theorem~\ref{thm:MASLOV}
has been observed by many authors. 
A proof for holomorphic polygons is contained in Seidel's 
book~\cite{SEIDEL1}.\phantomsection\label{SEIDEL1a}

We will need  a more general index formula 
(equation~\eqref{eq:index} below) which we explain next.  
Choose a Riemannian metric on $\Sigma$ that is compatible 
with the complex structure $J$.  This metric induces 
a Hermitian structure on the pullback tangent bundle 
$
v^*T\Sigma\to\S
$ 
and the Hilbert spaces $W^{1,2}(\S,v^*T\Sigma)$ 
and $L^2(\S,v^*T\Sigma)$ are understood with respect 
to this induced structure.  These Hilbert spaces 
are independent of the choice of the metric on $\Sigma$,
only their inner products depend on this choice.  
The linearized operator
$$
D_v:W_\BC^{1,2}(\S,v^*T\Sigma)\to L^2(\S,v^*T\Sigma)
$$
with 
$$
W_\BC^{1,2}(\S,v^*T\Sigma)
:=\left\{\hat v\in W^{1,2}(\S,v^*T\Sigma)\,\bigg|\,
\begin{array}{l}
\hat v(s,0)\in T_{v(s,0)}\alpha\;\forall s\in\R\\
\hat v(s,1)\in T_{v(s,1)}\beta\;\forall s\in\R
\end{array}\right\}
$$
is given by 
$$
D_v\hat v=\Nabla{s}\hat v+J\Nabla{t}\hat v
$$
for $\hat v\in W^{1,2}_\BC(\S,v^*T\Sigma)$, 
where $\nabla$ denotes the Levi-Civita connection.  
Here we use the fact that $\nabla J=0$ because $J$ 
is integrable.  We remark that, first, this is a 
Fredholm operator for every smooth map 
$v:\S\to\Sigma$ satisfying~\eqref{eq:BC} 
and~\eqref{eq:limit} (where the convergence 
is exponential and uniformly in $t$, and 
$\p_sv$, $\Nabla{s}\p_sv$, $\Nabla{s}\p_tv$ converge 
exponentially to zero as $s$ tends to $\pm\infty$). 
Second, the definition of the Viterbo--Maslov index 
$\mu(v)$ extends to this setting and it is equal 
to the Fredholm index of $D_v$ 
(see~\cite{RS3})\phantomsection\label{RS3a}
Third, the operator~$D_v$ is independent of the choice 
of the Riemannian metric whenever~$v$ is an 
$(\alpha,\beta)$-holomorphic strip. 

\bigbreak

Next we choose a unitary trivialization 
$$
\Phi(s,t):\C\to T_{v(s,t)}\Sigma
$$
of the pullback tangent bundle such that 
$$
\Phi(s,0)\R = T_{v(s,0)}\alpha,\qquad
\Phi(s,1)\R = T_{v(s,1)}\beta,
$$
and $\Phi(s,t)=\Psi_t(v(s,t))$ 
for $\abs{s}$ sufficiently large.
Here $\Psi_t$, $0\le t\le 1$, is a smooth family of unitary
trivializations of the tangent bundle over a neighborhood 
$U_x\subset \Sigma$ of $x$, respectively $U_y\subset\Sigma$ of $y$, 
such that $\Psi_0(z)\R=T_z\alpha$ for $z\in (U_x\cup U_y)\cap\alpha$
and $\Psi_1(z)\R=T_z\beta$ for $z\in (U_x\cup U_y)\cap\beta$.
Then
\begin{equation*}
\begin{split}
\cW &:=\Phi^{-1}W^{1,2}_\BC(\S,v^*T\Sigma) 
= \left\{\xi\in W^{1,2}(\S,\C)\,|\,
\xi(s,0),\xi(s,1)\in\R\;\forall s\in\R\right\},\\
\cH &:= \Phi^{-1}L^2(\S,v^*T\Sigma) = L^2(\S,\C).
\end{split}
\end{equation*}
The operator 
$
D_S:=\Phi^{-1}\circ D_v\circ \Phi:\cW\to\cH
$ 
has the form
\begin{equation}\label{eq:DS}
D_S\xi = \p_s\xi + \i\p_t\xi + S\xi
\end{equation}
where the function $S:\S\to\End_\R(\C)$ is given by
$$
S(s,t) := \Phi(s,t)^{-1}\Bigl(\Nabla{s}\Phi(s,t) 
+ J(v(s,t))\Nabla{t}\Phi(s,t)\Bigr).
$$
The matrix $\Phi^{-1}\Nabla{s}\Phi$ is skew-symmetric 
and the matrix $\Phi^{-1}J(v)\Nabla{t}\Phi$ is symmetric.  
Moreover, it follows from our hypotheses on $v$ 
and the trivialization that $S$ converges exponentially 
and $\Phi^{-1}\Nabla{s}\Phi$ as well as $\p_sS$ converge 
exponentially to zero as as $s$ tends to $\pm\infty$. 
The limits of $S$ are the symmetric matrix functions 
\begin{equation}\label{eq:Sxy}
\begin{split}
S_x(t) &:= \lim_{s\to-\infty}S(s,t) = \Psi_t(x)^{-1}J(x)\p_t\Psi_t(x),\\
S_y(t) &:= \lim_{s\to+\infty}S(s,t) = \Psi_t(y)^{-1}J(y)\p_t\Psi_t(y).
\end{split}
\end{equation}
Thus there exist positive constants $c$ and $\eps$ such that
\begin{equation}\label{eq:S}
\begin{split}
\Abs{S(s,t)-S_x(t)} + \Abs{\p_sS(s,t)} &\le ce^{\eps s},\\
\Abs{S(s,t)-S_y(t)} + \Abs{\p_sS(s,t)} &\le ce^{-\eps s}
\end{split}
\end{equation}
for every $s\in\R$.  This shows that the operator~\eqref{eq:DS} 
satisfies the hypotheses of~\cite[Lemma~3.6]{ASYMPTOTIC}. 
\phantomsection\label{ASYMPTOTIC5}
This lemma asserts the following.  Let $\xi\in\cW$
be a nonzero function in the kernel of $D_S$:
$$
\xi\in\cW,\qquad D_S\xi = \p_s\xi+\i\p_t\xi+S\xi = 0,\qquad \xi\ne 0.
$$
Then there exist nonzero functions $\xi_x,\xi_y:[0,1]\to\C$ and
positive real numbers $\lambda_x,\lambda_y,C,\delta$
such that
\begin{equation}\label{eq:asymp1}
\begin{split}
\i\dot\xi_x(t)+S_x(t)\xi_x(t) 
&= -\lambda_x\xi_x(t),\qquad \xi_x(0),\xi_x(1)\in\R\\
\i\dot\xi_y(t)+S_y(t)\xi_y(t) 
&= \lambda_y\xi_y(t),\qquad \xi_y(0),\xi_y(1)\in\R
\end{split}
\end{equation}
and
\begin{equation}\label{eq:asymp2}
\begin{split}
\Abs{\xi(s,t)-e^{\lambda_xs}\xi_x(t)} 
&\le Ce^{(\lambda_x+\delta)s},\qquad s\le 0\\
\Abs{\xi(s,t)-e^{-\lambda_ys}\xi_y(t)} 
&\le Ce^{-(\lambda_y+\delta)s},\qquad s\ge 0.
\end{split}
\end{equation}
We prove that there exist integers 
$\iota(x,\xi)\ge 0$ and $\iota(y,\xi)\ge 1$
such that
\begin{equation}\label{eq:asymp3}
\lambda_x = \iota(x,\xi)\pi + \theta_x,\qquad
\lambda_y = \iota(y,\xi)\pi - \theta_y.
\end{equation}
Here $\theta_x$ is chosen as above such that 
$$
T_x\beta=\exp(\theta_xJ(x))T_x\alpha,\qquad 0<\theta_x<\pi,
$$
and the same for $\theta_y$. To prove~\eqref{eq:asymp3},
we observe that the function 
$$
v_x(t):=\Psi_t(x)\xi_x(t)
$$ 
satisfies 
$$
J(x)\dot v_x(t)=-\lambda_xv_x(t),\qquad 
v_x(0)\in T_x\alpha,\qquad v_x(1)\in T_x\beta.
$$
Hence 
$
v_x(t) = \exp(t\lambda_xJ(x))v_x(0)
$ 
and this proves the first equation in~\eqref{eq:asymp3}.
Likewise, the function 
$
v_y(t):=\Psi_t(y)\xi_y(t)
$ 
satisfies $J(y)\dot v_y(t)=\lambda_yv_y(t)$
and $v_y(0)\in T_y\alpha$ and $v_y(1)\in T_y\beta$.  
Hence $v_y(t) = \exp(-t\lambda_yJ(y))v_y(0)$,
and this proves the second equation in~\eqref{eq:asymp3}. 

\begin{lemma}\label{le:index}
Suppose $S$ satisfies the asymptotic condition~\eqref{eq:S}
and let $\xi\in\cW$ be a smooth function with isolated zeros
that satisfies~\eqref{eq:asymp1}, \eqref{eq:asymp2}, 
and~\eqref{eq:asymp3}.  Then the Fredholm index of $D_S$ 
is given by the {\bf linear index formula}\index{linear index formula}
\index{Viterbo--Maslov index!linear index formula}
\begin{equation}\label{eq:index}
\mathrm{index}(D_S) 
= \iota(x,\xi)+\iota(y,\xi)+\sum_{z\in\p\S\atop\xi(z)=0}\iota(z,\xi)
+ 2\sum_{z\in\INT(\S)\atop\xi(z)=0}\iota(z,\xi).
\end{equation}
In the second sum $\iota(z,\xi)$ denotes the index of $z$
as a zero of $\xi$. In the first sum $\iota(z,\xi)$ denotes the degree
of the loop $[0,\pi]\to\RP^1:\theta\mapsto\xi(z+\eps e^{\i\theta})\R$
when $z\in\R$ and of the loop 
$[0,\pi]\to\RP^1:\theta\mapsto\xi(z-\eps e^{\i\theta})\R$
when $z\in\R+\i$; in both cases $\eps>0$ is chosen so small that
the closed $\eps$-neighborhood of $z$ contains 
no other zeros of $\xi$.
\end{lemma}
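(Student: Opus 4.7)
The plan is to compute $\mathrm{index}(D_S)$ by interpreting it as a topological winding count of the line field $\cL(z) := \xi(z)\R$ on $\S$ away from the zeros of $\xi$. Under the decay assumption~\eqref{eq:S}, standard Fredholm theory (see~\cite{RS3}) shows that $D_S$ is Fredholm with index determined by the asymptotic self-adjoint operators $L_x := -\i\p_t - S_x(t)$ and $L_y := -\i\p_t - S_y(t)$ on $L^2([0,1],\C)$ with real boundary conditions. The index is a signed count of spectral crossings at zero along an interpolating path from $L_x$ to $L_y$; by~\eqref{eq:asymp3}, the spectra are arithmetic progressions $\mathrm{spec}(L_x)=\{k\pi+\theta_x:k\in\Z\}$ and $\mathrm{spec}(L_y)=\{k\pi-\theta_y:k\in\Z\}$, and $\iota(x,\xi)$, $\iota(y,\xi)$ label the specific eigenvalues to which $\xi$ is asymptotic at $\mp\infty$.

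The local analysis at the zeros of $\xi$ is done via a Carleman similarity principle for linear equations of Cauchy--Riemann type. This yields, near an isolated zero $z_0$, a local factorization
$$
\xi(z) = (z-z_0)^{\iota(z_0,\xi)}\Phi(z)c_0
$$
with $\Phi$ a continuous $\R$-linear isomorphism and $c_0\ne 0$; the boundary condition forces $c_0\in\R$ and $\Phi$ to preserve $\R$ whenever $z_0\in\p\S$. Consequently a small counterclockwise circle around an interior zero $z_0$ is mapped by $\cL$ to $\RP^1$ with degree $2\iota(z_0,\xi)$, while a small semicircle around a boundary zero $z_0$ is mapped with degree $\iota(z_0,\xi)$; these agree with the indices in the statement of the lemma.

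The asymptotic analysis uses~\eqref{eq:asymp2}--\eqref{eq:asymp3}. Along $\{s\}+\i[0,1]$ with $s\ll 0$ we have $\xi(s,t)\approx e^{\lambda_x s}\xi_x(t)$, and because the unitary trivialisation $\Psi_t(x)$ rotates $\R$ to $T_x\beta$ by the angle $\theta_x$ as $t$ varies from $0$ to $1$, the line $\cL(s,t)=\xi_x(t)\R$ undergoes a net rotation $\lambda_x - \theta_x = \iota(x,\xi)\pi$ in $\RP^1$. Similarly for $s\gg 0$ the net rotation is $\iota(y,\xi)\pi$ (with the opposite orientation induced by the direction of traversal). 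Applying degree theory to the boundary map $\cL|_{\p([-R,R]+\i[0,1])}\to\RP^1$ and letting $R\to\infty$ — noting that $\cL=\R$ on the horizontal segments so these contribute nothing — one obtains that the total boundary winding has absolute value $\iota(x,\xi)+\iota(y,\xi)$ and equals the sum $\sum_{\p\S}\iota(z,\xi)+2\sum_{\INT(\S)}\iota(z,\xi)$ of local contributions from the zeros.

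The main obstacle will be to reconcile the signed topological count above with the Fredholm-theoretic expression for $\mathrm{index}(D_S)$: one must show that the asymptotic winding recorded by $\iota(x,\xi)+\iota(y,\xi)$, together with the local zero contributions, really equals the Maslov index of the asymptotic Lagrangian paths and not some shift of it. The cleanest way to handle this is to verify~\eqref{eq:index} first on a model constant-coefficient operator with $S(s,t)\equiv S_0(t)$, where $\ker D_S$ admits an explicit description by separation of variables and both sides reduce to direct computations; the general case then follows by a homotopy argument through operators of the form~\eqref{eq:DS}, using that the Fredholm index and the right-hand side of~\eqref{eq:index} are both deformation invariants (zeros of $\xi$ can appear or disappear only in configurations whose contribution to the sum is preserved, and the asymptotic integers $\iota(x,\xi),\iota(y,\xi)$ are locally constant under the perturbation). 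This will simultaneously establish surjectivity of $D_S$ on the kernel generated by $\xi$, as needed in the application to Theorem~\ref{thm:MASLOV}.
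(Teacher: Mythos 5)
Your winding-count framework and local analysis via the Carleman similarity principle are sound, and you correctly anticipate that the obstacle is connecting the topological count to the Fredholm index. However, the gap you flag is fatal for the route you actually set up: the line field $\cL(z)=\xi(z)\R$ equals $\R$ on \emph{both} horizontal boundary components of $\S$ (since $\xi\in\cW$ forces $\xi(s,0),\xi(s,1)\in\R$), so the degree-zero identity on $\p(\S_T\setminus U)$ never mentions $\mathrm{index}(D_S)$ at all. It produces only a relation among the $\iota$'s, and that relation is \emph{not} the linear index formula --- with $\cL=\xi\R$ the small circles around interior zeros contribute $-2\iota(z,\xi)$, not $+2\iota(z,\xi)$, because one is traversing the winding of $\xi$ itself rather than of a conjugate-twisted quantity, so the identity you obtain is incompatible with~\eqref{eq:index} whenever $\mathrm{index}(D_S)\ne 0$ (which is exactly the case of interest). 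The fallback you propose --- prove the formula on a constant-coefficient model by separation of variables, then deform --- also fails on its own terms: a constant-coefficient operator $D_{S_0}$ with nondegenerate ends has trivial $L^2$-kernel, so there is no $\xi$ on which to verify~\eqref{eq:index} in that model; and the claim that the zeros of $\xi$ can only appear or disappear in configurations that preserve the index sum, while plausible, is an unproven assertion that would itself require an argument of comparable depth to the original lemma.

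The paper's proof supplies the missing ingredient with a single device you do not use. It introduces the \emph{reference solution} $\xi_0:\S\to\C$ of the complex-linear ODE $\i\p_t\xi_0 + S\xi_0 = 0$ with $\xi_0(s,0)=1$, and runs the degree argument on $\Lambda(s,t) := \xi_0(s,t)\,\overline{\xi(s,t)}\,\R$ rather than on $\xi(s,t)\R$. This has two crucial effects simultaneously. First, on the top boundary $\Lambda(s,1)=\xi_0(s,1)\R=\Lambda_S(s)$, which is exactly the Lagrangian path whose relative Maslov index $\mu(\Lambda_S,\R)$ equals $\mathrm{index}(D_S)$ by the spectral-flow-equals-Maslov-index theorem of~\cite{RS3}; thus the Fredholm index enters the boundary computation as the contribution of $\R+\i$. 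Second, the complex conjugation $\overline{\xi}$ reverses the sign of the winding near each zero of $\xi$, so the local contributions come out as $-2\iota(z,\xi)$ (interior) and $-\iota(z,\xi)$ (boundary), with signs arranged so that the degree-zero identity on $\p(\S_T\setminus U)$ rearranges directly into~\eqref{eq:index}. Without the factor $\xi_0\overline{(\cdot)}$, neither the Fredholm index nor the correct signs appear, and no amount of additional asymptotic bookkeeping fixes that. I would suggest you redo the degree computation with $\Lambda=\xi_0\overline{\xi}\,\R$ in place of $\cL=\xi\R$; once you do, your local analysis and asymptotic estimates carry over essentially verbatim and the model/homotopy detour becomes unnecessary.
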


\begin{proof}
Since $\xi_x$ and $\xi_y$ have no zeros, by~\eqref{eq:asymp1}, 
it follows from equation~\eqref{eq:asymp2} that the zeros of $\xi$
are confined to a compact subset of $\S$.  Moreover the zeros 
of~$\xi$ are isolated and so the right hand side 
of~\eqref{eq:index} is a finite sum. Now let 
$\xi_0:\S\to\C$ be the unique solution of the equation
\begin{equation}\label{eq:xi0}
\i\p_t\xi_0(s,t)+S(s,t)\xi_0(s,t)=0,\qquad \xi_0(s,0)=1.
\end{equation}
Then 
\begin{equation}\label{eq:asymp4}
\begin{split}
\xi_{0,x}(t)&:=\lim_{s\to-\infty}\xi_0(s,t) = \Psi_t(x)^{-1}\Psi_0(x)1,\\
\xi_{0,y}(t)&:=\lim_{s\to+\infty}\xi_0(s,t) = \Psi_t(y)^{-1}\Psi_0(y)1.
\end{split}
\end{equation}
Thus the Lagrangian path 
$$
\R\to\RP^1:s\mapsto\Lambda_S(s):=\R\xi(s,1) 
$$
is asymptotic to the subspace 
$\Psi_1(x)^{-1}T_x\alpha$ as $s$ tends to $-\infty$ 
and to the subspace 
$\Psi_1(y)^{-1}T_y\alpha$ as $s$ tends to $+\infty$.
These subspaces are both transverse to $\R$.  
By the spectral-flow-equals-Maslov-index theorem 
in~\cite{RS3}\phantomsection\label{RS3b} 
the Fredholm index of $D_S$ is equal to the relative Maslov index 
of the pair $(\Lambda_S,\R)$:
\begin{equation}\label{eq:maspec}
\mathrm{index}(D_S) = \mu(\Lambda_S,\R).
\end{equation}
It follows from~\eqref{eq:asymp1} and~\eqref{eq:asymp4} that
\begin{equation}\label{eq:asymp5}
\frac{\xi_{0,x}(t)}{\xi_x(t)} = \frac{e^{-\i\lambda_xt}}{\xi_x(0)},\qquad
\frac{\xi_{0,y}(t)}{\xi_y(t)} = \frac{e^{\i\lambda_yt}}{\xi_y(0)}.
\end{equation}
Now let 
$
U= \bigcup_{\xi(z)=0}U_z\subset\S
$
be a union of open discs or half discs $U_z$ of radius 
less than one half, centered at the zeros $z$ of $\xi$, 
whose closures are disjoint.  Consider the smooth map 
$\Lambda:\S\setminus U\to\RP^1$ defined by
$$
\Lambda(s,t):=\xi_0(s,t)\overline{\xi(s,t)}\R.
$$
By~\eqref{eq:asymp5} this map converges, uniformly in $t$,
as $s$ tends to $\pm\infty$ with limits 
\begin{equation}\label{eq:asymp6}
\Lambda_x(t) := \lim_{s\to-\infty}\Lambda(s,t) = e^{-\i\lambda_xt}\R,\qquad
\Lambda_y(t) := \lim_{s\to+\infty}\Lambda(s,t) = e^{\i\lambda_yt}\R.
\end{equation}
Moreover, we have
\begin{equation*}
\begin{split}
\Lambda(s,1) = \xi_0(s,1)\R = \Lambda_S(s),\qquad &(s,1)\notin U,\\
\Lambda(s,0) = \xi_0(s,0)\R = \R,\quad\;\;\,\qquad &(s,0)\notin U.
\end{split}
\end{equation*}
If $z\in\mathrm{int}(\S)$ with $\xi(z)=0$ 
then the map $\Lambda_z:=\Lambda|_{\p U_z}$
is homotopic to the map 
$\p U_z\to\RP^1:s+\i t\mapsto\overline{\xi(s,t)}\R$.
Hence it follows from the definition of the index 
$\iota(z,\xi)$ that its degree is
\begin{equation}\label{eq:indint}
\deg(\Lambda_z:\p U_z\to\RP^1) = - 2\iota(z,\xi),\qquad 
z\in\mathrm{int}(\S),\qquad \xi(z)=0.
\end{equation}
If $z\in\p\S$ with $\xi(z)=0$, 
define the map $\Lambda_z:\p U_z\to\R P^1$ by 
$$
\Lambda_z(s,t):=\left\{\begin{array}{ll}
\xi_0(s,t)\overline{\xi(s,t)}\R,&\mbox{if }(s,t)\in\p U_z\setminus\p\S,\\
\xi_0(s,t)\R,&\mbox{if }(s,t)\in\p U_z\cap\p\S.
\end{array}\right.
$$
This map is homotopic to the map $(s,t)\mapsto\overline{\xi(s,t)}\R$
for $(s,t)\in\p U_z\setminus\p\S$ and $(s,t)\mapsto\R$
for $(s,t)\in\p U_z\cap\p\S$.  Hence it follows from 
the definition of the index $\iota(z,\xi)$ that its degree is
\begin{equation}\label{eq:indb}
\deg(\Lambda_z:\p U_z\to\RP^1) = - \iota(z,\xi),\qquad 
z\in\p\S,\qquad \xi(z)=0.
\end{equation}
Abbreviate $\S_T:=[-T,T]+\i[0,1]$ for $T>0$ sufficiently large. 
Since the map $\Lambda:\p(\S_T\setminus U)\to\RP^1$
extends to $\S_T\setminus U$ its degree is zero and it 
is equal to the relative Maslov index of the pair of 
Lagrangian loops $(\Lambda|_{\p(\S_T\cap U)},\R)$. Hence
\begin{eqnarray*}
0
&=&
\lim_{T\to\infty}\mu(\Lambda|_{\p(\S_T\setminus U)},\R) \\
&=&
\mu(\Lambda_y,\R) - \mu(\Lambda_x,\R) - \mu(\Lambda_S,\R)
- \sum_{z\in \S\atop \xi(z)=0} \mu(\Lambda_z,\R) \\
&=&
\iota(y,\xi) + \iota(x,\xi) - \mu(\Lambda_S,\R)
+ \sum_{z\in\p\S\atop\xi(z)=0}\iota(z,\xi)
+ 2\sum_{z\in\mathrm{int}(\S)\atop\xi(z)=0}\iota(z,\xi).
\end{eqnarray*}
Here the second equality follows from the additivity of the 
relative Maslov index for paths~\cite{RS2}.
\phantomsection\label{RS2g}  
It also uses the fact that, for ${z\in\R+\i}$ with ${\xi(z)=0}$,
the relative Maslov index of the pair 
$
(\Lambda_z|_{\p U_z\cap(\R+\i)},\R)=(\Lambda_S|_{\p U_z\cap(\R+\i)},\R)
$
appears with a plus sign when using the orientation of $\R+\i$
and thus compensates for the intervals in the relative Maslov 
index $-\mu(\Lambda_S,\R)$ that are not contained in the boundary 
of $\S\setminus U$.  Moreover, for $z\in\R$ with $\xi(z)=0$,
the relative Maslov index of the pair $(\Lambda_z|_{\p U_z\cap\R},\R)$
is zero.  The last equation follows from the 
formulas~\eqref{eq:asymp3} and~\eqref{eq:asymp6} for the 
first two terms and from~\eqref{eq:indb} and~\eqref{eq:indint}
for the last two terms.  With this understood, the linear index 
formula~\eqref{eq:index} follows from equation~\eqref{eq:maspec}.  
This proves Lemma~\ref{le:index}. 
\end{proof}

\begin{lemma}\label{le:onto}
The operator $D_S$ is injective whenever $\mathrm{index}(D_S)\le 0$
and is surjective whenever $\mathrm{index}(D_S)\ge 0$.
\end{lemma}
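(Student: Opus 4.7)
The plan is to deduce both halves of the lemma from the linear index formula (Lemma~\ref{le:index}) by exploiting the positivity of all terms on its right-hand side, together with a standard formal-adjoint trick for the surjectivity half.

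For injectivity when $\mathrm{index}(D_S)\le 0$: first I would argue that any nonzero $\xi\in\ker D_S$ satisfies the hypotheses of Lemma~\ref{le:index}. Because $D_S$ is of Cauchy--Riemann type, the similarity principle (from the theory of $J$-holomorphic curves) implies that the zero set of $\xi$ is discrete and that at each zero $z$ the local index $\iota(z,\xi)$ is at least $1$; in particular the isolated-zeros hypothesis of Lemma~\ref{le:index} is satisfied. From~\eqref{eq:asymp3} one has $\iota(x,\xi)\ge 0$, and since $0<\theta_y<\pi$ forces $\lambda_y=\iota(y,\xi)\pi-\theta_y>0$, one has $\iota(y,\xi)\ge 1$. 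Plugging these nonnegativities and the bound $\iota(y,\xi)\ge 1$ into~\eqref{eq:index} yields $\mathrm{index}(D_S)\ge 1$, contradicting the hypothesis. Hence $\ker D_S=0$.

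For surjectivity when $\mathrm{index}(D_S)\ge 0$: I would identify $\mathrm{coker}(D_S)$ with the kernel of the formal $L^2$-adjoint and reduce to the injectivity half just established. A standard integration by parts, using that $\i$ is skew-adjoint for the real inner product on $\C$ and that the boundary conditions $\xi(s,0),\xi(s,1)\in\R$ are totally real, shows that the formal adjoint is
\[
D_S^*\eta=-\p_s\eta+\i\p_t\eta+S^T\eta
\]
subject to the same boundary condition $\eta(s,0),\eta(s,1)\in\R$. The reflection $(s,t)\mapsto(-s,t)$ then conjugates $D_S^*$ into an operator $D_{\tilde S}$ of exactly the same form as $D_S$, where $\tilde S(s,t):=S^T(-s,t)$. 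Because the asymptotic matrices $S_x(t)$ and $S_y(t)$ in~\eqref{eq:Sxy} are symmetric, the limits of $\tilde S$ as $s\to\mp\infty$ are $S_y$ and $S_x$ respectively, so $D_{\tilde S}$ fits the framework of Lemma~\ref{le:index} with the $x$- and $y$-ends swapped. By the spectral-flow-equals-Maslov-index identity~\eqref{eq:maspec} applied to both operators, $\mathrm{index}(D_{\tilde S})=-\mathrm{index}(D_S)\le 0$. Applying the injectivity half to $D_{\tilde S}$ gives $\ker D_{\tilde S}=0$, hence $\ker D_S^*=0$ and $D_S$ is surjective.

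The one point that needs care is the appeal to the similarity principle to ensure both that zeros of kernel elements of $D_S$ are isolated and that their local indices $\iota(z,\xi)$ are positive at interior and boundary zeros alike; this is classical for Cauchy--Riemann type operators with totally real boundary conditions. Beyond that, every step is an algebraic consequence of Lemma~\ref{le:index} and the symmetry of the limiting matrices $S_x,S_y$, so no further analytic obstacle arises.
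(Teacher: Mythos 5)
Your proof is correct and follows essentially the same route as the paper: inject the hypotheses into the linear index formula of Lemma~\ref{le:index}, use positivity of the local indices at zeros (the paper cites~\cite[Thm.~C.1.10]{MS}, which is the similarity-principle argument you invoke) to force $\mathrm{index}(D_S)\ge 1$ whenever $\ker D_S\ne 0$, and then reduce surjectivity to injectivity via the formal adjoint. The one point where you supply more detail than the paper is the surjectivity half: the paper simply asserts that the adjoint $\eta\mapsto-\p_s\eta+\i\p_t\eta+S^T\eta$ ``is therefore injective by what we just proved,'' whereas you spell out the reflection $(s,t)\mapsto(-s,t)$ that conjugates $D_S^*$ into an operator $D_{\tilde S}$ with $\tilde S(s,t)=S^T(-s,t)$ of the same Cauchy--Riemann form, which is the honest justification for invoking the first half; this is a worthwhile clarification, not a different method.
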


\begin{proof}
If $\xi\in\cW$ is a nonzero element in the kernel of $D_S$ 
then $\xi$ satisfies the hypotheses of Lemma~\ref{le:index}.
Moreover, every zero of $\xi$ has positive index by the argument 
in the proof of Theorem~C.1.10 
in~\cite[pages~561/562]{MS}.\phantomsection\label{MS}
Hence the index of $D_S$ is positive by the linear index formula
in Lemma~\ref{le:index}.  This shows that $D_S$ is injective 
whenever $\mathrm{index}(D_S)\le 0$.
If $D_S$ has nonnegative index then the formal adjoint operator 
$\eta\mapsto -\p_s\eta+\i\p_t\eta+S^T\eta$ has nonpositive index 
and is therefore injective by what we just proved.  
Since its kernel is the $L^2$-orthogonal complement 
of the image of $D_S$ it follows that $D_S$ is surjective.
This proves Lemma~\ref{le:onto}.
\end{proof}

\begin{proof}[Proof of Theorem~\ref{thm:MASLOV}] 
\phantomsection\label{proof:MASLOV}
The index formula~\eqref{eq:MASLOV} follows from 
the linear index formula~\eqref{eq:index} in Lemma~\ref{le:index} 
with $\xi:=\Phi^{-1}\p_sv$.  The index formula shows that $D_v$ 
has positive index for every nonconstant $(\alpha,\beta)$-holomorphic 
strip $v:\S\to\Sigma$. Hence $D_v$ is onto by Lemma~\ref{le:onto}.
This proves Theorem~\ref{thm:MASLOV}.
\end{proof}

\begin{proof}[Proof of Theorem~\ref{thm:lunestrip}]  
\phantomsection\label{proof:lunestrip}
The proof has four steps.

\medskip\noindent{\bf Step~1.}
{\it The map~\eqref{eq:lunestrip} is well defined.}

\medskip\noindent
Let $u,u':\D\to\Sigma$ be equivalent smooth $(\alpha,\beta)$-lunes
from $x$ to $y$. Then there is an orientation preserving 
diffeomorphism $\phi:\D\to\D$ such that $\phi(\pm1)=\pm1$
and $u':=u\circ\phi$.  Consider the holomorphic strips
$
v:=u\circ\phi_u\circ g:\S\to\Sigma
$
and
\begin{eqnarray*}
v'
&:=&
u'\circ\phi_{u'}\circ g \\
&=&
u\circ\phi\circ\phi_{u\circ\phi}\circ g \\
&=&
v\circ g^{-1}\circ\phi_u^{-1}\circ\phi\circ\phi_{u\circ\phi}\circ g.
\end{eqnarray*}
By the definition of $\phi_u$ 
we have $(u\circ\phi_u)^*J=\i$ and 
$(u\circ\phi\circ\phi_{u\circ\phi})^*J=\i$.
(See equation~\eqref{eq:phiu}.)
Hence the composition 
$\phi_u^{-1}\circ\phi\circ\phi_{u\circ\phi}:
\D\setminus\{\pm1\}\to\D\setminus\{\pm1\}$
is holomorphic and so is the composition
$
g^{-1}\circ\phi_u^{-1}\circ\phi\circ\phi_{u\circ\phi}\circ g:\S\to\S.
$
Hence this composition is given by a time shift and this proves Step~1.

\medskip\noindent{\bf Step~2.}
{\it The map~\eqref{eq:lunestrip} is injective.}

\medskip\noindent
Let $u,u':\D\to\Sigma$ be smooth $(\alpha,\beta)$-lunes from $x$
to $y$ and define $v:=u\circ\phi_u\circ g$ and $v':=u'\circ\phi_{u'}\circ g$.
(See equations~\eqref{eq:phiu} and~\eqref{eq:g}.)
Assume that $v'=v\circ\tau$ for a translation $\tau:\S\to\S$.
Then $u'=u\circ\phi$, where $\phi:\D\to\D$ is given by 
$\phi|_{\D\setminus\{\pm1\}}=\phi_u\circ g\circ\tau\circ g^{-1}\circ\phi_{u'}^{-1}$
and $\phi(\pm1)=\pm1$.  Since $u$ and $u'$ are immersions, it follows
that $\phi$ is a diffeomorphism of $\D$. This proves Step~2.

\medskip\noindent{\bf Step~3.}
{\it Every holomorphic $(\alpha,\beta)$-strip $v:\S\to\Sigma$
from $x$ to $y$ with Viterbo--Maslov index one is an immersion 
and satisfies $\nu_x(v)=0$ and $\nu_y(v)=1$.}

\medskip\noindent
This follows immediately from the index formula~\eqref{eq:MASLOV}
in Theorem~\ref{thm:MASLOV}.

\smallbreak

\medskip\noindent{\bf Step~4.}
{\it The map~\eqref{eq:lunestrip} is surjective.}

\medskip\noindent
Let $v:\S\to\Sigma$ be a holomorphic $(\alpha,\beta)$-strip 
from $x$ to $y$ with Viterbo--Maslov index one.  
By Step~3, $v$ is an immersion and satisfies 
$\nu_x(v)=0$ and ${\nu_y(v)=1}$.  Hence it follows 
from~\eqref{eq:myv} and~\eqref{eq:mxv} that
\begin{equation}\label{eq:mxyv}
\begin{split}
\psi_y(v(s+\i t)) 
&= c_ye^{-(\pi-\theta_y)(s+\i t)} 
+ O(e^{-(\pi-\theta_y+\delta)s}),\qquad s> T,\\
\psi_x(v(s+\i t)) 
&= c_xe^{\theta_x(s+\i t)} 
+ O(e^{(\theta_x+\delta)s}),\qquad\qquad\;\;\; s< -T,
\end{split}
\end{equation}
for $T$ sufficiently large.  
This implies that the composition 
$$
u':=v\circ g^{-1}:\D\setminus\{\pm1\}\to\Sigma
$$
($g$ as in equation~\eqref{eq:g})
is an immersion and extends continuously to $\D$ by $u'(-1):=x$
and $u'(1):=y$.   Moreover, locally near $z=-1$, the image of $u'$
covers only one of the four quadrants into which $\Sigma$
is divided by $\alpha$ and $\beta$ and the same holds near $z=1$.

We must prove that there exists a homeomorphism $\phi:\D\to\D$
such that

\smallskip\noindent{\bf (a)}
$\phi(\pm1)=\pm1$ and $\phi(0)=0$,

\smallskip\noindent{\bf (b)}
$\phi$ restricts to an orientation preserving diffeomorphism
of $\D\setminus\{\pm1\}$, 

\smallskip\noindent{\bf (c)}
the map $u := u'\circ\phi^{-1}:\D\to\Sigma$
is a smooth lune.

\medskip\noindent
Once $\phi$ has been found it follows from~(c) that 
$u\circ\phi=v\circ g^{-1}:\D\setminus\{\pm1\}\to\Sigma$
is holomorphic and hence $\phi^*u^*J=\i$.
Hence it follows from~(a) and~(b) that $\phi=\phi_u$ 
(see equation~\eqref{eq:phiu}) and this implies that the 
equivalence class $[v]\in\cM^\Floer(x,y;J)$ belongs to the 
image of our map~\eqref{eq:lunestrip} as claimed.

To construct $\phi$, choose a smooth function 
$\lambda:\R\to(1/2,\infty)$ such that
\begin{equation}\label{eq:lambda}
\lambda(s) = \left\{\begin{array}{ll}
\pi/2\theta_x,&\mbox{for }s\le -2,\\
1,&\mbox{for }s\ge -1,
\end{array}\right.,\qquad
\lambda(s) + \lambda'(s)s >0.
\end{equation}
(For example define 
$\lambda(s):=\pi/\theta_x-1+\left(\pi/\theta_x-2\right)/s$
for $-2\le s\le -1$ to obtain a piecewise smooth function
and approximate by a smooth function.)
Then the map $s\mapsto \lambda(s)s$ is a diffeomorphism
of $\R$.  Consider the sets 
$$
\K_\lambda := \left\{e^{s+\i t}\,\bigg|\,
0\le t\le\frac{\pi}{2\lambda(s)}\right\},\qquad
\K := \left\{e^{s+\i t}\,\bigg|\,
0\le t\le\frac{\pi}{2}\right\}.
$$
Denote their closures by $\overline{\K}_\lambda:=\K_\lambda\cup\{0\}$
and $\overline{\K}:=\K\cup\{0\}$ and define the homeomorphism 
$\rho_\lambda:\overline{\K}_\lambda\to\overline{\K}$ 
by $\rho_\lambda(0):=0$ and
$$
\rho_\lambda(e^{s+\i t}) := e^{(s+\i t)\lambda(s)},\qquad e^{s+\i t}\in\K_\lambda.
$$
It restricts to a diffeomorphism from $\K_\lambda$
to $\K$ and it satisfies $\rho_\lambda(\zeta) = \zeta^{\pi/2\theta_x}$ 
for $\abs{\zeta}\le e^{-2}$ and $\rho_\lambda(\zeta)=\zeta$ 
for $\abs{\zeta}\ge e^{-1}$.  Consider the map 
\begin{equation}\label{eq:wla}
w_\lambda:\overline{\K}_\lambda\to\Sigma,\qquad
w_\lambda(\zeta) 
:= u'\left(\frac{\rho_\lambda(\zeta)-1}{\rho_\lambda(\zeta)+1}\right).
\end{equation}
We claim that $w_\lambda:\overline{\K}_\lambda\to\Sigma$
is a $C^1$ immersion.  If $\zeta := e^{s+\i t} \in \K_\lambda$ 
with $\abs{\zeta} = e^s \le e^{-2}$ then by~\eqref{eq:g}
$$
\frac{\rho_\lambda(e^{s+\i t})-1}{\rho_\lambda(e^{s+\i t})+1}
= \frac{e^{(s+\i t)\pi/2\theta_x}-1}{e^{(s+\i t)\pi/2\theta_x}+1}
= g\left(\theta_x^{-1}(s+\i t)\right).
$$
Insert this as an argument in $u'$ and use the 
formula $u'\circ g=v$ to obtain
\begin{equation}\label{eq:wlav}
w_\lambda(e^{s+\i t}) 
= u'\left(\frac{\rho_\lambda(e^{s+\i t})-1}{\rho_\lambda(e^{s+\i t})+1}\right)
= v\left(\theta_x^{-1}(s+\i t)\right),\qquad
s\le -2.
\end{equation}
Thus 
\begin{equation*}
\begin{split}
\psi_x\left(w_\lambda(\zeta)\right)
&=
\psi_x\left(v\left(\theta_x^{-1}(s+\i t)\right)\right) \\
&= 
c_xe^{s+\i t} + O\bigl(e^{1+\delta/\theta_x)s}\bigr) \\
&= 
c_x\zeta + O\bigl(\abs{\zeta}^{1+\delta/\theta_x}\bigr)
\end{split}
\end{equation*}
for $\zeta=e^{s+\i t}\in\K_\lambda$ sufficiently small.
(Here the second equation follows from~\eqref{eq:mxyv}.)
Hence the map $w_\lambda:\overline{\K}_\lambda\to\Sigma$
in~\eqref{eq:wla} is complex differentiable at the origin and 
$d(\psi_x\circ w_\lambda)(0)=c_x$.  Next we prove that the derivative 
of $w_\lambda$ is continuous.  To see this, recall that
$\psi_x\circ w_\lambda$ and $\psi_x\circ v$ are 
holomorphic wherever defined and denote their complex derivatives
by $d(\psi_x\circ w_\lambda)$ and $d(\psi_x\circ v)$.  Differentiating 
equation~\eqref{eq:wlav} gives
$$
e^{s+\i t}d(\psi_x\circ w_\lambda)(e^{s+\i t})
= \theta_x^{-1}d(\psi_x\circ v)\left(\theta_x^{-1}(s+\i t)\right).
$$
By Corollary~\ref{cor:asymptotic}~(i),
$e^{-\theta_x(s+\i t)}d(\psi_x\circ v)(s+\i t)$ converges 
uniformly to $\theta_xc_x$ as $s$ tends to $-\infty$. 
Hence $d(\psi_x\circ w_\lambda)(\zeta)
=\theta_x^{-1}e^{-(s+\i t)}d(\psi_x\circ v)(\theta_x^{-1}(s+\i t))$
converges to ${c_x=d(\psi_x\circ w_\lambda)(0)}$ 
as $\zeta=e^{s+\i t}\in\K_\lambda$ tends to zero.  
Hence $w_\lambda$ is continuously differentiable 
near the origin and is a $C^1$ immersion as claimed.

\bigbreak

Now let $\sigma_\lambda:\overline{\K}_\lambda\to\D\setminus\{1\}$ 
be any diffeomorphism that satisfies 
$$
\sigma_\lambda(\zeta) = \frac{\zeta-1}{\zeta+1}\qquad\mbox{for}\quad \abs{\zeta}\ge 1.
$$
Define $\phi'':\D\to\D$ and $u'':\D\to\Sigma$ by $\phi''(1):=1$, $u''(1):=y$, and
$$
\phi''(z):= \frac{\rho_\lambda(\sigma_\lambda^{-1}(z))-1}
{\rho_\lambda(\sigma_\lambda^{-1}(z))+1},\qquad
u''(z) := u'\circ\phi''(z) = w_\lambda\circ\sigma_\lambda^{-1}(z)
$$
for $z\in\D\setminus\{1\}$. 
Then $\phi''=\id$ on $\left\{z\in\D\,|\,\RE\,z\ge0\right\}$,
the map ${\phi'':\D\to\D}$ satisfies~(a) and~(b),
and $u''|_{\D\setminus\{1\}}$ is an orientation preserving $C^1$ immersion.  
A similar construction near $y$ yields an orientation preserving $C^1$ immersion 
$u'''=u'\circ\phi''':\D\to\Sigma$ where $\phi''':\D\to\D$ satisfies~(a) and~(b).  
Now approximate $u'''$ in the $C^1$-topology by a smooth lune 
$u=u'\circ\phi:\D\to\Sigma$ to obtain the required map $\phi$.
This proves Step~4 and Theorem~\ref{thm:lunestrip}.
\end{proof}

\begin{proof}[Proof of Theorem~\ref{thm:floer3}] 
\phantomsection\label{proof:floer3}
By Theorem~\ref{thm:MASLOV}, the linearized operator $D_v$
in Floer theory is surjective for every $(\alpha,\beta)$-holomorphic 
strip $v$.  Hence there is a  boundary operator on the $\Z_2$ 
vector space $\CF(\alpha,\beta)$ as defined 
by Floer~\cite{FLOER1,FLOER2}\phantomsection\label{FLOERb}
in terms of the mod two count of $(\alpha,\beta)$-ho\-lo\-mor\-phic 
strips. By Theorem~\ref{thm:lunestrip} this boundary operator
agrees with the combinatorial one defined in terms 
of the mod two count of $(\alpha,\beta)$-lunes.  
Hence the combinatorial Floer homology of the pair 
$(\alpha,\beta)$ agrees with the analytic Floer homology defined by 
Floer.\index{Floer homology}\index{Floer homology!analytic}
This proves Theorem~\ref{thm:floer3}.
\end{proof}

\begin{remark}[{\bf Hearts and Diamonds}]\label{rmk:HL}\rm 
\index{hearts!and diamonds}\index{diamonds}
We have seen that the combinatorial boundary operator 
$\p$ on $\CF(\alpha,\beta)$ agrees with Floer's boundary 
operator by Theorem~\ref{thm:lunestrip}.  Thus we have  
two proofs that $\p^2=0$: the combinatorial proof using broken 
hearts and Floer's proof using his gluing construction. 
He showed (in much greater generality) 
that two $(\alpha,\beta)$-holomorphic strips
of index one (one from $x$ to $y$ and one from $y$ to $z$) 
can be glued together to give rise to a $1$-parameter family 
of $(\alpha,\beta)$-holomorphic strips (modulo time shift) 
of index two from~$x$ to~$z$.   
This one parameter family can be continued until it
ends at another pair of $(\alpha,\beta)$-holomorphic strips 
of index one (one from $x$ to some intersection point $y'$ 
and one from $y'$ to $z$).  These one parameter families 
are in one-to-one correspondence to 
$(\alpha,\beta)$-hearts from~$x$ to~$z$. 
This can be seen geometrically as follows. 
Each glued $(\alpha,\beta)$-holomorphic strip from~$x$ to~$z$
has a critical point on the $\beta$-boundary near $y$ for a broken
heart of type~(a). The $1$-manifold is parametrized by the 
position of the critical value. There is precisely 
one $(\alpha,\beta)$-holomorphic strip in this moduli space 
without critical point and an angle between $\pi$ and $2\pi$ at $z$.  
The critical value then moves onto the $\alpha$-boundary 
and tends towards $y'$ at the other end of the moduli space 
giving a broken heart of type~(c) (See Figure~\ref{fig:hearts}).  

Here is an explicit formula for the gluing construction in the two dimensional setting.  
Let $h=(u,y,v)$ be a broken 
$(\alpha,\beta)$-heart of type~(a) or~(b) from $x$ to $z$. 
(Types~(c) and~(d) are analogous with $\alpha$ and $\beta$
interchanged.) Denote the left and right upper quadrants by
$Q_L:=(-\infty,0)+\i(0,\infty)$ and $Q_R:=(0,\infty)+\i(0,\infty)$.
Define diffeomorphisms $\psi_L:Q_L\to\D\setminus\p\D$ and 
$\psi_R:Q_R\to\D\setminus\p\D$ by
$$
\psi_L(\zeta) := \frac{1+\zeta}{1-\zeta},\qquad
\psi_R(\zeta) := \frac{\zeta-1}{\zeta+1}.
$$
The extensions of these maps to M\"obius transformations
of the Riemann sphere are inverses of each other. 
Define the map $w:Q_L\cup Q_R\to\Sigma$ by 
$$
w(\zeta):= \left\{\begin{array}{ll}
u(\psi_L(\zeta)),&\mbox{for }\zeta\in Q_L,\\
v(\psi_R(\zeta)),&\mbox{for }\zeta\in Q_R.
\end{array}\right.
$$
The maps $u\circ\psi_L$ and $v\circ\psi_R$ send suitable intervals
on the imaginary axis starting at the origin to the same arc on $\beta$.
Modify $u$ and $v$ so that $w$ extends to a smooth map 
on the slit upper half plane $\mathbb{H}\setminus\i[1,\infty)$,
still denoted by $w$.   Here $\mathbb{H}\subset\C$
is the closed upper half plane.  
Define $\phi_\eps:\D\to\mathbb{H}$ by 
$$
\phi_\eps(z) := \frac{2\eps z}{1-z^2},\qquad z\in\D,\qquad 0<\eps<1.
$$
This map sends the open set $\mathrm{int}(\D)\cap Q_L$ 
diffeomorphically onto $Q_L$
and it sends $\mathrm{int}(\D)\cap Q_R$ 
diffeomorphically onto $Q_R$.
It also sends the interval $\i[0,1]$ 
diffeomorphically onto $\i[0,\eps]$. 
The composition $w\circ\phi_\eps:\mathrm{int}(\D)\to\Sigma$
extends to a smooth map on $\D$ denoted by $w_\eps:\D\to\Sigma$.
An explicit formula for $w_\eps$ is 
$$
w_\eps(z) = \left\{\begin{array}{ll}
u\left(\frac{1-z^2+2\eps z}{1-z^2-2\eps z}\right),&
\mbox{if }z\in\D\mbox{ and }\RE\,z\le 0,\\
v\left(\frac{-1+z^2+2\eps z}{1+z^2+2\eps z}\right),
&\mbox{if }z\in\D\mbox{ and }\RE\,z\ge 0.
\end{array}\right.
$$
The derivative of this map at every point
$z\ne\i$ is an orientation preserving isomorphism.
Its only critical value is the point 
$$
c_\eps := u\left(\frac{1+\i\eps}{1-\i\eps}\right)
= v\left(\frac{\i\eps-1}{\i\eps+1}\right)
\in\beta.
$$
Note that $c_\eps$ tends to $y=u(1)=v(-1)$ as $\eps$ tends to zero.
The composition of $w_\eps$ with a suitable $\eps$-dependent
diffeomorphism $\S\to\D\setminus\{\pm 1\}$ gives the 
required one-parameter family of glued holomorphic strips.
\end{remark}


\section{Further Developments}\label{sec:FD}

There are many directions in which the theory developed 
in the present memoir can be extended. Some of these 
directions and related work in the literature
are discussed below.

\subsection*{Floer Homology}

If one drops the hypothesis that the loops $\alpha$ and $\beta$
are not contractible and not isotopic to each other there are three 
possibilities.  In some cases the Floer homology groups are still well
defined and invariant under (Hamiltonian) isotopy,
in other cases invariance under isotopy breaks down,
and there are examples with $\p\circ\p\ne 0$, 
so Floer homology is not even defined.  All these phenomena
have their counterparts in combinatorial  Floer homology.

A case in point is that of two transverse embedded circles 
$
\alpha,\beta\subset\C
$ 
in the complex plane. In this case the boundary 
operator 
$$
\p:\CF(\alpha,\beta)\to\CF(\alpha,\beta)
$$
of Section~\ref{sec:FLOER} still satisfies $\p\circ\p=0$.
However, 
$
\ker\p=\im\,\p
$ 
and so the (combinatorial) Floer homology groups vanish.
This must be true because (combinatorial) Floer homology is 
still invariant under isotopy and the loops can be disjoined 
by a translation.  

A second case is that of two transverse embedded loops
in the sphere $\Sigma=S^2$.  Here the Floer homology groups
are nonzero when the loops intersect and vanish otherwise.
An interesting special case is that of two equators.
(Following Khanevsky we call an embedded circle
$\alpha\subset S^2$ an {\bf equator} 
when the two halves \index{equator} of 
$S^2\setminus\alpha$ have the same area.)  
In this case the combinatorial Floer homology groups 
do not vanish, but are only invariant under Hamiltonian isotopy.  
This is an example of the monotone case for Lagrangian
Floer theory (see Oh~\cite{OH}),\phantomsection\label{OHb}
and the theory  developed by Biran--Cornea 
applies~\cite{BC}.\phantomsection\label{BC}
For an interesting study of diameters (analogues 
of equators for discs) see 
Khanevsky~\cite{K}.\phantomsection\label{K}

\bigbreak

A similar case is that of two noncontractible transverse 
embedded loops 
$$
\alpha,\beta\subset\Sigma
$$ 
that are isotopic to each other.
Fix an area form on $\Sigma$. If $\beta$ is Hamiltonian
isotopic to $\alpha$ then the combinatorial Floer homology groups 
do not vanish and are invariant under Hamiltonian isotopy, 
as in the case of two equators on $S^2$.  
If $\beta$ is non-Hamiltonian isotopic to $\alpha$ 
(for example a distinct parallel copy), then the Floer
homology groups are no longer invariant under
Hamiltonian isotopy, as in the case of two embedded circles 
in $S^2$ that are not equators.  However, if we take account 
of the areas of the lunes by introducing combinatorial 
Floer homology with coefficients in an appropriate Novikov ring, 
the Floer homology groups will be invariant under Hamiltonian 
isotopy. When the Floer homology groups vanish it is
interesting to give a combinatorial description of the 
relevant torsion invariants 
(see Hutchings--Lee~\cite{HL1,HL2}).\phantomsection\label{HL}
This involves an interaction between lunes and annuli.

A different situation occurs when $\alpha$ is not contractible and
$\beta$ is contractible.  In this case
$$
\p\circ\p=\id
$$ 
and one can prove this directly in the combinatorial setting.
For example, if~$\alpha$ and $\beta$ intersect in precisely
two points $x$ and $y$ then there is precisely one lune from
$x$ to $y$ and precisely one lune from $y$ to $x$. 
They are embedded and their union is the disc encircled 
by $\beta$  rather than a heart as in Section~\ref{sec:HEART}.  
In the analytical setting this disc bubbles off in the moduli space 
of index two holomorphic strips from $x$ to itself.
This is a simple example of the obstruction theory developed
in great generality by 
Fukaya--Oh--Ohta--Ono~\cite{FOOO}.\phantomsection\label{FOOO1}

\subsection*{Moduli Spaces}

Another direction is to give a combinatorial description
of all holomorphic strips, not just those of index one.
The expected result is that they are uniquely determined,
up to translation, by their $(\alpha,\beta)$-trace 
$$
\Lambda=(x,y,\w)
$$ 
with $\w\ge0$,  the positions of the critical values,
suitable monodromy data, and the angles at infinity. 
(See Remark~\ref{rmk:HL} for a discussion of the 
Viterbo--Maslov index two case.)  This can be viewed 
as a natural generalization of Riemann--Hurwitz theory.  
For inspiration see the work of Okounkov and Pandharipande 
on the Gromov--Witten theory of 
surfaces~\cite{OP1,OP2,OP3,OP4}.\phantomsection\label{OP}

\subsection*{The Donaldson Triangle Product}

Another step in the program, already discussed 
in~\cite{DESILVA},\phantomsection\label{DESILVA2} 
is the combinatorial description of the product 
structures\index{Floer homology!triangle product}
$$
\HF(\alpha,\beta)\otimes\HF(\beta,\gamma)\to\HF(\alpha,\gamma)
$$
for triples of noncontractible, pairwise nonisotopic, 
and pairwise transverse embedded loops in a closed 
oriented $2$-manifold $\Sigma$.  The combinatorial 
setup involves the study of immersed triangles in $\Sigma$.  
When the triangle count is infinite, for example
on the $2$-torus, the definition of the product 
requires Floer homology with coefficients in Novikov rings.
The proof that the resulting map
$$
\CF(\alpha,\beta)\otimes\CF(\beta,\gamma)\to\CF(\alpha,\gamma)
$$
on the chain level is a chain homomorphism is based on similar 
arguments as in Section~\ref{sec:HEART}.  The proof that the product 
on homology is invariant under isotopy is based on similar arguments as 
in Section~\ref{sec:ISOTOPY}.  A new ingredient is 
the phenomenon that $\gamma$ can pass over an intersection point
of $\alpha$ and $\beta$ in an isotopy.  In this case the number 
of intersection points does not change but it is necessary to 
understand how the product map changes on the chain level.
The proof of associativity requires the study of immersed
rectangles and uses similar arguments as in Section~\ref{sec:HEART}.

In the case of the $2$-torus the study of triangles gives rise to 
Theta-functions as noted by Kontsevich~\cite{KONTSEVICH}.
\phantomsection\label{KONTSEVICH}
This is an interesting, and comparatively easy, 
special case of homological mirror symmetry. 

\subsection*{The Fukaya Category}

A natural extension of the previous discussion is 
to give a combinatorial description of the Fukaya 
category~\cite{FOOO}.\phantomsection\label{FOOO2}
A directed version of this category was described 
by Seidel~\cite{SEIDEL1}.\phantomsection\label{SEIDEL1b}  
In dimension two the directed Fukaya category is associated 
to a finite ordered collection
$$
\alpha_1,\alpha_2,\dots,\alpha_n\subset\Sigma
$$
of noncontractible, pairwise nonisotopic, and pairwise transverse
embedded loops in $\Sigma$. Interesting examples of such tuples 
arise from vanishing cycles of Lefschetz fibrations over the disc
with regular fiber $\Sigma$ (see~\cite{SEIDEL1}). 
\phantomsection\label{SEIDEL1c}  

The Fukaya category, on the combinatorial level, 
involves the study of immersed polygons. 
Some of the results in the present memoir (such as the combinatorial
techniques in Sections~\ref{sec:HEART} and~\ref{sec:ISOTOPY},
the surjectivity of the Fredholm operator, and the formula 
for the Viterbo--Maslov index in Section~\ref{sec:LS})
extend naturally to this setting.  On the other hand the algebraic 
structures are considerably more intricate for $A^\infty$ categories.  
The combinatorial approach has been used to compute the derived
Fukaya category of a surface by 
Abouzaid~\cite{A},\phantomsection\label{A}
and to establish homological mirror symmetry for punctured
spheres by Abouzaid--Auroux--Efimov--Katzarkov--Orlov~\cite{AAEKO}
\phantomsection\label{AAEKO} and for a genus two surface by 
Seidel~\cite{SEIDEL2}.\phantomsection\label{SEIDEL2}

\newpage
\appendix
\part*{Appendices}
\addcontentsline{toc}{part}{Appendices}


\section{The Space of Paths}\label{app:path} 

We assume throughout that $\Sigma$ is a connected oriented smooth
$2$-manifold without boundary and 
$
\alpha,\beta\subset\Sigma
$
are two embedded loops.  Let
$$
    \Om_{\alpha,\beta}
    := \left\{x\in\Cinf([0,1],\Sigma)\,|\,
       x(0)\in\alpha,\,x(1)\in\beta\right\}
$$
denote the space of paths connecting $\alpha$ to $\beta$.

\begin{proposition}\label{prop:dbae}
Assume that $\alpha$ and $\beta$ are not contractible 
and that $\alpha$ is not isotopic to $\beta$.  
Then each component of $\Om_{\alpha,\beta}$ 
is simply connected and hence
$
H^1(\Om_{\alpha,\beta};\R)=0.
$
\end{proposition}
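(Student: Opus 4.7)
The plan is to lift loops in $\Om_{\alpha,\beta}$ to the universal cover and show they are null-homotopic there. Since $\alpha$ is noncontractible, $\Sigma\ne S^2$, so the universal cover $\pi:\tSi\to\Sigma$ is diffeomorphic to~$\C$; let $\Gamma\subset\Diff(\tSi)$ denote the deck group. Fix a component of $\Om_{\alpha,\beta}$ and a basepoint $x_0$ in that component, choose a lift $\tx_0:[0,1]\to\tSi$, and let $\talpha_0$ and $\tbeta_0$ be the unique lifts of $\alpha$ and $\beta$ containing $\tx_0(0)$ and $\tx_0(1)$ respectively. Both $\talpha_0$ and $\tbeta_0$ are properly embedded arcs in $\tSi$ diffeomorphic to~$\R$, since $\alpha,\beta$ are noncontractible embedded circles.

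Next, a based loop $f:S^1\to\Om_{\alpha,\beta}$ corresponds to a continuous map $F:[0,1]\times[0,1]\to\Sigma$ with $F(s,0)\in\alpha$, $F(s,1)\in\beta$, and $F(0,\cdot)=F(1,\cdot)=x_0$. Lift $F$ to $\tF:[0,1]\times[0,1]\to\tSi$ with $\tF(0,\cdot)=\tx_0$. Since $F([0,1]\times\{0\})\subset\alpha$ is connected, its lift lies in a single component of $\pi^{-1}(\alpha)$, which must be $\talpha_0$; likewise $\tF([0,1]\times\{1\})\subset\tbeta_0$. Because $F(1,\cdot)=x_0$, the path $\tF(1,\cdot)$ is a lift of $x_0$, and hence $\tF(1,\cdot)=g\,\tx_0$ for a unique $g\in\Gamma$.

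The key step is to show $g=\id$. The deck transformation $g$ preserves $\talpha_0$ (since $g\talpha_0$ is a lift of $\alpha$ meeting $\tF(1,0)\in\talpha_0$, so the two coincide) and, by the same argument, preserves $\tbeta_0$. The stabilizer of $\talpha_0$ in $\Gamma$ is the infinite cyclic subgroup generated by the primitive deck transformation corresponding to~$\alpha$, and similarly for $\tbeta_0$. If $g\ne\id$, then some nontrivial iterate of $\alpha$ is freely homotopic to some nontrivial iterate of $\beta$ in $\Sigma$; by Lemma~\ref{le:dbae3} this forces $\alpha$ to be isotopic to $\beta$ (with some orientation), contradicting the hypotheses of the proposition. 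Hence $g=\id$, and $\tF$ is a genuine based loop in $\Om_{\talpha_0,\tbeta_0}$.

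Finally, the evaluation map $\Om_{\talpha_0,\tbeta_0}\to\talpha_0\times\tbeta_0\cong\R^2$ is a Serre fibration whose fibers are path spaces between fixed points in the contractible space $\tSi$, hence contractible; since the base is also contractible, $\Om_{\talpha_0,\tbeta_0}$ is contractible and in particular simply connected. Thus $\tF$ is null-homotopic in $\Om_{\talpha_0,\tbeta_0}$, and composing the null-homotopy with $\pi$ yields a null-homotopy of $f$ in $\Om_{\alpha,\beta}$. The identity $H^1(\Om_{\alpha,\beta};\R)=0$ then follows from simple-connectivity via Hurewicz and universal coefficients. The main obstacle is the third step, which is the only place the topological hypotheses on $\alpha$ and $\beta$ are used; everything else is routine once $\tF$ is known to be a loop rather than a path twisted by a nontrivial deck transformation.
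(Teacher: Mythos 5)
Your proof is correct and uses the same essential input (Lemma~\ref{le:dbae3}) but packages the argument differently. The paper takes a torus $u:S^1\times[0,1]\to\Sigma$ representing a loop in $\Om_{\alpha,\beta}$, extracts the boundary degrees $k_0=\deg(u(\cdot,0):S^1\to\alpha)$ and $k_1=\deg(u(\cdot,1):S^1\to\beta)$, shows $k_0=k_1=0$ via Lemmas~\ref{le:dbae2} and~\ref{le:dbae3}, and then factors $u$ through a sphere $v:S^2\to\Sigma$, concluding via $\pi_2(\Sigma)=0$. You instead lift to the universal cover, identify the relevant deck transformation $g$, show $g$ must simultaneously stabilize $\talpha_0$ and $\tbeta_0$, and deduce $g=\id$ from the same cyclic-stabilizer reasoning that underlies Lemma~\ref{le:dbae3}; you then finish with a fibration argument showing $\Om_{\talpha_0,\tbeta_0}$ is contractible. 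Your approach trades the degree/$\pi_2$ bookkeeping of the paper for a more structural lifting-and-fibration argument; what it buys is a cleaner separation of concerns (the topological hypotheses enter only in showing $g=\id$, after which everything is formal), at the cost of invoking slightly heavier machinery. One small point worth spelling out in a final write-up: the equality $g=h_\alpha^{k_0}=h_\beta^{k_1}$ with $k_0,k_1\neq0$ translates, after accounting for the basepoint change along $\tx_0$, into a free homotopy between $\alpha^{k_0}$ and $\beta^{k_1}$ (conjugate classes in $\pi_1$ give freely homotopic loops), which is exactly the hypothesis of Lemma~\ref{le:dbae3}.
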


The proof was explained to us by David 
Epstein~\cite{DBAE}.\phantomsection\label{DBAE2}
It is based on the following three lemmas. 
We identify $S^1\cong\R/\Z$.

\begin{lemma}\label{le:dbae1}
Let $\gamma:S^1\to\Sigma$ be a noncontractible loop 
and denote by
$$
\pi:\tSi\to\Sigma
$$
the covering generated by $\gamma$.
Then $\tSi$ is diffeomorphic to the cylinder.
\end{lemma}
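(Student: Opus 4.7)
The plan is to reduce the statement to the classification of $2$-manifolds with infinite cyclic fundamental group. Since $\gamma$ is noncontractible, $\Sigma$ is not diffeomorphic to $S^2$, and so, by the classification of orientable surfaces, the universal cover $\widehat{\Sigma}$ is diffeomorphic to $\R^2$; in particular $\pi_1(\Sigma)$ is torsion-free (surface groups of closed orientable surfaces of positive genus are torsion-free, and the fundamental group of any noncompact surface is free). Hence $[\gamma]\in\pi_1(\Sigma)$ generates an infinite cyclic subgroup, and by definition of the covering generated by $\gamma$ one has
$$
\pi_*\pi_1(\tSi)=\langle[\gamma]\rangle\cong\Z.
$$
Thus $\tSi$ is a connected orientable $2$-manifold without boundary with $\pi_1(\tSi)\cong\Z$.

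From here I would invoke the Kerékjártó--Richards classification of noncompact surfaces to finish: a connected orientable $2$-manifold without boundary is determined up to diffeomorphism by its genus (possibly infinite) and its space of ends. The constraint $\pi_1\cong\Z$ forces genus zero (since each handle would contribute a free factor of rank two to $\pi_1$) and exactly two ends (one end gives the plane with trivial $\pi_1$, three or more ends give a free group of rank $\ge 2$), which characterizes the open cylinder $S^1\times\R$.

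An equivalent, more hands-on route is to view $\tSi$ as the quotient of $\widehat{\Sigma}\cong\R^2$ by the cyclic group $\langle[\gamma]\rangle$ acting by orientation-preserving deck transformations, i.e.\ fixed-point-free and properly discontinuously. Any such smooth $\Z$-action on $\R^2$ is smoothly conjugate to integer translation by $(1,0)$, with quotient $S^1\times\R$; this is a smooth refinement of the Brouwer plane translation theorem. Alternatively, one can pick a smoothly embedded simple closed curve $c\subset\tSi$ representing a generator of $\pi_1(\tSi)$, show that $\tSi\setminus c$ has two simply connected noncompact components (each a planar surface with one boundary circle, hence diffeomorphic to $S^1\times[0,\infty)$), and glue them along $c$ to produce the cylinder.

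The main obstacle is that the classification step is not elementary in any self-contained sense: whichever route one chooses, one must invoke either Kerékjártó--Richards or the Brouwer translation theorem (in its smooth, properly discontinuous refinement). I would handle this by citing the classification of noncompact surfaces directly and keeping the proof of Lemma~\ref{le:dbae1} to a short paragraph, since the reader only needs the conclusion for the applications in Appendix~\ref{app:path}.
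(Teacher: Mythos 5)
Your proposal is correct, but it takes a genuinely different route from the paper. You identify $\tSi$ \emph{abstractly}: you compute $\pi_1(\tSi)\cong\Z$ (using that $\pi_1(\Sigma)$ is torsion-free because the universal cover is $\R^2$) and then invoke the Ker\'ekj\'art\'o--Richards classification of open surfaces to conclude that a boundaryless orientable surface with infinite cyclic fundamental group must be the open cylinder. The paper instead works \emph{geometrically}: it fixes a constant-curvature metric on $\Sigma$ via uniformization, so that the universal cover is isometric to $\R^2$ or $\HH^2$ and the deck group acts by isometries. A nontrivial fixed-point-free isometry of $\R^2$ is a translation and of $\HH^2$ is a hyperbolic (or parabolic) element, and the quotient of the model space by the cyclic group it generates is manifestly a cylinder --- no appeal to the classification of noncompact surfaces or to a Brouwer-type translation theorem is needed. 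Both approaches lean on a substantial input (Ker\'ekj\'art\'o--Richards versus uniformization), but the paper's choice fits better with the rest of the memoir, which already works with a fixed identification of the universal cover with $\C$. Note that your second alternative (a smooth Brouwer plane translation theorem) is conceptually closest to what the paper does, except that by rigidifying the action to be isometric the paper replaces that nontrivial dynamical fact with the elementary classification of fixed-point-free isometries of $\R^2$ and $\HH^2$. One small inaccuracy in your write-up: the full Richards classification data also records which ends are non-planar, though this extra datum is vacuous once the genus has been pinned down to zero, so your argument goes through.
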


\begin{proof}
By hypothesis, $\Sigma$ is oriented and has a nontrivial
fundamental group.  By the uniformization theorem,
choose a metric of constant curvature.
Then the universal cover of $\Sigma$ is isometric
to either $\R^2$ with the flat metric or
to the upper half space $\HH^2$ with the
hyperbolic metric.
The $2$-manifold $\tSi$
is a quotient of the universal cover of
$\Sigma$ by the subgroup of the group
of covering transformations generated by
a single element (a translation in the
case of $\R^2$ and a hyperbolic element
of $\PSL(2,\R)$ in the case of $\HH^2$).
Since $\gamma$ is not contractible, this element
is not the identity.  Hence $\tSi$
is diffeomorphic to the cylinder.
\end{proof}

\begin{lemma}\label{le:dbae2}
Let $\gamma:S^1\to\Sigma$ be a noncontractible loop 
and, for $k\in\Z$, define $\gamma^k:S^1\to\Sigma$
by 
$$
\gamma^k(s):=\gamma(ks).
$$
Then $\gamma^k$ is contractible if and only if $k=0$.
\end{lemma}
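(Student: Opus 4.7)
The plan is to reduce the problem to the cylinder $\tSi$ provided by Lemma~\ref{le:dbae1} and there use the fact that $\pi_1$ of a cylinder is $\Z$.

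First I would dispose of the trivial case $k=0$: then $\gamma^0$ is a constant loop and is obviously contractible. For the nontrivial direction, fix $k\ne 0$ and let $\pi:\tSi\to\Sigma$ be the covering generated by $\gamma$, so that $\tSi$ is a cylinder by Lemma~\ref{le:dbae1}. By the very definition of this covering, $\gamma$ admits a lift $\tga:S^1\to\tSi$. Moreover, under the standard identification
$$
\pi_*:\pi_1(\tSi,\tga(0))\xrightarrow{\cong}\langle[\gamma]\rangle\subset\pi_1(\Sigma,\gamma(0)),
$$
the class $[\tga]$ is sent to $[\gamma]$, and since $\pi_1(\tSi)\cong\Z$ (the cylinder being homotopy equivalent to $S^1$), the class $[\tga]$ generates $\pi_1(\tSi)$.

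Next I would lift $\gamma^k$. Since $\gamma^k=\gamma\circ(s\mapsto ks)$ and $\tga$ is a lift of $\gamma$, the map $\tga^k:S^1\to\tSi$ defined by $\tga^k(s):=\tga(ks)$ is a lift of $\gamma^k$. In $\pi_1(\tSi)\cong\Z$ we have $[\tga^k]=k[\tga]\ne 0$, so $\tga^k$ is not contractible in $\tSi$.

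Finally I would transfer noncontractibility back to $\Sigma$ by the standard lifting argument: if $\gamma^k$ were contractible in $\Sigma$, there would be a continuous extension $F:D^2\to\Sigma$ of $\gamma^k$. Since $D^2$ is simply connected and $\pi:\tSi\to\Sigma$ is a covering, $F$ lifts to $\tF:D^2\to\tSi$ with $\tF|_{\p D^2}=\tga^k$ (after choosing the lift of a basepoint to agree with $\tga^k$ there); this would contradict the noncontractibility of $\tga^k$ in $\tSi$ established in the previous step. Hence $\gamma^k$ is not contractible, completing the proof. No real obstacle arises: everything reduces to elementary covering space theory once Lemma~\ref{le:dbae1} is in hand.
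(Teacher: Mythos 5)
Your proof is correct and follows exactly the route the paper takes: pass to the cylinder covering $\pi:\tSi\to\Sigma$ of Lemma~\ref{le:dbae1}, observe that $\gamma^k$ lifts to a loop $\tga^k$ with $[\tga^k]=k[\tga]\ne0$ in $\pi_1(\tSi)\cong\Z$, and transfer noncontractibility back down via uniqueness of lifts. The paper states the lifting argument in one sentence; you have merely spelled out the intermediate steps.
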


\begin{proof}
Let $\pi:\tSi\to\Sigma$ be as in Lemma~\ref{le:dbae1}.
Then, for $k\ne0$, the loop $\gamma^k:S^1\to\Sigma$ lifts to
a noncontractible loop in $\tSi$.
\end{proof}

\begin{lemma}\label{le:dbae3}
Let $\gamma_0,\gamma_1:S^1\to\Sigma$ be noncontractible 
embedded loops and suppose that $k_0,k_1$ 
are nonzero integers such that $\gamma_0^{k_0}$ 
is homotopic to $\gamma_1^{k_1}$.  
Then either $\gamma_1$ is homotopic to $\gamma_0$ and $k_1=k_0$
or $\gamma_1$ is homotopic to ${\gamma_0}^{-1}$ and $k_1=-k_0$.
\end{lemma}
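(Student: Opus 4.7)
The plan is to apply Lemma~\ref{le:dbae1} to $\gamma_0$: let $\pi:\tSi\to\Sigma$ be the cylinder cover of $\Sigma$ corresponding to $\langle[\gamma_0]\rangle\subset\pi_1(\Sigma)$, in which $\gamma_0$ has a lift $\tilde\gamma_0$ generating $\pi_1(\tSi)\cong\Z$. The entire argument will take place in this cylinder.

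First I would lift the free homotopy $H:S^1\times[0,1]\to\Sigma$ from $\gamma_0^{k_0}$ to $\gamma_1^{k_1}$: since $H_\ast\pi_1(S^1\times[0,1])=\langle[\gamma_0]^{k_0}\rangle\subset\langle[\gamma_0]\rangle$, the homotopy lifts to $\tilde H:S^1\times[0,1]\to\tSi$, and the boundary value $\sigma:=\tilde H(\cdot,1)$ is a loop in $\tSi$ that projects to $\gamma_1^{k_1}$ and is freely homotopic to $\tilde\gamma_0^{k_0}$. Because $\pi_1(\tSi)\cong\Z$ is abelian, this gives $[\sigma]=k_0$ in $\Z$.

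Next I would use the embeddedness of $\gamma_1$: the connected image $\sigma(S^1)$ lies in one component $C$ of the embedded $1$-submanifold $\pi^{-1}(\gamma_1)\subset\tSi$. Since $[\sigma]=k_0\ne 0$, the component $C$ cannot be contractible in $\tSi$; this rules out $C\cong\R$ and leaves $C$ as an embedded circle isotopic to the core of the cylinder, with $[C]=\pm1$ in $\pi_1(\tSi)$. The covering $\pi|_C:C\to\gamma_1$ has some degree $d\ge 1$, and its pushforward $\pi_\ast[C]$ equals both a generator $\pm[\gamma_0]$ of $\pi_\ast\pi_1(\tSi)=\langle[\gamma_0]\rangle$ and a conjugate of $[\gamma_1]^d$ (from $\pi|_C$ being a $d$-fold cover onto $\gamma_1$). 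Thus a conjugate of $[\gamma_0]^{\pm1}$ is a $d$-th power in $\pi_1(\Sigma)$; since an embedded noncontractible loop in a surface represents a primitive (not a proper power) element of $\pi_1(\Sigma)$, and $\pi_1(\Sigma)$ is torsion-free, this forces $d=1$. Then $\pi|_C:C\to\gamma_1$ is a diffeomorphism, and comparing $[\sigma]=\deg(\sigma)[C]$ with $\pi\circ\sigma=\gamma_1^{k_1}$ yields $k_0=\pm k_1$.

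Finally, $C$ is a simple embedded lift of $\gamma_1$ in the cylinder, isotopic to $\tilde\gamma_0$ or $\tilde\gamma_0^{-1}$ according to whether $[C]=+1$ or $-1$; projecting this isotopy via $\pi$ gives the desired free homotopy between $\gamma_1$ and $\gamma_0^{\pm1}$. The hardest step will be establishing $d=1$: it rests on the primitivity of the homotopy class of an embedded noncontractible loop together with torsion-freeness of $\pi_1(\Sigma)$, so that $[\gamma_0]^{\pm1}$ cannot be a nontrivial power in $\pi_1(\Sigma)$. Without this geometric-algebraic input, $C$ could in principle be a multiple cover of $\gamma_1$ and the clean matching of exponents $k_0=\pm k_1$ would break down.
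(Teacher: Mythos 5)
Your proof is correct, but it departs from the paper's argument at the key step. Both pass to the cylinder cover $\pi:\tSi\to\Sigma$ generated by $\gamma_0$ (Lemma~\ref{le:dbae1}) and single out an embedded circle $C\subset\pi^{-1}(\gamma_1)$ that is homotopic to a generator of $\pi_1(\tSi)\cong\Z$ and covers $\gamma_1$ with some degree $d\ge1$; everything then hinges on showing $d=1$. You establish this by invoking the theorem that a noncontractible embedded loop on an orientable surface represents a primitive (non-power) element of $\pi_1(\Sigma)$, together with torsion-freeness. The paper instead runs a symmetry argument: interchanging $\gamma_0$ and $\gamma_1$ produces, in the $\gamma_1$-cylinder, an analogous lift of some power $\gamma_0^{j_0}$ homotopic to the generator there; projecting both statements to $\Sigma$ and composing yields a free homotopy from $\gamma_0$ to $\gamma_0^{j_0 j_1}$, which, lifted back to the $\gamma_0$-cylinder, forces $j_0 j_1=1$ in $\Z$, so $j_0=j_1=\pm1$. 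The paper's route is self-contained given Lemmas~\ref{le:dbae1} and~\ref{le:dbae2} and effectively re-derives the primitivity statement rather than quoting it; your appeal to primitivity is shorter but imports a substantive outside theorem the paper does not cite (and which, when proved via the cylinder cover, closely parallels the paper's own reasoning). Both approaches then extract $k_0=\pm k_1$ by comparing winding degrees in the cylinder.
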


\begin{proof}
Let $\pi:\tSi\to\Sigma$ be the covering generated by $\gamma_0$. 
Then ${\gamma_0}^{k_0}$ lifts to a closed curve in $\tSi$ 
and is homotopic to ${\gamma_1}^{k_1}$.
Hence ${\gamma_1}^{k_1}$ lifts
to a closed immersed curve in $\tSi$.
Hence there exists a nonzero integer $j_1$
such that ${\gamma_1}^{j_1}$ lifts
to an embedding $S^1\to\tSi$.
Any embedded curve in the cylinder is either
contractible or is homotopic to a generator.
If the lift of ${\gamma_1}^{j_1}$ were contractible
it would follow that ${\gamma_0}^{k_0}$ is contractible,
hence, by Lemma~\ref{le:dbae2}, $k_0=0$ 
in contradiction to our hypothesis.
Hence the lift of ${\gamma_1}^{j_1}$
to $\tSi$ is not contractible.
With an appropriate sign of $j_1$ it follows that
the lift of ${\gamma_1}^{j_1}$ is homotopic
to the lift of $\gamma_0$.
Interchanging the roles of $\gamma_0$ and $\gamma_1$,
we find that there exist nonzero integers
$j_0,j_1$ such that
$$
\gamma_0\sim{\gamma_1}^{j_1},\qquad
\gamma_1\sim{\gamma_0}^{j_0}
$$
in $\tSi$.  Hence $\gamma_0$ is homotopic to 
${\gamma_0}^{j_0j_1}$ in the free loop space of $\tSi$.
Since the homotopy lifts to the cylinder $\tSi$
and the fundamental group of $\tSi$
is abelian, it follows that 
$$
j_0j_1=1.
$$
If $j_0=j_1=1$ then $\gamma_1$ is homotopic to $\gamma_0$,
hence $\gamma_0^{k_1}$ is homotopic to ${\gamma_0}^{k_0}$,
hence ${\gamma_0}^{k_0-k_1}$ is contractible, 
and hence $k_0-k_1=0$, by Lemma~\ref{le:dbae2}.
If $j_0=j_1=-1$ then $\gamma_1$ is homotopic to ${\gamma_0}^{-1}$,
hence $\gamma_0^{-k_1}$ is homotopic to ${\gamma_0}^{k_0}$,
hence ${\gamma_0}^{k_0+k_1}$ is contractible, and hence $k_0+k_1=0$,
by Lemma~\ref{le:dbae2}.  This proves Lemma~\ref{le:dbae3}.
\end{proof}

\begin{proof}[Proof of Proposition~\ref{prop:dbae}] 
\phantomsection\label{proof:dbae}
Orient $\alpha$ and $\beta$ and
and choose orientation preserving
diffeomorphisms 
$$
\gamma_0:S^1\to\alpha,\qquad
\gamma_1:S^1\to\beta.
$$
A closed loop in $\Om_{\alpha,\beta}$ gives rise
to a map $u:S^1\times[0,1]\to\Sigma$ such that
$$
u(S^1\times\{0\})\subset\alpha,\qquad
u(S^1\times\{1\})\subset\beta.
$$
Let $k_0$ denote the degree of $u(\cdot,0):S^1\to\alpha$
and $k_1$ denote the degree of $u(\cdot,1):S^1\to\beta$.
Since the homotopy class of a map $S^1\to\alpha$
or a map $S^1\to\beta$ is determined by the degree
we may assume, without loss of generality, that
$$
u(s,0) = \gamma_0(k_0s),\qquad
u(s,1) = \gamma_1(k_1s).
$$
If one of the integers $k_0,k_1$ vanishes, so does the other,
by Lemma~\ref{le:dbae2}.  If they are both nonzero then 
$\gamma_1$ is homotopic to either $\gamma_0$ 
or $\gamma_0^{-1}$, by Lemma~\ref{le:dbae3}.
Hence $\gamma_1$ is isotopic to either $\gamma_0$ 
or $\gamma_0^{-1}$, by~\cite[Theorem~4.1]{EPSTEIN}.
\phantomsection\label{EPSTEIN2}
Hence $\alpha$ is isotopic to $\beta$, in contradiction
to our hypothesis.  This shows that
$$
k_0=k_1=0.
$$
With this established it follows that the map
$
u:S^1\times[0,1]\to\Sigma
$
factors through a map $v:S^2\to\Sigma$
that maps the south pole to $\alpha$ and the north pole
to $\beta$.  Since $\pi_2(\Sigma)=0$ it follows
that $v$ is homotopic, via maps with fixed north and
south pole, to one of its meridians.
This proves Proposition~\ref{prop:dbae}.
\end{proof}


\section{Diffeomorphisms of the Half Disc}
\label{app:diffeos_of_D}

\begin{proposition}\label{prop:disc}
The group of orientation preserving diffeomorphisms
$\phi:\D\to\D$ that satisfy $\phi(1)=1$ and $\phi(-1)=-1$
is connected.
\end{proposition}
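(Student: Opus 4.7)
The plan is to reduce the proposition to Smale's theorem that $\Diff^+(D^2,\p D^2)$ is connected. Write $G$ for the group of orientation preserving diffeomorphisms $\phi:\D\to\D$ with $\phi(\pm 1)=\pm 1$. First I would parameterize the two boundary arcs of $\D$: write $\D\cap\R=[-1,1]$ and identify $\D\cap S^1$ with $[0,\pi]$ via $\theta\mapsto e^{\i\theta}$. Since $\phi\in G$ preserves the boundary and is orientation preserving with both endpoints fixed, it restricts to orientation preserving diffeomorphisms $f:=\phi|_{[-1,1]}$ and $g:=\phi|_{[0,\pi]}$ of intervals fixing both endpoints. The spaces of such interval diffeomorphisms are convex, since the straight-line homotopies $f_t:=(1-t)f+t\cdot\id$ and $g_t:=(1-t)g+t\cdot\id$ have positive derivative whenever $f$ and $g$ do, and in particular contractible.

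Next I would extend $(f_t,g_t)$ to a smooth ambient isotopy $\phi_t:\D\to\D$ through $G$ with $\phi_0=\phi$, in such a way that $\phi_1$ equals the identity on some neighborhood $U$ of $\p\D$. Away from the corners $\pm 1$ this is a routine application of the collar neighborhood theorem combined with a cutoff along the inward normal direction. At each corner $\pm 1$ one instead uses a smooth chart, supplied by the corner collar theorem for smooth manifolds with corners, in which the two boundary arcs of $\D$ become the positive $x$- and $y$-axes; in such a chart the extension can be built explicitly as the product of the two one-dimensional boundary isotopies damped by a radial cutoff that is $1$ near the corner and $0$ outside a small quadrant. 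Patching these local constructions with a partition of unity yields the desired isotopy $\phi_t\in G$.

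Once $\phi_1$ is the identity on the neighborhood $U$ of $\p\D$, I would round off the corners of $\D$ inside $U$ to view $\phi_1$ as an orientation preserving diffeomorphism of a smooth closed disc $D^2\cong\D$ that is the identity on $\p D^2$. Smale's theorem then provides an isotopy from $\phi_1$ to the identity through $\Diff^+(D^2,\p D^2)$; every diffeomorphism along this isotopy fixes $\pm 1$ and preserves the two boundary arcs, so it belongs to $G$. Concatenating the two isotopies produces a path from $\phi$ to the identity in $G$, establishing connectedness.

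The main obstacle is the corner analysis in the first stage: the local corner chart and the cutoff function must be chosen so that the product extension of the two one-dimensional boundary isotopies is smooth across the corner and remains an orientation preserving diffeomorphism of $\D$ (as a manifold with corners) for every $t\in[0,1]$. Once this corner bookkeeping is in place, the remainder of the argument reduces to the ordinary collar neighborhood theorem and to Smale's theorem for the smooth disc.
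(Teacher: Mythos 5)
Your overall strategy coincides with the paper's: both proofs first reduce to a diffeomorphism that equals the identity on a neighborhood of $\p\D$ (you via a boundary isotopy plus isotopy extension, the paper via four explicit incremental cutoff constructions in Steps 1--4), and both then invoke a version of Smale's theorem. The difference in the first part is largely one of execution: the paper's Steps 1--2 (straighten $d\phi(\pm 1)$ first, then make $\phi$ locally the identity at the corners) spell out concretely what your corner chart and radial cutoff must achieve, and could serve as a blueprint for the extension you only sketch. The more substantive point is in the final step, where your argument as written has a small gap: Smale's theorem applied to $\Diff^+(D^2,\p D^2)$ gives an isotopy whose members are the identity \emph{on} $\p D^2$ but need not be the identity \emph{near} $\p D^2$, so extending them by the identity on $\D\setminus D^2$ need not produce smooth diffeomorphisms of $\D$, and hence need not give a path in the group $G$. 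What you actually need is that the group of diffeomorphisms of $D^2$ equal to the identity near the boundary --- equivalently, compactly supported diffeomorphisms of the open disc --- is connected; this is a standard corollary of Smale's theorem via a collar argument, but it is stronger than the statement you quoted. The paper circumvents this issue by collapsing $\p\D$ to a point of $S^2$, applying Smale's theorem for the sphere, and then explicitly correcting the resulting isotopy in three stages (first fixing the origin by composing with rotations, then fixing the differential there, and finally making the isotopy the identity near the origin by a cutoff); that explicit three-stage correction is exactly the work that the sharper disc-version of Smale's theorem packages up.
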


\begin{proof} 
Choose $\phi$ as in the proposition.
We prove in five steps that $\phi$ is isotopic to the identity.

\medskip
\noindent{\bf Step~1.}
{\it
We may assume that $d\phi(-1)=d\phi(1)=\one$.
}

\medskip
\noindent
The differential of $\phi$ at $-1$ has the form
$$
     d\phi(-1)
     = \left(\begin{array}{cc}
        a & 0 \\ 0 & b
       \end{array}\right).
$$
Let $X:\D\to\R^2$ be a vector field on $\D$
that is tangent to the boundary, is supported
in an $\eps$-neighborhood of $-1$,
and satisfies
$$
     dX(-1)
     = \left(\begin{array}{cc}
       \log a & 0 \\ 0 & \log b
       \end{array}\right).
$$
Denote by $\psi_t:\D\to\D$ the flow of $X$.
Then $d\psi_1(-1)=d\phi(-1)$.
Replace $\phi$ by $\phi\circ{\psi_1}^{-1}$.

\smallbreak

\medskip
\noindent{\bf Step~2.}
{\it
We may assume that $\phi$ is equal to the identity map
near $\pm1$.
}

\medskip
\noindent
Choose local coordinates near $-1$ that identify
a neighborhood of $-1$ with a neighborhood
of zero in the right upper quadrant $Q$.
This gives rise to a local diffeomorphism
$\psi:Q\to Q$ such that $\psi(0)=0$.
Choose a smooth cutoff function
$\rho:[0,\infty)\to[0,1]$
such that
$$
     \rho(r)
     = \left\{\begin{array}{rl}
       1,&\mbox{for }r\le1/2,\\
       0,&\mbox{for }r\ge 1,
       \end{array}\right.
$$
For $0\le t\le 1$ define $\psi_t:Q\to Q$ by
$$
     \psi_t(z)
     := \psi(z) + t\rho(|z|^2/\eps^2))(z-\psi(z)).
$$
Since $d\psi(0)=\one$ this map is a diffeomorphism
for every $t\in[0,1]$ provided that $\eps>0$
is sufficiently small.  Moreover,
$\psi_t(z)=\psi(z)$ for $|z|\ge\eps$,
$\psi_0=\psi$, and $\psi_1(z)=z$ for $|z|\le\eps/2$.

\medskip
\noindent{\bf Step~3.}
{\it
We may assume that $\phi$ is equal to the
identity map near $\pm 1$ and on $\p\D$.
}

\medskip
\noindent
Define $\tau:[0,\pi]\to[0,\pi]$ by
$$
     \phi(e^{i\theta})=e^{i\tau(\theta)}.
$$
Let $X_t:\D\to\R^2$ be a vector field that is
equal to zero near $\pm1$ and satisfies
$$
     X_t(z+t(\phi(z)-z)) = \phi(z)-z
$$
for $z\in\D\cap\R$ and
$$
     X_t(z)
     = i(\tau(\theta)-\theta)z,\qquad
     z = e^{i(\theta+t(\tau(\theta)-\theta))}.
$$
Let $\psi_t:\D\to\D$ be the isotopy generated by
$X_t$ via
$
     \p_t\psi_t = X_t\circ\psi_t
$
and
$
     \psi_0=\id.
$
Then $\psi_1$ agrees with $\psi$ on $\p\D$
and is equal to the identity near $\pm 1$.
Replace $\phi$ by $\phi\circ{\psi_1}^{-1}$.

\medskip
\noindent{\bf Step~4.}
{\it
We may assume that $\phi$ is equal to the identity map
near $\p\D$.
}

\medskip
\noindent
Write
$$
     \phi(x+iy) = u(x,y)+iv(x,y).
$$
Then
$$
     u(x,0)=x,\qquad \p_yv(x,0) = a(x).
$$
Choose a cutoff function $\rho$ equal to one near
zero and equal to zero near one.  Define
$$
     \phi_t(x,y) := u_t(x,y) + v_t(x,y)
$$
where
$$
     u_t(x,y)
     := u(x,y) + t\rho(y/\eps)(x-u(x,y))
$$
and
$$
     v_t(x,y)
     := v(x,y) + t\rho(y/\eps)(a(x)y-v(x,y)).
$$
If $\eps>0$ is sufficiently small then
$\phi_t$ is a diffeomorphism for every $t\in[0,1]$.
Moreover, $\phi_0=\phi$ and $\phi_1$ satisfies
$$
     \phi_1(x+iy) = x + ia(x)y
$$
for $y\ge 0$ sufficiently small.
Now choose a smooth family of vector fields
$X_t:\D\to\D$ that vanish on the boundary
and near $\pm1$ and satisfy
$$
     X_t(x+i(y+t(a(x)y-y))) = i(a(x)y-y)
$$
near the real axis. Then the isotopy $\psi_t$
generated by $X_t$ satisfies
$\psi_t(x+iy)=x+iy + it(a(x)y-y)$
for $y$ sufficiently small.
Hence $\psi_1$ agrees with $\phi_1$ near
the real axis. Hence $\phi\circ{\psi_1}^{-1}$
has the required form near $\D\cap\R$.
A similar isotopy near $\D\cap S^1$
proves Step~4.

\medskip
\noindent{\bf Step~5.}
{\it
We prove the proposition.
}

\medskip
\noindent
Choose a continuous map $f:\D\to S^2=\C\cup\{\infty\}$
such that $f(\p\D)=\{0\}$ and $f$
restricts to a diffeomorphism
from $\D\setminus\p\D$ to $S^2\setminus\{0\}$.
Define $\psi:S^2\to S^2$ by
$$
     f\circ\psi = \phi\circ f.
$$
Then $\psi$ is a diffeomorphism, equal to the identity near the origin.
By a well known Theorem of 
Smale~\cite{SMALE2}\phantomsection\label{SMALE2}
(see also~\cite{EE} and~\cite{GRS-HMS})
\phantomsection\label{EE}\label{GRS-HMS}
$\psi$ is isotopic to the identity.
Compose with a path in $\SO(3)$
which starts and ends at the identity
to obtain an isotopy $\psi_t:S^2\to S^2$
such that $\psi_t(0)=0$. Let
$$
    \Psi_t:=d\psi_t(0),\qquad
    U_t:=\Psi_t({\Psi_t}^T\Psi_t)^{-1/2}.
$$
Then $U_t\in\SO(2)$ and $U_0=U_1=\one$.
Replacing $\psi_t$ by ${U_t}^{-1}\psi_t$ we may assume
that $U_t=\one$ and hence $\Psi_t$
is positive definite for every $t$.
Hence there exists a smooth path
$[0,1]\to\R^{2\times 2}:t\mapsto A_t$ such that
$$
    e^{A_t}=\Psi_t,\qquad
    A_0 = A_1 = 0.
$$
Choose a smooth family of compactly supported
vector fields $X_t:\C\to\C$ such that
$$
    dX_t(0) = A_t,\qquad
    X_0 = X_1 = 0.
$$
For every fixed $t$ let $\chi_t:S^2\to S^2$ be the time-$1$
map of the flow of $X_t$.  Then
$$
    \chi_t(0)=0,\qquad d\chi_t(0)=\Psi_t,\qquad
    \chi_0=\chi_1=\id.
$$
Hence the diffeomorphisms
$$
    \psi'_t := \psi_t\circ{\chi_t}^{-1}
$$
form an isotopy from $\psi_0'=\id$ to $\psi_1'=\psi$
such that $\psi_t'(0)=0$ and $d\psi_t'(0)=\one$
for every $t$. Now let $\rho:\R\to[0,1]$
be a smooth cutoff function that is equal to
one near zero and equal to zero near one. Define
$$
    \psi''_t(z):= \psi'_t(z) + \rho(|z|/\eps)(z-\psi'_t(z)).
$$
For $\eps>0$ sufficiently small this is an isotopy
from $\psi''_0=\id$ to $\psi''_1=\psi$
such that $\psi_t=\id$ near zero for every $t$.
The required isotopy $\phi_t:\D\to\D$ is now
given by
$
    f\circ\psi_t = \phi_t\circ f.
$
This proves Proposition~\ref{prop:disc}.
\end{proof}


\section{Homological Algebra}\label{app:homological}

Let $P$ be a finite set and $\nu:P\times P\to\Z$ be a function
that satisfies
\begin{equation}\label{eq:nu-CF}
     \sum_{q\in P}\nu(r,q)\nu(q,p) = 0
\end{equation}
for all $p,r\in P$.  Any such function
determines a chain complex $\p:C\to C$,
where $C=C(P)$ and $\p=\p^\nu$ are
defined by
$$
     C := \bigoplus_{p\in P}\Z p,\qquad
     \p q := \sum_{p\in P}\nu(q,p)p
$$
for $q\in P$. Throughout we fix two
elements $\bar p,\bar q\in P$ such that
$
     \nu(\bar q,\bar p) = 1.
$
Consider the set
\begin{equation}\label{eq:P'}
     P' := P \setminus \{\bar p,\bar q\}
\end{equation}
and the function $\nu':P'\times P'\to\Z$ defined by
\begin{equation}\label{eq:nu'}
     \nu'(q,p) := \nu(q,p) - \nu(q,\bar p)\nu(\bar q,p)
\end{equation}
for $p,q\in P$ and denote
$C':= C(P')$ and $\p':=\p^{\nu'}$.
The following lemma is due to 
Floer~\cite{FLOER2}.\phantomsection\label{FLOERc}

\begin{lemma}[{\bf Floer}]\label{le:floer} 
\index{Floer chain complex}
The endomorphism $\p':C'\to C'$ is a chain complex
and its homology $H(C',\p')$ is isomorphic to $H(C,\p)$.
\end{lemma}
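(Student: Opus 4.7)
My plan is to realize $(C', \partial')$ as the quotient of $(C, \partial)$ by an acyclic subcomplex $D \subset C$, so that both claims (that $(\partial')^2 = 0$ and that $H(C', \partial') \cong H(C, \partial)$) fall out at once from the long exact sequence associated with $0 \to D \to C \to C/D \to 0$. The key preparatory step is a change of basis dictated by the fact that $\bar p$ appears in $\partial \bar q$ with coefficient $\nu(\bar q, \bar p) = 1$.

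Concretely, I would first replace $\bar p$ in the given basis of $C$ by
\begin{equation*}
\tilde p \;:=\; \partial \bar q \;=\; \bar p \;+\; \nu(\bar q, \bar q)\,\bar q \;+\; \sum_{p \in P'} \nu(\bar q, p)\, p,
\end{equation*}
obtaining a new $\Z$-basis $\{\tilde p, \bar q\} \cup P'$ of $C$. Let $D \subset C$ be the submodule spanned by $\tilde p$ and $\bar q$, and let $E \subset C$ be the submodule spanned by $P'$, so that $C = D \oplus E$ as abelian groups. Then $D$ is a subcomplex, since $\partial \bar q = \tilde p \in D$ by definition and $\partial \tilde p = \partial^2 \bar q = 0$; moreover $D$ is acyclic, since its only cycle up to scalars is $\tilde p$, which is manifestly the boundary of $\bar q$.

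Next I would verify that the induced differential $\bar\partial := \pi \circ \partial|_E$ on $E \cong C/D$, where $\pi : C \to E$ is the projection along $D$, coincides with $\partial'$. Substituting $\bar p = \tilde p - \nu(\bar q, \bar q)\,\bar q - \sum_{q \in P'} \nu(\bar q, q)\, q$ into the expression
\begin{equation*}
\partial p \;=\; \nu(p,\bar p)\,\bar p \;+\; \nu(p,\bar q)\,\bar q \;+\; \sum_{q \in P'} \nu(p,q)\, q
\end{equation*}
for $p \in P'$, and then discarding the $\tilde p$ and $\bar q$ components, yields
\begin{equation*}
\bar\partial p \;=\; \sum_{q \in P'} \bigl(\nu(p,q) - \nu(p,\bar p)\,\nu(\bar q, q)\bigr)\, q \;=\; \sum_{q \in P'} \nu'(p,q)\, q \;=\; \partial' p,
\end{equation*}
as required. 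Hence $(\partial')^2 = 0$ is automatic, and the short exact sequence $0 \to D \to C \to C' \to 0$, together with the acyclicity of $D$, furnishes the isomorphism $H(C,\partial) \cong H(C',\partial')$ via the long exact sequence.

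The main obstacle, such as it is, is purely notational: one must carry out the basis change and check that projecting $\partial|_E$ onto $E$ produces exactly the correction term $-\,\nu(p,\bar p)\,\nu(\bar q,q)$ appearing in~\eqref{eq:nu'}. Once this bookkeeping is done, no further input is needed, as the statement reduces to the standard principle that cancelling an acyclic subcomplex does not affect homology.
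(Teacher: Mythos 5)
Your proposal is correct, and it takes a genuinely different route from the paper's. The paper constructs explicit chain maps $\Phi:C'\to C$, $\Psi:C\to C'$ and a chain homotopy $T$, and verifies all four relations ($\p'\circ\p'=0$, $\Phi\p'=\p\Phi$, $\p'\Psi=\Psi\p$, $\Psi\Phi=\id$, $\id-\Phi\Psi=\p T+T\p$) by direct computation. You instead realize $(C',\p')$ as the quotient of $(C,\p)$ by the rank-two acyclic subcomplex $D=\Z\tilde p\oplus\Z\bar q$ with $\tilde p:=\p\bar q$; this gives $(\p')^2=0$ for free (as the induced square-zero endomorphism on a quotient) and yields $H(C)\cong H(C')$ from the short exact sequence $0\to D\to C\to C'\to 0$ and the vanishing of $H(D)$. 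The two approaches are in fact close cousins: unwinding your construction, the projection $\pi:C\to E$ along $D$ is exactly the paper's $\Psi$, and the paper's $\Phi q=q-\nu(q,\bar p)\bar q$ is a section of it. What you gain is conceptual transparency (the formulas are no longer pulled from thin air, and $(\p')^2=0$ is not a separate check); what the paper's computational route buys is complete self-containment, with no appeal to any homological algebra beyond the definitions, and it exhibits the homotopy inverse explicitly, which is sometimes useful downstream.

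Two small points worth tightening. First, when you assert $D$ is acyclic, the claim "its only cycle up to scalars is $\tilde p$" is fine over a field but over $\Z$ one should say directly that $\ker(\p|_D)=\Z\tilde p=\im(\p|_D)$, which holds because $\p(a\tilde p+b\bar q)=b\tilde p$; no torsion arises since $D$ is free. Second, since $(C,\p)$ here is an ungraded module with a square-zero endomorphism rather than a $\Z$-graded chain complex, the "long exact sequence" should either be understood for the associated $2$-periodic complex, or replaced by the elementary direct argument (lift a cycle of $C'$ to $C$, correct by a boundary in $D$, and dually for injectivity). Either repair is immediate, but the phrasing as written leans on machinery set up for graded complexes.
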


\begin{proof} The proof consists of four steps.

\medskip
\noindent{\bf Step~1.} $\p'\circ\p'=0$.

\medskip
\noindent
Let $r\in P'$.  Then $\p'\circ\p'r=\sum_{p\in P'}\mu'(r,p)p$ where
$\mu'(r,p)\in\Z$ is given by
\begin{eqnarray*}
     \mu'(r,p)
&= &
     \sum_{q\in P'}\nu'(r,q)\nu'(q,p) \\
&= &
     \sum_{q\in P}\left(\nu(r,q)-\nu(r,\bar p)\nu(\bar q,q)\right)
             \left(\nu(q,p)-\nu(q,\bar p)\nu(\bar q,p)\right) \\
&= &
     0
\end{eqnarray*}
for $p\in P'$.  Here the first equation follows
from the fact that $\nu(\bar q,\bar p)=1$ and
the last equation follows from the fact
that $\p\circ\p=0$.

\medskip
\noindent{\bf Step~2.}
{\it
The operator $\Phi:C'\to C$ defined by
\begin{equation}\label{eq:phi}
    \Phi q := q - \nu(q,\bar p)\bar q
\end{equation}
for $q\in P'$ is a chain map, i.e.
$$
    \Phi\circ\p' = \p\circ\Phi.
$$
}

\medskip
\noindent
For $q\in P'$ we have
\begin{eqnarray*}
    \Phi\p'q
&= &
    \sum_{p\in P'}\nu'(q,p)\Phi p \\
&= &
    \sum_{p\in P'}
    \left(
    \nu(q,p) - \nu(q,\bar p)\nu(\bar q,p)
    \right)
    \left(
    p-\nu(p,\bar p)\bar q
    \right)  \\
&= &
    \sum_{p\in P}
    \left(
    \nu(q,p) - \nu(q,\bar p)\nu(\bar q,p)
    \right)
    \left(
    p-\nu(p,\bar p)\bar q
    \right)  \\
&= &
    \sum_{p\in P}
    \left(
    \nu(q,p)-\nu(q,\bar p)\nu(\bar q,p)
    \right)p \\
&= &
    \p q - \nu(q,\bar p)\p\bar q \\
&= &
    \p\Phi q.
\end{eqnarray*}

\medskip
\noindent{\bf Step~3.}
{\it
The operator $\Psi:C\to C'$ defined by
$\Psi q=q$ for $q\in P'$ and
\begin{equation}\label{eq:psi}
    \Psi \bar q := 0,\qquad
    \Psi \bar p := - \sum_{p\in P'}\nu(\bar q,p)p
\end{equation}
is a chain map, i.e.
$$
    \p'\circ\Psi = \Psi\circ\p.
$$
}

\medskip
\noindent
For $q\in P'$ we have
\begin{eqnarray*}
    \Psi\p q
&= &
    \sum_{p\in P}\nu(q,p)\Psi p \\
&= &
    \sum_{p\in P'}\nu(q,p)p
    + \nu(q,\bar p)\Psi\bar p \\
&= &
    \sum_{p\in P'}
    \left(
    \nu(q,p) - \nu(q,\bar p)\nu(\bar q,p)
    \right)p  \\
&= &
    \p'q.
\end{eqnarray*}
Moreover,
$$
    \Psi\p\bar q
    = \sum_{p\in P}\nu(\bar q,p)\Psi p
    = \sum_{p\in P'}\nu(\bar q,p)p
      + \Psi \bar p
    = 0
    = \p'\Psi\bar q,
$$
and
\begin{eqnarray*}
    \p'\Psi\bar p
&= &
    -\sum_{q\in P'}\nu(\bar q,q)\p'q \\
&= &
    -\sum_{q\in P'}\sum_{p\in P'}
    \nu(\bar q,q)
    \left(
    \nu(q,p)-\nu(q,\bar p)\nu(\bar q,p)
    \right)p \\
&= &
    \sum_{p\in P'}
    \left(
    \nu(\bar p,p)-\nu(\bar p,\bar p)\nu(\bar q,p)
    \right)p  \\
&= &
    \sum_{p\in P'}\nu(\bar p,p)p
    + \nu(\bar p,\bar p)\Psi\bar p  \\
&= &
    \sum_{p\in P}\nu(\bar p,p)\Psi p  \\
&= &
    \Psi\p\bar p.
\end{eqnarray*}

\medskip
\noindent{\bf Step~4.}
{\it
The operator $\Psi\circ\Phi:C'\to C'$ is equal
to the identity and
$$
     \id - \Phi\circ\Psi = \p\circ T+T\circ\p,
$$
where $T:C\to C$ is defined by $T\bar p=\bar q$
and $Tq=0$ for $q\in P\setminus\{\bar p\}$.}

\bigbreak

\medskip
\noindent
For $q\in P'$ we have
$$
     \Phi\Psi q = \Phi q = q-\nu(q,\bar p)\bar q
$$
and hence
$$
     q - \Phi\Psi q
     = \nu(q,\bar p)\bar q
     = \nu(q,\bar p)T\bar p
     = T\p q
     = T\p q+\p Tq.
$$
Moreover,
$$
    \bar q - \Phi\Psi\bar q
    = \bar q
    = \nu(\bar q,\bar p)T\bar p
    = T\p\bar q
    = T\p\bar q+\p T\bar q
$$
and
\begin{eqnarray*}
    \bar p - \Phi\Psi\bar p
&= &
    \bar p + \sum_{p\in P'}\nu(\bar q,p)\Phi p \\
&= &
    \bar p + \sum_{p\in P'}\nu(\bar q,p)p
    - \sum_{p\in P'}\nu(\bar q,p)\nu(p,\bar p)\bar q \\
&= &
    \bar p + \sum_{p\in P'}\nu(\bar q,p)p
    + \nu(\bar q,\bar q)\bar q
    + \nu(\bar p,\bar p)\bar q \\
&= &
    \p\bar q
    + \nu(\bar p,\bar p)\bar q \\
&= &
    \p T\bar p + T\p\bar p.
\end{eqnarray*}
This proves Lemma~\ref{le:floer}.
\end{proof}

Now let $(P,\preceq)$ be a finite poset.
An ordered pair $(p,q)\in P\times P$ is called
{\bf adjacent} if $p\preceq q$, $p\ne q$, 
and \index{adjacent elements of a poset}
$$
     p\preceq r\preceq q
     \qquad\implies\qquad
     r\in\{p,q\}.
$$
Fix an adjacent pair $(\bar p,\bar q)\in P\times P$
and consider the relation $\preceq'$
on 
$$
P':=P\setminus\{\bar p,\bar q\}
$$
defined by
\begin{equation}\label{eq:po}
     p\preceq' q
     \qquad\iff\qquad
     \left\{\begin{array}{ll}
     \mbox{either}& p\preceq q, \\
     \mbox{or}& \bar p\preceq q\mbox{ and }p\preceq \bar q.
     \end{array}\right.
\end{equation}

\begin{lemma}\label{le:po}
$(P',\preceq')$ is a poset.
\end{lemma}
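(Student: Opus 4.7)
The plan is to verify the three axioms of a partial order---reflexivity, antisymmetry, and transitivity---for the relation $\preceq'$ on $P'$, leveraging the hypothesis that $(\bar p, \bar q)$ is an adjacent pair (so that any $r \in P$ with $\bar p \preceq r \preceq \bar q$ must equal $\bar p$ or $\bar q$) and the fact that by construction $P'$ contains neither of these endpoints.

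Reflexivity is immediate: for $p \in P'$, the reflexivity of $\preceq$ gives $p \preceq p$, hence $p \preceq' p$ via the first clause of~\eqref{eq:po}. For antisymmetry, I would suppose $p \preceq' q$ and $q \preceq' p$ with $p,q \in P'$, and argue by cases on which clause of~\eqref{eq:po} holds in each assumption. If both relations come from the first clause, antisymmetry of $\preceq$ finishes the job. In the three remaining cases at least one relation invokes the second clause, and combining the inequalities produces a chain $\bar p \preceq p \preceq \bar q$ or $\bar p \preceq q \preceq \bar q$; adjacency of $(\bar p,\bar q)$ then forces the intermediate element into $\{\bar p,\bar q\}$, contradicting membership in $P'$.

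For transitivity, I would assume $p \preceq' q$ and $q \preceq' r$ for $p,q,r \in P'$ and again split into four cases according to which clause of~\eqref{eq:po} applies. When both relations are of the first type, transitivity of $\preceq$ yields $p \preceq r$ directly. The two mixed cases are handled by prepending or appending a $\preceq$-inequality: for instance, if $p \preceq q$ and the pair $(q,r)$ uses the second clause (so $\bar p \preceq r$ and $q \preceq \bar q$), then $p \preceq q \preceq \bar q$ combined with $\bar p \preceq r$ gives $p \preceq' r$ via the second clause. The only genuinely delicate case is when both relations come from the second clause; then one obtains $\bar p \preceq q$ together with $q \preceq \bar q$, whence adjacency forces $q \in \{\bar p, \bar q\}$, contradicting $q \in P'$, so the case is vacuous.

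The main obstacle---which is really a bookkeeping rather than a conceptual one---is to carry out the four-way case analysis for transitivity cleanly and recognize that the ``doubly mixed'' case is precisely the one excluded by the adjacency hypothesis on $(\bar p, \bar q)$. This is the step that explains why the definition~\eqref{eq:po} is tailored to adjacent pairs: non-adjacent choices would leave room for an intermediate $q$ in $P'$ and destroy transitivity.
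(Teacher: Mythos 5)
Your proof is correct and follows essentially the same case analysis as the paper's: transitivity and antisymmetry are each checked by splitting on which clause of~\eqref{eq:po} witnesses each occurrence of $\preceq'$, with adjacency of $(\bar p,\bar q)$ invoked to rule out the problematic configurations. The one place you diverge is in the transitivity argument: you dispose of the ``both second clause'' case by observing it is vacuous (adjacency forces the middle element into $\{\bar p,\bar q\}$), whereas the paper handles the same case directly---from $p\preceq\bar q$ and $\bar p\preceq r$ one reads off $p\preceq'r$ via the second clause of~\eqref{eq:po}, without ever touching adjacency. Both routes are fine, but this affects your closing remark, which is inaccurate: adjacency is \emph{not} needed for transitivity (the paper's Case~4 demonstrates this explicitly), so non-adjacent pairs $(\bar p,\bar q)$ do not ``destroy transitivity.'' What they destroy is antisymmetry. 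For instance, in a chain $a\preceq b\preceq c\preceq d$ with $\bar p=a$ and $\bar q=d$, one has $b\preceq'c$ (first clause) and $c\preceq'b$ (second clause, since $\bar p\preceq b$ and $c\preceq\bar q$) while $b\ne c$. Adjacency is the hypothesis that rescues antisymmetry, and your antisymmetry argument is precisely where it earns its keep.
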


\begin{proof}
We prove that the relation $\preceq'$ is transitive.
Let $p,q,r\in P'$ such that 
$$
p\preceq' q,\qquad q\preceq' r.
$$
There are four cases.  

\smallskip\noindent{\bf Case~1:}
{\it $p\preceq q$ and $q\preceq r$.}
Then $p\preceq r$ and hence $p\preceq' r$.

\smallskip\noindent{\bf Case~2:}
{\it $p\not\preceq q$ and $q\preceq r$.}
Then
$$
\bar p\preceq q\preceq r,\qquad
p\preceq\bar q,
$$
and hence $p\preceq'r$.

\smallskip\noindent{\bf Case~3:}
{\it $p\preceq q$ and $q\not\preceq r$.}
The argument is as in the Case~2.

\smallskip\noindent{\bf Case~4:}
{\it $p\not\preceq q$ and $q\not\preceq r$.}
Then
$$
p\preceq\bar q,\qquad
\bar p\preceq r,
$$
and hence $p\preceq' r$.

Next we prove that the relation $\preceq'$ is
anti-symmetric. Hence assume that $p,q\in P'$
such that $p\preceq'q$ and $q\preceq'p$.
We claim that $p\preceq q$ and $q\preceq p$.
Assume otherwise that $p\not\preceq q$.
Then $\bar p\preceq q$ and $p\preceq\bar q$.
Since $q\preceq' p$, it follows that
$\bar p\preceq p\preceq\bar q$ and
$\bar p\preceq q\preceq\bar q$, and hence
$\{p,q\}\subset\{\bar p,\bar q\}$, a contradiction.  
Thus we have shown that $p\preceq q$.  
Similarly, $q\preceq p$ and hence $p=q$. 
This proves Lemma~\ref{le:po}
\end{proof}

A function $\mu:P\to\Z$ is called an {\bf index function} 
for $(P,\preceq)$ if \index{index function on a poset}
\begin{equation}\label{eq:Pmu}
     p\preceq q
     \qquad\implies\qquad
     \mu(p) < \mu(q).
\end{equation}
Let $\mu$ be an index function for $P$.
A function 
$$
\nu:P\times P\to\Z
$$ 
is called a {\bf connection matrix} \index{connection matrix} 
for $(P,\preceq,\mu)$ if it satisfies~(\ref{eq:nu-CF}) and
\begin{equation}\label{eq:numu}
     \nu(q,p)\ne 0\qquad\implies\qquad
     \mu(q)-\mu(p)=1,\;\; p\preceq q
\end{equation}
for $p,q\in P$.

\begin{lemma}\label{le:connection}
If $\mu:P\to\Z$ is an index function
for $(P,\preceq)$ then $\mu':=\mu|_{P'}$
is an index function for $(P',\preceq')$.
Moreover, if $\nu$ is a connection matrix for
$(P,\preceq,\mu)$ and $\nu(\bar q,\bar p)=1$
then $\nu'$ is a connection matrix for
$(P',\preceq',\mu')$.
\end{lemma}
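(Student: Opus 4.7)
My plan is to verify the two conclusions directly from the definitions of $\preceq'$ and $\nu'$, relying on the hypothesis $\nu(\bar q,\bar p)=1$ combined with condition~\eqref{eq:numu} to pin down $\mu(\bar q)-\mu(\bar p)=1$, which is the key numerical input.

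For the first assertion, that $\mu' := \mu|_{P'}$ is an index function for $(P',\preceq')$, I would unpack the disjunctive definition~\eqref{eq:po}. Fix $p,q \in P'$ with $p\preceq' q$ and $p\ne q$. If $p\preceq q$ in the original order, then $\mu'(p)=\mu(p)<\mu(q)=\mu'(q)$ by hypothesis. Otherwise $\bar p\preceq q$ and $p\preceq\bar q$; since $p,q\in P'$ are distinct from both $\bar p$ and $\bar q$, these inequalities are strict, so $\mu(p)<\mu(\bar q)$ and $\mu(\bar p)<\mu(q)$. The hypothesis $\nu(\bar q,\bar p)=1$ together with~\eqref{eq:numu} gives $\mu(\bar q)=\mu(\bar p)+1$, and chaining yields $\mu'(p)<\mu(\bar q)=\mu(\bar p)+1\le\mu(q)=\mu'(q)$.

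For the second assertion, the chain-complex relation~\eqref{eq:nu-CF} for $\nu'$ is exactly the identity $\p'\circ\p'=0$, which was already established in Step~1 of the proof of Lemma~\ref{le:floer}. It remains to check~\eqref{eq:numu} for $\nu'$. Assume $\nu'(q,p)\ne0$ for some $p,q\in P'$. From the defining formula~\eqref{eq:nu'},
\[
\nu'(q,p)=\nu(q,p)-\nu(q,\bar p)\nu(\bar q,p),
\]
so at least one of $\nu(q,p)$ or the product $\nu(q,\bar p)\nu(\bar q,p)$ is nonzero. If $\nu(q,p)\ne 0$, then~\eqref{eq:numu} applied to $\nu$ gives $p\preceq q$ and $\mu(q)-\mu(p)=1$, whence $p\preceq' q$ and $\mu'(q)-\mu'(p)=1$. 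If the product $\nu(q,\bar p)\nu(\bar q,p)$ is nonzero, then both factors are nonzero, so~\eqref{eq:numu} yields $\bar p\preceq q$, $p\preceq \bar q$, $\mu(q)=\mu(\bar p)+1$, and $\mu(p)=\mu(\bar q)-1$. The first two inclusions give $p\preceq' q$ via~\eqref{eq:po}, and combining the two numerical equalities with $\mu(\bar q)-\mu(\bar p)=1$ gives $\mu'(q)-\mu'(p)=\mu(\bar p)+1-(\mu(\bar q)-1)=1$.

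No genuine obstacle arises; the only subtlety is that the two cases are not mutually exclusive. When both $\nu(q,p)\ne 0$ and $\nu(q,\bar p)\nu(\bar q,p)\ne 0$, one just checks that the two routes for computing $\mu(q)-\mu(p)$ agree (both give $1$, forced by the common value $\mu(\bar q)-\mu(\bar p)=1$) and that either route already witnesses $p\preceq' q$. The bookkeeping is routine, and the entire argument reduces to the single arithmetic fact $\mu(\bar q)=\mu(\bar p)+1$ extracted from $\nu(\bar q,\bar p)=1$.
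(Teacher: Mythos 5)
Your proof is correct and follows essentially the same route as the paper: the same case split on whether $p\preceq q$ holds for the index-function claim, reliance on Lemma~\ref{le:floer} for the $\p'\circ\p'=0$ condition, and the same two-case analysis of $\nu'(q,p)\ne 0$ driven by the arithmetic fact $\mu(\bar q)-\mu(\bar p)=1$. The only cosmetic difference is that the paper splits on $\nu(q,p)\ne 0$ versus $\nu(q,p)=0$ (so the cases are exclusive), whereas you allow the overlapping case and note the two computations agree; the substance is identical.
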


\begin{proof}
We prove that $\mu'$ is an
index function for $(P',\preceq')$.
Let $p',q'\in P'$ such that $p\preceq' q$.
If $p\preceq q$ then $\mu(p)<\mu(q)$,
since $\mu$ is an index function for $(P,\preceq)$.
If $p\not\preceq q$ then $p\preceq\bar q$
and $\bar p\preceq q$, and hence
$$
     \mu(p) < \mu(\bar q) = \mu(\bar p) + 1 \le \mu(q).
$$
Hence $\mu'$ satisfies~(\ref{eq:Pmu}), as claimed.
Next we prove that $\nu'$ is a connection matrix
for $(P',\preceq',\mu')$. By Lemma~\ref{le:floer},
$\nu'$ satisfies~(\ref{eq:nu-CF}). We prove that
it satisfies~(\ref{eq:numu}).  Let
$p,q\in P'$ such that $\nu'(q,p)\ne 0$.
If $\nu(q,p)\ne 0$ then, since $\nu$ is a connection
matrix for $(P,\preceq,\mu)$, we have
$\mu(q)-\mu(p)=1$ and $p\preceq'q$.
If $\nu(q,p)=0$ then in follows from the definition
of $\nu'$ that $\nu(q,\bar p)\ne 0$ and $\nu(\bar q,p)\ne 0$.
Hence
$$
     \mu(q) - \mu(\bar p) = 1,\qquad
     \mu(\bar q) - \mu(p) = 1,\qquad
     \mu(\bar q) - \mu(\bar p) = 1,
$$
and hence
$$
     \bar p\preceq q,\qquad p\preceq\bar q.
$$
It follows again that $\mu(q)-\mu(p)=1$ and $p\preceq' q$.
Hence $\nu'$ satisfies~(\ref{eq:numu}), as claimed.
This proves Lemma~\ref{le:connection}.
\end{proof}


\section{Asymptotic behavior of holomorphic strips}\label{app:asymptotic}

This appendix deals with the asymptotic behaviour 
of pseudoholomorphic strips in symplectic manifolds
that satisfy Lagrangian boundary conditions. 
More precisely, let $(M,\om)$ be a symplectic
manifold and 
$$
L_0,L_1\subset M
$$
be closed (not necessarily compact)
Lagrangian submanifolds that intersect transversally. 
Fix a $t$-de\-pen\-dent family of $\om$-tame almost complex 
structures $J_t$ on $M$.  
We consider smooth maps $v:\S\to M$ 
that satisfy the boundary value problem
\begin{equation}\label{eq:FLOER}
\p_sv+J_t(v)\p_tv = 0,\qquad
v(\R)\subset L_0,\qquad
v(\R+\i)\subset L_1.
\end{equation}

\begin{theorem}\label{thm:asymptotic}
Let $v:\S\to M$ be a solution of~\eqref{eq:FLOER}.
Assume $v$ has finite energy
$$
E(v) := \int_\S v^*\om < \infty
$$
and that the image of $v$ has compact closure. 
Then the following hold.

\smallskip\noindent{\bf (i)}
There exist intersection points $x,y\in L_0\cap L_1$ such that
\begin{equation}\label{eq:LIMITS}
x = \lim_{s\to-\infty}v(s,t),\qquad
y = \lim_{s\to+\infty}v(s,t),
\end{equation}
where the convergence is uniform in $t$.  Moreover, $\p_sv$ decays 
exponentially in the $\Cinf$ topology, i.e.\ there are positive 
constants $\delta$ and $c_1,c_2,c_3,\dots$ such that 
$\Norm{\p_su}_{C^k([s-1,s+1]\times[0,1])}\le c_ke^{-\delta\abs{s}}$
for all $s$ and $k$.

\smallskip\noindent{\bf (ii)}
Assume $v$ is nonconstant. Then there exist positive real numbers
$\lambda_x,\lambda_y$ and smooth paths $\eta_x:[0,1]\to T_xM$
and $\eta_y:[0,1]\to T_yM$ satisfying 
\begin{equation}\label{eq:etaxy}
\begin{split}
&J_t(x)\p_t\eta_x(t)+\lambda_x\eta_x(t)=0,\qquad
\eta_x(0)\in T_xL_0,\qquad \eta_x(1)\in T_xL_1,\\
&J_t(y)\p_t\eta_y(t)-\lambda_y\eta_y(t)=0,\qquad
\eta_y(0)\in T_yL_0,\qquad \eta_y(1)\in T_yL_1,
\end{split}
\end{equation}
and 
\begin{equation}\label{eq:etaxy1}
\eta_x(t) = \lim_{s\to-\infty}e^{-\lambda_xs}\p_sv(s,t),\qquad
\eta_y(t) = \lim_{s\to+\infty}e^{\lambda_ys}\p_sv(s,t),
\end{equation}
where the convergence is uniform in $t$.

\smallskip\noindent{\bf (iii)}
Assume $v$ is nonconstant and let $\lambda_x,\lambda_y>0$
and $\eta_x(t)\in T_xM$ and $\eta_y(t)\in T_yM$ be as in~(ii).
Then there exists a constant $\delta>0$ such that
\begin{equation}\label{eq:etaxx}
\begin{split}
&v(s,t) = \exp_x\left(
\frac{1}{\lambda_x}e^{\lambda_xs}\eta_x(t)
+ R_x(s,t)\right),\\
&\lim_{s\to-\infty}e^{-(\lambda_x+\delta)s}\sup_t\abs{R_x(s,t)} = 0,
\end{split}
\end{equation}
and
\begin{equation}\label{eq:etayy}
\begin{split}
&v(s,t) = \exp_y\left(
- \frac{1}{\lambda_y}e^{-\lambda_ys}\eta_y(t)
+ R_y(s,t)\right),\\
&\lim_{s\to+\infty}e^{(\lambda_y+\delta)s}\sup_t\abs{R_y(s,t)} = 0.
\end{split}
\end{equation}
\end{theorem}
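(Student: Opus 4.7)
The plan is to prove (i), (ii), (iii) in sequence: (i) sets up the convergence and rough exponential decay, (ii) extracts the leading eigenfunction via a spectral analysis of the asymptotic linearized operator, and (iii) bootstraps to the quantitative asymptotic expansion. For (i), I would first use $E(v)<\infty$ plus additivity of the energy integral to get $\int_s^{s+1}\!\int_0^1 |\p_s v|^2\,dt\,ds \to 0$ as $|s|\to\infty$. Standard $\eps$-regularity for $J$-holomorphic strips with totally real boundary converts this into a pointwise bound $\sup_t |\p_s v(s,t)|\to 0$. Combined with the precompact-image hypothesis, any sequence $s_n\to-\infty$ gives (after extraction) a uniform limit path in $M$; the fact that $\p_s v\to 0$ forces this limit to satisfy $\p_t v = 0$, hence to be constant, and the boundary conditions place the constant in $L_0\cap L_1$. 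Transversality makes this intersection discrete, so the limit is unique. For the exponential decay, one shows that
\[
e(s) := \tfrac12\int_0^1 |\p_s v(s,t)|^2\,dt
\]
satisfies a differential inequality $e''(s) \ge 4\delta^2 e(s)$ for $|s|$ large, where $\delta>0$ is the distance from the origin to the spectrum of the asymptotic linearized operator at the limit intersection point. This yields $e(s)\le Ce^{-2\delta|s|}$, and standard interior/boundary elliptic estimates upgrade the bound to $C^k$ decay of $\p_s v$ at rate $\delta$ (with constants depending on $k$).

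For (ii), I would pass to Darboux coordinates around $x$ in which $J_t(x)$ is the standard complex structure on $\R^{2n}$ and both $T_xL_0$ and $T_xL_1$ are linear totally real subspaces. Writing $w(s,t):=\exp_x^{-1}v(s,t)$, well defined for $s$ sufficiently negative by part~(i), the equation~\eqref{eq:FLOER} becomes
\[
\p_s w + J_0(t)\p_t w + A(t)w = N(s,t,w),
\]
where $A$ is a smooth symmetric loop of matrices encoding $\p_tJ_t(x)$ and the Levi-Civita correction, and $N$ is quadratic in $w$, hence exponentially small in $s$ by part~(i). The asymptotic operator $D_\infty := -J_0(t)\p_t - A(t)$, viewed on paths from $T_xL_0$ to $T_xL_1$, is essentially self-adjoint on $L^2([0,1];\R^{2n})$ with compact resolvent; transversality of $L_0$ and $L_1$ at $x$ is equivalent to $0\notin\mathrm{Spec}(D_\infty)$. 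Expanding $w(s,\cdot)$ in the eigenbasis $\{\eta_\lambda\}$ and using finite energy to rule out growing modes as $s\to-\infty$ forces only positive eigenvalues to contribute, with the smallest such $\lambda_x$ providing the leading term. Since $v$ is nonconstant, the coefficient in front of the corresponding eigenfunction $\eta_x$ is nonzero; differentiating the expansion in $s$ yields \eqref{eq:etaxy1}. The analysis at $+\infty$ is symmetric, with the direction of the half-strip producing the sign reversal in the second line of \eqref{eq:etaxy}.

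For (iii), define $R_x$ by \eqref{eq:etaxx}. The subtracted ansatz $\lambda_x^{-1}e^{\lambda_x s}\eta_x(t)$ is an exact solution of the linear equation $\p_s + J_0\p_t + A = 0$ at $x$, so $R_x$ satisfies the same linearized equation with inhomogeneity of size $O(e^{2\lambda_x s})$ coming from the quadratic remainder $N$. Running the spectral expansion of part~(ii) on $R_x$ now shows that $R_x$ decays at rate $\min(\lambda_x+\delta_0,\,2\lambda_x)$, where $\delta_0>0$ is the gap between $\lambda_x$ and the next positive eigenvalue of $D_\infty$; any $\delta>0$ strictly smaller than this minimum minus $\lambda_x$ satisfies the estimate in \eqref{eq:etaxx}. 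The argument at $y$, which yields \eqref{eq:etayy}, is identical after reversing the sign of $s$.

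The main obstacle will be the spectral expansion in (ii): the equation has $t$-dependent coefficients through $J_t$, the boundary condition couples two a priori arbitrary transverse totally real subspaces, and the solution $w$ is initially known only to be bounded, not of any definite exponential decay. One must carefully construct the self-adjoint realization of $D_\infty$, choose weighted Sobolev spaces with exponential weights straddling each eigenvalue, and use a Laplace transform (or equivalently a resolvent/spectral projection argument) to diagonalize the linear part while controlling the nonlinear remainder $N(s,t,w)$ by a bootstrap that pushes the decay rate from $0$ up through the spectrum of $D_\infty$ one eigenvalue at a time. This is exactly the technical machinery that occupies the memoir~\cite{ASYMPTOTIC}; the sketch above only outlines the logical skeleton.
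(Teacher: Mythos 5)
The paper does not prove Theorem~\ref{thm:asymptotic} at all---it simply cites \cite[Theorems~A, B]{ASYMPTOTIC} for the $\om$-compatible case and \cite{SIMCEVIC} for the $\om$-tame case. Your proposal therefore has nothing in the paper to be compared against; what you have written is a reconstruction of the argument in the cited references, and as a logical skeleton it is accurate. A few places are imprecise in ways worth flagging. First, you cannot choose Darboux coordinates at $x$ that render $J_t(x)$ the \emph{standard} complex structure for all $t$ simultaneously, so the asymptotic operator really is $-J_0(t)\p_t$ with $t$-dependent coefficients; on the other hand, since you linearize at the constant map $x$ in geodesic normal coordinates, the zeroth-order term $A(t)$ you introduce should in fact vanish (the theorem's asymptotic equation $J_t(x)\p_t\eta_x+\lambda_x\eta_x=0$ has no such term). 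Second, the self-adjointness of $D_\infty$ that your spectral expansion needs holds with respect to the inner product $\om(\cdot,J_t\cdot)$ and therefore genuinely requires $\om$-compatibility; this is precisely why the paper cites \cite{SIMCEVIC} separately for the tame case, and your sketch silently assumes compatibility. Third, the assertion that nonconstancy of $v$ forces a nonzero coefficient on the lowest contributing eigenfunction is not immediate: one must rule out decay faster than every eigenvalue, which is the unique-continuation-style content of \cite[Lemma~3.6]{ASYMPTOTIC} (an estimate the present paper itself invokes in the proof of Lemma~\ref{le:index}). You are candid that the sketch outlines only the logical skeleton and that the heavy estimates live in \cite{ASYMPTOTIC}; that assessment is correct, and modulo the three points above the outline is sound.
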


As a warmup the reader is encouraged to verify the signs in~(ii) and~(iii)
in the linear setting where $M$ is a symplectic vector space, 
$L_0,L_1$ are transverse Lagrangian subspaces, $J_t=J$ 
is a constant $\om$-tame complex structure, and 
$v(s,t)=\lambda_x^{-1}e^{\lambda_xs}\eta_x(t)$, respectively
$v(s,t)=-\lambda_y^{-1}e^{-\lambda_ys}\eta_y(t)$.

\bigbreak

\begin{proof}[Proof of Theorem~\ref{thm:asymptotic}]
Assertion~(i) is standard (see e.g.~\cite[Theorem~A]{ASYMPTOTIC}).
Assertions~(ii) and~(iii) are proved in~\cite[Theorem~B]{ASYMPTOTIC}
for $\om$-compatible\phantomsection\label{ASYMPTOTIC6}
almost complex structures $J_t$.  The $\om$-tame case is treated 
in~\cite{SIMCEVIC}.\phantomsection\label{SIMCEVIC}
\end{proof}

The next corollary summarizes the consequences of 
Theorem~\ref{thm:asymptotic} in the special case of dimension two.
Assume $\Sigma$ and $\alpha,\beta\subset\Sigma$
satisfy~\hyperlink{property_H}{(H)} and fix a complex structure $J$
on $\Sigma$. For $x\in\alpha\cap\beta$ denote by $\theta_x\in(0,\pi)$ 
the angle from $T_x\alpha$ to $T_x\beta$ with respect to our complex 
structure $J$ so that 
$$
T_x\beta = \left(\cos(\theta_x)+\sin(\theta_x)J\right)T_x\alpha,\qquad
0<\theta_x<\pi.
$$
(See equation~\eqref{eq:thetax}.)
Fix two intersection points $x,y\in\alpha\cap\beta$
and two holomorphic coordinate charts $\psi_x:U_x\to\C$
and $\psi_y:U_y\to\C$ defined on neighborhoods
$U_x\subset \Sigma$ of $x$ and $U_y\subset\Sigma$ of $y$ 
such that $\psi_x(x)=0$ and $\psi_y(y)=0$.  

\begin{corollary}\label{cor:asymptotic}
Let $v:\S\to\Sigma$ be a nonconstant holomorphic 
$(\alpha,\beta)$-strip from $x$ to $y$.  Thus $v$ is has 
finite energy and satisfies the boundary conditions 
$v(\R)\subset\alpha$ and $v(\R+\i)\subset\beta$ 
and the endpoint conditions
$$
\lim_{s\to-\infty}v(s+\i t) = x,\qquad 
\lim_{s\to+\infty}v(s+\i t) = y.
$$
(See equation~\eqref{eq:limit}.)
Then there exist nonzero complex numbers
$$
c_x\in T_0\psi_x(U_x\cap\alpha),\qquad
c_y\in T_0\psi_y(U_y\cap\alpha)
$$
and integers $\nu_x\ge 0$ and $\nu_y\ge1$ such that
the following holds.

\smallskip\noindent{\bf (i)}
Define 
\begin{equation}\label{eq:lambdaxy}
\lambda_x:=\nu_x\pi+\theta_x,\qquad 
\lambda_y:=\nu_y\pi-\theta_y.
\end{equation}
Then 
\begin{equation}\label{eq:claxy}
\begin{split}
\lambda_xc_x &= \lim_{s\to-\infty}e^{-\lambda_x(s+\i t)}d(\psi_x\circ v)(s+\i t),\\
-\lambda_yc_y &= \lim_{s\to+\infty}e^{\lambda_y(s+\i t)}d(\psi_y\circ v)(s+\i t),
\end{split}
\end{equation}
where the convergence is uniform in $t$.

\smallskip\noindent{\bf (ii)}
Let $\lambda_x,\lambda_y>0$ be given by~\eqref{eq:lambdaxy}.
There exists a constant $\delta>0$ such that
\begin{equation}\label{eq:nuxv}
\psi_x(v(s+\i t)) = c_xe^{\lambda_x(s+\i t)} 
+ O(e^{(\lambda_x+\delta)s})
\end{equation}
for $s<0$ close to $-\infty$ and
\begin{equation}\label{eq:nuyv}
\psi_y(v(s+\i t)) = c_ye^{-\lambda_y(s+\i t)} 
+ O(e^{-(\lambda_y+\delta)s})
\end{equation}
for $s>0$ close to $+\infty$.
\end{corollary}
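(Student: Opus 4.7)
The plan is to deduce the corollary from Theorem~\ref{thm:asymptotic} by transporting everything into the holomorphic charts $\psi_x$ and $\psi_y$ and using the simple form taken there by both the complex structure at the critical point and the ODE~\eqref{eq:etaxy}. First I would apply Theorem~\ref{thm:asymptotic} with $(M,L_0,L_1) = (\Sigma,\alpha,\beta)$ and $J_t = J$ to a nonconstant finite-energy holomorphic $(\alpha,\beta)$-strip $v$ from $x$ to $y$. Part~(i) of that theorem already yields the convergence of $v$ to the asymptotic intersection points, and parts~(ii) and~(iii) produce positive constants $\lambda_x,\lambda_y$ together with smooth tangent paths $\eta_x:[0,1]\to T_x\Sigma$ and $\eta_y:[0,1]\to T_y\Sigma$ satisfying the eigenvalue equations~\eqref{eq:etaxy} and the pointwise asymptotic formulas~\eqref{eq:etaxy1}, \eqref{eq:etaxx}, \eqref{eq:etayy}.

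Next I would pull these back to the chart $\psi_x$. Since $d\psi_x(x)\circ J(x) = \i\circ d\psi_x(x)$, setting $\xi_x(t):=d\psi_x(x)\eta_x(t)\in\C$ turns the first equation in~\eqref{eq:etaxy} into $\i\dot\xi_x + \lambda_x\xi_x = 0$, whose solutions are $\xi_x(t) = e^{\i\lambda_x t}\xi_x(0)$. The boundary conditions $\eta_x(0)\in T_x\alpha$, $\eta_x(1)\in T_x\beta$ translate into $\xi_x(0)\in T_0\psi_x(U_x\cap\alpha)$ and $e^{\i\lambda_x}\xi_x(0)\in T_0\psi_x(U_x\cap\beta)$, and the angle condition~\eqref{eq:thetax} (with $0<\theta_x<\pi$) forces $\lambda_x\equiv\theta_x\pmod{\pi}$. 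Combined with $\lambda_x>0$ this gives $\lambda_x = \nu_x\pi + \theta_x$ with $\nu_x\ge 0$. Defining $c_x := \xi_x(0)/\lambda_x$ places $c_x$ in $T_0\psi_x(U_x\cap\alpha)$, and the identity $\p_s(\psi_x\circ v) = d(\psi_x\circ v)$ (since $\psi_x\circ v$ is holomorphic) turns~\eqref{eq:etaxy1} into the first limit in~\eqref{eq:claxy}. The analogous analysis at $y$ gives $\xi_y(t) = e^{-\i\lambda_y t}\xi_y(0)$; the boundary conditions now read $-\lambda_y\equiv\theta_y\pmod{\pi}$, forcing $\lambda_y = \nu_y\pi - \theta_y$ with $\nu_y\ge 1$ (since $\lambda_y>0$ and $\theta_y\in(0,\pi)$ rule out $\nu_y\le 0$), and the sign in the definition of $c_y := -\xi_y(0)/\lambda_y\in T_0\psi_y(U_y\cap\alpha)$ is exactly what is needed so that $-\lambda_y c_y = \xi_y(0)$ produces the second limit in~\eqref{eq:claxy}.

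For part~(ii) I would compose the representation~\eqref{eq:etaxx} with $\psi_x$. Writing $\psi_x\circ\exp_x(\zeta) = d\psi_x(x)\zeta + Q_x(\zeta)$ with $Q_x(\zeta) = O(|\zeta|^2)$ near $0$, I substitute $\zeta = \lambda_x^{-1}e^{\lambda_x s}\eta_x(t) + R_x(s,t)$. The linear term contributes $\lambda_x^{-1}e^{\lambda_x s}\xi_x(t) = c_x e^{\lambda_x(s+\i t)}$, and the quadratic remainder is $O(e^{2\lambda_x s})$, while the contribution from $R_x$ is $o(e^{(\lambda_x+\delta)s})$ by~\eqref{eq:etaxx}. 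Choosing $\delta':=\min(\delta,\lambda_x)>0$ gives the estimate~\eqref{eq:nuxv}. The formula~\eqref{eq:nuyv} at $y$ follows in exactly the same way from~\eqref{eq:etayy}, with the minus sign in front of $\lambda_y^{-1}e^{-\lambda_y s}\eta_y(t)$ absorbed by the corresponding sign in the definition of $c_y$.

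No step is really an obstacle once Theorem~\ref{thm:asymptotic} is granted; the only place careful bookkeeping is needed is in the quantization argument, where one must verify both that the integer $\nu_x$ lies in $\Z_{\ge 0}$ and that $\nu_y$ lies in $\Z_{\ge 1}$ (rather than $\Z_{\ge 0}$), and that the normalizations of $c_x$ and $c_y$ match the signs appearing in~\eqref{eq:claxy}. The asymmetry between $\nu_x\ge 0$ and $\nu_y\ge 1$ is a direct consequence of the opposite signs in the two ODEs in~\eqref{eq:etaxy} together with the common range $\theta_x,\theta_y\in(0,\pi)$.
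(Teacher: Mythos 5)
Your proposal is correct and follows essentially the same route as the paper: transport into the holomorphic charts, solve the linear ODE~\eqref{eq:etaxy}, read off the quantization $\lambda_x=\nu_x\pi+\theta_x$, $\lambda_y=\nu_y\pi-\theta_y$ with $\nu_x\ge0$, $\nu_y\ge1$ from the boundary conditions and $\theta_x,\theta_y\in(0,\pi)$, and then deduce (i) from~\eqref{eq:etaxy1} and (ii) from~\eqref{eq:etaxx}--\eqref{eq:etayy}. The paper phrases the identification in the reverse direction (defining $\eta_x,\eta_y$ in terms of $c_x,c_y$ and verifying they solve the ODE rather than solving the ODE and extracting $c_x,c_y$), but this is the same argument.
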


\begin{proof}
Define $\eta_x:[0,1]\to T_x\Sigma$ and $\eta_y:[0,1]\to T_y\Sigma$ by
$$
d\psi_x(x)\eta_x(t) := \lambda_xc_xe^{\i\lambda_xt},\qquad
d\psi_y(y)\eta_y(t) := -\lambda_yc_ye^{-\i\lambda_yt}.
$$
These functions satisfy the conditions in~\eqref{eq:etaxy}
if and only if $c_x\in T_0\psi_x(U_x\cap\alpha)$, 
$c_y\in T_0\psi_y(U_y\cap\alpha)$, and
$\lambda_x,\lambda_y$ are given by~\eqref{eq:lambdaxy}
with integers $\nu_x\ge 0$ and $\nu_y\ge1$.  
Moreover, the limit condition~\eqref{eq:claxy} is equivalent
to~\eqref{eq:etaxy1}.  Hence assertion~(i) in 
Corollary~\ref{cor:asymptotic} follows from~(ii) 
in Theorem~\ref{thm:asymptotic}.  With this understood,
assertion~(ii) in Corollary~\ref{cor:asymptotic} follows 
immediately from~(iii) in Theorem~\ref{thm:asymptotic}. 
This proves the corollary.  (For assertion~(ii) see 
also~\cite[Theorem~C]{ASYMPTOTIC}.)\phantomsection\label{ASYMPTOTIC7}
\end{proof}


\newpage
\addcontentsline{toc}{part}{References}

\newpage
\addcontentsline{toc}{part}{Index}
\input{comb_floer_memoir.ind}

\end{document}